\documentclass[a4paper,10pt]{amsart}
\usepackage[T1]{fontenc}
\usepackage{lmodern}

\usepackage{latexsym}
\usepackage{amsmath}
\usepackage{amsfonts}
\usepackage{amssymb}
\usepackage{amsxtra}
\usepackage{amsthm}
\usepackage{thmtools}
\usepackage{enumitem}
\usepackage{mathtools}
\usepackage{xcolor}
\usepackage{tikz}
\usepackage{tikz-cd}
\usepackage[pdftitle={Hilbert Schemes of Points in the Plane and Quasi-Lisse Vertex Algebras with N=4 Symmetry}]{hyperref}

\theoremstyle{definition}
\newtheorem{definition}{Definition}[section]
\newtheorem{remark}[definition]{Remark}
\theoremstyle{plain}
\newtheorem*{maintheorem*}{Main Theorem}
\newtheorem*{theorem*}{Theorem}
\newtheorem{proposition}[definition]{Proposition}
\newtheorem{theorem}[definition]{Theorem}
\newtheorem{corollary}[definition]{Corollary}
\newtheorem{lemma}[definition]{Lemma}



\newcommand{\Id}{\operatorname{Id}}

\DeclareMathOperator{\Hilb}{Hilb}

\newcommand{\VPA}{\mathrm{VPA}}
\newcommand{\VA}{\hbar\mathrm{VA}}

\newcommand{\tldU}{\widetilde{U}}
\newcommand{\tldfrU}{\widetilde{U}}
\newcommand{\tldcalD}{\widetilde{\calD}}
\newcommand{\tldmu}{\widetilde{\mu}}

\newcommand{\Swt}{\operatorname{t-wt}}
\newcommand{\Abar}{R}

\DeclareMathOperator{\Com}{Com} 

\DeclareMathOperator{\GL}{GL}
\newcommand{\tldcalO}{\widetilde{\calO}}
\newcommand{\JetBundle}[1]{\tldcalO_{J_\infty #1}}
\newcommand{\jet}[2]{J_{#1}{#2}}
\newcommand{\Cl}{C\ell}
\newcommand{\tldA}{\widetilde{A}}
\newcommand{\tldC}{\widetilde{C}}
\newcommand{\tldE}{\widetilde{E}}

\DeclareMathOperator{\hatotimes}{\widehat{\otimes}}

\newcommand{\Koszul}{\mathrm{Kosz}}
\newcommand{\DR}{\mathrm{dR}}
\newcommand{\Alg}{\mathrm{Alg}}

\newcommand{\Scheme}{\mathrm{Scheme}}
\newcommand{\ch}{\mathrm{ch}}
\newcommand{\vertex}{\mathrm{vert}}
\newcommand{\TN}{T_{\calN=4}}
\newcommand{\TTr}{T_{\beta\gamma}}
\newcommand{\TSF}{T_{\sf{SF}}}

\newcommand{\dE}{{}^{\tau} \!E} 
\newcommand{\HSF}{{}^{\mathrm{HS}} \!F} 
\newcommand{\HSGr}{{}^{\mathrm{HS}} \!\Gr} 
\newcommand{\HSE}{{}^{\mathrm{HS}} \!E} 
\newcommand{\hF}{{}^{\hbar} \!F}
\newcommand{\hGr}{{}^{\hbar} \!\Gr}
\newcommand{\hE}{{}^{\hbar} \!E}

\newcommand{\tldG}{\widetilde{G}}
\newcommand{\hatsl}{\widehat{\frsl}}
\newcommand{\red}{\mathrm{red}}

\newcommand{\Wbeta}{\widetilde{\beta}}
\newcommand{\Wgamma}{\widetilde{\gamma}}
\newcommand{\Wb}{\widetilde{b}}
\newcommand{\Wc}{\widetilde{c}}

\newcommand{\SF}{\mathsf{SF}}

\newcommand{\C}{{\mathbb C}}

\newcommand{\Z}{{\mathbb Z}}

\newcommand{\bbP}{{\mathbb P}}

\newcommand{\bbV}{{\mathbb V}}

\newcommand{\bfe}{\mathbf{e}}

\newcommand{\calA}{\mathcal{A}}

\newcommand{\calD}{\mathcal{D}}

\newcommand{\calF}{\mathcal{F}}

\newcommand{\calH}{\mathcal{H}}
\newcommand{\calI}{\mathcal{I}}

\newcommand{\calM}{\mathcal{M}}
\newcommand{\calN}{\mathcal{N}}
\newcommand{\calO}{\mathcal{O}}

\newcommand{\calT}{\mathcal{T}}

\newcommand{\calV}{\mathcal{V}}

\newcommand{\frX}{\mathfrak{X}}

\newcommand{\frg}{\mathfrak{g}}

\newcommand{\frm}{\mathfrak{m}}

\newcommand{\frgl}{\mathfrak{gl}}
\newcommand{\frsl}{\mathfrak{sl}}

\newcommand{\sfV}{\mathsf{V}}
\newcommand{\sfW}{\mathsf{W}}

\DeclareMathOperator{\Tr}{Tr}
\DeclareMathOperator{\gEnd}{End}
\DeclareMathOperator{\lEnd}{\operatorname{\mathscr{E}\kern-.1pc\mathit{nd}}}
\DeclareMathOperator{\gHom}{Hom}
\DeclareMathOperator{\lHom}{\operatorname{\mathscr{H}\kern-.1pc\mathit{om}}}

\DeclareMathOperator{\Spec}{Spec}

\DeclareMathOperator{\Proj}{Proj}
\DeclareMathOperator{\Lie}{Lie}

\DeclareMathOperator{\Res}{res}
\DeclareMathOperator{\Dim}{dim}
\DeclareMathOperator{\Ker}{Ker}
\renewcommand{\Im}{\operatorname{Im}}

\DeclareMathOperator{\gr}{Gr}
\DeclareMathOperator{\Gr}{Gr}

\newcommand{\fin}{\mathrm{fin}}

\newcommand{\blkbar}{\raisebox{0.5ex}{\rule{2ex}{0.4pt}}}
\newcommand{\qquot}{/\!\!/}

\newcommand{\bfone}{\mathbf{1}}
\newcommand{\NO}{{:}} 

\newcommand{\scbul}{{\,\raise1pt\hbox{$\scriptscriptstyle\bullet$}\,}}


\newcommand{\on}[1]{\operatorname{#1}}
\newcommand{\+}{\oplus}

\newcommand{\W}{\mathsf{W}}


\newcommand{\ii}{\mathrm{i}}
\newcommand{\e}{\mathrm{e}}
\DeclareMathOperator{\SL}{SL}
\DeclareMathOperator{\rk}{rk}
\newcommand{\wt}{\operatorname{wt}}


\begin{document}

\title[Hilbert Schemes and Vertex Algebras with $\calN=4$ Symmetry]{Hilbert Schemes of Points in the Plane and Quasi-Lisse Vertex Algebras with $\calN=4$ Symmetry}
\author[Tomoyuki Arakawa, Toshiro Kuwabara, Sven Möller]{Tomoyuki Arakawa,\textsuperscript{\lowercase{a},\lowercase{b}} Toshiro Kuwabara\textsuperscript{\lowercase{c}} and Sven Möller\textsuperscript{\lowercase{b},\lowercase{d}}}
\thanks{\textsuperscript{a}{Ningbo University, Ningbo, China}}
\thanks{\textsuperscript{b}{Research Institute for Mathematical Sciences, Kyoto University, Kyoto, Japan}}
\thanks{\textsuperscript{c}{University of Tsukuba, Tsukuba, Japan}}
\thanks{\textsuperscript{d}{Universität Hamburg, Hamburg, Germany}}
\thanks{Email: \href{mailto:arakawa@kurims.kyoto-u.ac.jp}{\nolinkurl{arakawa@kurims.kyoto-u.ac.jp}}, \href{mailto:toshiro.kuwa@gmail.com}{\nolinkurl{toshiro.kuwa@gmail.com}}, \href{mailto:math@moeller-sven.de}{\nolinkurl{math@moeller-sven.de}}}

\begin{abstract}
To each complex reflection group $\Gamma$ one can attach a canonical symplectic singularity $\calM_{\Gamma}$ \cite{Bea00}. Motivated by the 4D/2D duality \cite{BeeLemLie15,BeeRas}, Bonetti, Meneghelli and Rastelli \cite{BMR19} conjectured the existence of a supersymmetric vertex operator superalgebra $\W_{\Gamma}$ whose associated variety is isomorphic to $\calM_{\Gamma}$. We prove this conjecture when the complex reflection group $\Gamma$ is the symmetric group $S_N$ by constructing a sheaf of $\hbar$-adic vertex operator superalgebras on the Hilbert scheme of $N$ points in the plane. For that case, we also show the free-field realisation of $\W_{\Gamma}$ in terms of $\rk(\Gamma)$ many $\beta\gamma bc$-systems proposed in \cite{BMR19}, and identify the character of $\W_{\Gamma}$ as a certain quasimodular form of mixed weight and multiple $q$-zeta value.

In physical terms, the vertex operator superalgebra $\W_{S_N}$ constructed in this article corresponds via the 4D/2D duality \cite{BeeLemLie15} to the four-di\-men\-sion\-al $\calN=4$ supersymmetric Yang-Mills theory with gauge group $\SL_N$.
\end{abstract}

\maketitle

\setcounter{tocdepth}{1}
\tableofcontents
\setcounter{tocdepth}{2}


\section{Introduction}
\label{sec:intro}
Let $\Gamma$ be a complex reflection group of rank $\rk(\Gamma)$. By definition, $\Gamma$ is a subgroup of $\GL(V_\Gamma)$ that is generated by reflections, where $V_\Gamma=\C^{\rk(\Gamma)}$. The group $\Gamma$ acts diagonally on the symplectic vector space $T^*V_\Gamma=V_\Gamma\+V_\Gamma^*$, preserving its symplectic form, and it is known that the orbit space
\[
\calM_\Gamma\coloneqq(V_\Gamma\+ V_\Gamma^*)/\Gamma=\Spec\C[V_\Gamma\+ V_\Gamma^*]^\Gamma
\]
has a symplectic singularity \cite{Bea00}.

Motivated by the 4D/2D duality \cite{BeeLemLie15,BeeRas} and the free-field realisation in \cite{Adamovic16}, Bonetti, Meneghelli and Rastelli \cite{BMR19} conjectured the existence of a vertex operator superalgebra $\W_\Gamma$ of central charge $c_\Gamma\coloneqq-3\sum_{i=1}^{\rk(\Gamma)}(2p_i-1)$ equipped with $\calN=2$ supersymmetry, such that, among other things,
\begin{equation}\label{eq:ass-variety-with-Higgs}
X_{\W_\Gamma}\cong \calM_\Gamma,
\end{equation}
where $p_1,\dots ,p_{\rk(\Gamma)}$ are the degrees of the fundamental invariants of $\C[V_\Gamma]^\Gamma$ and $X_V$ denotes the associated variety of a vertex superalgebra $V$ \cite{Arakawa12}.

When $\Gamma$ is even a Coxeter group, \cite{BMR19} further conjectured that the $\calN=2$ symmetry of $\W_\Gamma$ should be enhanced to the small $\calN=4$ supersymmetry. That is, $\W_\Gamma$ should be a conformal extension of the small $\calN=4$ superconformal algebra $\on{Vir}_{\calN=4}^{c_\Gamma}$ of level $c_\Gamma/6$, a vertex operator superalgebra of central charge $c_\Gamma$ \cite{Kac,FK02,KW04,KRW03,Ara05}.

Note that \eqref{eq:ass-variety-with-Higgs} implies that the vertex operator superalgebra $\W_\Gamma$ is quasi-lisse in the sense of \cite{AraKaw18} since a normal variety with symplectic singularities has only finitely many symplectic leaves \cite{Kaledin:2006kq}.

\medskip

We now mention the meaning of $\W_\Gamma$ in the context of the 4D/2D duality
\[
\bbV\colon\{\text{4D $\calN=2$ SCFTs}\}\longrightarrow \{\text{VOAs}\},
\]
which associates a vertex operator superalgebra $\bbV(\calT)$ with each four-dimensional $\calN=2$ superconformal field theory $\calT$ \cite{BeeLemLie15}. It is believed that $\bbV$ is a finite map and that the associated variety of $\bbV(\calT)$ satisfies
\begin{equation}\label{eq:Hiigs-branch}
X_{\bbV(\calT)}\cong \on{Higgs}(\calT),
\end{equation}
where $\on{Higgs}(\calT)$ is the Higgs branch of $\calT$, as conjectured in \cite{BeeRas} (see \cite{A.Higgs,Ara18} for mathematical reviews). Since the Higgs branch of $\calT$ is expected to have a symplectic singularity, this Higgs branch conjecture \eqref{eq:Hiigs-branch} would imply that the vertex operator superalgebras $\bbV(\calT)$ are quasi-lisse, and it is moreover believed that they are of CFT-type, simple and strongly finitely generated.

Since the seminal work \cite{BeeLemLie15}, there have been considerable efforts to study examples of vertex operator superalgebras appearing in the image of the 4D/2D correspondence: see
\cite{BeeLemLie15, Beem:2015yu, LemPee15, ChrRee15, CorSha16, LemLie16, NisTac16, BuiNis16, CorGaiSha16, LemLieMen17, Son17, CorGaiSha17, SonXieYan17, BuiLacNis17, ItoShu17, NeiYan17, AM15, AraKaw18, BeeRas, BonRas18, FrePeiYan18, FluSon18, ChoNis18, BuiLac1802, AraMor16b, Cre18, WanXie1907, BMR19, NisSasZhu19, AgaLeeSon19, BuiLac19, BeeMenRas19, OhYag19, Jeo19, DedGukNak20, AugCreKan20, EagLocSha2001, DedFlu20, BeeMenPee20, WatZhu20, BiaLem20, FodZhu20, PanPee20, Lem20, XieYanYau2103, XieYan21, XieYan21W, XieYan21b, Ded21, AdaCreGen21, Yan21, Li21, KiyNis21, BriKasMil21, AgaAndKan2106, BeeMen2109, KaiMar2110, FRENEI21, KLS21, PWZ22, KozShaYan22, KawRid22, DedWan22, BeePee22, BuiNis22, KawRidWoo22, BSR22, KaiMarRas22, ZhePanWan2211, AEkeren19, BeeNai23, BeeNai23free, ArgLotWea23, LiXieYan23, PanWan2310, GuoLiPan2311, AraMT, MirLoc, AraEkeMor23, LiLiYan23, Fur23, ShaXieYan23, BuiJia23, Li23, RasRay23, ArdMarRos23, NawPanZhe23, ChaLinWu23}.

Now, according to \cite{BMR19}, in the case where the Coxeter group $\Gamma$ is the Weyl group $W(\frg)$ of a simple Lie algebra $\frg$, a central property of the conjectured vertex operator superalgebra $\W_{W(\frg)}$ is that
\[
\W_{W(\frg)}\cong\bbV(\on{SYM}_{\frg}),
\]
where $\on{SYM}_{\frg}$ is the four-dimensional $\calN=4$ supersymmetric Yang-Mills theory with gauge Lie algebra $\frg$. Since
\[
\on{Higgs}(\on{SYM}_{\frg})\cong\calM_{W(\frg)},
\]
the isomorphism $X_{\W_{\Gamma}}\cong\calM_{\Gamma}$ in \eqref{eq:ass-variety-with-Higgs} is exactly the Higgs branch conjecture \eqref{eq:Hiigs-branch} of Beem and Rastelli for the four-dimensional theory $\calT=\on{SYM}_{\frg}$.

However, despite of their importance, the vertex operator superalgebras $\W_\Gamma$ have been studied very little since the paper \cite{BMR19} appeared. The main difficulty is that $\W_\Gamma$ is in general expected to be a W-algebra in the sense that it is not generated by a Lie algebra, and therefore one cannot expect to define $\W_\Gamma$ by generators and relations (operator product expansions) in a closed form.

\medskip

In this article, we prove the conjecture of Bonetti, Meneghelli and Rastelli \cite{BMR19} in the case where $\Gamma$ is the symmetric group $S_N=W(\frsl_N)$, corresponding to the four-dimensional $\calN=4$ supersymmetric Yang-Mills theory with gauge group $\SL_N$. That is, we construct a family of vertex operator superalgebras $\W_{S_N}$, $N\ge2$, with the desired properties. In this situation, the symplectic variety $\calM_{S_N}$ is essentially\footnote{To be precise, $\C^{2N}/S_N\cong\calM_{S_N}\times T^*\C$ as symplectic varieties.} the $N$-th symmetric power $\C^{2N}/S_N$ of the affine plane $\C^2$, and it is well-known that the singular variety $M_0\coloneqq\C^{2N}/S_N$ admits the conical symplectic resolution
\[
\Hilb^N(\C^2)\longrightarrow\C^{2N}/S_N,
\]
where $M\coloneqq\Hilb^N(\C^2)$ is the Hilbert scheme of $N$ points in the affine plane.

In the classical setting, Kashiwara and Rouquier \cite{KR08} (see also \cite{BFM06} for a result in positive characteristic) constructed a sheaf $\calA_{\hbar}$ of $\hbar$-adic algebras on the Hilbert scheme $M$ such that the algebra $[\calA_{\hbar}(M)]^{\C^\times}$ of its global sections (and taking the invariants in some sense under a certain action of the one-dimensional torus $\C^\times$) is isomorphic to the spherical rational Cherednik algebra \cite{EG02} associated with $S_N$, which is a natural quantisation of the coordinate ring of $M_0=\C^{2N}/S_N$. The construction of \cite{KR08} is now regarded as an example of a quantisation of a conical symplectic resolution studied in \cite{BLPW16a,BLPW16b}.

In \cite{AKM15}, the notion of a sheaf $\calA_{\hbar}$ of $\hbar$-adic algebras was upgraded to that of a sheaf of $\hbar$-adic vertex algebras \cite{Li04}. Therefore, it is natural to try to apply the method in \cite{AKM15} to $M=\Hilb^N(\C^2)$ and chiralise the construction of \cite{KR08}. However, this straightforward attempt does not work for the Hilbert scheme $M$ due to some obstruction in constructing sheaves of vertex algebras (cf.\ \cite{GMS04}).

In order to overcome this problem, we replace $M$ with structure sheaf $\calO_M$ by the supervariety whose underlying topological space is again $M$ but whose structure sheaf is a certain superalgebra analogue $\tldcalO_M$ of $\calO_M$. In terms of the Nakajima quiver variety description \cite{Nak94,Nak98,Nakajima} of the Hilbert scheme $M$, this corresponds to replacing the vector space $\C$ on the framing vertex of the Jordan quiver by the superspace $\C^{1|1}$ (see \autoref{sec:superHilb} for details):
\[
\begin{tikzpicture}
\node (1) at (0,0) {$\C^{1|1}$};
\node (2) at (2,0) {$\C^N$};
\begin{scope}[transform canvas={yshift=2pt}]
\draw[->] (1) to (2);
\end{scope}
\begin{scope}[transform canvas={yshift=-2pt}]
\draw[->] (2) to (1);
\end{scope}
\draw[->] ([shift={(24pt,0)}]2.center) +(20:-22pt) arc (20:340:-22pt);
\draw[->] ([shift={(24pt,0)}]2.center) +(340:-26pt) arc (340:20:-26pt);
\end{tikzpicture}
\]
Then, the machinery of \cite{AKM15} becomes applicable and we are able to construct a sheaf $\tldcalD^\ch_{M,\hbar}$ of $\hbar$-adic vertex superalgebras on $M$ quantising the $\infty$-jet bundle $\JetBundle{M} = J_{\infty} \tldcalO_M$ associated with the sheaf $\tldcalO_M$.

The space of global sections $\tldcalD^\ch_{M, \hbar}(M)$ is naturally equipped with the structure of an $\hbar$-adic vertex operator superalgebra with a certain action of the one-dimensional torus $\C^\times$. To obtain a usual vertex operator algebra, we consider the (in some sense) $\C^\times$-invariant subalgebra $[\tldcalD^\ch_{M, \hbar}(M)]^{\C^\times}$ of $\tldcalD^\ch_{M, \hbar}(M)$ (for details, see \autoref{sec:global-VA}). Then the vertex operator superalgebra $[\tldcalD^\ch_{M, \hbar}(M)]^{\C^\times}$ is essentially the conjectured vertex operator superalgebra $\W_{S_N}$ (see \autoref{thm:W-prop}):
\begin{maintheorem*}
For $N\ge 2$, there exists a sheaf $\tldcalD^\ch_{M,\hbar}$ of $\hbar$-adic vertex operator superalgebras of central charge $c=-3N^2$ on the Hilbert scheme $M=\Hilb^N(\C^2)$ such that $\gr\tldcalD^\ch_{M,\hbar}\cong\pi_*\calO_{J_{\infty}M_{\calV}}[[\hbar]]$ and whose global sections $\sfV_{S_N}\coloneqq [\tldcalD^\ch_{M, \hbar}(M)]^{\C^\times}$ have the associated variety
\[
X_{\sfV_{S_N}}\cong\C^{2N}/S_N.
\]
Moreover, the vertex operator superalgebra $\sfV_{S_N}$ decomposes as a tensor product
\[
\sfV_{S_N}\cong\W_{S_N}\otimes\beta\gamma\otimes\SF,
\]
where $\W_{S_N}$ is a conformal extension of some quotient of the
small $\calN=4$ superconformal algebra $\on{Vir}_{\calN=4}^{c_{S_N}}$ of central charge $c_{S_N}=-3(N^2-1)$, and
\[
X_{\W_{S_N}}\cong\calM_{S_N}.
\]
The vertex operator superalgebras $\sfV_{S_N}$ and $\W_{S_N}$ are of CFT-type and quasi-lisse.
\end{maintheorem*}
Here, $\beta\gamma$ denotes the $\beta\gamma$-system vertex algebra and $\SF$ the symplectic fermion vertex superalgebra, and in the theorem they are endowed with conformal structures of central charges $c=-1$ and $c=-2$, respectively.

We suspect that $\sfV_{S_N}$, $\W_{S_N}$ and the quotient of the small $\calN=4$ superconformal algebra inside $\W_{S_N}$ are simple. This is true for $N=2$ (cf.\ \cite{GMS05}), where we identify $\W_{S_N}$ as the simple small $\calN=4$ superconformal algebra of level $k=-3/2$ \cite{Adamovic16}.

The global sections containing the tensor factor $\beta\gamma\otimes\SF$ is merely an artefact due to the Nakajima quiver variety construction of $M=\Hilb^N(\C^2)$ being associated with $\frgl_N$ rather than $\frsl_N$. This is mirrored by the corresponding decomposition $\C^{2N}/S_N\cong \calM_{S_N}\times T^* \C$ of the associated varieties.

\medskip

It is an immediate consequence of our sheaf construction that by considering the local sections over a Zariski open subset $U\subset M$ we obtain the vertex operator superalgebra embedding
\[
\sfV_{S_N}=[\tldcalD^\ch_{M, \hbar}(M)]^{\C^\times} \hookrightarrow [\tldcalD^\ch_{M, \hbar}(U)]^{\C^\times}.
\]
In particular, by choosing an appropriate open set $U$ we obtain the free-field realisation in terms of $\rk(S_N)=\rk(\frsl_N)=N-1$ many tensor copies of the $\beta\gamma b c$-system vertex superalgebra that is conjectured in \cite{BMR19} (generalising \cite{Adamovic16}):
\begin{theorem*}
For $N\ge2$, there is an embedding $\sfV_{S_N}\hookrightarrow(\beta\gamma b c)^{\otimes(N-1)}\otimes\beta\gamma\otimes\SF\hookrightarrow(\beta\gamma b c)^{\otimes N}$, which restricts to the free-field realisation
\[
\W_{S_N }\hookrightarrow(\beta\gamma b c)^{\otimes(N-1)}.
\]
\end{theorem*}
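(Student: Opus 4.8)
The plan is to read off the free-field realisation from the sheaf $\tldcalD^\ch_{M,\hbar}$ by restricting its global sections to a Zariski open subset of $M=\Hilb^N(\C^2)$ on which the whole construction degenerates to free fields, and then identifying those local sections explicitly. First I would let $U\subset M$ be the open locus of $N$ pairwise distinct points, so that $U\cong((\C^2)^N\setminus\Delta)/S_N$, and combine the restriction homomorphism of $\hbar$-adic vertex superalgebras
\[
\sfV_{S_N}=[\tldcalD^\ch_{M,\hbar}(M)]^{\C^\times}\hookrightarrow[\tldcalD^\ch_{M,\hbar}(U)]^{\C^\times}
\]
with pullback along the \'etale $S_N$-cover $\widetilde U=(\C^2)^N\setminus\Delta\to U$. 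Both maps are injective: the first because $U$ is dense and $\gr\tldcalD^\ch_{M,\hbar}\cong\pi_*\calO_{J_{\infty}M_{\calV}}[[\hbar]]$ is torsion-free, so injectivity can be checked on the associated graded and then lifted $\hbar$-adically; the second because the cover is faithfully flat.

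Next I would compute the target. Over $\widetilde U$ the $N$ points have disjoint supports, so the super-structure sheaf $\tldcalO_{M_{\calV}}$, its $\infty$-jet scheme, and the quantisation $\tldcalD^\ch_{M,\hbar}$ all factorise into $N$ decoupled copies indexed by the labelled points and permuted by $S_N$. A single copy is the case $N=1$, in which the $\C^{1|1}$-framed Jordan quiver yields the super-affine plane and the associated $\C^\times$-invariant $\hbar$-adic chiral differential operators specialise to the $\beta\gamma bc$-system. Composing restriction with the \'etale pullback thus realises $\sfV_{S_N}$ inside $(\beta\gamma bc)^{\otimes N}$, with image contained in the $S_N$-invariants, which already gives the outer embedding $\sfV_{S_N}\hookrightarrow(\beta\gamma bc)^{\otimes N}$.

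To produce the refined middle term and the restriction to $\W_{S_N}$, I would pass to centre-of-mass and relative coordinates on $\widetilde U$ and match the result with the decomposition $\sfV_{S_N}\cong\W_{S_N}\otimes\beta\gamma\otimes\SF$ of the Main Theorem. The centre-of-mass copy is a single $\beta\gamma bc$ on which $S_N$ acts trivially; it is precisely the trace part distinguishing the $\frgl_N$- from the $\frsl_N$-quiver, mirroring $\C^{2N}/S_N\cong\calM_{S_N}\times T^*\C$, and the factor $\beta\gamma\otimes\SF$ is sent into this copy as its bosonic $\beta\gamma$ tensored with the symplectic-fermion subalgebra $\SF\subset bc$ of its fermionic $bc$. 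The remaining $N-1$ relative copies carry $\W_{S_N}$, whose image lies in the $S_N$-invariants of $(\beta\gamma bc)^{\otimes(N-1)}$ and hence in $(\beta\gamma bc)^{\otimes(N-1)}$. Using the elementary inclusion $\beta\gamma\otimes\SF\hookrightarrow\beta\gamma bc$ on the centre-of-mass factor then assembles the chain $\sfV_{S_N}\hookrightarrow(\beta\gamma bc)^{\otimes(N-1)}\otimes\beta\gamma\otimes\SF\hookrightarrow(\beta\gamma bc)^{\otimes N}$ and restricts to $\W_{S_N}\hookrightarrow(\beta\gamma bc)^{\otimes(N-1)}$.

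The main obstacle will be the second step: establishing that the localised $\hbar$-adic chiral differential operators, together with the $\C^\times$-invariance, reproduce exactly the free $\beta\gamma bc$-system at the full vertex-superalgebra level and not merely on the associated graded, and that the conformal vector of $\sfV_{S_N}$—which is not the naive sum of the free-field stress tensors but a Sugawara-type element accounting for the central charge $-3N^2$—is correctly realised. Alongside this, verifying that the centre-of-mass degree of freedom decouples as precisely $\beta\gamma\otimes\SF$ rather than a larger subalgebra of $\beta\gamma bc$, and that the $S_N$-descent is compatible with the chiral structure, is where the genuine work lies.
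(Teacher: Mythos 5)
Your route is genuinely different from the paper's, but it has a real gap at its centre. The paper never goes near the distinct-points locus: it restricts to the affine cell $U_{(N)}$ of \autoref{sec:big-cell}, where the sections $\tldcalD^\ch_{M,\hbar}(U_{(N)})$ are honestly polynomial in the explicit generators $[X^N:X^i]$, $[Y:X^i]$, $\{\psi:X^i\gamma\}$, $\phi X^i\gamma$ (see \eqref{eq:1}), computes all OPEs among them, and diagonalises the resulting triangular pairings by hand (\autoref{prop:betagambc}); injectivity of restriction and the commutant argument (\autoref{thm:Wakimoto}, \autoref{prop:h-factorization}) then finish the proof. Your replacement for this — that the quantised sheaf, pulled back along the \'etale cover $\widetilde U\to U$ of the distinct-points locus, factorises into $N$ decoupled copies of the standard free $\beta\gamma bc$ quantisation — is precisely the step you do not prove, and it is not routine. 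The sheaf $\tldcalD^\ch_{M,\hbar}$ is a Zariski sheaf on $M$ built by BRST reduction from $\frX$; the paper supplies no \'etale functoriality for it, the distinct-points locus is neither affine nor one of the $U_\lambda$ to which the local trivialisations and the vanishing theorem apply, and even granting a sensible pullback one must identify it with the \emph{standard} free quantisation of the pulled-back jet bundle rather than some twisted quantisation of the same vertex Poisson sheaf. That identification carries essentially the full content of the theorem.

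There is concrete evidence that the naive decoupling is in fact false, so this is not merely a matter of missing rigour. Suppose the embedding did decouple into per-copy free fields compatibly with the grading. Since $\Tr(X^m)\mapsto\sum_i x_i^m$ must have weight $m/2$ for every $m$, the bosons $x_i,y_i$ carry weight $1/2$, so each $\beta\gamma$ copy has $c=-1$; and since, as you yourself require, $\phi\gamma$ and $\beta\psi$ must land on the standard $\SF\subset bc$ of the centre-of-mass copy (i.e.\ on $b$ and $\partial c$, both of weight $1$), the fermions carry weights $(1,0)$ and each $bc$ copy has $c=-2$. The sum of per-copy stress tensors then has central charge $-3N$, not $-3N^2$. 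Hence the image of the conformal vector $T$ necessarily contains non-diagonal cross-copy terms — the chiral counterpart of the Calogero-type interaction terms that appear when the spherical Cherednik algebra is localised at distinct points — so the ``centre-of-mass $\otimes$ relative'' splitting cannot respect the conformal structure copy by copy, and recovering the graded free-field copies with central charges $c=-3(2p_i-1)$, $p_i=1,\dots,N$, requires a further non-trivial diagonalisation. That diagonalisation is exactly what \autoref{prop:betagambc} accomplishes over $U_{(N)}$, and it is exactly what your outline defers to ``the main obstacle'' without resolving. In short, you have correctly located the hard points, but the proposed strategy does not overcome them, and in its stated per-copy form it contradicts the central charge of $\sfV_{S_N}$.
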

Note that the conformal structure of $\sfV_{S_N}$ fixes conformal structures of the $\beta\gamma b c$-systems of central charges $c=-3(2p_i-1)$ with $p_i=N,N-1,\dots,1$ and with $p_i=N,N-1,\dots,2$ in the case of $\W_{S_N}$. We recall that in the latter case the $p_i$ are the $\rk(S_N)=N-1$ many degrees of the fundamental invariants of $S_N$. The last $\beta\gamma b c$-system of central charge $c=-3$ is the obvious conformal extension of $\beta\gamma\otimes\SF$, which splits from the chosen local sections of $\tldcalD^\ch_{M,\hbar}$ and, in fact, implies the splitting result stated in the main theorem for the global sections $\sfV_{S_N}$.

As a corollary, we also obtain a free-field realisation of (some quotient of) the
small $\calN=4$ superconformal algebra $\on{Vir}_{\calN=4}^{c_{S_N}}$ of central charge $c_{S_N}=-3(N^2-1)$ in terms of $(\beta\gamma b c)^{\otimes(N-1)}$, generalising results for $N=2$ and $3$ in \cite{Adamovic16} (see also \cite{GMS05}) and \cite{BMR19}, respectively.

\medskip

Besides the Higgs branch, another important datum associated with a four-dimensional $\calN=2$ superconformal field theory $\calT$ is the Schur index $\calI_{\calT}(q)$. In the context of the 4D/2D duality, it is conjectured that this Schur index $\calI_{\calT}$, which should coincide with the supercharacter of the corresponding vertex operator superalgebra $\bbV(\calT)$, is a quasimodular form \cite{BeeRas}. This is true for $\calT=\on{SYM}_{\frsl_N}$ with $\bbV(\calT)\cong\W_{S_N}$:
\begin{theorem*}
For $N\ge2$, the supercharacter of $\W_{S_N}$ is
\[
\on{sch}_{\W_{S_N}}(q)=\frac{1}{\eta(q)^3}{\sum_{n=0}^\infty}(-1)^n\biggl(\binom{N+n}{N}+\binom{N+n-1}{N}\biggr)q^{(N+2n)^2/8}
\]
and coincides with the Schur index of the four-dimensional $\calN=4$ supersymmetric Yang-Mills theory $\on{SYM}_{\frsl_N}$ with gauge group $\SL_N$. This supercharacter is a sum of holomorphic quasimodular forms of weights $N-1,N-3,\dots\in\Z_{\ge0}$ for the full modular group $\SL_2(\Z)$ if $N$ is odd and for $\Gamma^0(2)$ with some character if $N$ is even.
\end{theorem*}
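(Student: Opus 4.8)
The plan is to compute the full supercharacter $\on{sch}_{\sfV_{S_N}}$ from the geometry and then extract $\on{sch}_{\W_{S_N}}$ via the decomposition $\sfV_{S_N}\cong\W_{S_N}\otimes\beta\gamma\otimes\SF$ of the Main Theorem: since the supercharacters of the $\beta\gamma$-system ($c=-1$) and of the symplectic fermions $\SF$ ($c=-2$) are standard $\eta$-quotients, it suffices to divide $\on{sch}_{\sfV_{S_N}}$ by their ($N$-independent) product. As the supercharacter is unchanged under passing to the associated graded, the Main Theorem identifies $\on{sch}_{\sfV_{S_N}}$ with the $\C^\times$-equivariant super Hilbert series of $\pi_*\calO_{J_\infty M_\calV}$, that is, of the global functions on the arc space of the super-Hilbert scheme. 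I would evaluate this by Atiyah--Bott localisation on $M=\Hilb^N(\C^2)$, whose torus-fixed points are the monomial ideals indexed by partitions $\lambda\vdash N$. Each $\lambda$ contributes a product over the arm/leg tangent weights of $\lambda$, taken over all jet orders, while the two odd directions of the $\C^{1|1}$ framing supply fermionic numerator factors carrying $(-1)^F$.

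The crux is to collapse this sum over partitions into the stated closed form, and this is where I expect the main difficulty. A convenient device is to pass to the generating series $\sum_{N\ge0}\on{sch}_{\sfV_{S_N}}\,p^N$, which should be a plethystic exponential of the $N=1$ contribution; the fermionic framing then induces enough boson--fermion cancellation to reduce this product to a tractable $q$-series, from which the coefficient of $p^N$ can be read off. After dividing out the $\beta\gamma\otimes\SF$ factor, the two binomials $\binom{N+n}{N}$ and $\binom{N+n-1}{N}$ should emerge as the graded multiplicities of two families of short $\on{Vir}_{\calN=4}^{c_{S_N}}$-modules surviving in the supercharacter, with lowest weights $n(N+n)/2$ above the vacuum, consistent with $\tfrac{1}{\eta^3}q^{(N+2n)^2/8}=q^{-c_{S_N}/24}\,q^{n(N+n)/2}\prod_{k\ge1}(1-q^k)^{-3}$.

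For modularity I would set $m=N+2n$ and $Q(m)=\binom{(m+N)/2}{N}+\binom{(m+N)/2-1}{N}$, a polynomial in $m$ of degree $N$. Elementary binomial manipulation yields $Q(-m)=(-1)^N Q(m)$ and, crucially, $Q(m)=0$ whenever $m\equiv N\pmod 2$ and $|m|<N$; together with the resulting invariance of the summand $(-1)^{(m-N)/2}Q(m)q^{m^2/8}$ under $m\mapsto-m$, this shows the one-sided sum equals one half of the complete theta-type series $\sum_{m\equiv N(2)}(-1)^{(m-N)/2}Q(m)q^{m^2/8}$ over all $m\in\Z$. Such a theta with polynomial coefficient is a $\Z$-linear combination of $(q\,d/dq)^j$ applied to a Jacobi theta function, hence quasimodular; matching the weight $\tfrac12+2j$ (respectively $\tfrac32+2j$ for $N$ odd, where an extra factor $m$ is forced by parity) against $\deg Q=N$ and twisting by $\eta^{-3}$ of weight $-\tfrac32$ produces precisely the weights $N-1,N-3,\dots$ claimed. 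Finally, for $N$ odd the index $m$ is odd and the theta is modular for all of $\SL_2(\Z)$, whereas for $N$ even the substitution $m=2m'$ gives $q^{m'^2/2}$ and lands one in $\Gamma^0(2)$ with the stated character.

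It remains to identify the series with the Schur index of $\on{SYM}_{\frsl_N}$. Using the 4D/2D dictionary $\bbV(\on{SYM}_{\frsl_N})\cong\W_{S_N}$, by which the Schur index is by definition the supercharacter of the associated vertex algebra, I would match the $q$-series above with the known closed form of the $\calN=4$ $\SL_N$ Schur index obtained by supersymmetric localisation; both are governed by the same polynomial-coefficient theta, so the identification follows once the $q^{-c_{S_N}/24}$ normalisations are aligned.
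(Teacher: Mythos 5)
Your route to the character formula itself has two genuine gaps. First, the step ``the supercharacter is unchanged under passing to the associated graded, so it equals the $\C^\times$-equivariant super Hilbert series of $\pi_*\calO_{J_\infty M_{\calV}}$'' silently exchanges global sections with associated graded. The quantisation statement (\autoref{prop:quantization}) is an isomorphism of \emph{sheaves}; on global sections the paper only ever records an \emph{injection} $\tldcalD^\ch_{M,\hbar}(M)/\hbar\tldcalD^\ch_{M,\hbar}(M)\hookrightarrow\JetBundle{M}(M)$ (see the discussion preceding \autoref{lemma:1}), and surjectivity is equivalent to a vanishing statement for $H^1$ of the sheaf $\hbar\tldcalD^\ch_{M,\hbar}$ that is established nowhere. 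The same vanishing is needed a second time when you invoke Atiyah--Bott: fixed-point localisation computes the equivariant Euler characteristic $\sum_i(-1)^i\on{ch}H^i(M,\JetBundle{M})$, not the character of $H^0$ alone, and no higher-cohomology vanishing for the infinite-rank jet bundle on $\Hilb^N(\C^2)$ is available. (A smaller technical point: the $\C^\times$ you are localising with is the diagonal torus, whose fixed locus on $\Hilb^N(\C^2)$ is \emph{not} the finite set of monomial ideals --- already for $N=2$ it contains the zero section $\bbP^1\subset T^*\bbP^1$; you would need the full two-torus and then specialise.) Second, even granting all of this, the collapse of the fixed-point sum to the stated closed form --- which you yourself flag as ``the crux'' --- is left entirely unexecuted, yet it \emph{is} the first assertion of the theorem. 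The paper avoids the geometry altogether: since the BRST cohomology is concentrated in degree $0$, the Euler--Poincar\'e principle gives $\on{sch}_{\sfV_{S_N}}=\on{sch}_C$ for the explicit free-field subcomplex $C$, the $\frgl_N$-invariants in $C$ are extracted by a Haar-measure integral over $\on{U}(N)$, and this integral is literally the $\GL_N$ Schur-index integral of \cite{BDF15}, whose closed form is then quoted; in particular the identification with the Schur index of $\on{SYM}_{\frsl_N}$ comes for free, whereas in your sketch it is deferred to the 4D/2D dictionary and an after-the-fact matching. Your final step, dividing out $\on{sch}_{\calD^\ch(T^*\C^1)\otimes\SF}$ to pass from $\sfV_{S_N}$ to $\W_{S_N}$, does agree with the paper.

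By contrast, your quasimodularity argument is correct and genuinely different from the paper's, which deduces it from the exact integration formulae of \cite{PP22} by induction over $N$. Writing $m=N+2n$ and $Q(m)=\binom{(m+N)/2}{N}+\binom{(m+N)/2-1}{N}$, one checks (using $\binom{-y}{N}=(-1)^N\binom{y+N-1}{N}$) that indeed $Q(-m)=(-1)^NQ(m)$ and that $Q(m)=0$ for $m\equiv N\pmod 2$, $|m|<N$, so the one-sided sum is half of a two-sided theta series with polynomial coefficient; $Q$ is an even (resp.\ odd) polynomial for $N$ even (resp.\ odd), and expanding it into monomials expresses the series through powers of $q\,d/dq$ applied to $\eta(q)^3$ for $N$ odd, resp.\ to a $\theta_4$-type series for $N$ even. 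Dividing by $\eta(q)^3$ cancels the multiplier and yields quasimodular forms of weights $N-1,N-3,\dots$ for $\SL_2(\Z)$, resp.\ $\Gamma^0(2)$ with character --- and the would-be weight $-1$ contribution for $N$ even is absent precisely because $Q(0)=0$, so the weights land in $\Z_{\ge0}$ as claimed. This is a self-contained and arguably cleaner proof of the modularity statement than the paper's; it just cannot substitute for the character formula it is applied to, which in your proposal remains unproven.
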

Here, $\eta(q)$ denotes the Dedekind eta function. The proof relies on the exact integration formulae in \cite{PP22} and results for the Schur indices for $\SL_N$ in \cite{BDF15}. For example, for $N=3$, the supercharacter equals
\[
\on{sch}_{\W_{S_3}}(q)=(1-E_2(q))/24,
\]
where $E_2(q)$ denotes the quasimodular Eisenstein series of weight~$2$. We also identify the characters $\on{sch}_{\W_{S_N}}(q)$ with certain multiple $q$-zeta values studied in \cite{Mac21,AR13}, which can be used to derive formulae for their expansion in terms of Eisenstein series for arbitrary $N$.

\medskip

Finally, we note that our results in this article give the first non-trivial examples of chiral quantisation of Nakajima quiver varieties. We plan to extend our results to other Nakajima quiver varieties in forthcoming work.


\subsection*{Outline}

In \autoref{sec:Hilb} we recall the construction of the $N$-point Hilbert scheme $M=\Hilb^N(\C^2)$ of the plane as a Nakajima quiver variety, i.e.\ by Hamiltonian reduction. Then, by adding fermionic coordinate functions, we construct a sheaf $\tldcalO_{M}$ of commutative superalgebras on $M$.

In \autoref{sec:sheaves-SVA} we review the notions of vertex superalgebras, vertex Poisson superalgebras and $\hbar$-adic vertex superalgebras. Then we introduce $\hbar$-adic versions $\calD^\ch(T^* \C^n)_{\hbar}$ of the $\beta\gamma$-system, $\Cl_{\hbar}(T^* \C^n)$ of the $bc$-system, $V^k(\frg)_{\hbar}$ of the affine vertex algebras and $\SF_{\hbar}$ of the symplectic fermion vertex superalgebra. We also construct the microlocalisation $\calD^\ch_{T^* \C^n, \hbar}$ of $\calD^\ch(T^*\C^n)_{\hbar}$, a certain sheaf of $\hbar$-adic vertex algebras on the affine space $T^* \C^n$ with global sections $\calD^\ch(T^* \C^n)_{\hbar}$.

In \autoref{sec:semi-infinite-red} we construct the central object of this text, a sheaf $\tldcalD^\ch_{M, \hbar}$ of $\hbar$-adic vertex superalgebras on the Hilbert scheme $M$. The construction is based on a vertex superalgebra analogue of quantum Hamiltonian reduction, which we call semi-infinite BRST cohomology. We then prove a vanishing (or no-ghost) theorem for this BRST cohomology.

In \autoref{sec:F-action} we study the $\hbar$-adic vertex superalgebra of global sections $\tldcalD^\ch_{M,\hbar}(M)$, which we then reduce to a vertex superalgebra $\sfV_{S_N}\coloneqq [\tldcalD^\ch_{M,\hbar}(M)]^{\C^\times}$ of invariants under a certain torus action. $\sfV_{S_N}$ is endowed with a natural conformal structure that makes it into a vertex operator superalgebra of CFT-type of central charge $c=-3N^2$. We show that the associated variety of $\sfV_{S_N}$ is the symplectic quotient variety $\C^{2N}/S_N$ so that $\sfV_{S_N}$ is quasi-lisse and that $\sfV_{S_N}$ contains a quotient of the small $\calN=4$ superconformal algebra $\on{Vir}_{\calN=4}^{c_{S_N}}$ of central charge $c_{S_N}=-3(N^2-1)$.

In \autoref{sec:Wakimoto}, by considering the local sections of $\tldcalD^\ch_{M,\hbar}$ over an appropriately chosen open subset $U\subset M$, we obtain a free-field realisation of $\sfV_{S_N}$. This also implies the factorisation $\sfV_{S_N}=\W_{S_N}\otimes \calD^\ch(T^*\C^1)\otimes\SF$. Then, $\W_{S_N}$ is a vertex operator superalgebra of CFT-type of central charge $c_{S_N}=-3(N^2-1)$ and a conformal extension of some quotient of $\on{Vir}_{\calN=4}^{c_{S_N}}$. Moreover, $\W_{S_N}$ has the associated variety $\calM_{S_N}$, is quasi-lisse and has a free-field realisation in terms of a $\beta\gamma bc$-system of rank $N-1=\rk(S_N)$. $\W_{S_N}$ is the vertex superalgebra for the reflection group $S_N$ conjectured by Bonetti, Meneghelli and Rastelli \cite{BMR19}.

In \autoref{sec:chars} we determine the supercharacter of the vertex operator superalgebra $\W_{S_N}$ and show that it is a quasimodular form of mixed weight. We also identify it with a certain multiple $q$-zeta value.

Finally, in \autoref{sec:n=2-case} we consider the special case of $N=2$. Then, the sheaf $\tldcalD^\ch_{M,\hbar}$ is essentially the $\hbar$-adic version of the twisted chiral de Rham algebra on $\bbP^1$ with parameter $\alpha=1/2$ introduced in \cite{GMS05}. Moreover, $\W_{S_2}$ is equal to the simple quotient of $\on{Vir}_{\calN=4}^{-9}$ and the free-field realisation in terms of the $\beta\gamma bc$-system coincides with the one given in \cite{Adamovic16}.


\subsection*{Acknowledgements}

We thank Dražen Adamović, Philip Argyres, Henrik Bachmann, Christopher Beem, Anirudh Deb, Martin Möller, Hiraku Nakajima, Leonardo Rastelli and Brandon Rayhaun for valuable discussions.

Tomoyuki Arakawa was supported by Grant-in-Aid KAKENHI JP21H04993 by the \emph{Japan Society for the Promotion of Science}.

Toshiro Kuwabara was supported by Grant-in-Aid KAKENHI JP21K03174 by the \emph{Japan Society for the Promotion of Science}.

Sven Möller acknowledges support through the Emmy Noether Programme by the \emph{Deutsche Forschungsgemeinschaft} (project number 460925688). Sven Möller was also supported by a Postdoctoral Fellowship for Research in Japan and Grant-in-Aid KAKENHI JP20F40018 by the \emph{Japan Society for the Promotion of Science}.


\subsection*{Notation}

Let $G$ be a group and $V$ a $G$-module. We denote the subspace of all $G$-invariant elements of $V$ by $V^G$. For a character $\theta\colon G \longrightarrow\C^\times$, we denote the subspace of all $G$-semi-invariants of weight $\theta$ by $V^{G,\theta}=\{v\in V\,|\,g\cdot v=\theta(g)x\}$.

All algebras and vector spaces will be over the base field $\C$. By a commutative (super)algebra we mean a commutative, associative and unital (super)algebra. For a commutative algebra $R$ and a vector space $V$, denote by $R\langle V\rangle=\bigoplus_{n=0}^{\infty}R\otimes_{\C} V^{\otimes n}$ the tensor algebra of $V$ over $R$. Let $S_{R}(V)=R\langle V\rangle/(a\otimes b-b\otimes a\,|\,a,b\in V)$ and $\Lambda_{R}(V)=R\langle V\rangle/(a\otimes b+b \otimes a\,|\,a,b\in V)$ be the symmetric and the exterior algebras of $V$ over $R$, respectively. Given a basis $\{a_1,\dots,a_r\}$ of $V$, we also write $R\langle a_1,\dots,a_r\rangle=R\langle V\rangle$, $R[a_1,\dots,a_r]=S_{R}(V)$ and $\Lambda_{R}(a_1,\dots,a_r)=\Lambda_{R}(V)$.

For a commutative algebra $A$, let $\Spec A$ be the affine scheme associated with $A$. For a commutative, graded algebra $A=\bigoplus_{n\in \Z_{\ge0}}A_n$, let $\Proj A$ be the projective scheme over $\Spec A_0$ associated with $A$. Throughout the paper, we only consider integral, separated and reduced schemes over $\C$ and call them algebraic varieties.

Let $M$ be an algebraic variety. For a sheaf $\calF$ on $M$ and an open subset $U\subset M$, we denote the set of sections of $\calF$ over $U$ by $\calF(U)$. We denote the structure sheaf of $M$ by $\calO_M$ and the coordinate ring of $M$ by $\C[M]=\calO_M(M)$.


\section{Hilbert Scheme of Points in the Plane}
\label{sec:Hilb}

In this section, we recall Nakajima's construction of the Hilbert scheme $M$ of points in the affine plane and construct a certain sheaf $\tldcalO_{M}$ of commutative superalgebras on it.


\subsection{Quiver Variety Construction}
\label{sec:quiver-const}

We review the construction of the Hilbert scheme $M\coloneqq\Hilb^N(\C^2)$ of $N$ points in the affine complex plane $\C^2$ as a Nakajima quiver variety \cite{Nak94,Nak98} for the Jordan quiver. We refer the reader to \cite{Nakajima} for details of the construction.

\medskip

Throughout the text, let $N\in\Z_{>0}$. Set $V=\gEnd(\C^N)\oplus\C^N$ and consider the corresponding symplectic vector space given by the cotangent bundle
\[
T^*V=V\oplus V^*=\gEnd(\C^N)^{\oplus2}\oplus(\C^N)^{\oplus2}.
\]
We write the standard coordinate functions of $\gEnd(\C^N)\subset V$ by $x_{ij}$ and the ones of $\gEnd(\C^N)\subset V^*$ by $y_{ij}$ for $i,j=1,\dots,N$. We further write the coordinate functions of the vector space $\C^N\subset V$ by $\gamma_i$ and the ones of $\C^N\subset V^*$ dual to $\gamma_i$ by $\beta_i$ for $i=1,\dots,N$. Then, the coordinate ring $\C[T^*V]$ of the symplectic vector space $T^*V$ is a Poisson algebra with Poisson bracket $\{y_{ij}, x_{kl}\}=\delta_{il}\delta_{jk}$, $\{\beta_i, \gamma_j\}=\delta_{ij}$ and zero otherwise.

Let $G\coloneqq\GL_N(\C)$ be the general linear group of rank $N$. We consider an action of $G$ on $\C[T^*V]$ given by
\[
g*X=gXg^{-1},\quad g*Y=gYg^{-1},\quad g*\gamma=g\gamma\quad\text{and}\quad g*\beta=\beta g^{-1},
\]
where $g=(g_{ij})_{i,j}$ is an element of $G$ and $X=(x_{ij})_{i,j}$, $Y=(y_{ij})_{i,j}$, $\gamma={}^t(\gamma_i)_{i}$ and $\beta=(\beta_i)_{i}$. The action induces a $G$-action on the symplectic vector space $T^*V$. This action is Hamiltonian, and we let $\mu\colon T^*V\longrightarrow \frg^*$ be a moment map associated with the action, where $\frg=\Lie(G)=\frgl_N=\gEnd(\C^N)$ is the Lie algebra of $G$. We define the linear map
\[
\mu^*\colon\frg\longrightarrow\C[T^*V],\qquad E_{ij}\mapsto\sum_{p=1}^{N} x_{ip}y_{pj}-\sum_{p=1}^{N}x_{pj}y_{ip}+\gamma_i\beta_j,
\]
where $E_{ij}\in\frg$ is the matrix whose $(i,j)$-th entry is $1$ and all other entries vanish. The map $\mu^*$ satisfies
\begin{align*}
\frac{d}{dt}(\exp(tA)*f)|_{t=0}&=\{\mu^*(A),f\},\\
\mu^*([A,B])&=\{\mu^*(A),\mu^*(B)\}
\end{align*}
for $A,B\in\frg$, $f\in\C[T^*V]$. The moment map $\mu\colon T^*V\longrightarrow\frg^*$ is the morphism of affine varieties associated with the comoment map $\mu^*$.

Let $\frX\subset T^*V$ be the subset defined by the following stability condition: a point $p\in T^*V$ belongs to $\frX$ if and only if the vector $\gamma(p)\in\C^N$ is a cyclic vector of $\C^N$ with respect to the action of the matrices $X(p),Y(p)\in\gEnd(\C^N)$. It is known that $\frX$ is a Zariski open subset, and $G$ acts freely on $\frX$.

Now, we consider the Hamiltonian reduction of $T^*V$ with respect to the $G$-action. The $G$-action is closed on the inverse image $\mu^{-1}(0)$ under the moment map~$\mu$. Set
\[
M_0=\mu^{-1}(0)\qquot G=\Spec\C[\mu^{-1}(0)]^G,
\]
the affine variety associated with the invariant subalgebra $\C[\mu^{-1}(0)]^G$, and
\[
M=(\mu^{-1}(0)\cap\frX)/G,
\]
the quotient manifold of closed $G$-orbits in $\mu^{-1}(0)\cap\frX$. We denote the projection $\mu^{-1}(0)\cap\frX\longrightarrow M$ by $\rho$. The inclusion map $\mu^{-1}(0)\cap\frX\longrightarrow\mu^{-1}(0)$ induces a map $\pi\colon M\longrightarrow M_0$. Note that the Poisson bracket on $\C[T^*V]$ induces a Poisson bracket on the algebra $\C[\mu^{-1}(0)]^G$ and on the structure sheaf $\calO_{M}$ of $M$. Moreover, the manifold $M$ is endowed with the structure of a holomorphic symplectic manifold by the Poisson bracket.

The symmetric group $S_N$ acts naturally on $\C^{2N}=(\C^2)^N$.
\begin{proposition}[\cite{Nakajima}]
$M_0\simeq\C^{2N}/S_N$ and $M\simeq\Hilb^N(\C^2)$, and the map $\pi\colon M\longrightarrow M_0$ gives a resolution of singularities. Moreover, the corresponding homomorphism $\pi^*\colon \C[M_0] = \C[\C^{2N}]^{S_N}\longrightarrow\calO_{M}(M)$ is a homomorphism of Poisson algebras, i.e.\ $\pi$ is a symplectic resolution.
\end{proposition}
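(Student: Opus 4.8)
The plan is to identify $M$ and $M_0$ with the classical ADHM description of the Hilbert scheme and its affinisation, and then to check Poisson compatibility formally. First I would rewrite the condition $\mu=0$ in matrix form as $[X,Y]+\gamma\beta=0$, where $\gamma\beta$ is the rank-one endomorphism $v\mapsto\gamma\,(\beta v)$ of $\C^N$. The decisive step is the \emph{vanishing lemma}: on the stable locus $\mu^{-1}(0)\cap\frX$ one has $\beta=0$, hence $[X,Y]=0$. To prove it, I would first note $\beta\gamma=\Tr(\gamma\beta)=-\Tr([X,Y])=0$, and then show that $U=\sum_{a,b\ge0}\C\,X^aY^b\gamma$ is stable under $X$ and $Y$ (for $Y$ one commutes $Y$ past $X^a$ and absorbs the resulting $\gamma\beta$-corrections, which lie in $U$), so $U=\C^N$ by cyclicity; a trace identity then forces $\beta$ to annihilate every $X^aY^b\gamma$, giving $\beta=0$.

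Given the lemma, a point of $M$ is a $G$-orbit of triples $(X,Y,\gamma)$ with $[X,Y]=0$ and $\gamma$ cyclic. Making $\C^N$ into a $\C[x,y]$-module via $x\mapsto X$, $y\mapsto Y$ turns $\gamma$ into a cyclic generator, so $I=\{f\in\C[x,y]\mid f(X,Y)\gamma=0\}$ is an ideal of colength $N$; conversely a colength-$N$ ideal $I$ yields the triple on $\C[x,y]/I$ with $\gamma$ the class of $1$. These assignments are mutually inverse, and the residual $\GL_N$-ambiguity is precisely change of basis of $\C[x,y]/I$, whence $M\isoto\Hilb^N(\C^2)$.

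For $M_0=\Spec\C[\mu^{-1}(0)]^G$ I would argue via closed orbits: a closed $G$-orbit in $\mu^{-1}(0)$ is represented by a point where $X,Y$ commute and are simultaneously diagonalisable, and the multiset of joint eigenvalues $\{(a_i,b_i)\}\subset\C^2$ identifies the closed orbits with $\C^{2N}/S_N$. On functions, the invariant ring is generated by the traces $\Tr(X^aY^b)$, which on the diagonal locus become the power sums $\sum_i a_i^{\,a}b_i^{\,b}$, and these generate $\C[\C^{2N}]^{S_N}$; hence $M_0\isoto\C^{2N}/S_N$. The inclusion $\mu^{-1}(0)\cap\frX\hookrightarrow\mu^{-1}(0)$ induces $\pi$, which under these identifications is the Hilbert--Chow morphism sending an ideal to its support cycle.

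That $\pi$ is a resolution is then standard: $M=\Hilb^N(\C^2)$ is smooth of dimension $2N$ by Fogarty's theorem, $\pi$ is projective (the GIT map to the affine quotient), and it restricts to an isomorphism over the open locus of $N$ distinct points, hence is birational; a proper birational morphism from a smooth variety onto the normal $M_0$ is a resolution. For the symplectic statement, both Poisson brackets arise from the ambient bracket on $\C[T^*V]$ by Hamiltonian reduction, and $\pi^*$ is induced by restriction of $G$-invariant functions; since restriction intertwines the two reduced brackets, $\pi^*$ is a Poisson homomorphism, and as $M$ is smooth symplectic of dimension $2N=\dim M_0$ this makes $\pi$ a symplectic resolution. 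I expect the main obstacle to be the vanishing lemma $\beta=0$: the trace/induction bookkeeping that controls the non-ordered monomials is the only step that is not formal, with everything else following the classical Nakajima--Fogarty package.
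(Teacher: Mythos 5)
Your proposal is correct, and it is precisely the classical ADHM/Nakajima argument that the paper itself invokes: the vanishing lemma $\beta=0$ (hence $[X,Y]=0$) on the stable locus via the trace/induction bookkeeping, the identification of stable orbits with colength-$N$ ideals and of closed orbits with point configurations, Fogarty's smoothness plus properness and birationality of the Hilbert--Chow map, and Poisson compatibility of the two Hamiltonian reductions. The paper gives no proof of its own but defers to \cite{Nakajima}, and your reconstruction matches that standard proof.
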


The structure sheaf $\calO_M$ of $M$ can be realised also by Hamiltonian reduction of the structure sheaf $\calO_{\frX}$ of $\frX$ as follows: for an open subset $U$ of $M$, take an open subset $\tldU$ of $\frX$ such that $\rho(\tldU \cap \mu^{-1}(0))=U$. Then the sheaf associated with the presheaf given by
\begin{equation}\label{eq:Hamilton-str-sh}
U\mapsto\Bigl(\calO_{\frX}(\tldU)\bigm/\sum_{i,j=1}^{N}\calO_{\frX}(\tldU) \,\mu^*(E_{ij})\Bigr)^\frg
\end{equation}
coincides with the structure sheaf $\calO_M$ of $M$.


\subsection{A Sheaf of Poisson Superalgebras on the Hilbert Scheme}
\label{sec:superHilb}

By adding fermionic coordinate functions to $\calO_{\frX}$, we construct a sheaf $\tldcalO_{M}$ of commutative superalgebras on the Hilbert scheme $M$ by Hamiltonian reduction.

\medskip

Consider $\tldcalO_{\frX}\coloneqq\calO_{\frX}\otimes_{\C}\Lambda_{\C}(\psi_i,\phi_i\,|\,i=1,\dots,N)$, a sheaf of superalgebras on~$\frX$, and define a Poisson bracket on $\Lambda_{\C}(\psi_i,\phi_i\,|\,i=1,\dots,N)$ by setting $\{\psi_i,\phi_j\}=\delta_{ij}$ for $i,j=1,\dots,N$ and zero otherwise. Then $\tldcalO_{\frX}$ is a sheaf of Poisson superalgebras.

We define a homomorphism of Lie superalgebras
\begin{equation}\label{eq:tldmu}
\tldmu\colon\frg\longrightarrow\tldcalO_{\frX}(\frX),\qquad E_{ij}\mapsto\sum_{p=1}^{N}x_{ip}y_{pj}-\sum_{p=1}^{N}x_{pj}y_{ip}+\gamma_i\beta_j+\psi_i\phi_j.
\end{equation}
The comoment map $\tldmu$ induces an action of $\frg$ on $\tldcalO_{\frX}$ by $A\mapsto\{\tldmu(A),\blkbar\}$, and the corresponding $G$-action. As $\psi_i\phi_j$ is nilpotent for any $i,j=1,\dots,N$, note that the quotient sheaf $\tldcalO_{\frX}\big/\sum_{i,j}\tldcalO_{\frX}\,\tldmu(E_{ij})$ is supported on the closed subset $\mu^{-1}(0)\subset\frX$. Define
\[
\tldcalO_{M}=\Bigl(\rho_{*}\bigl(\tldcalO_{\frX}\bigm/\sum_{i,j=1}^{N} \tldcalO_{\frX}\,\tldmu(E_{ij})\bigr)\Bigr)^\frg,
\]
a sheaf of superalgebras on $M$. The Poisson bracket on $\tldcalO_{\frX}$ induces a Poisson bracket on $\tldcalO_{M}$.


\subsection{Local Coordinates over a Certain Open Subset}
\label{sec:big-cell}

In the following, we describe the local coordinates of the sheaf $\tldcalO_{M}$ over a certain affine open subset $U_{(N)}\subset M$ of the Hilbert scheme (cf.\ \cite{Hai98} and \autoref{sec:local-trivial}). This will be useful in \autoref{sec:Wakimoto}, when we describe a certain free-field realisation.

\medskip

Consider the Zariski open subset
\[
\tldU_{(N)}\coloneqq\{p\in T^*V\,|\,\gamma(p),\,X(p)\gamma(p),\,\dots,\,X(p)^{N-1} \gamma(p)\text{ span }\C^N\},
\]
of $\frX$, and let \[U_{(N)}\coloneqq\rho(\tldU_{(N)}\cap\mu^{-1}(0))\] be the corresponding Zariski open subset of $M$. We describe the local coordinates over the open subset $U_{(N)}$ explicitly.

Set $B_{(N)}=(\gamma,X \gamma,\dots,X^{N-1}\gamma)$, an $N$-by-$N$ matrix with entries in $\C[T^* V]$. Note that $B_{(N)}$ is an invertible matrix in $\calO_{\frX}(\tldU_{(N)})$. We then define the functions $[X^N:X^{i}]_{(N)}$ and $[Y:X^{i}]_{(N)}\in\tldcalO_{\frX}(\tldU_{(N)})$ for $i=0,\dots,N-1$ by
the equalities
\begin{align*}
{}^t([X^N:X^{i-1}]_{(N)})_{i=1,\dots,N}&=B_{(N)}^{-1}X^N\gamma,\\
{}^t([Y:X^{i-1}]_{(N)})_{i=1,\dots,N}&=B_{(N)}^{-1}Y\gamma
\end{align*}
and the fermionic functions $\{\psi:X^{i}\gamma\}_{(N)}\in\tldcalO_{\frX}(\tldU_{(N)})$ for $i=0,\dots,N-1$ by
\[
{}^t(\{\psi:X^{i-1}\gamma\}_{(N)})_{i=1,\dots,N}=B_{(N)}^{-1}\psi,
\]
where we write $\psi=(\psi_i)_{i=1,\dots,N}$. We also consider $\phi X^i\gamma$ for $i=0,\dots,N-1$ with $\phi={}^t(\phi_i)_{i=1,\dots,N}$. Then, all these functions are $G$-invariant and we shall see that
\begin{align}\label{eq:local-coord-X}
\tldcalO_{\frX}(\tldU_{(N)})&=\C[[X^N:X^i]_{(N)},[Y:X^i]_{(N)}\,|\,i=0, \dots,N-1]\\
&\quad\otimes\Lambda_{\C}(\{\psi:X^i\gamma\}_{(N)},\phi X^{i}\gamma\,|\, i=0,\dots,N-1)\nonumber\\
&\quad\otimes\C[(B_{(N)}^{\pm 1})_{ij}\,|\,i,j=1,\dots,N]\otimes\C[\tldmu(E_{ij})\,|\,i,j=1,\dots,N].\nonumber
\end{align}
By \eqref{eq:Hamilton-str-sh}, this implies the following description of the local coordinates over $U_{(N)}$ so that $U_{(N)}\simeq\C^{2N}$ is affine:
\begin{align}\label{eq:local-coord-Hilb}
\tldcalO_{M}(U_{(N)})&=\C[[X^N:X^i]_{(N)},[Y:X^i]_{(N)}\,|\,i=0,\dots,N-1]\\
&\quad\otimes\Lambda_{\C}(\{\psi:X^i\gamma\}_{(N)},\phi X^{i}\gamma\,|\,i=0,\dots,N-1).\nonumber
\end{align}

We now show \eqref{eq:local-coord-X}. Since $\calO_{\frX}(\tldU_{(N)})=\C[T^* V][(B_{(N)}^{-1})_{ij}\,|\,i,j=1,\dots,N]$, the right-hand side is contained in the left-hand side. To prove the opposite inclusion, it suffices to show that all the generators belong to the right-hand side. By the above definition, $\psi=B_{(N)}{}^t(\{\psi:X^{i-1}\}_{(N)})_{i=1,\dots,N}$ and $\phi=(\phi X^{i-1}\gamma)_{i=1,\dots,N}B_{(N)}^{-1}$ so that $\psi_i$ and $\phi_i$ belong to the right-hand side for any $i=1,\dots,N$. From the identity $X B_{(N)}=(X\gamma,X^2\gamma,\dots,X^N\gamma)$ it follows that $X=(X\gamma,X^2\gamma,\dots,X^N\gamma)B_{(N)}^{-1}$, and this implies that $x_{ij}$ belongs to the right-hand side for any $i,j=1,\dots,N$ since $X^N\gamma=B_{(N)}{}^t ([X^N:X^{i-1}]_{(N)})_{i=1,\dots,N}$.

In order to apply the same reasoning to $Y$, we need two lemmata. To this end, set $Z=(\tldmu(E_{ij}))_{i,j=1,\dots,N}=XY-YX+\gamma\beta+\psi\phi$.
\begin{lemma}\label{lemma:8}
For $i\in\Z_{\ge0}$, $\beta X^i\gamma$ lies in the right-hand side of \eqref{eq:local-coord-X}.
\end{lemma}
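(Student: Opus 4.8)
The plan is to isolate the scalar $\beta X^i\gamma$ by taking a trace of the moment-map matrix $Z=XY-YX+\gamma\beta+\psi\phi$ against a power of $X$. Denote the right-hand side of \eqref{eq:local-coord-X} by $\calR$, the subalgebra of $\tldcalO_{\frX}(\tldU_{(N)})$ generated by the elements listed there. The paragraph preceding the lemma already establishes that all entries of $X$, as well as $\gamma_i,\psi_i,\phi_i$, lie in $\calR$, while the entries of $Z$ are by construction the generators $\tldmu(E_{ij})\in\calR$. Everything needed is therefore present in $\calR$ except $\beta$ itself, so the task is to recover the particular scalar combination $\beta X^i\gamma$ from the available matrix data.

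Concretely, I would compute $\Tr(ZX^i)$ for $i\ge0$ and split it according to the four summands of $Z$. The commutator part vanishes identically by cyclicity of the trace, since $\Tr(XYX^i)=\Tr(YX^{i+1})=\Tr(YXX^i)$, whence $\Tr\bigl((XY-YX)X^i\bigr)=0$. The rank-one bosonic part gives $\Tr(\gamma\beta X^i)=\beta X^i\gamma$, because $\gamma\beta$ is the outer product of the column $\gamma$ with the row $\beta$, so its trace against $X^i$ collapses to the scalar $\sum_{a,b}\gamma_a\beta_b(X^i)_{ba}=\beta X^i\gamma$. The fermionic part gives $\Tr(\psi\phi X^i)=\pm\phi X^i\psi$, again a scalar (the sign coming from anticommuting $\psi$ and $\phi$). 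Putting these together yields the identity
\[
\beta X^i\gamma=\Tr(ZX^i)-\Tr(\psi\phi X^i).
\]

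It then remains to observe that both terms on the right lie in $\calR$. Since all entries of $Z$ and of $X$ belong to $\calR$, so does every entry of the matrix $ZX^i$ and hence its trace; and the fermionic term is a finite sum of products $\psi_a(X^i)_{ba}\phi_b$ all of whose factors lie in $\calR$. Therefore $\beta X^i\gamma\in\calR$, which is the assertion. (This lemma is exactly what is needed to push the argument through for $Y$: expanding $YX^j\gamma$ using $YX=XY-Z+\gamma\beta+\psi\phi$ produces terms of the shape $X^k\gamma\cdot(\beta X^m\gamma)$, whose scalar factors are controlled precisely by the lemma.)

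The computation is short, so I expect the only point requiring real care to be the fermionic sign bookkeeping in $\Tr(\psi\phi X^i)$; however, since we only need membership in $\calR$ and not an exact formula, the precise sign is immaterial. The genuine conceptual key -- what makes the lemma work at all -- is the vanishing of the commutator contribution to the trace, which is what allows one to trade the a priori unknown quantity $\beta X^i\gamma$ for the manifestly available data encoded in $Z$ and the fermions.
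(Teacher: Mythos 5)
Your proof is correct and is essentially the paper's own argument: the paper likewise expands $\beta X^i\gamma=\Tr(X^i\gamma\beta)$ using $\gamma\beta=Z-(XY-YX)-\psi\phi$, cancels the commutator terms by cyclicity of the trace, and concludes $\beta X^i\gamma=\Tr(X^iZ)+\phi X^i\psi$ with both summands manifestly in the right-hand side of \eqref{eq:local-coord-X}. The only cosmetic difference is that you start from $\Tr(ZX^i)$ and solve for $\beta X^i\gamma$, while the paper starts from $\beta X^i\gamma$ and substitutes for $\gamma\beta$; the content is identical.
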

\begin{proof}
Note that
\begin{align*}
\beta X^i\gamma
&=\Tr(X^i\gamma\beta)=-\Tr(X^{i+1}Y)+\Tr(X^iYX)-\Tr(X^i\psi\phi)+\Tr(X^iZ)\\
&=-\Tr(X^i\psi\phi)+\Tr(X^i Z)=\phi X^i\psi+\Tr(X^iZ).
\end{align*}
Since $\psi_i$, $\phi_i$, $x_{ij}$ and $Z_{ij}=\tldmu(E_{ij})$ belong to the right-hand side for all $i,j$, the same holds for $\beta X^i\gamma$ for $i\in\Z_{\ge0}$.
\end{proof}

\begin{lemma}\label{lemma:9}
For $m,i=0,\dots,N-1$, let $[YX^m:X^i]_{(N)}\in\tldcalO_{\frX}(\tldU_{(N)})$ be defined by $([YX^m:X^{i-1}]_{(N)})_{i=1,\dots,N}=B_{(N)}^{-1}YX^m\gamma$. Then, $[YX^m:X^i]_{(N)}$ belongs to the right-hand side of \eqref{eq:local-coord-X}.
\end{lemma}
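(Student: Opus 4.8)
The plan is to reduce the claim to the statement that all entries of the column vector $YX^m\gamma$ lie in the right-hand side of \eqref{eq:local-coord-X}. Since $B_{(N)}^{\pm1}$ already lies there and $[YX^m:X^i]_{(N)}$ are by definition the components of $B_{(N)}^{-1}YX^m\gamma$, the two assertions are equivalent. I would prove this by induction on $m$, following the same pattern by which $X$ was treated before the lemmata. The base case $m=0$ is immediate: the components of $B_{(N)}^{-1}Y\gamma$ are exactly the generators $[Y:X^i]_{(N)}$, $i=0,\dots,N-1$, which occur explicitly on the right-hand side of \eqref{eq:local-coord-X}.

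For the inductive step the idea is to trade the ``bad'' product $YX$ for $XY$ together with correction terms, using the moment map relation $Z=XY-YX+\gamma\beta+\psi\phi$. Writing $YX^m\gamma=(YX)X^{m-1}\gamma$ and substituting $YX=XY+\gamma\beta+\psi\phi-Z$ gives
\begin{equation*}
YX^m\gamma=X\bigl(YX^{m-1}\gamma\bigr)+\gamma\bigl(\beta X^{m-1}\gamma\bigr)+\psi\bigl(\phi X^{m-1}\gamma\bigr)-Z\bigl(X^{m-1}\gamma\bigr).
\end{equation*}
I would then check the four summands separately. The first is $X$ (already known to lie in the right-hand side) applied to $YX^{m-1}\gamma$, whose entries lie there by the induction hypothesis. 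In the second, $\beta X^{m-1}\gamma$ is a scalar lying in the right-hand side by \autoref{lemma:8}, multiplied by the column $\gamma$, whose entries are columns of $B_{(N)}$. In the third, $\phi X^{m-1}\gamma$ is one of the fermionic generators (since $0\le m-1\le N-2$) and $\psi$ has entries in the right-hand side. Finally, the entries of $Z$ are the generators $\tldmu(E_{ij})$, and those of $X^{m-1}\gamma$ already lie in the right-hand side. This closes the induction, and taking $m$ up to $N-1$ then lets one conclude that $Y$ itself lies in the right-hand side, mirroring the treatment of $X$.

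The bookkeeping that $X$, $\psi$, $\gamma$, the entries of $Z$, and the fermionic functions $\phi X^i\gamma$ all belong to the right-hand side is routine, as these facts were established in the discussion preceding the lemmata. The one genuinely non-formal input — and precisely the reason \autoref{lemma:8} is proved first — is the appearance of $\beta X^{m-1}\gamma$: the coordinate $\beta$ is not manifestly expressible in the chosen generators, so the correction term produced by the moment map must be rewritten (via $\beta X^i\gamma=\phi X^i\psi+\Tr(X^iZ)$, as in \autoref{lemma:8}) before the induction can proceed. Thus the main obstacle is to organise the recursion so that at each step only $\beta X^{m-1}\gamma$ with $m-1\ge0$ is ever needed, which is exactly the input \autoref{lemma:8} supplies; once this is arranged, the argument is a direct induction.
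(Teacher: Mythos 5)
Your proof is correct and follows essentially the same route as the paper: induction on $m$, using the moment-map identity $Z=XY-YX+\gamma\beta+\psi\phi$ to rewrite $YX^{m}\gamma$ as $X(YX^{m-1}\gamma)+\gamma(\beta X^{m-1}\gamma)+\psi(\phi X^{m-1}\gamma)-ZX^{m-1}\gamma$, handling the first term by the induction hypothesis, the second via \autoref{lemma:8}, and the remaining two from the generators already known to lie in the right-hand side of \eqref{eq:local-coord-X}. The only cosmetic difference is your explicit remark that the claim is equivalent to all entries of $YX^m\gamma$ lying in the right-hand side (since $B_{(N)}^{\pm1}$ does), which the paper leaves implicit.
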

\begin{proof}
We proceed by induction over $m$. If $m=0$, $[YX^m:X^{i}]_{(N)}=[Y:X^i]_{(N)}$ belongs to the right-hand side of \eqref{eq:local-coord-X} for $i=0,\dots,N-1$. For the induction step, assume that $[YX^m:X^i]_{(N)}$ lies in the right-hand side for all $i=0,\dots,N-1$. Then, $YX^{m+1}\gamma=XYX^{m}\gamma+\gamma\beta X^{m}\gamma+\psi\phi X^{m}\gamma-ZX^m\gamma$. By the induction hypothesis, $XYX^m\gamma$ lies in the right-hand side. By \autoref{lemma:8}, $\gamma(\beta X^m\gamma)$ belongs to the right-hand side, and clearly so does $\psi(\phi X^m\gamma)$. Therefore, $[YX^{m+1}:X^i]_{(N)}$ also lies in the right-hand side of \eqref{eq:local-coord-X} for all $i=0,\dots,N-1$.
\end{proof}

By \autoref{lemma:9}, all entries of the matrix $YB_{(N)}=(Y\gamma,YX\gamma,\dots,YX^{N-1}\gamma)$ lie in the right-hand side of \eqref{eq:local-coord-X}. Thus, so does $y_{ij}$ for all $i,j=1,\dots,N$. Similarly, all entries of the row vector $\beta B_{(N)}=(\beta\gamma,\beta X\gamma,\dots,\beta X^{N-1}\gamma)$ lie in the right-hand side, and hence so do all $\beta_i$. This proves \eqref{eq:local-coord-X}.


\subsection{Local Trivialisation of the Hamiltonian Reduction}
\label{sec:local-trivial}

The Hilbert scheme $M=\Hilb^N(\C^2)$ has an affine open covering $M=\bigcup_{\lambda\vdash N}U_{\lambda}$ indexed by the partitions $\lambda$ of $N$ \cite{Hai98}, which includes the set $U_{(N)}$ from the preceding section. We now describe the local trivialisations of the sheaf $\tldcalO_{M}$ over the open sets $U_\lambda$.

\medskip

Let $\lambda=(\lambda_0\ge\lambda_1\ge\dots)$ be a partition of $N$, denoted by $\lambda\vdash N$. For a pair of non-negative integers $(i,j)\in\Z_{\ge 0}^2$, we write $(i,j)\in\lambda$ if $i<\lambda_j$.

Generalising the definition in \autoref{sec:big-cell}, we let $\tldU_{\lambda}\subset T^*V$ be the subset of those $p\in T^*V$ satisfying that $\{X(p)^iY(p)^j\gamma(p)\,|\,(i,j)\in\lambda\}$ spans the vector space $\C^N$. With $B_{\lambda}\coloneqq(X^iY^j\gamma)_{(i,j)\in\lambda}$, $\tldU_{\lambda}$ may be written as $\tldU_{\lambda}=\{p\in T^*V\,|\,\det B_{\lambda}(p)\ne0\}$. Thus, $\tldU_{\lambda}$ is a Zariski open subset of $\frX$, and moreover there is an affine open covering $\frX=\bigcup_{\lambda\vdash N}\tldU_{\lambda}$. It induces an affine open covering
\[
M=\bigcup_{\lambda\vdash N}U_{\lambda},\quad\text{with}\quad U_{\lambda}=(\tldU_{\lambda}\cap\mu^{-1}(0))/G
\]
the affine open subset of $M$ associated with $\tldU_{\lambda}$.

For $P=P(X,Y)\in\C\langle X,Y\rangle$ and $(i,j)\in\lambda$, define a bosonic section $[P:X^iY^j]_{\lambda}$ and a fermionic section $\{\psi:X^iY^j\gamma\}_{\lambda}$ in $\tldcalO_{\frX}(\tldU_{\lambda})$ by
\begin{align*}
{}^t([P:X^iY^j]_{\lambda})_{(i,j)\in\lambda}&=B_{\lambda}^{-1}P\gamma,\\
{}^t(\{\psi:X^iY^j\gamma\}_{\lambda})_{(i,j)\in\lambda}&=B_{\lambda}^{-1}\psi,
\end{align*}
respectively. Then, $[P:X^iY^j]_{\lambda}$ is $G$-invariant for any $P\in\C\langle X,Y\rangle$, $(i,j)\in\lambda$. Also, $\{\psi:X^iY^j\gamma\}_{\lambda}\in\tldcalO_{\frX}(\frX)$ is a $G$-invariant fermionic section for $(i,j)\in\lambda$. Define $A_{\lambda}$ to be the subalgebra of $\calO_{\frX}(\tldU_{\lambda})=\C[T^*V][(B_{\lambda}^{-1})_{ij}\,|\,i,j=1,\dots,N]\subset\tldcalO_{\frX}(\tldU_{\lambda})$ generated by $[P:X^iY^j]_{\lambda}$ for all $P\in \C\langle X,Y\rangle$ and $(i,j)\in\lambda$.

\begin{proposition}\label{prop:local-triv}
For $\lambda\vdash N$, there is the following trivialisation of the Hamiltonian reduction
\begin{align*}
\tldcalO_{\frX}(\tldU_{\lambda})&=A_{\lambda}\otimes\Lambda(\{\psi:X^iY^j \gamma\}_{\lambda},\phi X^iY^j\gamma\,|\,(i,j)\in\lambda)\\
&\quad\otimes\C[(B_{\lambda}^{\pm 1})_{ij}\,|\,i,j=1,\dots,N]\otimes\C[\tldmu(E_{ij})\,|\,i,j=1,\dots,N]\eqqcolon\tldA_{\lambda}.
\end{align*}
This localisation yields the isomorphism
\[
\tldcalO_M(U_{\lambda})\simeq A_{\lambda}\otimes\Lambda(\{\psi:X^iY^j\gamma\}_{\lambda},\phi X^iY^j\gamma\,|\,(i,j)\in\lambda).
\]
\end{proposition}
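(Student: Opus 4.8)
The plan is to follow the template of the big-cell computation in \autoref{sec:big-cell}, replacing the monomials $X^i\gamma$ there by the Young-diagram-indexed family $X^iY^j\gamma$, $(i,j)\in\lambda$. First I would show that $\tldcalO_{\frX}(\tldU_\lambda)$ is generated as an algebra by the elements appearing on the right-hand side, by proving two inclusions, and only afterwards verify that these generators assemble into a genuine tensor product. The inclusion of the generated subalgebra into $\tldcalO_{\frX}(\tldU_\lambda)$ is immediate: the coordinates $[P:X^iY^j]_\lambda$, the fermions $\{\psi:X^iY^j\gamma\}_\lambda$ and $\phi X^iY^j\gamma$, the entries $(B_\lambda^{\pm1})_{ij}$ (recall $B_\lambda$ is invertible on $\tldU_\lambda$) and the $\tldmu(E_{ij})$ all manifestly lie in $\tldcalO_{\frX}(\tldU_\lambda)=\C[T^*V][(B_\lambda^{-1})_{ij}]\otimes\Lambda(\psi_i,\phi_i)$.

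For the reverse inclusion it suffices to place each standard generator into $\tldA_\lambda$. The directions $\gamma_i,\psi_i,\phi_i$ are immediate from the defining relations: $\gamma$ is the column of $B_\lambda$ indexed by $(0,0)\in\lambda$, one has $\psi=B_\lambda\,{}^t(\{\psi:X^iY^j\gamma\}_\lambda)$, and $\phi=(\phi X^iY^j\gamma)_{(i,j)}\,B_\lambda^{-1}$. For $X$ and $Y$ I would use the frame identities $X B_\lambda=B_\lambda\widetilde X$ and $Y B_\lambda=B_\lambda\widetilde Y$, where the columns of $\widetilde X$ and $\widetilde Y$ are the coordinate vectors of $X^{i+1}Y^j\gamma$ and $YX^iY^j\gamma$ in the basis $B_\lambda$; these entries are of the form $[P:\,\cdot\,]_\lambda$ and hence lie in $A_\lambda$ \emph{by definition}, since $A_\lambda$ is generated by $[P:\,\cdot\,]_\lambda$ for \emph{all} $P\in\C\langle X,Y\rangle$. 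Thus $X=B_\lambda\widetilde X B_\lambda^{-1}$ and $Y=B_\lambda\widetilde Y B_\lambda^{-1}$ have entries in $\tldA_\lambda$. Note that, unlike in the big cell, the inductive step of Lemma~\ref{lemma:9} is no longer needed, since its output $[YX^m:\,\cdot\,]_\lambda$ is now among the generators of $A_\lambda$.

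The one place where the moment-map constraint is genuinely used is the analogue of Lemma~\ref{lemma:8}, needed to obtain $\beta_i$ via $\beta=(\beta X^iY^j\gamma)_{(i,j)}\,B_\lambda^{-1}$: I must show that $\beta P\gamma\in\tldA_\lambda$ for every word $P$. Writing $\beta P\gamma=\Tr(P\gamma\beta)$ and substituting the moment-map relation $\gamma\beta=Z-XY+YX-\psi\phi$, where $Z=(\tldmu(E_{ij}))_{ij}$, yields
\[
\beta P\gamma=\Tr(PZ)-\Tr(PXY)+\Tr(PYX)-\Tr(P\psi\phi),
\]
and every term now lies in $\tldA_\lambda$ because the entries of $X,Y,\psi,\phi$ and the $\tldmu(E_{ij})$ have already been placed there. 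Together with $(B_\lambda^{-1})_{ij}\in\tldA_\lambda$, this shows that $\tldcalO_{\frX}(\tldU_\lambda)$ equals the subalgebra generated by the four families.

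The remaining and, I expect, main obstacle is to upgrade this generation statement to the honest tensor product and then descend to the reduction; this is the step where geometry rather than formal algebra enters. The genuine tensor factorisation — algebraic independence of the bosonic generators of $A_\lambda$, the entries $(B_\lambda^{\pm1})_{ij}$ and the $\tldmu(E_{ij})$, together with freeness of the exterior factor — I would establish exactly as in the big cell: the free $G$-action on $\frX$ makes $\tldU_\lambda\cap\mu^{-1}(0)\to U_\lambda$ a trivial $G$-torsor framed by $B_\lambda$ (which transforms by $g*B_\lambda=gB_\lambda$), while freeness forces $\mu$ to be submersive, so that the $\tldmu(E_{ij})$ serve as coordinates transverse to $\mu^{-1}(0)$; this identifies $\tldU_\lambda$ with the product of $\Spec A_\lambda$ (with $U_\lambda\cong\C^{2N}$), the group $G$ and $\frg^*$, and the invertible change of fermionic basis by $B_\lambda$ accounts for the exterior factor. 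The reduction isomorphism then follows formally from the definition of $\tldcalO_M$ (cf.\ \eqref{eq:Hamilton-str-sh}): quotienting by $\sum_{ij}\tldcalO_{\frX}\,\tldmu(E_{ij})$ sets $Z=0$ and removes the factor $\C[\tldmu(E_{ij})]$ — the $\frg$-action descends because $\tldmu$ is a homomorphism of Lie superalgebras (see \eqref{eq:tldmu}) — and the $\frg$-action is trivial on $A_\lambda$ and on the fermions but acts by left translation on $\C[(B_\lambda^{\pm1})_{ij}]\cong\C[G]$, whose invariants are the constants, so that $(A_\lambda\otimes\Lambda(\cdots)\otimes\C[(B_\lambda^{\pm1})_{ij}])^{\frg}\cong A_\lambda\otimes\Lambda(\cdots)$.
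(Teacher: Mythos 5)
Your proposal is correct and follows essentially the same route as the paper's own proof: generation of $\tldcalO_{\frX}(\tldU_\lambda)$ via the frame identities $XB_\lambda$ and $YB_\lambda$ read in the $B_\lambda$-basis (with $\gamma$, $\psi$, $\phi$ immediate, and with the correct observation that no induction à la \autoref{lemma:9} is needed since $A_\lambda$ contains $[P:X^iY^j]_\lambda$ for \emph{all} words $P$), followed by descent through \eqref{eq:Hamilton-str-sh}. Your write-up is in fact more complete than the paper's: the paper's proof of this proposition never places the $\beta_i$ in $\tldA_\lambda$ — the trace-plus-moment-map step you give is precisely the analogue of \autoref{lemma:8}, which the paper carries out only in the big-cell case of \autoref{sec:big-cell} — and the paper likewise leaves the tensor-product independence and the passage to $\tldcalO_M(U_\lambda)$ implicit, both of which you spell out correctly.
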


\begin{remark}
It is easy to see that $A_{\lambda}=\calO_M(U_{\lambda})$. For further details of the combinatorial nature of the affine open covering $M=\bigcup_{\lambda\vdash N}U_{\lambda}$, see \cite{Hai98} or Chapter~18 of \cite{MS05}.
\end{remark}

\begin{proof}[Proof of \autoref{prop:local-triv}]
We have to show that all generators of $\tldcalO_{\frX}(\tldU_{\lambda})$ belong to $\tldA_{\lambda}$. Evidently, all entries of the vectors $X^aY^b\gamma=B_{\lambda}{}^t([X^aY^b:X^iY^j]_{\lambda})_{(i,j)\in\lambda}$, $\psi=B_{\lambda}{}^t(\{\psi:X^iY^j\gamma\}_{\lambda})_{(i,j)\in\lambda}$ and $\phi=(\phi X^iY^j\gamma)_{(i,j)\in\lambda}B_{\lambda}^{-1}$ lie in $\tldA_{\lambda}$. Consider the matrix $X B_{\lambda}=(X^{k+1}Y^l\gamma)_{(k,l)\in\lambda}$. All its entries are of the form $[X^{k+1}Y^l:X^iY^j]_{\lambda}$ and lie in $A_{\lambda}$. Thus, $X=(X^{k+1}Y^l\gamma)_{(k,l)\in\lambda}B^{-1}_{\lambda}$ implies that $x_{ij}$ lies in $\tldA_{\lambda}$ for $i,j=1,\dots,N$. Similarly, $Y=(YX^kY^l\gamma)_{(k,l)\in\lambda}B_{\lambda}^{-1}$ implies that $y_{ij}$ lies in $\tldA_{\lambda}$ for $i,j=1,\dots,N$. Therefore, we obtain the identity of \autoref{prop:local-triv}.
\end{proof}


\section{Sheaves of \texorpdfstring{$\hbar$}{ℏ}-Adic Vertex Superalgebras}
\label{sec:sheaves-SVA}

In this section, we review the notion of ($\hbar$-adic) vertex superalgebras. We also recall jet schemes and arc spaces. Then we introduce $\hbar$-adic versions of some well-known examples of vertex superalgebras.


\subsection{Vertex Superalgebras and \texorpdfstring{$\hbar$}{ℏ}-Adic Vertex Superalgebras}
\label{sec:vertex-algebras}

In the following, we define vertex superalgebras, vertex Poisson superalgebras and $\hbar$-adic vertex superalgebras.

\medskip

A vertex superalgebra $V$ (see, e.g., \cite{Bor86,Li96b,Kac}) is defined as a tuple $(V, \bfone, \partial, Y(\blkbar, z))$, where $V$ is a $\Z/2\Z$-graded vector space, $\bfone \in V$ is a distinguished even element called the vacuum vector, $\partial$ is an even, linear map $\partial\colon V \longrightarrow V$ called the translation operator, and $Y(\blkbar, z)$ is the state-field correspondence, an even, linear map,
\begin{align*}
Y(\blkbar, z)\colon V &\longrightarrow \gEnd_{\C}(V)[[z, z^{-1}]],\\
a&\longmapsto Y(a, z) = \sum_{n \in \Z} a_{(-n-1)} z^{n},
\end{align*}
subject to the following axioms:
\begin{enumerate}[label=(\alph*)]
\item $Y(a, z)$ is a field, i.e.\ $a_{(n)} b = 0$ for any $a,b \in V$ if $n \gg 0$,
\item $Y(\bfone, z) = \Id_V$,
\item $Y(a, z) \bfone \in V[[z]]$ and $Y(a, z) \bfone |_{z=0} = a$ for any $a \in V$,
\item $[\partial, Y(a, z)] = Y(\partial a, z) = \partial_z Y(a, z)$ for any $a \in V$, and $\partial \bfone = 0$,
\item (Borcherds' identity) for any $a,b \in V$ and $l,m,n \in \Z$,
\begin{align}\label{eq:Borcherds}
&\sum_{j=0}^{\infty} \binom{m}{j} (a_{(n+j)} b)_{(l+m-j)}\\
&= \sum_{j=0}^{\infty} (-1)^j \binom{n}{j} \{a_{(m+n-j)} b_{(l+j)} - (-1)^{n+p(a)p(b)} b_{(l+n-j)} a_{(m+j)}\},\nonumber
\end{align}
where $p(a) \in \Z/2\Z$ is the parity of $a$.
\end{enumerate}
The $Y(a,z)$, $a\in V$, are called vertex operators and sometimes written as $a(z)$.

The vertex superalgebra $V$ is said to be commutative if $a_{(n)} = 0$ for any $a\in V$ and $n \in \Z_{\ge 0}$. Equivalently, all vertex operators (super)commute with each other. In this case, the $(-1)$-product endows $V$ with the structure of a commutative superalgebra (with an even derivation).

It is well-known that the commutation relations between the operators
$a_{(m)}$ and $b_{(n)}$ for $a,b \in V$ and $m,n \in \Z$ are encoded into the identity
\begin{equation}\label{eq:OPE}
Y(a, z) Y(b, w) = \sum_{n \in \Z} \frac{Y(a_{(n)} b, w)}{(z-w)^{n+1}} = \sum_{n \ge 0} \frac{Y(a_{(n)} b, w)}{(z-w)^{n+1}} + \NO Y(a, z) Y(b, w) \NO,
\end{equation}
called the operator product expansion (or OPE for short), where $\NO\blkbar\NO$ is the normally ordered product (see, e.g., \cite{FBZ}).
It is often written in the form
\[
Y(a, z) Y(b, w) \sim \sum_{n \ge 0} \frac{Y(a_{(n)} b, w)}{(z-w)^{n+1}},
\]
omitting the term $\NO Y(a, z) Y(b, w) \NO$, which is regular at $z=w$ and does not affect the commutators $[a_{(m)}, b_{(n)}]$.

\medskip

A conformal vertex superalgebra of central charge $c\in\C$ is a vertex superalgebra $V$ together with an even vector $T\in V$ satisfying
\begin{enumerate}[label=(\alph*)]
\item $[T_{(m+1)},T_{(n+1)}]=(m-n)T_{(m+n+1)}+\frac{m^3-m}{12}\delta_{m+n,0}\,c\Id_V$ for any $m,n\in\Z$,
\item $T_{(1)}$ acts semisimply on $V$ with eigenvalues in $\frac{1}{2}\Z$, called weights,
\item $\wt(a_{(n)})=\wt(a)-n-1$ for any $a\in V$, $n\in\Z$,
\item $T_{(0)}=\partial$.
\end{enumerate}
We write $V=\bigoplus_{n\in\frac{1}{2}\Z}V_n$ for the corresponding eigenspace decomposition of $V$. Note that $\bfone\in V_0$, $\partial\in V_1$ and $T\in V_2$. We point out that we do not require ``correct statistics'', i.e.\ that $\bigoplus_{n\in\Z}V_n$ and $\bigoplus_{n\in\frac{1}{2}+\Z}V_n$ be exactly the vectors of even and odd parity, respectively (cf.\ \cite{RSW22,HM23}).

A conformal vertex superalgebra $V$ is called vertex operator superalgebra if additionally $V_n=\{0\}$ for $n\ll 0$ and $\Dim_\C V_n<\infty$ for all $n\in\frac{1}{2}\Z$.

\medskip

A vertex Poisson superalgebra $V$ \cite{FBZ} is a tuple $(V, \bfone, \partial, Y_{-}(\blkbar, z), Y_{+}(\blkbar, z))$, where
\[
Y_{\pm}(\blkbar, z)\colon V \longrightarrow \gEnd_{\C}(V)[[z, z^{-1}]]
\]
are even, linear maps to fields
\[
Y_{-}(a, z) = \sum_{n \in \Z_{\ge 0}} a_{(-n-1)} z^n, \quad Y_{+}(a, z) = \sum_{n \in \Z_{< 0}} a_{(-n-1)} z^n,
\]
$a\in V$, such that $(V, \bfone, \partial, Y_{-}(\blkbar, z))$ is a commutative vertex superalgebra and $(V, \partial, Y_{+}(\blkbar, z))$ is a vertex Lie superalgebra, i.e.\ the operators $a_{(n)}$ for $n \in \Z_{\ge 0}$ satisfy the following axioms:
\begin{enumerate}[label=(\alph*)]
\item $a_{(n)} b = (-1)^{n+p(a)p(b)+1} \sum_{j \ge 0} (-1)^j \partial^j (b_{(n+j)} a) / j!$,
\item $[a_{(m)},b_{(n)}] = \sum_{j \ge 0} \binom{m}{j} (a_{(j)} b)_{(m+n-j)}$,
\item $[\partial, Y_{+}(a, z)] = Y_{+}(\partial a, z) = \partial_z Y_{+}(a, z)$
\end{enumerate}
for any $a,b \in V$ and $m,n \in \Z_{\ge 0}$. For a vertex Poisson superalgebra, in addition, we require that $Y_{-}(\blkbar, z)$ and $Y_{+}(\blkbar, z)$ be compatible in the sense that the $a_{(n)}$ for $n \in \Z_{\ge 0}$ are derivations with respect to the $(-1)$-product, i.e.\ that
\begin{enumerate}[label=(\alph*),resume]
\item $a_{(n)} (b_{(-1)} c) = (a_{(n)} b)_{(-1)} c + (-1)^{p(a)p(b)} b_{(-1)} (a_{(n)} c)$
\end{enumerate}
for all $a,b,c \in V$ and $n \in \Z_{\ge 0}$.

\medskip

Following \cite{Li04}, we introduce the notion of $\hbar$-adic vertex superalgebras. To this end, let $\hbar$ be an indeterminate that commutes with all other operators. An $\hbar$-adic vertex superalgebra $V$ is a tuple $(V, \bfone, \partial, Y(\blkbar, z))$, where $V$ is a flat $\C[[\hbar]]$-module complete in the $\hbar$-adic topology, the even vacuum vector $\bfone \in V$ and the parity-preserving $\C[[\hbar]]$-linear map $\partial\colon V \longrightarrow V$ satisfy the same axioms as for a vertex superalgebra and
\[
Y(\blkbar, z)\colon V \longrightarrow \gEnd_{\C[[\hbar]]}(V)[[z, z^{-1}]]
\]
is a $\C[[\hbar]]$-linear map such that the $(n)$-products are continuous with respect to the $\hbar$-adic topology and $(V / \hbar^k V, \bfone, \partial, Y(\blkbar, z))$ is a vertex superalgebra for each $k \in \Z_{>0}$. Note that an $\hbar$-adic vertex superalgebra is not necessarily a vertex superalgebra over $\C[[\hbar]]$ since $Y(a, z)$ is not a field on $V$. Indeed, for any $k \in \Z_{>0}$, $Y(a, z) = \sum_{n \in \Z} a_{(n)} z^{-n-1}$ satisfies $a_{(n)} b \equiv 0$ modulo $\hbar^k$ if $n \gg 0$ but not necessarily $a_{(n)} b = 0$.

Let $(V, \bfone, \partial, Y(\blkbar, z))$ be an $\hbar$-adic vertex superalgebra. Assume that $V / \hbar V$ is commutative. Then, $Y_{+}(\blkbar, z) \coloneqq \hbar^{-1} Y(\blkbar, z) \bmod \hbar$ satisfies the axioms of a vertex Lie superalgebra. Thus, $(V / \hbar V, \bfone, \partial, Y(\blkbar, z) \bmod \hbar, \hbar^{-1} Y(\blkbar, z) \bmod \hbar)$ is a vertex Poisson superalgebra.

For $\hbar$-adic vertex superalgebras $V$ and $W$, we denote the $\hbar$-adic completion of the tensor product $V \otimes_{\C[[\hbar]]} W$ by $V \hatotimes W$. This tensor product $V \hatotimes W$ is also an $\hbar$-adic vertex superalgebra and it includes $V$ and $W$ as $\hbar$-adic vertex subalgebras.

One can also introduce the notions of conformal $\hbar$-adic vertex superalgebras and $\hbar$-adic vertex operator superalgebras in analogy to the non-$\hbar$-adic versions above. For instance, the main properties of a conformal vector in an $\hbar$-adic vertex superalgebra are stated in \autoref{lemma:Virasoro}.


\subsection{Jet Bundles}
\label{sec:jet-bundles}

We briefly recall jet bundles, jet schemes and arc spaces. References are \cite{Ish07,EinMus,BouNicSeb00}. In the following, a differential algebra is a commutative $\C$-superalgebra $A$ with an even derivation $\partial$. Note that a differential algebra $A$ can be regarded as a commutative vertex superalgebra, where $Y(\blkbar, z)$ is given by $Y(a, z) = \e^{z \partial} a$ for $a \in A$.

For a finitely generated, unital, commutative superalgebra $R$, let $\jet{\infty}{R}$ be the unique differential algebra satisfying the following universal property: there is an algebra homomorphism $j \colon R \longrightarrow \jet{\infty}{R}$ such that for any algebra homomorphism $\varphi \colon R \longrightarrow A$ from $R$ to a differential algebra $A$, there is a unique differential algebra homomorphism $\widetilde{\varphi} \colon \jet{\infty}{R} \longrightarrow A$ satisfying $\widetilde{\varphi} \circ j = \varphi$. The differential algebra $\jet{\infty}{R}$ can also be characterised as the unique unital, commutative $\C$-superalgebra such that $\gHom_{\Alg}(\jet{\infty}{R}, A) \simeq \gHom_{\Alg}(R, A[[t]])$ for any unital, commutative $\C$-superalgebra $A$. For $m \in \Z_{\ge 0}$ and $R$ as above, let $\jet{m}{R}$ be the unique unital, commutative $\C$-superalgebra satisfying $\gHom_{\Alg}(\jet{m}{R}, A) \simeq \gHom_{\Alg}(R, A[t]/(t^{m+1}))$ for any unital, commutative $\C$-superalgebra $A$. There exists a natural homomorphism $\jet{m}{R} \longrightarrow \jet{n}{R}$ for $m \le n$, and $\jet{\infty}{R}$ is the injective limit associated with the direct system $\jet{0}{R} \longrightarrow \jet{1}{R} \longrightarrow \jet{2}{R} \longrightarrow \dots$

We may assume that the commutative superalgebra $R$ is of the form
\[
R = (\C[x_1, \dots, x_N] \otimes \Lambda(\psi_1, \dots, \psi_M)) / (f_1, \dots, f_r)
\]
with bosonic variables $x_1,\dots,x_N$, fermionic variables $\psi_1,\dots,\psi_M$ and $f_1,\dots,f_r \in \C[x_1, \dots, x_N] \otimes \Lambda(\psi_1, \dots, \psi_M)$. Then, $\jet{\infty}{R}$ is the differential algebra given by
\[
\jet{\infty}{R} = (\C[x_{i (-n)} \,|\, \substack{i = 1, \dots, N \\ n = 1, 2, \dots}] \otimes \Lambda(\psi_{i (-n)} \,|\, \substack{i = 1, \dots, M \\ n = 1, 2, \dots})) / ( \partial^n f_i \,|\, \substack{i = 1, \dots, r \\ n = 1, 2, \dots}),
\]
where $x_{i (-n)} = (\partial^{n-1} / (n-1)!) x_i$ and $\psi_{i (-n)} = (\partial^{n-1} / (n-1)!) \psi_i$. For $m \in \Z_{\ge 0}$, the superalgebra $\jet{m}{R}$ is the subalgebra of $\jet{\infty}{R}$ generated by $x_{i (-n)}$ and $\psi_{j (-n)}$ with $i=1,\dots,N$, $j=1,\dots,M$ and $n=0,1,\dots,m$.

Let $X$ be a topological space and $\tldcalO_X$ a sheaf of unital, commutative superalgebras on $X$. We define the sheaf of differential algebras $\JetBundle{X} \coloneqq \jet{\infty}{\tldcalO_{X}}$ associated with $\tldcalO_X$ as the sheaf $U \mapsto \jet{\infty}{(\tldcalO_{X}(U))}$ for open subsets $U \subset X$. Note that there is a natural inclusion $j \colon \tldcalO_X \hookrightarrow \jet{\infty}{\tldcalO_{X}}$. Similarly, for $m \in \Z_{\ge 0}$, let $\jet{m}{\tldcalO_{X}}$ be the sheaf $U \mapsto \jet{m}{(\tldcalO_{X}(U))}$ for open subsets $U \subset X$. We call the sheaf $\JetBundle{X} = \jet{\infty}{\tldcalO_{X}}$ the $\infty$-jet bundle associated with $\tldcalO_{X}$.

Let $X$ be a scheme of finite type with structure sheaf $\calO_X$. The arc space (or $\infty$-jet scheme) over $X$ is the unique scheme $\jet{\infty} X$ satisfying
\[
\gHom_{\Scheme}(\Spec A, \jet{\infty}{X}) \simeq \gHom_{\Scheme}(\Spec A[[t]], X)
\]
for any unital, commutative $\C$-superalgebra $A$. Similarly, for $m \in \Z_{\ge 0}$, the $m$-jet scheme over $X$ is the unique scheme $\jet{m}{X}$ such that
\[
\gHom_{\Scheme}(\Spec A, \jet{m}{X}) \simeq \gHom_{\Scheme}(\Spec A[t] / (t^{m+1}), X)
\]
for any unital, commutative $\C$-superalgebra $A$. By definition, for $m \in \Z_{\ge 0} \cup \{\infty\}$, there is a natural morphism $\pi_m \colon \jet{m}{X} \longrightarrow X$. For $m \in \Z_{\ge 0} \cup \{\infty\}$, the sheaf $\jet{m}{\calO_X}$ associated with $\calO_X$ is related to the structure sheaf $\calO_{\jet{m}{X}}$ of the $m$-jet scheme $\jet{m}{X}$ by $(\pi_m)_{*} \calO_{\jet{m}{X}} = \jet{m}{\calO_{X}}$.

Now assume that the commutative superalgebra $R$ is a Poisson superalgebra with Poisson bracket $\{\blkbar, \blkbar\}$. The Poisson bracket $\{\blkbar, \blkbar\}$ of $\tldcalO_X$ induces a natural vertex Poisson superalgebra structure on the differential algebra $\jet{\infty}{R}$ satisfying $f_{(n)} g = \delta_{n0} \{f, g\}$ for $f,g \in R$ and $n \in \Z_{\ge 0}$ (\cite{Arakawa12}, Proposition~2.3.1).


\subsection{The \texorpdfstring{$\beta\gamma$}{βγ}-System and Its Microlocalisation}
\label{sec:h-adic-betagamma}

In the following, we introduce the $\hbar$-adic analogue $\calD^\ch(T^*\C^{N})_{\hbar}$ of the $\beta\gamma$-system $\calD^\ch(T^*\C^{N})=(\beta\gamma)^{\otimes N}$ of rank~$N$, also called Weyl vertex algebra or bosonic ghost vertex algebra. We then construct the microlocalisation $\calD^\ch_{T^* \C^N, \hbar}$ of $\calD^\ch(T^*\C^{N})_{\hbar}$ as a sheaf of $\hbar$-adic vertex algebras on the affine space $T^* \C^{N}$, as discussed in Section~2.2 of \cite{AKM15}, with global sections $\calD^\ch_{T^* \C^N, \hbar}(T^* \C^N) = \calD^\ch(T^* \C^N)_{\hbar}$. Since we only deal with the affine space $T^* \C^N$, the construction looks simpler than the one in \cite{AKM15}, where we considered the cotangent bundle of a flag manifold.

\medskip

Denote by $x_1,\dots,x_N$ and $y_1,\dots,y_N$ be the standard coordinate functions on $T^*\C^N = \C^{N} \oplus (\C^N)^*$. They are Darboux coordinates with respect to the standard symplectic form. Recall that the $\beta\gamma$-system $\calD^{\ch}(T^* \C^N)=(\beta\gamma)^{\otimes N}$ is the vertex algebra strongly generated by $x_i$ and $y_i$ for $i=1,\dots,N$ being subject to the operator product expansions $x_i(z) y_j(w) \sim -\delta_{ij}/(z-w)$ and $x_i(z) x_j(w) \sim y_i(z) y_j(w) \sim 0$. The $\hbar$-adic $\beta\gamma$-system on $T^* \C^N$ is the $\hbar$-adic vertex algebra $\calD^\ch(T^* \C^{N})_{\hbar}$ isomorphic as a $\C[[\hbar]]$-module to
\[
\calD^\ch(T^* \C^{N})_{\hbar} = \C[[\hbar]][x_{1 (-n)}, \dots, x_{N (-n)}, y_{1 (-n)}, \dots, y_{N (-n)} \,|\, n \in \Z_{>0}]
\]
with operator product expansions
\begin{align*}
x_i(z) y_j(w) &\sim - \delta_{ij} \hbar /(z-w),\\
x_i(z) x_j(w) &\sim 0,\\
y_i(z) y_j(w) &\sim 0
\end{align*}
for $i,j=1,\dots,N$, where we write $x_i(z) = Y(x_{i (-1)}, z)$ and $y_i(z) = Y(y_{i (-1)}, z)$. Evidently, it is an $\hbar$-adic analogue of the $\beta\gamma$-system vertex algebra.

The notion of chiral differential operators (CDO), which is the chiral analogue of sheaves of differential operators on complex manifolds, was introduced in \cite{MSV99,BeiDri04} and developed in \cite{MSV99,MS99,GMS00,GMS04,GMS03}. Following their construction, we can localise the $\beta\gamma$-system $\calD^\ch(T^* \C^N)$ generated by $x_i$ and $y_i$ for $i=1,\dots,N$ as a sheaf of vertex algebras on the complex vector space $\C^N$. Moreover, as discussed in \cite{AKM15}, we can also localise the above $\hbar$-adic $\beta\gamma$-system $\calD^\ch(T^* \C^{N})_{\hbar}$ as a sheaf $\calD^\ch_{T^* \C^N, \hbar}$ of $\hbar$-adic vertex algebras on the cotangent bundle $T^* \C^N$.

We shall give the details of this construction below, which we can summarise as follows: for the $\hbar$-adic $\beta\gamma$-system, the operator product expansions (and hence the $(n)$-products) between the vertex operators are determined by the Wick formula and thus they turn out to be bidifferential operators in the variables $x_{i (-n)}$, $y_{i (-n)}$. Therefore, even for rational functions in $x_{i (-n)}$, $y_{i (-n)}$, the same bidifferential operators give well-defined operator product expansions (or $(n)$-products) between them. Hence, we obtain a sheaf of $\hbar$-adic vertex algebras $\calD^\ch_{T^* \C^N, \hbar}$ on $T^* \C^N$.

As we discussed in the previous section, the jet bundle $\calO_{J_{\infty} T^* \C^N}$ on the symplectic vector space $T^* \C^N$ is equipped with a vertex Poisson algebra structure. The $\hbar$-adic $\beta\gamma$-system $\calD^\ch_{T^*\C^N, \hbar}$ is a quantisation of $\calO_{J_{\infty} T^* \C^N}$. Indeed, the quotient $\calD^\ch_{T^*\C^N, \hbar} / \hbar \calD^\ch_{T^*\C^N, \hbar}$ is isomorphic to $\calO_{J_{\infty} T^* \C^N}$ as vertex Poisson algebras.

\medskip

We now construct the sheaf of $\hbar$-adic vertex algebras $\calD^\ch_{T^* \C^N, \hbar}$ over $T^* \C^N$ that localises $\calD^\ch(T^* \C^N)_{\hbar}$. We discuss the localisation in Zariski topology. The key is that the operator product expansion of the ($\hbar$-adic) $\beta\gamma$-system satisfies the Wick formula, which implies that it can be realised by certain bidifferential operators. This fact allows us to construct the localisation of $\calD^\ch_{T^* \C^N, \hbar}$ similarly to the construction of the ring of microlocal differential operators on a
holomorphic symplectic manifold (cf.\ \cite{KR08, Losev10}).

While the $(-n)$-products on the $\hbar$-adic $\beta\gamma$-system are not associative, the variables $x_{i (-n)}$ and $y_{i (-n)}$ for $i=1,\dots,N$ and $n \in \Z_{>0}$ mutually commute in $\calD^\ch(T^* \C^N)_{\hbar}$. Thus, any element of $\calD^\ch(T^* \C^N)_{\hbar}$ can be viewed as a polynomial in the variables $\{x_{i (-n)}, y_{i (-n)} \,|\, i=1, \dots, N; n \in \Z_{>0}\}$. Namely, we do not need to care about the order of the variables, but we do care about the placement of brackets. We identify the variables $x_{i (-1)}$ and $y_{i (-1)}$ with the coordinate functions $x_i$ and $y_i$, respectively, in $\C[T^* \C^N] = \C[x_1, \dots, x_N, y_1, \dots, y_N]$ for $i=1,\dots,N$. Recall the $\infty$-jet bundle $\calO_{J_{\infty} T^* \C^N}$ on $T^* \C^N$. For an open subset $U \subset T^* \C^N$,
\[
\calO_{J_{\infty}T^* \C^N}(U) = \calO_{T^* \C^N}(U) \otimes_{\C[T^* \C^N]} \C[x_{i (-n)}, y_{i (-n)} \,|\, i=1, \dots, N; n \in \Z_{>0}],
\]
and we set
\[
\calD^\ch_{T^* \C^N, \hbar}(U) = \calO_{J_{\infty}T^* \C^N}(U)[[\hbar]]
\]
as a $\C[[\hbar]]$-module. For example, if $U = U_f = \{ f \ne 0 \} \subset T^* \C^N$ is an affine open subset associated with a polynomial $f \in \C[x_{i (-1)}, y_{i (-1)} \,|\, i=1, \dots, N] = \C[T^* \C^N]$, then
\[
\calD^\ch_{T^* \C^N, \hbar}(U_f) =
\C[[\hbar]][x_{i (-n)}, y_{i (-n)}, f^{-1} \,|\, i=1, \dots, N; n \in \Z_{>0}]
\]
as a $\C[[\hbar]]$-module. Moreover, for open subsets $U' \subset U$, the restriction morphism $\Res^U_{U'}\colon \calD^\ch_{T^* \C^N, \hbar}(U) \longrightarrow \calD^\ch_{T^* \C^N, \hbar}(U')$ is induced by the restriction morphism $\calO_{T^* \C^N}(U) \longrightarrow \calO_{T^* \C^N}(U')$. Hence, we obtain a sheaf of $\C[[\hbar]]$-modules $\calD^\ch_{T^* \C^N, \hbar}$ on the affine space $T^* \C^N$. The global sections are $\calD^\ch_{T^* \C^N, \hbar}(T^* \C^N) = \calD^\ch(T^* \C^N)_{\hbar}$.

Then, in order to define an $\hbar$-adic vertex algebra structure on $\calD^\ch_{T^* \C^N, \hbar}$, we recall the following observation (see also \cite{AKM15}, Lemma~2.8.1.1). For a polynomial $f \in \C[x_{i (-n)}, y_{i (-n)} \,|\, i=1, \dots, N; n \in \Z_{>0}]$, we write $f(z) = Y(f \bfone, z)$ for the corresponding vertex operator. Then, the following lemma follows from the Wick formula (see \cite{Kac}, Theorem 3.3, \cite{FBZ}, Lemma~12.2.6).
\begin{lemma}\label{lemma:Wick-bidiff}
There exist bidifferential operators $P_{nk}$ on the polynomial algebra $\C[x_{i (-m)}, y_{i (-m)} \,|\, i=1, \dots, N; m \in \Z_{>0}]$ for $k \in \Z_{\ge 0}$, $n \in \Z$ such that the operator product expansion of $f(z) g(w)$ in $\calD^\ch(T^* \C^N)_{\hbar}$ satisfies
\[
f(z) g(w) = \sum_{k = 0}^{\infty} \sum_{n \in \Z} \frac{\hbar^{k}}{(z-w)^n} P_{nk}(f, g)
\]
for $f,g \in \calO_{J_{\infty} T^* \C^N}(T^* \C^N) = \C[x_{i (-m)}, y_{i (-m)} \,|\, i=1, \dots, N; m \in \Z_{>0}]$.
\end{lemma}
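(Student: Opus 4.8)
The plan is to read the statement off from the Wick formula for free fields, reinterpreting each contraction of a pair of basic fields as a single differentiation in the appropriate mode variable. By the $\C[[\hbar]]$-bilinearity of the $(n)$-products it suffices to prove the formula when $f$ and $g$ are monomials in the variables $x_{i(-m)},y_{i(-m)}$, so I would fix two such monomials throughout.

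First I would record the elementary contractions. Under the jet-bundle identification $x_{i(-m)}=(\partial^{m-1}/(m-1)!)\,x_{i(-1)}$, the field attached to $x_{i(-a)}$ is $(1/(a-1)!)\,\partial_z^{a-1}x_i(z)$, and likewise for $y$. Since the only nonvanishing singular OPE among the generators is $x_i(z)\,y_j(w)\sim-\delta_{ij}\hbar/(z-w)$, the contraction of an $x_{i(-a)}$-field with a $y_{j(-b)}$-field equals
\[
\frac{\delta_{ij}}{(a-1)!(b-1)!}\,\partial_z^{a-1}\partial_w^{b-1}\frac{-\hbar}{z-w}=\delta_{ij}\,\hbar\,\frac{c_{a,b}}{(z-w)^{a+b-1}}
\]
for a universal constant $c_{a,b}\in\C$, while the $x$--$x$ and $y$--$y$ contractions vanish. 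The two features I would emphasise are that every contraction carries exactly one factor of $\hbar$ and yields a single pure pole $1/(z-w)^{a+b-1}$.

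Next I would apply the Wick formula (\cite{Kac}, Theorem~3.3; \cite{FBZ}, Lemma~12.2.6): $f(z)g(w)$ is the sum, over all sets of pairwise contractions between the basic fields of $f$ (placed at $z$) and those of $g$ (placed at $w$), of the product of the contraction factors with the normally ordered product of the remaining, uncontracted fields. The key reinterpretation is that selecting a contraction between an $x_{i(-a)}$-factor of $f$ and a $y_{i(-b)}$-factor of $g$ is the same as applying $\partial/\partial x_{i(-a)}$ to $f$ and $\partial/\partial y_{i(-b)}$ to $g$ and multiplying by the scalar $\hbar\,c_{a,b}$; hence performing a prescribed set of $k$ contractions is a bidifferential operation in $(f,g)$ producing a factor in $\hbar^k\,\C[(z-w)^{-1}]$. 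Finally I would expand every uncontracted $z$-field about $w$ by means of $\partial_z^{a-1}x_i(z)=\sum_{p\ge0}((z-w)^p/p!)\,\partial_w^{p+a-1}x_i(w)$, so that the whole product becomes a Laurent series in $(z-w)$ with coefficients in $\calO_{J_{\infty} T^* \C^N}(T^* \C^N)[[\hbar]]$. Defining $P_{nk}(f,g)$ to be the coefficient of $\hbar^k/(z-w)^n$ then exhibits it, by construction, as the image of $(f,g)$ under a fixed bidifferential operator, namely the contraction-derivatives together with the Taylor-expansion-derivatives acting on the $z$-fields, which is exactly the assertion of the lemma.

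The main obstacle is the bookkeeping in this last step. For a fixed number $k$ of contractions the contracted modes $(a_s,b_s)$ are not bounded, so the pole order $\sum_s(a_s+b_s-1)$ produced by the contractions is unbounded and must be brought down to the prescribed value $n$ by the nonnegative powers of $(z-w)$ coming from the Taylor expansion of the uncontracted fields. Consequently $P_{nk}$ is in general of unbounded differential order, and one must argue that its action is nonetheless well-defined, which holds because any fixed monomial admits only finitely many contraction patterns. The remaining point to check is that the coefficient of $\hbar^k/(z-w)^n$ depends on $f$ and $g$ through one and the same monomial-independent operator $P_{nk}$; this is immediate once the contraction rule and the Taylor coefficients are written out, since both are universal in the mode variables. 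The $\hbar$-adic convergence needs no separate argument, as the power of $\hbar$ simply counts the number of contractions.
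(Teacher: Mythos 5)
Your proposal is correct and takes essentially the same approach as the paper: the paper offers no detailed argument for this lemma, simply asserting that it ``follows from the Wick formula'' (citing \cite{Kac}, Theorem~3.3, \cite{FBZ}, Lemma~12.2.6, and \cite{AKM15}, Lemma~2.8.1.1). Your write-up supplies exactly the details that citation suppresses — contractions reinterpreted as derivations in the mode variables, each carrying one power of $\hbar$ and a pure pole, Taylor expansion of the uncontracted $z$-fields about $w$, and the local-finiteness observation that makes the resulting (infinite-order) bidifferential operators well defined.
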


For an open subset $U \subset T^* \C^N$, we shall define the operator product expansion of $f(z) g(w)$ for $f,g \in \calO_{J_{\infty }T^* \C^N}(U)$. In order to do so by using \autoref{lemma:Wick-bidiff}, we regard the elements $f$ and $g$ as rational functions in the variables $x_{i (-n)}$ and $y_{i (-n)}$ for $i=1,\dots,N$, $n \in \Z_{>0}$. By definition, there is a projection
\[
\sigma_0\colon \calD^\ch_{T^* \C^N, \hbar}(U) \longrightarrow \calO_{J_{\infty} T^* \C^N}(U), \qquad f \mapsto f \mod \hbar,
\]
which we call the symbol map. Let
\begin{equation}\label{eq:lift-map}
\iota\colon \calO_{J_{\infty} T^* \C^N}(U) \longrightarrow \calD^\ch_{T^* \C^N, \hbar}(U), \qquad f \mapsto \iota(f) = f \bfone
\end{equation}
be a natural section of the symbol map $\sigma_0$. Note that $\iota$ does not send products to $(-1)$-products, i.e.\ $\iota(f)_{(-1)} \iota(g)$ is in general different from $\iota(f g)$. By abuse of notation, we also denote $\iota(f)$ simply by $f$. Recall that we write $f(z) = Y(\iota(f), z)$ for the vertex operator on $\calD^\ch_{T^* \C^N, \hbar}(U)$.

Note that $f$, $g$ are rational functions in $\{x_{i (-n)}, y_{i (-n)} \,|\, i=1, \dots, N; n \in \Z_{>0}\}$, and thus only finitely many variables may appear in them. This implies that the bidifferential operators $P_{nk}$ also act on $\calO_{J_{\infty}T^* \C^N}(U)$ and
$P_{nk}(f, g) \in \calO_{J_{\infty} T^* \C^N}(U)$ for any $k \in \Z_{\ge 0}$, $n \in \Z$. Hence, we define the operator product expansion by
\begin{equation}\label{eq:12}
f(z) g(w) =\sum_{k = 0}^{\infty} \sum_{n \in \Z} \frac{\hbar^{k}}{(z-w)^n} P_{nk}(f, g) \in \calD^\ch_{T^* \C^N, \hbar}(U)[[(z-w)^{-1}]]
\end{equation}
for $f,g \in \calO_{J_{\infty} T^* \C^N}(U)$. We extend the operator product expansion $\C[[\hbar]]$-linearly and continuously in the $\hbar$-adic topology for general $f,g \in \calD^\ch_{T^* \C^N, \hbar}(U)$. The translation operator $\partial$ can be defined on $\calD^\ch_{T^* \C^N, \hbar}(U)$ in the same way since it is also a differential operator. Note that the above definition of the operator product expansion \eqref{eq:12} induces $(n)$-products on $\calD^\ch_{T^* \C^N, \hbar}(U)$ through the formula \eqref{eq:OPE} $f(z) g(w) = \sum_{n \in \Z} Y(\iota(f)_{(n)} \iota(g), w) (z-w)^{-n-1}$, namely
\begin{equation}\label{eq:16}
\iota(f)_{(n)} \iota(g) = \sum_{k \ge 0} \hbar^{k} \iota(P_{n k}(f, g)).
\end{equation}
Then, we obtain an $\hbar$-adic vertex algebra $\calD^\ch_{T^* \C^N, \hbar}(U)$ with vacuum vector $\bfone$ by the following lemma:
\begin{lemma}\label{lemma:local-Borcherds}
For any open subset $U \subset T^* \C^N$, Borcherds' identity \eqref{eq:Borcherds} holds on $\calD^\ch_{T^* \C^N, \hbar}(U)$ with respect to the $(n)$-products defined above.
\end{lemma}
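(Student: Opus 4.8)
The plan is to deduce Borcherds' identity on the local sections from the already-known identity on the global sections $\calD^\ch(T^* \C^N)_{\hbar}$, exploiting that every $(n)$-product is realised by the \emph{universal} bidifferential operators $P_{nk}$ of \autoref{lemma:Wick-bidiff}. Since these $(n)$-products are $\C[[\hbar]]$-bilinear and $\hbar$-adically continuous, and since $\calD^\ch_{T^* \C^N, \hbar}(U) = \calO_{J_{\infty} T^* \C^N}(U)[[\hbar]]$, it suffices to check \eqref{eq:Borcherds} for $a = \iota(f)$, $b = \iota(g)$, $c = \iota(h)$ with $f, g, h \in \calO_{J_{\infty} T^* \C^N}(U)$, and to do so separately in each $\hbar$-degree.

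Next, I would expand both sides of \eqref{eq:Borcherds} by iterating \eqref{eq:16}. As $P_{qk}(g,h)$ is produced from $(g,h)$ by a bidifferential operator, a nested product such as $\iota(f)_{(p)}(\iota(g)_{(q)}\iota(h))$ equals $\sum_{k,k'} \hbar^{k+k'}\iota\bigl(P_{p k'}(f, P_{q k}(g,h))\bigr)$, so that $(f,g,h) \mapsto P_{p k'}(f, P_{q k}(g,h))$ is a tridifferential operator. Collecting powers of $\hbar$, each side of \eqref{eq:Borcherds} becomes $\sum_{r \ge 0} \hbar^r T_r^{\pm}(f,g,h)$, with each $T_r^{\pm}$ a finite sum of compositions of the $P_{nk}$, hence itself a tridifferential operator. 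That only finitely many terms contribute to a fixed $\hbar$-degree $r$ (in particular only finitely many summation indices $j$ in \eqref{eq:Borcherds}) follows from the $\beta\gamma$-contraction count: a pole of order $n$ requires at least $\sim n$ contractions, so $P_{nk} = 0$ unless $k$ grows with $n$. This is the same mechanism that underlies the $\hbar$-adic structure on the global sections.

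The crux is that the $T_r^{\pm}$ are assembled from the $P_{nk}$, which are Wick contractions with universal (indeed constant) coefficients and hence act by the \emph{same} formulae on polynomials and on rational functions. Restricted to the polynomial subalgebra $\C[x_{i(-n)}, y_{i(-n)} \mid i, n] = \calO_{J_{\infty} T^* \C^N}(T^* \C^N)$, the equality $\sum_r \hbar^r T_r^+ = \sum_r \hbar^r T_r^-$ holds on every triple, for there it is precisely Borcherds' identity in the $\hbar$-adic vertex algebra $\calD^\ch(T^* \C^N)_{\hbar}$. As a polydifferential operator that kills all polynomial tuples is identically zero, it follows that $T_r^+ = T_r^-$ as tridifferential operators for each $r$. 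Any given $f,g,h$ involve only finitely many jet variables, and the $P_{nk}$ were already shown to act on and preserve $\calO_{J_{\infty} T^* \C^N}(U)$; since the extension of a (poly)differential operator to a localisation is unique, $T_r^+ = T_r^-$ persists on rational arguments. Evaluating at our $f,g,h$ and summing over $r$ yields \eqref{eq:Borcherds} on $\calD^\ch_{T^* \C^N, \hbar}(U)$.

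The main obstacle is the bookkeeping of the second step together with the operator-theoretic passage in the third: one must confirm that the rearrangement into $\hbar$-homogeneous tridifferential operators converges and truncates correctly in each $\hbar$-degree, and that ``agreement on all polynomial inputs'' upgrades to ``agreement of operators'' despite the presence of infinitely many jet variables. Both points are controlled by the locality and universality of the Wick operators $P_{nk}$ from \autoref{lemma:Wick-bidiff}: each fixed triple of arguments involves only finitely many variables, on which these operators are ordinary polydifferential operators determined by their values on polynomials.
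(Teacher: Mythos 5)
Your proposal is correct, but it takes a genuinely different route from the paper's proof. Both arguments rest on the same two inputs — Borcherds' identity is already known on the global (polynomial) sections $\calD^\ch(T^*\C^N)_{\hbar}$, and every $(n)$-product is computed by the universal Wick bidifferential operators $P_{nk}$ of \autoref{lemma:Wick-bidiff} — but they differ in how they transport the identity from polynomials to rational sections. The paper does this analytically: it embeds $\calD^\ch_{T^*\C^N,\hbar}(U)$ by Taylor expansion into the formal completion $(\calD^\ch_{T^*\C^N,\hbar})^{\wedge}_p$ of the stalk at each point $p$, notes that polynomials in the shifted variables $\tilde{x}_{i(-n)},\tilde{y}_{i(-n)}$ form a copy of the global $\hbar$-adic $\beta\gamma$-system, and then extends Borcherds' identity to all formal power series — hence to the embedded rational sections — using continuity of the $(n)$-products in the $\frm_p$-adic topology. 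You do it algebraically: you expand each side of \eqref{eq:Borcherds} into $\hbar$-homogeneous tridifferential operators $T^{\pm}_r$ (finite sums of compositions of the $P_{nk}$ on each finite, bounded-depth set of jet variables), invoke the fact that a polydifferential operator vanishing on all polynomial inputs is identically zero to conclude $T^{+}_r=T^{-}_r$ as operators, and then evaluate on rational arguments, where the very same formulas define the products. Your route avoids stalks, completions and the $\frm_p$-adic topology entirely, at the cost of the combinatorial bookkeeping (truncation of the sums over $j$ and over $\hbar$-degrees), which you control by the contraction count — though, to be precise, a pole of order $n$ on inputs of jet depth at most $D$ forces at least $n/(2D)$ contractions rather than $\sim n$; this does not affect the argument. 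The paper's continuity argument absorbs exactly this bookkeeping, which is what it buys; what your argument buys is a purely algebraic, completion-free proof that makes the universality of the Wick operators explicit.
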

\begin{proof}
For a point $p \in U$, let $\widehat{\calO}_{T^* \C^N, p}$ be the formal completion of the stalk $\calO_{T^* \C^N, p}$ with respect to the unique maximal ideal $\frm_p$. Set
\[
\bigl(\calD^\ch_{T^* \C^N, \hbar}\bigr)^{\wedge}_p = \widehat{\calO}_{T^* \C^N, p} \otimes_{\C[T^* \C^N]} \C[[\hbar]][x_{i (-n)}, y_{i (-n)} \,|\, i=1, \dots, N; n \in \Z_{>0}].
\]
Then we define the $(n)$-products on $(\calD^\ch_{T^* \C^N, \hbar})^{\wedge}_p$ in the same way as the ones on $\calD^\ch_{T^* \C^N, \hbar}(U)$. By definition, the $(n)$-products are continuous in the $\frm_p$-adic topology on $(\calD^\ch_{T^* \C^N, \hbar})^{\wedge}_p \times (\calD^\ch_{T^* \C^N, \hbar})^{\wedge}_p$. Moreover, the natural embedding
\begin{equation}\label{eq:14}
\calD^\ch_{T^* \C^N, \hbar}(U) \hookrightarrow \bigl(\calD^\ch_{T^* \C^N, \hbar}\bigr)^{\wedge}_p
\end{equation}
given by Taylor expansion commutes with the $(n)$-products. We furthermore set $\tilde{x}_{i (-1)} = x_{i (-1)} - x_i(p)$, $\tilde{y}_{i (-1)} = y_{i (-1)} - y_i(p)$, $\tilde{x}_{i (-n)} = x_{i (-n)}$ and $\tilde{y}_{i (-n)} = y_{i (-n)}$ for $i=1,\dots,N$ and $n \in \Z_{\ge 2}$. For any polynomials in the variables $\{\tilde{x}_{i (-n)}, \tilde{y}_{i (-n)} \,|\, i=1, \dots, N; n \in \Z_{>0}\}$, Borcherds' identity holds on $(\calD^\ch_{T^* \C^N, \hbar})^{\wedge}_p$ since it coincides with the one on the $\hbar$-adic $\beta\gamma$-system $\calD^\ch(T^* \C^N)_{\hbar}$ by the definition of the $(n)$-products. Then, Borcherds' identity must hold on $(\calD^\ch_{T^* \C^N, \hbar})^{\wedge}_p$ for any formal power series in the variables $\{\tilde{x}_{i (-n)}, \tilde{y}_{i (-n)} \,|\, i=1, \dots, N; n \in \Z_{>0}\}$ since the $(n)$-products are continuous in the $\frm_p$-adic topology, and thus it holds on $\calD^\ch_{T^* \C^N, \hbar}(U)$ by the embedding~\eqref{eq:14}.
\end{proof}

Clearly, the restriction $\Res^{U}_{U'}\colon \calD^\ch_{T^* \C^N, \hbar}(U) \longrightarrow \calD^\ch_{T^* \C^N, \hbar}(U')$ is an $\hbar$-adic vertex algebra homomorphism for open subsets $U' \subset U$, and hence we obtain the sheaf of $\hbar$-adic vertex algebras $\calD^\ch_{T^* \C^N, \hbar}$ over the affine space $T^* \C^N$. Finally, the operator product expansion \eqref{eq:12} yields that $f(z) g(w) \equiv \NO f(z) g(w) \NO \bmod \hbar$ for any symbol $f,g \in \calO_{J_{\infty} T^* \C^N}(U)$ since $P_{n0}(f, g) = 0$ for $n \ge 0$, and thus there is natural isomorphism of vertex Poisson algebras
\[
\calD^\ch_{T^* \C^N, \hbar}(U)/\hbar \calD^\ch_{T^* \C^N, \hbar}(U) \simeq \calO_{J_{\infty}T^* \C^N}(U)
\]
for any open subset $U$. Therefore, $\calD^\ch_{T^* \C^N, \hbar}$ is a quantisation of $\calO_{J_{\infty} T^* \C^N}$.

For an invertible function $f \in \calO_{J_{\infty} T^* \C^N}(U)$, the element $\iota(1/f) \in \calD^\ch_{T^* \C^N, \hbar}(U)$ is in general not an inverse of $\iota(f)$ with respect to the $(-1)$-product (the normally ordered product). However, the following proposition ensures the existence of the inverse with respect to the $(-1)$-product:
\begin{proposition}\label{prop:inverse-wrt-NO}
Assume that $f_0 = \sigma_0(f)$ is invertible in the commutative algebra $\calO_{J_{\infty} T^* \C^N}(U)$. We write $1/f_0$ for the inverse element of $f_0$ in $\calO_{J_{\infty} T^* \C^N}(U)$ with respect to the usual multiplication. Then, there exists an element $g \in \calD^\ch_{T^* \C^N, \hbar}(U)$ such that $f_{(-1)} g = \bfone$ in $\calD^\ch_{T^* \C^N, \hbar}(U)$ and $\sigma_0(g) = 1/f_0$. In particular,
\[
\NO f(z) g(z) \NO = Y(f_{(-1)} g, z) = \Id_{\calD^\ch_{T^* \C^N, \hbar}(U)}.
\]
\end{proposition}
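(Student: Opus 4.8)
The plan is to recast the existence of $g$ as the invertibility of a single $\C[[\hbar]]$-linear operator and then to solve by $\hbar$-adic successive approximation. Set $A \coloneqq \calD^\ch_{T^* \C^N, \hbar}(U)$ and consider the left-multiplication operator
\[
L_f \colon A \longrightarrow A, \qquad h \longmapsto f_{(-1)} h .
\]
Because the $(n)$-products are $\C[[\hbar]]$-bilinear and continuous in the $\hbar$-adic topology, $L_f$ is a continuous $\C[[\hbar]]$-linear endomorphism of the complete, flat (hence $\hbar$-torsion-free) module $A$. Finding $g$ with $f_{(-1)} g = \bfone$ is exactly solving $L_f(g) = \bfone$, and I would in fact prove that $L_f$ is bijective. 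I stress that the non-associativity of the $(-1)$-product means one cannot produce $g$ as a geometric-type series in the algebra $A$; the point of phrasing everything through the operator $L_f$ is that composition of operators is automatically associative, so the obstruction disappears once the problem is recognised as genuinely linear in $g$.

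Next I would reduce modulo $\hbar$. Since the $(-1)$-product of $A$ reduces modulo $\hbar$ to the ordinary commutative product of $\calO_{J_{\infty} T^* \C^N}(U)$ under the identification $A / \hbar A \simeq \calO_{J_{\infty} T^* \C^N}(U)$ of vertex Poisson algebras established above, the reduction $\overline{L}_f = L_f \bmod \hbar$ is nothing but multiplication by $f_0 = \sigma_0(f)$ on $\calO_{J_{\infty} T^* \C^N}(U)$. By hypothesis $f_0$ is invertible there, so $\overline{L}_f$ is an isomorphism whose inverse is multiplication by $1/f_0$.

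It then remains to lift this mod-$\hbar$ invertibility to $A$, which is the standard fact that a continuous $\C[[\hbar]]$-linear operator on a complete, flat $\C[[\hbar]]$-module is bijective as soon as its reduction modulo $\hbar$ is. For surjectivity, given $b \in A$ I would set $g_0 = \iota\bigl((1/f_0)\,\sigma_0(b)\bigr)$, so that $L_f(g_0) \equiv b \bmod \hbar$, write $L_f(g_0) = b - \hbar b_1$, and iterate the same construction with $b_1, b_2, \dots$; the resulting series $\sum_{k \ge 0} \hbar^k g_k$ converges by completeness and satisfies $L_f\bigl(\sum_{k \ge 0} \hbar^k g_k\bigr) = b$ by continuity. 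For injectivity, $L_f(h) = 0$ forces $\overline{L}_f(\sigma_0(h)) = 0$, hence $h \in \hbar A$; flatness then lets one cancel the factor $\hbar$ and induct, giving $h \in \bigcap_{k} \hbar^k A = 0$.

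Finally, applying surjectivity to $b = \bfone$ yields $g \in A$ with $f_{(-1)} g = \bfone$, and by construction its leading term is $g_0 = \iota(1/f_0)$, so $\sigma_0(g) = 1/f_0$ as required. The displayed identity $\NO f(z) g(z) \NO = Y(f_{(-1)} g, z) = \Id$ is then immediate from $f_{(-1)} g = \bfone$ together with the vacuum axiom $Y(\bfone, z) = \Id$. I expect the only genuine subtlety to be the organisational one of keeping the entire argument linear in $g$ so that the non-associativity of the $(-1)$-product is never invoked; the remaining ingredients, namely $\hbar$-adic completeness and flatness, are routine.
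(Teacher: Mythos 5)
Your proof is correct, and it runs on the same engine as the paper's: the leading term of $g$ is $\iota(1/f_0)$, the higher terms are produced by successive approximation order by order in $\hbar$, and $\hbar$-adic completeness sums the series. The difference is one of packaging. The paper works directly with the expansion $\iota(f)_{(-1)}\iota(g)=\sum_{k\ge0}\hbar^{k}\iota(P_{-1,k}(f,g))$ coming from the Wick formula, so the recursion is written explicitly: $g_0=1/f_0$ and each $g_k$ is $-1/f_0$ times the sum of the already-determined lower-order contributions (cross terms $f_ig_l$ and $P_{-1,j}$-terms with $l<k$). You instead phrase the problem as inverting the left-multiplication operator $L_f\colon h\mapsto f_{(-1)}h$, using only that the $(-1)$-product is $\C[[\hbar]]$-bilinear, continuous, and reduces modulo $\hbar$ to the commutative product on $\calO_{J_{\infty} T^* \C^N}(U)$ — a fact the paper establishes just before the proposition. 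This buys two things: the argument never touches the bidifferential operators $P_{nk}$, so it applies verbatim to any $\hbar$-adically complete, flat quantisation of a commutative algebra in which the leading symbol is invertible; and it yields bijectivity of $L_f$, hence uniqueness of $g$, which the paper neither states nor needs. Your remark that keeping the problem linear in $g$ is what defuses the non-associativity of the $(-1)$-product is implicit in the paper's recursion as well, but making it explicit is a genuine clarification rather than a gap.
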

\begin{proof}
Write $f = \sum_{k \ge 0} \hbar^k \iota(f_k)$, $g = \sum_{k \ge 0} \hbar^k \iota(g_k)$, where $f_k$, $g_k \in \calO_{J_{\infty} T^* \C^N}(U)$. Then, by \eqref{eq:16},
\begin{align}\label{eq:15}
f_{(-1)} g &= \sum_{k \ge 0} \hbar^{k} \iota(P_{-1 k}(f, g)) = \iota(f_0 g_0) + \hbar \bigl( \iota(f_0 g_1 + f_1 g_0) + \iota(P_{-1, 1}(f_0, g_0)) \bigr) \\
&\quad + \hbar^2 \bigl(\iota(f_0 g_2 + f_1 g_1 + f_2 g_0) \nonumber\\
&\quad + \iota(P_{-1, 1}(f_0, g_1) + P_{-1, 1}(f_1, g_0)) + \iota(P_{-1, 2}(f_0, g_0)) \bigr) + \dots\nonumber
\end{align}
Setting $g_0 = 1/f_0$ and
\[
g_1 = - \frac{1}{f_0}\bigl( f_1 g_0 + P_{-1, 1}(f_0, g_0) \bigr) \in \calO_{J_{\infty} T^* \C^N}(U),
\]
it follows that $f_{(-1)} (\iota(g_0) + \hbar \iota(g_1)) - \bfone \equiv 0 \bmod \hbar^2$. By induction on $k=1,2,\dots$ we can determine $g_k$ such that
\[
f_{(-1)} (\iota(g_0) + \hbar^1 \iota(g_1) + \dots + \hbar^{k} \iota(g_{k})) - \bfone \equiv 0 \mod \hbar^{k+1}.
\]
Then, since $\calD^\ch_{T^* \C^N, \hbar}(U)$ is complete in the $\hbar$-adic topology, we obtain the element $g = \sum_{k \ge 0} \hbar^k \iota(g_k)$ with the desired properties.
\end{proof}


\subsection{The \texorpdfstring{$bc$}{bc}-System}
\label{sec:h-adic-bc}

We introduce the $\hbar$-adic analogue $\Cl_{\hbar}(T^*\C^N)$ of the $bc$-system $\Cl(T^*\C^N)=(bc)^{\otimes N}$ of rank $N$, which is also called Clifford vertex superalgebra or $bc$-ghost vertex superalgebra.

\medskip

Again, consider the symplectic vector space $T^* \C^N = \C^N \oplus (\C^N)^*$. We write $\Pi V$ for the odd vector space corresponding to an even vector space $V$. Let $\psi_1,\dots,\psi_N$, $\phi_1,\dots,\phi_N$ be the standard Poisson coordinate functions on the odd vector space $\Pi T^* \C^N$, i.e.\ $\{\psi_i, \phi_j \} = \delta_{ij}$ and $\{\psi_i, \psi_j\} = \{\phi_i, \phi_j\} = 0$ for $i,j=1,\dots,N$, where $\{\blkbar, \blkbar\}$ denotes the Poisson superbracket of the Poisson superalgebra $\C[\Pi T^* \C^N]$.

The $\hbar$-adic $bc$-system $\Cl_{\hbar}(T^* \C^N)$ on $\Pi T^* \C^N$ is the $\hbar$-adic vertex superalgebra generated by the fermions $\psi_1,\dots,\psi_N$, $\phi_1,\dots,\phi_N$ with operator product expansions
\begin{align*}
\psi_i(z) \phi_j(w) &\sim \delta_{ij} \hbar /(z-w), \\
\psi_i(z) \psi_j(w) &\sim 0, \\
\phi_i(z) \phi_j(w) &\sim 0
\end{align*}
for $i,j=1,\dots,N$, where we again write $\psi_i(z) = Y(\psi_i, z)$, $\phi_i(z) = Y(\phi_i, z)$.
As a $\C[[\hbar]]$-module,
\[
\Cl_{\hbar}(T^* \C^N) = \Lambda_{\C[[\hbar]]}(\psi_{1 (-n)}, \dots, \psi_{N (-n)}, \phi_{1 (-n)}, \dots, \phi_{N (-n)} \,|\, n \in \Z_{>0}).
\]

Let $\Lambda^\vertex(T^* \C^N) = \Cl_{\hbar}(T^* \C^N) / \hbar \Cl_{\hbar}(T^* \C^N)$. It follows from the definition that $\psi_i(z) \phi_j(w) \sim \psi_i(z) \psi_j(w) \sim \phi_i(z) \phi_j(w) \sim 0 \bmod \hbar$ for $i,j=1,\dots,N$, and thus $\Lambda^\vertex(T^* \C^N)$ is a commutative vertex superalgebra. Note that $\Lambda^\vertex(T^* \C^N)$ becomes a vertex Poisson superalgebra upon defining
\[
Y_{+}(\psi_i, z) = \hbar^{-1} \sum_{n \ge 0} \psi_{i (n)} z^{-n-1}, \quad Y_{+}(\phi_i, z) = \hbar^{-1} \sum_{n \ge 0} \phi_{i (n)} z^{-n-1}
\]
for $i=1,\dots,N$.

\medskip

To construct the BRST cohomology, we shall later also need the (ghost) $\hbar$-adic $bc$-system $\Cl_{\hbar}(T^* \frg)$ associated with the symplectic vector space $T^* \frg = \frg \oplus \frg^*$ for $\frg = \frgl_N$. In this case, we consider the standard coordinate functions $\Psi_{ij} \in \frg^* \subset \C[\frg]$ and $\Phi_{ij} \in \frg \subset \C[\frg^*]$ for $i,j=1,\dots,N$, which correspond to the matrices $E_{ij} \in \frg$ (see above). Thus,
\[
\Cl_{\hbar}(T^* \frg) = \Lambda_{\C[[\hbar]]}(\Psi_{ij (-n)}, \Phi_{ij (-n)} \,|\, \substack{i,j=1, \dots, N \\ n = 1, 2, \dots})
\]
as a $\C[[\hbar]]$-module, and the operator product expansions between the generators are given by
\begin{align*}
\Psi_{ij}(z) \Phi_{kl}(w) &\sim \delta_{ik} \delta_{jl} \hbar / (z-w), \\
\Psi_{ij}(z) \Psi_{kl}(w) &\sim 0, \\
\Phi_{ij}(z) \Phi_{kl}(w) &\sim 0.
\end{align*}
Note that the $\hbar$-adic $bc$-system $\Cl_{\hbar}(T^* \frg)$ is naturally $\Z$-graded by the charge (or ghost) grading given by $\deg(\Psi_{ij (-n)}) = 1$, $\deg(\Phi_{ij (-n)}) = -1$ for $i,j=1,\dots,N$ and $n \in \Z_{>0}$. Moreover, the homogeneous subspace $\Cl^n_{\hbar}(T^* \frg)$ of charge $n \in \Z$ can be decomposed as a $\C[[\hbar]]$-module as
\begin{equation}\label{eq:bc-double-grade}
\Cl^n_{\hbar}(T^* \frg) = \prod_{p+q=n} \Cl^{p,q}_{\hbar}(T^* \frg)
\end{equation}
with
\[
\Cl^{p,q}_{\hbar}(T^* \frg) = \Lambda^{p}_{\C[[\hbar]]}(\Psi_{ij (-n)} \,|\, \substack{i,j=1, \dots, N \\ n = 1, 2, \dots}) \hatotimes \Lambda^{-q}_{\C[[\hbar]]}(\Phi_{ij (-n)} \,|\, \substack{i,j=1, \dots, N \\ n = 1, 2, \dots}), \nonumber
\]
where $\Lambda^{p}_{\C[[\hbar]]}(\blkbar)$ is the space of $p$-th exterior powers. Note that $\Cl^{p,q}_{\hbar}(T^* \frg) = 0$ unless $p \ge 0$ and $q \le 0$.


\subsection{Affine Vertex Algebras and Small \texorpdfstring{$\calN=4$}{N=4} Superconformal Algebra}
\label{sec:h-adic-affine-VA}

We also introduce the $\hbar$-adic affine vertex algebras $V^k(\frg)_{\hbar}$ and the $\hbar$-adic small $\calN=4$ superconformal algebra $\on{Vir}_{\calN=4,\hbar}^c$. Let $\frg$ be a finite-dimensional Lie algebra with invariant bilinear form $\kappa\colon \frg \times \frg \longrightarrow \C$. Consider the universal affine vertex algebra $V^{\kappa}(\frg)$ \cite{FZ92}, and define $V^{\kappa}(\frg)[\hbar] = V^{\kappa}(\frg) \otimes \C[\hbar]$. For an element $A \in \frg$, we write $\widehat{A} = \hbar A_{(-1)} \bfone \in V^{\kappa}(\frg)[\hbar]$. Then,
\begin{equation}\label{eq:OPE-affine-VA}
\widehat{A}_{(0)} \widehat{B} = \hbar \, \widehat{[A, B]}, \quad \widehat{A}_{(1)} \widehat{B} = \hbar^2 \,\kappa(A, B) \bfone, \quad \widehat{A}_{(n)} \widehat{B} = 0 \text{ if } n > 1
\end{equation}
for $A$, $B \in \frg$. The universal $\hbar$-adic affine vertex algebra $V^{\kappa}(\frg)_{\hbar}$ associated with $\frg$ and $\kappa$ is the $\hbar$-adic completion of the vertex subalgebra generated by the elements in $\{\widehat{A} \,|\, A \in \frg\}$. Note that $V^{\kappa}(\frg)_{\hbar} / (\hbar)$ is a vertex Poisson algebra, and it satisfies $V^{\kappa}(\frg)_{\hbar} / (\hbar) \simeq S(\frg \otimes \C[t^{-1}] t^{-1})$. By \eqref{eq:OPE-affine-VA}, the vertex Poisson structure $Y_{-}(\blkbar, z)$ on it does not depend on $\kappa$.

In the present article, we consider the case where $\frg$ is the general linear Lie algebra $\frgl_N(\C)$. In this case, the invariant bilinear form is given explicitly by $\kappa(A, B) = \kappa_k(A, B) = k \Tr(AB) - (k/N) \Tr(A) \Tr(B)$ for some level $k \in \C$. We also use the notation $V^k(\frg)_{\hbar}$ for $V^{\kappa_k}(\frg)_{\hbar}$.

\medskip

Similarly, we obtain an $\hbar$-adic analogue of the small $\calN=4$ superconformal algebra (see, e.g., \cite{Kac}). The universal small $\calN=4$ superconformal algebra $\on{Vir}_{\calN=4}^c$ of central charge~$c$ (or equivalently of level $k=c/6$) is a vertex operator superalgebra strongly generated by bosonic elements $J^0$, $J^{\pm}$ and the conformal vector $\TN$ and the fermionic ones $G^{\pm}$ and $\tldG^{\pm}$ satisfying the operator product expansions
\begin{align*}
J^0(z) J^{\pm}(w) &\sim \frac{\pm 2}{z-w} J^{\pm}(w), \quad J^{0}(z) J^{0}(w) \sim \frac{c/3}{(z-w)^2}, \\
J^{+}(z) J^{-}(w) &\sim \frac{-c/6}{(z-w)^2} + \frac{-1}{z-w} J^{0}(w) \\
J^{0}(z) B^{\pm}(w) &\sim \frac{\pm 1}{z-w} B^{\pm}(w), \quad J^{\pm}(z) B^{\mp}(w) \sim \frac{\mp 1}{z-w} B^{\pm}(w),
\end{align*}
as well as
\begin{align*}
G^{a}(z) \tldG^{b}(w) &\sim \frac{(c/3) \epsilon^{ab}}{(z-w)^3} + \frac{2 J^{ab}(w)}{(z-w)^2} + \frac{\epsilon^{ab} \TN(w) + \partial J^{ab}(w)}{z-w}, \\
\TN(z) \TN(w) &\sim \frac{c/2}{(z-w)^4} + \frac{2}{(z-w)^2} \TN(w) + \frac{1}{z-w} \partial \TN(w), \\
\TN(z) A(w) &\sim \frac{1}{(z-w)^2} A(w) + \frac{1}{z-w} \partial A(w),\\
\TN(z) B^{\pm}(w) &\sim \frac{3 / 2}{(z-w)^2} B^{\pm}(w) + \frac{1}{z-w} \partial B^{\pm}(w),
\end{align*}
with the operator product expansions of other pairs among the elements being trivial, for $A = J^0, J^{\pm}$, $B = G,\tldG$ and $a,b = \pm$, where $\epsilon^{+-} = 1$, $\epsilon^{-+} = -1$, $\epsilon^{\pm \pm} = 0$ and $J^{+-} = J^{-+} = J^0/2$, $J^{\pm\pm} = J^{\pm}$. Note that the vertex superalgebra $\on{Vir}_{\calN=4}^c$ contains the universal affine vertex algebra $V^k(\frsl_2)$ of level $k=c/6$, which is also the level of $\on{Vir}_{\calN=4}^c$ by definition.

Consider the vertex subalgebra of $\on{Vir}_{\calN=4}^c[\hbar]$ generated by the elements $\hbar J^0$, $\hbar J^{\pm}$, $\hbar^2 \TN$, $\hbar^{3/2} G^{\pm}$ and $\hbar^{3/2} \tldG^{\pm}$, and define the universal $\hbar$-adic small $\calN=4$ superconformal algebra $\on{Vir}_{\calN=4,\hbar}^c$ of central charge $c$ as its $\hbar$-adic completion.


\subsection{Symplectic Fermion Vertex Superalgebra}
\label{sec:symplec-ferm}

Finally, we introduce the $\hbar$-adic version $\SF_{\hbar}$ of the symplectic fermion vertex superalgebra \cite{Kau00}. Let $V = \C \Lambda_1 \oplus \C \Lambda_2$ be a symplectic vector space with symplectic bilinear form given by $(\Lambda_1, \Lambda_2) = -1$ and $(\Lambda_i, \Lambda_i) = 0$ for $i=1,2$. The vertex superalgebra of symplectic fermions $\SF$ associated with $V$ is the vertex superalgebra generated by fermionic elements $\Lambda_1$ and $\Lambda_2$ with operator product expansion $u(z) v(w) \sim (u, v)/(z-w)^2$ for $u,v \in V$. The $\hbar$-adic analogue of $\SF$ is denoted by $\SF_{\hbar}$. It is an $\hbar$-adic vertex superalgebra generated by $\Lambda_1$ and $\Lambda_2$ with operator product expansion given by
\[
u(z) v(w) \sim \hbar^2 (u, v)/(z-w)^2
\]
for $u,v \in V$.


\section{Semi-Infinite BRST Reduction}
\label{sec:semi-infinite-red}

In this section, for $N\ge1$ we construct a sheaf $\tldcalD^\ch_{M, \hbar}$ of $\hbar$-adic vertex superalgebras on the Hilbert scheme $M=\Hilb^N(\C^2)$. This is the central object of this text. The construction is based on a vertex algebra analogue of quantum Hamiltonian reduction, namely the semi-infinite BRST cohomology (\cite{Fei84,KosSte87,GorMalSch01}, see also \cite{AKM15,Kuwabara21,AraMT}). We also prove a vanishing (or no-ghost) theorem for this BRST cohomology.


\subsection{Construction of the BRST Cohomology}
\label{sec:brst-construction}

In the following, we define a BRST cohomology sheaf $\calH_{\VA}^{\infty/2 + \bullet}(\frg, \tldcalD^\ch_{\frX, \hbar})$ on $M$, which we later call $\tldcalD^\ch_{M, \hbar}$, after having proved a vanishing theorem for the cohomological spaces.

\medskip

Recall the symplectic vector space $T^* V = \gEnd(\C^N)^{\oplus 2} \oplus (\C^N)^{\oplus 2}$ and its standard symplectic coordinates $\{x_{ij}, y_{ij}\}_{i,j=1, \dots, N} \cup \{\gamma_i, \beta_{i}\}_{i=1, \dots, N}$ from \autoref{sec:quiver-const}. Let $\calD^\ch_{T^* V, \hbar}$ be the microlocalisation of the $\hbar$-adic $\beta\gamma$-system, a sheaf of vertex algebras on $T^* V$, and let $\Cl_{\hbar}(T^* \C^N)$ be the $\hbar$-adic $bc$-system. Then, define the sheaf $\tldcalD^\ch_{T^*V, \hbar} = \calD^\ch_{T^*V, \hbar} \hatotimes \Cl_{\hbar}(T^* \C^N)$ of $\hbar$-adic vertex superalgebras on $T^*V$. Restricting onto $\frX\subset T^*V$, we obtain a sheaf of $\hbar$-adic vertex superalgebras $\tldcalD^\ch_{\frX, \hbar} \coloneqq \tldcalD^\ch_{T^*V, \hbar} |_{\frX}$.
As a $\C[[\hbar]]$-module,
\begin{align*}
\tldcalD^\ch_{\frX, \hbar}(\tldU) &= \calO_{\frX}(\tldU) \otimes_{\C[\frX]} \C[[\hbar]][x_{ij (-n)}, y_{ij (-n)}, \gamma_{i (-n)}, \beta_{i (-n)} \,|\, \substack{i,j = 1, \dots, N \\ n = 1, 2, \dots}] \\
&\quad \hatotimes \Lambda_{\C[[\hbar]]}(\psi_{i (-n)}, \phi_{i (-n)} \,|\, \substack{i = 1, \dots, N \\ n = 1, 2, \dots})
\end{align*}
for any open subset $\tldU \subset \frX$.

To construct the BRST reduction of $\tldcalD^\ch_{\frX, \hbar}$, we introduce a chiralisation $\tldmu_\ch$ of the comoment map $\tldmu\colon \frg \longrightarrow \tldcalO_{\frX}(\frX)$, which we call chiral comoment map. To this end, consider the $\hbar$-adic affine vertex algebra $V^{-2N}(\frg)_{\hbar}$ associated with the general linear Lie algebra $\frg = \frgl_N$ of level $-2N$. Note that the corresponding invariant bilinear form $\kappa_{-2N}(A, B) = - 2N \Tr(AB) + 2 \Tr(A) \Tr(B)$ is the negative of the Killing form of $\frg = \frgl_N$. Define the $\C[\partial]$-linear map $\tldmu_\ch\colon V^{-2N}(\frg)_{\hbar}\longrightarrow\tldcalD^\ch_{\frX, \hbar}(\frX)$,
\[
\tldmu_\ch(E_{ij}) = \sum_{p=1}^{N} (x_{ip (-1)} y_{pj} - x_{pj (-1)} y_{ip}) + \gamma_{i (-1)} \beta_j + \psi_{i (-1)} \phi_j.
\]
By direct verification, we obtain the following lemma.
\begin{lemma}\label{lemma:chiral-comoment-hom}
The chiral comoment map $\tldmu_\ch\colon V^{-2N}(\frg)_{\hbar}\longrightarrow\tldcalD^\ch_{\frX, \hbar}(\frX)$ is a homomorphism of $\hbar$-adic vertex superalgebras.
\end{lemma}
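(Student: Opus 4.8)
The plan is to reduce the assertion to a finite OPE computation among the images of the generators. By construction, $V^{-2N}(\frg)_{\hbar}$ is the $\hbar$-adic completion of the vertex subalgebra generated by the currents $\widehat{E_{ij}} = \hbar\,(E_{ij})_{(-1)}\bfone$, subject only to the defining relations \eqref{eq:OPE-affine-VA}. By the universal property of the affine vertex algebra (reconstruction), together with $\C[\partial]$-linearity and $\hbar$-adic continuity, the $\C[[\hbar]]$-linear map $\tldmu_\ch$ is a homomorphism of $\hbar$-adic vertex superalgebras as soon as the images $J_{ij} \coloneqq \tldmu_\ch(E_{ij})$ satisfy the same relations \eqref{eq:OPE-affine-VA}, namely
\begin{align*}
(J_{ij})_{(0)} J_{kl} &= \hbar\,\tldmu_\ch([E_{ij}, E_{kl}]), \quad (J_{ij})_{(1)} J_{kl} = \hbar^2\,\kappa_{-2N}(E_{ij}, E_{kl})\,\bfone,\\
(J_{ij})_{(n)} J_{kl} &= 0 \quad (n \ge 2),
\end{align*}
where $[E_{ij}, E_{kl}] = \delta_{jk}E_{il} - \delta_{li}E_{kj}$ and $\kappa_{-2N}(E_{ij}, E_{kl}) = -2N\,\delta_{jk}\delta_{il} + 2\,\delta_{ij}\delta_{kl}$.

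The key structural observation is that $J_{ij}$ splits as a sum $J_{ij} = L_{ij} + C_{ij} + F_{ij}$ of three quadratic currents, with $L_{ij} = \sum_{p} (x_{ip (-1)} y_{pj} - x_{pj (-1)} y_{ip})$, $C_{ij} = \gamma_{i (-1)} \beta_j$ and $F_{ij} = \psi_{i (-1)} \phi_j$, lying respectively in the mutually commuting sub-vertex-algebras generated by $\{x, y\}$, $\{\gamma, \beta\}$ and $\{\psi, \phi\}$. Since the cross OPEs between these three sets of free fields vanish, every $(n)$-product splits as a sum over the three pieces, and it suffices to compute the self-OPE of each of $L$, $C$ and $F$. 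Each of these is a standard Wick-formula computation in which every contraction produces exactly one power of $\hbar$; hence the simple pole automatically carries a single $\hbar$ and the double pole a factor $\hbar^2$, matching the powers in \eqref{eq:OPE-affine-VA}. As each current is a weight-one bilinear, at most two contractions are possible, so no pole of order $\ge 3$ occurs and the relations $(J_{ij})_{(n)} J_{kl} = 0$ for $n \ge 2$ hold automatically.

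For the simple pole I would verify that each of $L$, $C$ and $F$ separately reproduces the $\frgl_N$-bracket, i.e.\ $(X_{ij})_{(0)} X_{kl} = \hbar(\delta_{jk} X_{il} - \delta_{li} X_{kj})$ for $X \in \{L, C, F\}$, so that their sum is exactly $\hbar\,\tldmu_\ch([E_{ij}, E_{kl}])$. The most delicate part is the double pole, i.e.\ the level. The fundamental currents $C$ and $F$ contribute to it with opposite signs, one being bosonic and the other fermionic, so that their contributions $\pm\hbar^2 \delta_{jk}\delta_{il}$ cancel identically; the entire level therefore originates from the conjugation current $L$. Computing $(L_{ij})_{(1)} L_{kl}$ — tracking the cross terms between the two summands of $L_{ij}$ and the sign in the $xy$-OPE $x_i(z) y_j(w) \sim -\delta_{ij}\hbar/(z-w)$ — should yield precisely $-2N\,\delta_{jk}\delta_{il} + 2\,\delta_{ij}\delta_{kl} = \kappa_{-2N}(E_{ij}, E_{kl})$, the negative of the Killing form of $\frgl_N$.

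The main obstacle is thus the adjoint-current level computation: confirming that the normal-ordering corrections appearing in the simple pole of $(L_{ij})_{(0)} L_{kl}$ cancel so that the bracket closes exactly, and that the double contractions (including the trace-shift term $2\,\delta_{ij}\delta_{kl}$) assemble into exactly the negative Killing form. The sign-cancellation between the bosonic $C$ and the fermionic $F$ contributions to the level is the conceptual reason why the total level equals the negative Killing form without any residual fundamental-representation shift, and I would check it explicitly to complete the argument.
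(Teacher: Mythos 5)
Your proposal is correct and is essentially the paper's own argument: the paper disposes of this lemma with the phrase ``by direct verification,'' meaning exactly the reduction you describe — checking that the images $\tldmu_\ch(E_{ij})$ satisfy the level-$\kappa_{-2N}$ affine OPE relations \eqref{eq:OPE-affine-VA}, which by the universal property and $\hbar$-adic continuity suffices. Your structural observations (the splitting into the three mutually commuting quadratic currents, the cancellation of the $\pm\hbar^2\delta_{il}\delta_{jk}$ level contributions of the bosonic $\gamma\beta$ and fermionic $\psi\phi$ pieces, and the adjoint piece $L$ alone producing $-2N\delta_{jk}\delta_{il}+2\delta_{ij}\delta_{kl}$, the negative Killing form) are all accurate and in fact supply more detail than the paper does.
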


Now, recall the ghost $\hbar$-adic $bc$-system $\Cl_{\hbar}(T^* \frg)$ with generators $\Psi_{ij}$, $\Phi_{ij}$ for $i,j=1,\dots,N$. It is $\Z$-graded by the charge (or ghost) grading defined in \autoref{sec:h-adic-bc}, i.e.\ $\Cl_{\hbar}(T^* \frg) = \prod_{\bullet \in \Z} \Cl^{\bullet}_{\hbar}(T^* \frg)$. Let $c_{ijkl}^{pq}$ denote the structure constants of the Lie algebra $\frg = \frgl_N$, i.e.\ $[E_{ij}, E_{kl}] = \sum_{pq} c_{ijkl}^{pq} E_{pq}$ for $i,j,k,l=1,\dots,N$. Then, via
\[
J(E_{ij})\coloneqq - \sum_{klpq} c_{ijkl}^{pq} \Psi_{kl} \Phi_{pq}
\]
we define a $\C[\partial]$-linear map $J\colon V^{2N}(\frg)_{\hbar}\longrightarrow\Cl_{\hbar}^0(T^* \frg)$. Again, a straightforward computation shows:
\begin{lemma}\label{lemma:ghost-hom}
The map $J\colon V^{2N}(\frg)_{\hbar}\longrightarrow\Cl_{\hbar}^0(T^* \frg)$ is a homomorphism of $\hbar$-adic vertex superalgebras.
\end{lemma}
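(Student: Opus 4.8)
The plan is to invoke the universal property of the $\hbar$-adic affine vertex algebra. Since $V^{2N}(\frg)_{\hbar}$ is topologically generated by the currents $\widehat{E_{ij}}$ and, via Borcherds' identity, its entire $\hbar$-adic vertex superalgebra structure is fixed by the generating operator product expansions \eqref{eq:OPE-affine-VA}, a $\C[\partial]$-linear assignment $\widehat{E_{ij}} \mapsto J(E_{ij})$ extends to a homomorphism precisely when the images $\calJ_{ij} \coloneqq J(E_{ij}) = -\sum_{klpq} c_{ijkl}^{pq}\,\NO\Psi_{kl}\Phi_{pq}\NO$ satisfy the same relations, namely
\begin{align*}
\calJ_{ij\,(0)} \calJ_{kl} &= \hbar\,\textstyle\sum_{pq} c_{ijkl}^{pq}\,\calJ_{pq}, \\
\calJ_{ij\,(1)} \calJ_{kl} &= \hbar^2\,\kappa_{2N}(E_{ij}, E_{kl})\,\bfone, \\
\calJ_{ij\,(n)} \calJ_{kl} &= 0 \quad (n \ge 2).
\end{align*}
Everything therefore reduces to computing one operator product expansion in the free-fermion system $\Cl_{\hbar}(T^*\frg)$.

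First I would compute $\calJ_{ij}(z)\calJ_{kl}(w)$ using the Wick formula for the $\hbar$-adic $bc$-system, the fermionic analogue of \autoref{lemma:Wick-bidiff}. The only nonzero contraction is $\Psi_{ab}(z)\Phi_{cd}(w) \sim \delta_{ac}\delta_{bd}\hbar/(z-w)$, so each contraction produces exactly one factor of $\hbar$ and lowers the pole order by one. As both currents are quadratic in the fermions, at most two contractions are possible; hence only poles of order $1$ and $2$ occur, the vanishing $\calJ_{ij\,(n)}\calJ_{kl} = 0$ for $n \ge 2$ is immediate, and the single- and double-pole terms automatically carry the required powers $\hbar^1$ and $\hbar^2$, so the $\hbar$-bookkeeping matches \eqref{eq:OPE-affine-VA} with no further effort.

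The single-pole (one-contraction) term is again linear in the fermion bilinears and equals $\hbar$ times the current $\sum_{pq} c_{ijkl}^{pq}\calJ_{pq}$; matching it amounts to the quadratic identity on structure constants expressing that the $\calJ_{ij}$ realise the adjoint action, i.e.\ $[\ad_{E_{ij}}, \ad_{E_{kl}}] = \ad_{[E_{ij}, E_{kl}]}$, which is the Jacobi identity for $\frg = \frgl_N$. The double-pole (two-contraction) term is a scalar multiple of $\bfone$, whose coefficient is exactly $\Tr(\ad_{E_{ij}}\ad_{E_{kl}})$, the trace form in the adjoint representation. For $\frgl_N$ this equals the Killing form $\kappa_{2N}(A,B) = 2N\Tr(AB) - 2\Tr(A)\Tr(B)$, so the level comes out to be precisely $2N = 2h^{\vee}$, matching the target $V^{2N}(\frg)_{\hbar}$.

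The step requiring the most care is the double-pole computation together with the normal-ordering contributions. When $i = j$ the current $\calJ_{ij}$ contains self-contractible pieces $\NO\Psi_{ab}\Phi_{ab}\NO$, and in forming the OPE one must track the reordering terms to confirm both that no anomalous contribution appears in the single-pole sector (which would spoil the adjoint relation) and that the normal ordering does not shift the level away from $2N$. Once the double-pole scalar is identified with $\Tr(\ad_{E_{ij}}\ad_{E_{kl}})$ and evaluated as the Killing form, all three relations hold; since the $(n)$-products are continuous in the $\hbar$-adic topology, $J$ extends $\C[[\hbar]]$-linearly and continuously to the desired homomorphism $V^{2N}(\frg)_{\hbar} \longrightarrow \Cl^0_{\hbar}(T^*\frg)$, landing in charge $0$ because each generator is the product of one $\Psi$ and one $\Phi$.
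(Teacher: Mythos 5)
Your proposal is correct and takes essentially the same approach as the paper: the paper offers no written proof beyond "a straightforward computation shows", meaning exactly the Wick-formula verification you outline, namely that the ghost bilinears $J(E_{ij})$ reproduce the generating OPEs \eqref{eq:OPE-affine-VA} of $V^{2N}(\frg)_{\hbar}$, which by the universal property of the ($\hbar$-adic) universal affine vertex algebra suffices. Your two key identifications — the simple pole with the Jacobi identity $[\ad_{A},\ad_{B}]=\ad_{[A,B]}$, and the double pole with $+\Tr(\ad_{A}\ad_{B})=\kappa_{2N}(A,B)$ (the plus sign being the hallmark of fermionic rather than $\beta\gamma$ bilinears, and the point where the level $2N$ rather than $-2N$ emerges) — constitute the entire content of that computation.
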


We then define the sheaf
\[
\tldC_{\VA} = \prod_{\bullet \in \Z} \tldC_{\VA}^{\bullet}, \qquad \tldC_{\VA}^{\bullet} = \tldcalD^\ch_{\frX, \hbar} \hatotimes \Cl^{\bullet}_{\hbar}(T^* \frg),
\]
of $\Z$-graded $\hbar$-adic vertex superalgebras on $\frX$. On this sheaf we consider the odd global section
\begin{align*}
Q &\coloneqq \sum_{i,j} \bigl(\tldmu_\ch(E_{ij})+\frac{1}{2}J(E_{ij})\bigr)_{(-1)}\Psi_{ij}\\
&=\sum_{i,j} \tldmu_\ch(E_{ij})_{(-1)} \Psi_{ij} - \frac{1}{2} \sum_{ijklpq} c_{ijkl}^{pq} \Psi_{ij} \Psi_{kl} \Phi_{pq} \in \tldC_{\VA}^1(\frX)
\end{align*}
of charge $+1$. Note that the image of $Q_{(0)}$ lies in $\hbar \tldC_{\VA}$, and we define a derivation $d_{\VA} = (1/\hbar) Q_{(0)}$ on $\tldC_{\VA}$, which is homogeneous of charge $+1$. Crucially, a straightforward calculation shows that $Q_{(0)} Q = 0$, which implies
\[
(d_{\VA})^2 = (Q_{(0)})^2 = (1/2) (Q_{(0)} Q)_{(0)} = 0.
\]
Therefore:
\begin{proposition}\label{prop:total-BRST-cpx}
For any $\tldU \subset \frX$ open, $(\tldC_{\VA}(\tldU), d_{\VA})$ is a cochain complex.
\end{proposition}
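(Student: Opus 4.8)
The plan is to verify the three conditions that make $(\tldC_{\VA}(\tldU), d_{\VA})$ a cochain complex: that $d_{\VA} = (1/\hbar)Q_{(0)}$ is a well-defined $\C[[\hbar]]$-linear operator, that it is homogeneous of charge $+1$, and that $(d_{\VA})^2 = 0$. For well-definedness I would note that every singular OPE among the generators of $\tldC_{\VA}$ carries at least one factor of $\hbar$: this holds for the matter fields (the OPEs of the $\hbar$-adic $\beta\gamma$- and $bc$-systems each have a prefactor $\hbar$), for the ghosts $\Psi_{ij},\Phi_{kl}$ (whose OPE is $\sim\hbar\,\delta_{ik}\delta_{jl}/(z-w)$), and for the currents $\tldmu_\ch(E_{ij})$, whose $(n)$-products carry factors of $\hbar$ by \eqref{eq:OPE-affine-VA}. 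Since $Q_{(0)}a$ is the single-pole coefficient of $Q(z)a(w)$ and hence arises from at least one contraction, it lies in $\hbar\tldC_{\VA}$, so dividing by $\hbar$ gives a well-defined operator. Homogeneity is immediate: each summand of $Q$ has ghost charge $+1$ and the charge grading is additive under all $(n)$-products, so $Q_{(0)}$, and therefore $d_{\VA}$, raises charge by one.

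For the nilpotency I would use that $Q$ is odd, so that Borcherds' identity \eqref{eq:Borcherds} gives $\{Q_{(0)},Q_{(0)}\} = (Q_{(0)}Q)_{(0)}$ and hence $(Q_{(0)})^2 = \tfrac12(Q_{(0)}Q)_{(0)}$. It therefore suffices to prove the vertex-algebra master equation $Q_{(0)}Q = 0$. Writing $Q = Q_{\mathrm{m}} + \tfrac12 Q_{\mathrm{gh}}$ with matter part $Q_{\mathrm{m}} = \sum_{ij}\tldmu_\ch(E_{ij})_{(-1)}\Psi_{ij}$ and ghost part $Q_{\mathrm{gh}} = \sum_{ij}J(E_{ij})_{(-1)}\Psi_{ij}$, I would expand $Q_{(0)}Q$ into its matter-matter, matter-ghost and ghost-ghost contributions by repeated use of the OPEs, where \autoref{lemma:chiral-comoment-hom} supplies the matter current algebra at level $\kappa_{-2N}$ (producing the structure constants $c^{pq}_{ijkl}$ together with a central term proportional to $\kappa_{-2N}$) and \autoref{lemma:ghost-hom} identifies the ghost current $J$ as realising $\hat{\frg}$ at level $\kappa_{2N}$.

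The terms should then cancel for two reasons. The purely structural terms, linear in a single current and quadratic or cubic in the ghosts, assemble into cyclic sums of products of two structure constants and vanish by the Jacobi identity for $\frg = \frgl_N$, using also the antisymmetry $\Psi_{ij}\Psi_{kl} = -\Psi_{kl}\Psi_{ij}$. The remaining central (anomalous) term is proportional to $\kappa_{-2N} + \kappa_{2N}$: the matter anomaly from \autoref{lemma:chiral-comoment-hom} is cancelled by the anomaly produced in the ghost sector, which by \autoref{lemma:ghost-hom} is governed by $\kappa_{2N}$. Since the level was chosen precisely so that $\kappa_{-2N} + \kappa_{2N} = 0$ (this is exactly why the negative Killing form, i.e.\ level $-2N$, appears), this anomaly vanishes and $Q_{(0)}Q = 0$ follows. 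As this is an identity among the generators and their OPEs, which are the same over every open subset, it descends to each $\tldC_{\VA}(\tldU)$, yielding the claimed cochain complex. I expect the main obstacle to be precisely this last computation: organising the many index sums in the OPE $Q(z)Q(w)$ and checking that the Jacobi identity and the level bookkeeping conspire to annihilate every term, with the level choice $\kappa_{-2N}$ being the crucial input that makes the anomaly cancel.
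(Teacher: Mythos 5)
Your proposal is correct and follows essentially the same route as the paper: the paper likewise notes that the image of $Q_{(0)}$ lies in $\hbar \tldC_{\VA}$ (making $d_{\VA}=(1/\hbar)Q_{(0)}$ well defined and of charge $+1$), asserts $Q_{(0)}Q=0$ by ``a straightforward calculation'', and deduces nilpotency from the oddness of $Q$ via $(Q_{(0)})^2=\tfrac12 (Q_{(0)}Q)_{(0)}$. Your sketch of that omitted calculation --- Jacobi identity killing the structural terms and the anomaly cancelling because $\kappa_{-2N}+\kappa_{2N}=0$ --- is precisely the standard content behind the paper's one-line claim.
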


We define the global sections
\[
\tldE_{ij} = Q_{(0)} \Phi_{ij} = \tldmu_\ch(E_{ij}) + J(E_{ij}) \in \tldC_{\VA}^{0}(\frX)
\]
for $i,j=1,\dots,N$, which induce a $\C[\partial]$-linear map $V^0(\frg)_{\hbar}\longrightarrow\tldC_{\VA}^0(\frX)$. Then \autoref{lemma:chiral-comoment-hom} and \autoref{lemma:ghost-hom} imply:
\begin{lemma}\label{lemma:sln-rep}
The map $E_{ij}\mapsto\tldE_{ij}$ induces a homomorphism $V^0(\frg)_{\hbar}\longrightarrow\tldC_{\VA}^0(\frX)$ of $\hbar$-adic vertex superalgebras.
\end{lemma}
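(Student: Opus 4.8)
The plan is to verify directly that the elements $\tldE_{ij}$ satisfy the defining operator product expansions of the level-$0$ affine vertex algebra $V^0(\frg)_{\hbar}$, and then to upgrade the given $\C[\partial]$-linear map $E_{ij}\mapsto\tldE_{ij}$ to a homomorphism of $\hbar$-adic vertex superalgebras by the universal property of $V^0(\frg)_{\hbar}$. Since this affine vertex algebra is (topologically) generated by the $\widehat{E_{ij}}$ subject only to the relations \eqref{eq:OPE-affine-VA} with $\kappa=\kappa_0=0$, it suffices, using the normalisation $\widehat{A}=\hbar A_{(-1)}\bfone$, to check
\[
(\tldE_{ij})_{(0)}\tldE_{kl}=\hbar\sum_{pq}c_{ijkl}^{pq}\tldE_{pq},\qquad (\tldE_{ij})_{(1)}\tldE_{kl}=0,\qquad (\tldE_{ij})_{(n)}\tldE_{kl}=0\ (n>1),
\]
which match $\widehat{E_{ij}}{}_{(0)}\widehat{E_{kl}}=\hbar\,\widehat{[E_{ij},E_{kl}]}$ together with the vanishing of the central term at level $0$.

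The key step is to exploit that $\tldE_{ij}=\tldmu_\ch(E_{ij})+J(E_{ij})$ is a sum of two pieces lying in the two tensor factors of $\tldC_{\VA}^0=\tldcalD^\ch_{\frX, \hbar}\hatotimes\Cl^0_{\hbar}(T^*\frg)$. Elements supported in distinct tensor factors have trivial mutual $(n)$-products for every $n\ge0$, so the two cross terms drop out and
\[
(\tldE_{ij})_{(n)}\tldE_{kl}=\tldmu_\ch(E_{ij})_{(n)}\tldmu_\ch(E_{kl})+J(E_{ij})_{(n)}J(E_{kl}).
\]
By \autoref{lemma:chiral-comoment-hom} the $\tldmu_\ch(E_{ij})$ satisfy the relations \eqref{eq:OPE-affine-VA} of $V^{-2N}(\frg)_{\hbar}$, and by \autoref{lemma:ghost-hom} the $J(E_{ij})$ satisfy those of $V^{2N}(\frg)_{\hbar}$. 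Hence for $n=0$ each summand equals $\hbar$ times the image of $[E_{ij},E_{kl}]$ under $\tldmu_\ch$ respectively $J$, and by $\C$-linearity these add up to $\hbar\,\tldE_{[E_{ij},E_{kl}]}=\hbar\sum_{pq}c_{ijkl}^{pq}\tldE_{pq}$; for $n>1$ both summands vanish; and for $n=1$ the two contributions are the central terms $\hbar^2\kappa_{-2N}(E_{ij},E_{kl})\bfone$ and $\hbar^2\kappa_{2N}(E_{ij},E_{kl})\bfone$.

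The crux of the argument — and the reason the specific opposite levels $\mp 2N$ were chosen — is the cancellation of these two central terms. Since $\kappa_k(A,B)=k\Tr(AB)-(k/N)\Tr(A)\Tr(B)$ is linear in the level $k$, one has $\kappa_{2N}=-\kappa_{-2N}$, so the level-$(-2N)$ and level-$(+2N)$ contributions cancel exactly and $(\tldE_{ij})_{(1)}\tldE_{kl}=0$, as required at level $0$. This matching of the chiral comoment level against the ghost level is the standard critical cancellation underlying the BRST construction, and is precisely the structural input that will later force $d_{\VA}$ to square to zero. I do not expect a genuine obstacle beyond bookkeeping: the only points requiring care are keeping the single power of $\hbar$ in the $(0)$-products straight under the normalisation $\widehat A=\hbar A_{(-1)}\bfone$, and checking that the universal property of $V^0(\frg)_{\hbar}$ survives the $\hbar$-adic completion, both of which are routine here.
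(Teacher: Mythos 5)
Your proof is correct and is exactly the argument the paper intends: the paper's proof consists of citing \autoref{lemma:chiral-comoment-hom} and \autoref{lemma:ghost-hom}, and your write-up simply makes explicit why those two lemmas suffice — the cross $(n)$-products vanish because $\tldmu_\ch(E_{ij})$ and $J(E_{ij})$ live in different tensor factors, and the central terms cancel since $\kappa_k$ is linear in $k$, so $\kappa_{-2N}+\kappa_{2N}=\kappa_0=0$.
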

In particular, the operators $\{ (1/ \hbar) \tldE_{ij (0)} \,|\, i, j = 1, \dots, N \}$ form the general linear Lie algebra $\frg = \frgl_N$, and make $\tldC_{\VA}(\frX)$ into a $\frg$-module.

\medskip

It shall be advantageous to consider the BRST cohomology only on a relative subcomplex
\cite{FreGarZuc86}. To this end, we consider the subspace of $\tldC_{\VA}$ given by
\[
C_{\VA} \coloneqq \{ c \in \tldC_{\VA} \,|\, \tldE_{ij (0)} c = \Phi_{ij (0)} c = 0 \text{ for all } i, j = 1, \dots, N \}.
\]
As $Q_{(0)}$ is a derivation,
\begin{align*}
\tldE_{ij (0)} Q_{(0)} c &= Q_{(0)} \tldE_{ij (0)} c - (Q_{(0)} \tldE_{ij})_{(0)} c = (Q_{(0)}^2 \Phi_{ij})_{(0)} c = 0,\\
\Phi_{ij (0)} Q_{(0)} c &= - Q_{(0)} \Phi_{ij (0)} c + (Q_{(0)} \Phi_{ij})_{(0)} c = \tldE_{ij (0)} c = 0
\end{align*}
for any section $c \in C_{\VA}$ and $i,j=1,\dots,N$. Therefore, $(C_{\VA}, d_{\VA})$ is a sheaf of subcomplexes of $(\tldC_{\VA}, d_{\VA})$.
For an open subset $\tldU \subset \frX$, we write the cohomology
\[
H^{\infty/2+\bullet}_{\VA}(\frg, \tldcalD^\ch_{\frX, \hbar}(\tldU)) \coloneqq H^{\bullet}(C_{\VA}(\tldU), d_{\VA})
\]
and call it the (relative) BRST cohomology of $\tldcalD^\ch_{\frX, \hbar}(\tldU)$ with respect to $\frg$.

\begin{lemma}
The BRST cohomology $H_{\VA}^{\infty/2+\bullet}(\frg, \tldcalD^\ch_{\frX, \hbar}(\tldU))$ is equipped with the structure of a ($\Z$-graded) $\hbar$-adic vertex superalgebra.
\end{lemma}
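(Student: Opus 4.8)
The plan is to exploit that $d_{\VA} = (1/\hbar) Q_{(0)}$ is an odd derivation of charge $+1$ for all the $(n)$-products, so that the vertex superalgebra operations descend to cohomology. First I would record the key algebraic fact, namely the commutator formula $[a_{(0)}, b_{(n)}] = (a_{(0)} b)_{(n)}$ (with the super-bracket), which is a consequence of Borcherds' identity \eqref{eq:Borcherds}. Applying this to the odd element $Q$ and rewriting gives
\[
Q_{(0)}(a_{(n)} b) = (Q_{(0)} a)_{(n)} b + (-1)^{p(a)} a_{(n)} (Q_{(0)} b)
\]
for all $a, b \in \tldC_{\VA}(\tldU)$ and $n \in \Z$, the sign reflecting that $Q$ is odd. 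Since $\hbar$ is central and $Q_{(0)}$ maps into $\hbar\,\tldC_{\VA}$, the operator $d_{\VA} = (1/\hbar) Q_{(0)}$ inherits the same odd, charge-raising derivation property. Combined with $d_{\VA}^2 = 0$ from \autoref{prop:total-BRST-cpx}, this shows that for cocycles $a, b$ the product $a_{(n)} b$ is again a cocycle, and that it is a coboundary whenever $a$ or $b$ is. Hence the $(n)$-products descend to well-defined $\C[[\hbar]]$-bilinear operations on the cohomology.

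Next I would check that the remaining structure maps descend. The vacuum $\bfone$ satisfies $Q_{(0)} \bfone = 0$ (by axiom~(c), the zero mode annihilates the vacuum), so $\bfone$ is a cocycle and represents the vacuum class. The translation operator satisfies $[\partial, a_{(0)}] = 0$ (this is the $n = 0$ case of $[\partial, a_{(n)}] = -n\, a_{(n-1)}$, which follows from axiom~(d)); applied to $Q$ it gives $[\partial, d_{\VA}] = 0$, so $\partial$ preserves cocycles and coboundaries and descends. Because $d_{\VA}$ raises the charge grading by one, the cohomology is $\Z$-graded by charge. All the defining axioms of a vertex superalgebra — Borcherds' identity together with the vacuum and translation axioms — are $\C[[\hbar]]$-multilinear identities that hold at the level of representatives and therefore pass to cohomology classes verbatim.

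Finally, to upgrade this to an \emph{$\hbar$-adic} vertex superalgebra I would address the topological conditions. Continuity of the descended $(n)$-products is immediate from continuity on $\tldC_{\VA}(\tldU)$, and reducing modulo $\hbar^k$ one sees that $d_{\VA}$ descends to a differential on each $C_{\VA}(\tldU)/\hbar^k C_{\VA}(\tldU)$, making the associated cohomology a genuine vertex superalgebra. The main obstacle is the requirement that the cohomology be a flat (equivalently, over the discrete valuation ring $\C[[\hbar]]$, an $\hbar$-torsion-free) and $\hbar$-adically complete module. The cocycle module $\ker d_{\VA}$ is a submodule of the torsion-free module $C_{\VA}(\tldU)$ and hence torsion-free; the danger is that the quotient $\ker d_{\VA}/\operatorname{Im} d_{\VA}$ acquires spurious $\hbar$-torsion, which happens precisely when $\operatorname{Im} d_{\VA}$ fails to be $\hbar$-saturated. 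I would control this by filtering by $\hbar$-degree and invoking strictness of $d_{\VA}$ with respect to this filtration, comparing with the classical (Poisson) BRST differential obtained modulo $\hbar$; the cleanest resolution, however, comes once the vanishing theorem concentrates the cohomology in charge~$0$ and supplies an explicit description, from which both torsion-freeness and $\hbar$-adic completeness (degree by degree in conformal weight) follow directly.
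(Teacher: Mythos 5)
Your proposal is correct and its core --- the derivation identity $Q_{(0)}(a_{(n)}b) = (Q_{(0)}a)_{(n)}b + (-1)^{p(a)}a_{(n)}(Q_{(0)}b)$, used to show that the $(n)$-products preserve $\Ker d_{\VA}$ and that $(\Ker d_{\VA})_{(n)}(\Im d_{\VA}) \subset \Im d_{\VA}$ --- is exactly the paper's proof of this lemma. The paper stops there, leaving the vacuum, translation, charge grading and the $\hbar$-adic module conditions (flatness and completeness) implicit; your extra verification of these, including correctly flagging that torsion-freeness of the cohomology is not automatic and is ultimately settled by the strictness/vanishing arguments of the later sections, is sound and simply records more than the paper does.
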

\begin{proof}
Again using that the coboundary operator $Q_{(0)}$ is a derivation, we see that $Q_{(0)} (a_{(n)} b) = (-1)^{p(a)} a_{(n)} Q_{(0)} b + (Q_{(0)} a)_{(n)} b$ for $a,b \in C_{\VA}$ and $n \in \Z$. This implies that $a_{(n)} b \in \Ker d_{\VA}$ for $a,b\in\Ker d_{\VA}$. Also, $a_{(n)} (Q_{(0)} b) = Q_{(0)} (a_{(n)} b)$ for $a \in \Ker d_{\VA}$ and $b \in C_{\VA}$. Thus, $(\Ker d_{\VA})_{(n)} (\Im d_{\VA}) \subset \Im d_{\VA}$.
\end{proof}

Now, using the BRST cohomology, we construct a sheaf of $\hbar$-adic vertex superalgebras on the Hilbert scheme $M$. For an open subset $U \subset M$, take an open subset $\tldU \subset \frX$ such that $\tldU$ is closed under the $G$-action and $U = (\tldU \cap \mu^{-1}(0))/G$. Crucially, by \autoref{lemma:support-BRST-cohom} below, the BRST cohomology $H_{\VA}^{\infty/2 + \bullet}(\frg, \tldcalD^\ch_{\frX, \hbar}(\tldU))$ does not depend on the choice of the open subset $\tldU$ satisfying these conditions. Therefore, it defines a presheaf $U \mapsto H_{\VA}^{\infty/2 + \bullet}(\frg, \tldcalD^\ch_{\frX, \hbar}(\tldU))$ over $M$, which we use to define the BRST cohomology sheaf:
\begin{definition}\label{def:BRST-sheaf}
The sheaf on the Hilbert scheme $M=\Hilb^N(\C^2)$ associated with the presheaf $U \mapsto H_{\VA}^{\infty/2 + \bullet}(\frg, \tldcalD^\ch_{\frX, \hbar}(\tldU))$ is denoted by $\calH_{\VA}^{\infty/2 + \bullet}(\frg, \tldcalD^\ch_{\frX, \hbar})$.
\end{definition}

\medskip

We comment that the BRST cochain complex can be decomposed into a double cochain complex. Recall the decomposition \eqref{eq:bc-double-grade} of the $\hbar$-adic $bc$-system $\Cl^{p,q}_{\hbar}(T^* \frg)$. Set $\tldC_{\VA}^{p,q} = \tldcalD^\ch_{\frX, \hbar} \hatotimes \Cl^{p,q}_{\hbar}(T^* \frg)$ for $p,q \in \Z$. Then, $\tldC_{\VA}^{n} = \prod_{p + q = n} \tldC_{\VA}^{p, q}$ for any $n \in \Z$. Define
\begin{align}\label{eq:d-plus}
d^{+}_{\VA} &\coloneqq \hbar^{-1} \sum_{i,j} \sum_{n \ge 0} \Psi_{ij (-n-1)} \tldE_{ij (n)} \\
&\quad + \hbar^{-1} \sum_{ijklpq} \sum_{n \ge 0} \sum_{m=0}^{n} c_{ijkl}^{pq} \Psi_{ij (-n+m-1)} \Psi_{kl (-m)} \Phi_{pq (n)} \nonumber \\
&= \sum_{i,j} \sum_{n \ge 0} \Psi_{ij (-n-1)} (\hbar^{-1} \tldE_{ij (n)}) \nonumber \\
&\quad + \sum_{ijklpq} \sum_{n \ge 0} \sum_{m=0}^{n} c_{ijkl}^{pq} \Psi_{ij (-n+m-1)} \Psi_{kl (-m)} \frac{\partial}{\partial \Psi_{pq (-n-1)}} \nonumber
\end{align}
and
\begin{align}\label{eq:d-minus}
d^{-}_{\VA} &\coloneqq \hbar^{-1} \sum_{i,j} \sum_{n \ge 0} \tldmu_\ch(E_{ij})_{(-n-1)} \Psi_{ij (n)} \\
&\quad + \hbar^{-1} \sum_{ijklpq} \sum_{n \ge 0} \sum_{m=0}^{n} c_{ijkl}^{pq} \Phi_{pq (-n-1)} \Psi_{ij (n-m)} \Psi_{kl (m)} \nonumber \\
&= \sum_{i,j} \sum_{n \ge 0} \tldmu_\ch(E_{ij})_{(-n-1)} \frac{\partial}{\partial \Phi_{ij (-n-1)}} \nonumber \\
&\quad + \hbar \sum_{ijklpq} \sum_{n \ge 0} \sum_{m=0}^{n} c_{ijkl}^{pq} \Phi_{pq (-n-1)} \frac{\partial}{\partial \Phi_{ij (-n+m-1)}} \frac{\partial}{\partial \Phi_{kl (-m-1)}}. \nonumber
\end{align}
Then, $d^{+}_{\VA}$ maps from $\tldC_{\VA}^{p, q}$ to $\tldC_{\VA}^{p+1, q}$, $d^{-}_{\VA}$ maps from $\tldC_{\VA}^{p, q}$ to $\tldC_{\VA}^{p, q+1}$, and $d_{\VA} = d^+_{\VA} + d^-_{\VA}$ and $d^+_{\VA} \circ d^-_{\VA} = - d^-_{\VA} \circ d^+_{\VA}$ hold. Thus, we obtain a double complex $(\tldC_{\VA}, d^+_{\VA}, d^-_{\VA})$ whose total complex is the cochain complex $(\tldC_{\VA}, d_{\VA})$. Setting $C_{\VA}^{p,q} = C_{\VA} \cap \tldC^{p,q}_{\VA}$, we obtain a double complex $(C_{\VA}^{p,q}, d_{\VA}^{+}, d_{\VA}^{-})$ whose total complex is the BRST complex $(C_{\VA}, d_{\VA})$.


\subsection{Poisson BRST Reduction}
\label{sec:Poisson-brst}

By considering the associated graded, we make the sheaf $(C_{\VA}, d_{\VA})$ of cochain complexes of $\hbar$-adic vertex superalgebras into a sheaf $(C_{\VPA}, d^{+}_{\VPA}, d^{-}_{\VPA})$ of double complexes of vertex Poisson superalgebras.

\medskip

The cochain complex $\tldC_{\VA} = \tldcalD^\ch_{\frX, \hbar} \hatotimes \Cl_{\hbar}(T^* \frg)$ is naturally equipped with a filtration $\hF_{\bullet} \tldC_{\VA}$ by powers of $\hbar$: $\hF_p \tldC_{\VA} \coloneqq \hbar^p \tldC_{\VA}$ for $p \in \Z_{\ge 0}$. For each $p \in \Z_{\ge 0}$, the associated graded space is
\[
\hGr_{p} \tldC_{\VA} = \hF_{p} \tldC_{\VA} / \hF_{p+1} \tldC_{\VA} \simeq \JetBundle{\frX} \otimes \Lambda^\vertex(T^* \frg)
\]
as commutative vertex superalgebra. Recall that there is a natural vertex Poisson superalgebra structure on $\hGr_{p} \tldC_{\VA}$ with the Poisson structure $Y_{+}(a, z)$ for $a \in \hGr_{p} \tldC_{\VA}$ given by $Y_{+}(a, z) = \hbar^{-1} Y(a, z) \bmod \hbar$. By abuse of notation, we write $a_{(n)}$ for the modes of $Y_{+}(a, z) = \sum_{n \ge 0} a_{(n)} z^{-n-1}$, i.e.\ the operator $a_{(n)}$ on $\hGr_{p} \tldC_{\VA}$ is the one induced from $\hbar^{-1} a_{(n)} \bmod \hbar$ on $\tldC_{\VA}$. Then, the isomorphism $\hGr_{p} \tldC_{\VA} \simeq \JetBundle{\frX} \otimes \Lambda^\vertex(T^* \frg)$ is one of vertex Poisson superalgebras.

By restriction, we obtain the filtered complex $\hF_p C_{\VA} = C_{\VA} \cap \hF_p \tldC_{\VA}$. For $p \in \Z_{\ge 0}$, the associated graded space is the vertex Poisson superalgebra
\[
\hGr_{p} C_{\VA} = \{ c \in \JetBundle{\frX} \otimes \Lambda^\vertex(T^* \frg) \,|\, \tldE_{ij (0)} c = \Phi_{ij (0)} c = 0 \text{ for } i,j = 1, \dots, N \}.
\]
Note that $\hGr_{p} \tldC_{\VA}$ and $\hGr_{p} C_{\VA}$ are independent of $p \in \Z_{\ge 0}$.

Then, the coboundary operators $d_{\VA}^{+}$ and $d_{\VA}^{-}$ induce coboundary operators on $\hGr_{p} \tldC_{\VA}$ and $\hGr_p C_{\VA}$. They can be explicitly described as
\begin{align}\label{eq:d-VPA-plus}
d^{+}_{\VPA} &\coloneqq \sum_{i,j} \sum_{n \ge 0} \Psi_{ij (-n-1)} \tldE_{ij (n)} \\
&\quad + \sum_{ijklpq} \sum_{n \ge 0} \sum_{m=0}^{n} c_{ijkl}^{pq} \Psi_{ij (-n+m-1)} \Psi_{kl (-m)} \Phi_{pq (n)} \nonumber \\
&= \sum_{i,j} \sum_{n \ge 0} \Psi_{ij (-n-1)} \tldE_{ij (n)} \nonumber \\
&\quad + \sum_{ijklpq} \sum_{n \ge 0} \sum_{m=0}^{n} c_{ijkl}^{pq} \Psi_{ij (-n+m-1)} \Psi_{kl (-m)} \frac{\partial}{\partial \Psi_{pq (-n-1)}} \nonumber
\end{align}
and
\begin{equation}\label{eq:d-VPA-minus}
d^{-}_{\VPA} \coloneqq \sum_{i,j} \sum_{n \ge 0} \tldmu_{\infty}(E_{ij})_{(-n-1)} \Psi_{ij (n)} = \sum_{i,j} \sum_{n \ge 0} \tldmu_{\infty}(E_{ij})_{(-n-1)} \frac{\partial}{\partial \Phi_{ij (-n-1)}},
\end{equation}
where $\tldmu_{\infty} \colon S(\frg \otimes \C[t^{-1}] t^{-1}) \longrightarrow \JetBundle{\frX}(\frX)$ is a homomorphism of vertex Poisson superalgebras induced from the Lie algebra homomorphism $\tldmu\colon \frg \longrightarrow \tldcalO_{\frX}(\frX)$. Note that
\[
\JetBundle{\frX}(\frX) = \C[x_{ij (-n)}, y_{ij (-n)}, \gamma_{i (-n)}, \beta_{i (-n)}, \psi_{i (-n)}, \phi_{i (-n)} \,|\, \substack{i, j = 1, \dots, N \\ n = 1, 2, \dots}\,]
\]
as a commutative superalgebra. Then, the homomorphism $\tldmu_{\infty}$ is given by
\[
\tldmu_{\infty}(E_{ij}) = \sum_{p=1}^{N} (x_{ip (-1)} y_{pj} - x_{pj (-1)} y_{ip}) + \gamma_{i (-1)} \beta_{j} + \psi_{i (-1)} \phi_{j}
\]
for the matrices $E_{ij} \in \frg$ for $i,j=1,\dots,N$.

Setting $\tldC_{\VPA} = \hGr_{p} \tldC_{\VA} = \JetBundle{\frX} \otimes \Lambda^\vertex(T^* \frg)$ and $C_{\VPA} = \hGr_p C_{\VA}$, we obtain double complexes $(\tldC_{\VPA}, d^{+}_{\VPA}, d^{-}_{\VPA})$ and $(C_{\VPA}, d^{+}_{\VPA}, d^{-}_{\VPA})$, respectively, each defined on a vertex Poisson superalgebra.


\subsection{A Koszul Complex Associated with the BRST Complex}
\label{sec:koszul-cpx}

In the following, using the vertex Poisson algebra complex of the previous section, we associate a Koszul complex (see, e.g., Section 16 in \cite{Mat89}) with the BRST complex.

\medskip

For $p \in \Z$, we consider the vertex Poisson algebra complex $(\tldC^{p, \bullet}_{\VPA}, d^{-}_{\VPA})$ from above. By the explicit description \eqref{eq:d-VPA-minus} of the coboundary operator $d^{-}_{\VPA}$, for an open subset $\tldU \subset \frX$, the complex $(\tldC^{p, \bullet}_{\VPA}(\tldU), d^{-}_{\VPA})$ coincides with the Koszul complex $K_{- \bullet}(\{\partial^m \tldmu_{\infty}(E_{ij}) \,|\, \substack{i,j = 1, \dots, N \\ m = 1, 2, \dots}\}, \JetBundle{\frX}(\tldU))$ associated with the sequence of sections $\{\partial^{m} \tldmu_{\infty}(E_{ij}) \,|\, \substack{i,j = 1, \dots, N \\ m = 1, 2, \dots}\}$ on $\JetBundle{\frX}(\tldU)$.

\begin{lemma}\label{lemma:regular-seq}
For any open subset $\tldU \subset \frX$, the sequence $\{\partial^m \tldmu_{\infty}(E_{ij}) \,|\, \substack{i,j = 1, \dots, N \\ m = 1, 2, \dots}\}$ is a regular sequence on $\JetBundle{\frX}(\tldU)$.
\end{lemma}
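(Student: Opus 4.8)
The plan is to deduce this arc-space statement from its finite-dimensional, purely even ``classical'' shadow, namely the smoothness of the moment-map fibre $\mu^{-1}(0)\cap\frX$, and then to propagate regularity first through the jet functor and afterwards through the fermionic directions. Write $B\coloneqq\jet{\infty}{(\calO_{\frX}(\tldU))}$ for the purely bosonic arc algebra and note that, as a super-commutative ring, $\JetBundle{\frX}(\tldU)=B\otimes_{\C}\Lambda$, where $\Lambda=\Lambda_{\C}(\psi_{i(-n)},\phi_{i(-n)}\mid i=1,\dots,N;\ n\geq1)$. Under this identification each generator splits as $\partial^m\tldmu_{\infty}(E_{ij})=\partial^m\mu^*(E_{ij})+\partial^m(\psi_{i(-1)}\phi_{j(-1)})$, i.e.\ as the jet of the ordinary (bosonic) comoment map plus a correction of fermionic degree two. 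The heart of the matter is thus the bosonic sequence $\{\partial^m\mu^*(E_{ij})\}$, and the fermionic correction will be absorbed at the very end by an associated-graded argument.

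For the bosonic classical step, I would use that $G=\GL_N$ acts freely on $\frX$ (\autoref{sec:quiver-const}): then every stabiliser Lie algebra vanishes, so the differential of the moment map $\mu\colon\frX\to\frg^*$ is surjective at each point, $\mu$ is a submersion, and $\mu^{-1}(0)\cap\frX$ is a smooth closed subvariety of the smooth variety $\frX$ of codimension $\dim\frg=N^2$. In particular the $N^2$ functions $\mu^*(E_{ij})$ cut out a smooth complete intersection, so they form a regular sequence in $\calO_{\frX}(\tldU)$ for every affine open $\tldU\subset\frX$.

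Next I would lift this to $B$ level by level. For each $m$, the jet scheme $\jet{m}{\frX}$ is smooth (jet schemes of smooth varieties are smooth), hence Cohen--Macaulay, while by the explicit presentation of jet schemes recalled in \autoref{sec:jet-bundles} the closed subscheme of $\jet{m}{\frX}$ cut out by $\{\partial^k\mu^*(E_{ij})\mid 0\le k\le m,\ i,j=1,\dots,N\}$ is precisely $\jet{m}{(\mu^{-1}(0)\cap\frX)}$, which is again smooth. Its codimension equals $(m+1)N^2$, exactly the number of defining equations; since in a Cohen--Macaulay ring a proper ideal of height equal to the number of its generators is generated by those generators as a regular sequence, these functions form a regular sequence in $\calO_{\jet{m}{\frX}}(\tldU)$. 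As $B$ is obtained from $\calO_{\jet{m}{\frX}}(\tldU)$ by freely adjoining the higher jet variables (which do not occur in $\partial^k\mu^*(E_{ij})$ for $k\le m$), regularity persists in $B$, and ordering the family by jet level and letting $m\to\infty$ shows that the full sequence $\{\partial^m\mu^*(E_{ij})\}$ is regular in $B$.

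Finally, for the fermionic step: since $\Lambda$ is free over $\C$, the extension $B\hookrightarrow B\otimes_{\C}\Lambda=\JetBundle{\frX}(\tldU)$ is faithfully flat and the ideal generated by the $\partial^m\mu^*(E_{ij})$ stays proper, so flat base change keeps $\{\partial^m\mu^*(E_{ij})\}$ regular on $\JetBundle{\frX}(\tldU)$. Equipping $\JetBundle{\frX}(\tldU)$ with the decreasing, exhaustive and separated filtration by fermion number, the symbol (lowest fermionic-degree component) of $\partial^m\tldmu_{\infty}(E_{ij})$ is exactly $\partial^m\mu^*(E_{ij})$, and the associated graded ring is $\JetBundle{\frX}(\tldU)$ itself; as the symbols form a regular sequence there, the standard lifting lemma for separated filtrations yields that $\{\partial^m\tldmu_{\infty}(E_{ij})\}$ is regular, as claimed. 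I expect the real obstacle to lie not in this fermionic bookkeeping, which is formal once the symbols are identified, but in the bosonic jet step: one must genuinely verify that smoothness of $\mu^{-1}(0)\cap\frX$ forces each $\jet{m}{(\mu^{-1}(0)\cap\frX)}$ to be a smooth complete intersection of the expected codimension, and that the finite-level regularity is compatible with the infinite-jet limit. It is precisely here that the freeness of the $G$-action is indispensable; without it the fibre would be singular and one would have to replace smoothness by a rational-singularities/irreducibility argument for the arc space.
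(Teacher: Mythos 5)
Your proof is correct in substance and follows the same two-stage skeleton as the paper's proof of \autoref{lemma:regular-seq} --- first establish regularity of the purely bosonic sequence $\{\partial^m\mu^*(E_{ij})\}$ via finite-level jet schemes and pass to the limit, then absorb the fermionic correction --- but the technical inputs at both stages are genuinely different. For the bosonic stage the paper never uses freeness of the $G$-action: it invokes flatness of the moment map $\mu$ on all of $T^*V$ to see that $\mu^{-1}(0)$ is a complete intersection, then cites Proposition~1.4 of \cite{Mustata01} (jet schemes of locally complete intersections are again locally complete intersections) together with the dimension count $\Dim_{\C}\jet{n}{\frX}=\Dim_{\C}\jet{n}{M}+2\Dim_{\C}\jet{n}{G}$ to conclude regularity on $\C[\jet{n}{T^*V}]$. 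Your route --- freeness of $G$ on $\frX$ makes $\mu|_{\frX}$ a submersion, so $\mu^{-1}(0)\cap\frX$ is a smooth complete intersection, its jet schemes are then smooth of the expected codimension, and the Cohen--Macaulay height criterion finishes the finite level --- is more elementary: it avoids both the flatness of $\mu$ (a nontrivial input) and Musta\c{t}\u{a}'s theorem, at the price of proving regularity only over $\frX$ rather than over all of $T^*V$, which is all the lemma asks for. For the fermionic stage the paper argues by contradiction, projecting a putative relation onto the component $\C[\jet{\infty}{T^*V}]\otimes 1$; your filtration-by-fermion-number argument is arguably more robust, since the paper's projection as written only produces a relation $\sum_s a_s r_s=0$, which by itself does not contradict regularity (Koszul relations always exist). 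One caveat on your last step: the ``standard lifting lemma'' from the associated graded ring to the filtered ring requires the filtration to be complete or bounded, and the fermion-number filtration on $B\otimes_{\C}\Lambda$ is neither. This is easily repaired: any finite initial segment of the sequence, and any witness of zero-divisibility after applying the algebra projection $\Lambda=\Lambda_F\otimes\Lambda_{F^c}\longrightarrow\Lambda_F$ onto the finitely many fermionic variables actually involved, lives in $B\otimes_{\C}\Lambda_F$, where the filtration is bounded and the lifting lemma applies verbatim. With that patch (and with attention restricted to affine $\tldU$, a point the paper's own proof also glosses over, as it only treats $\frX$ itself), your argument is complete.
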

\begin{proof}
Since the moment map $\mu \colon T^* V \longrightarrow \frg^*$ is a flat morphism, the associated algebra homomorphism $\mu^*$ maps $S(\frg)$ injectively into $\C[T^* V] = \calO_{\frX}(\frX)$. Thus, $\{ \mu^*(E_{ij}) \,|\, i, j = 1, \dots, N \}$ is a regular sequence on $\C[T^* V]$, and $\mu^{-1}(0)$ is a complete intersection. By Proposition~1.4 in \cite{Mustata01}, $J_n \mu^{-1}(0)$ is also a locally complete intersection for any $n \in \Z_{\ge 0}$. By comparing the dimensions of the manifolds $\Dim_{\C} J_{n} \frX = \Dim_{\C} J_{n} M + 2 \Dim_{\C} J_{n} G$, we see that $\Dim_{\C} J_n \frX = \Dim_{\C} J_n \mu^{-1}(0) + \#\{ \partial^m \mu^*(E_{ij}) \,|\, \substack{i,j = 1, \dots, N \\ m = 0, 1, \dots, n}\}$. This implies that $\{ \partial^m \mu^*(E_{ij}) \,|\, \substack{i,j = 1, \dots, N \\ m = 0, 1, \dots, n}\}$ is again a regular sequence on $\C[J_n T^* V]$ for any $n \in \Z_{\ge 0}$. Therefore, $\{ \partial^m \mu^*(E_{ij}) \,|\, \substack{i,j = 1, \dots, N \\ m = 0, 1, \dots}\}$ is a regular sequence on $\C[J_{\infty} T^* V]$.

We arrange the sequence as $r_s \coloneqq \partial^{m_s} \mu^*(E_{i_s j_s})$ for $s = 1, 2, \dots$ and define $n_s = \partial^{m_s}\tldmu_{\infty}(E_{i_s j_s}) - r_{s} = \partial^{m_s} (\psi_{i_s (-1)} \phi_{j_s})$. Now, assume for the sake of contradiction that the sequence $\{ r_s + n_s \,|\, s = 1, 2, \dots \}$ is not regular on $\JetBundle{\frX}(\frX) = \C[J_{\infty} T^* V] \otimes \Lambda(\psi_{i (-n)}, \phi_{i (-n)} \,|\, \substack{i = 1, \dots, N \\ n = 1, 2, \dots})$. Then, there is an $s \in \Z_{>0}$ such that $r_s + n_s$ is a zero-divisor on $\JetBundle{\frX}(\frX) / (r_1 + n_1, \dots, r_{s-1} + n_{s-1})$. This implies that there exist $a_1,\dots,a_s \in \C[J_{\infty} T^* V]$ and $b_1,\dots,b_s \in \Lambda(\psi_{i (-n)}, \phi_{i (-n)} \,|\, \substack{i = 1, \dots, N \\ n = 1, 2, \dots})$ such that $\sum_{i=1}^{s} (a_s + b_s)(r_s + n_s) = 0$ in $\JetBundle{\frX}(\frX)$. Taking the component $\C[J_{\infty} T^* V] \otimes 1 \subset \JetBundle{\frX}(\frX)$, the equation implies that $\sum_{i=1}^{s} a_s r_s = 0$ in $\C[J_{\infty} T^* V]$, contradicting the fact that $\{ r_s \,|\, s = 1, 2, \dots \}$ is a regular sequence on $\C[J_{\infty} T^* V]$. Thus $\{ \partial^{n_s} \tldmu_{\infty}(E_{i_s j_s}) = r_s + n_s \,|\, s = 1, 2, \dots \}$ is a regular sequence on $\JetBundle{\frX}(\frX)$.
\end{proof}

For an open subset $\tldU \subset \frX$, let $\tau_{\ge \bullet} \tldC_{\VPA}(\tldfrU)$ be the column filtration associated with the double complex $(\tldC_{\VPA}(\tldU), d^{+}_{\VPA}, d^{-}_{\VPA})$, i.e.\
\[
\tau_{\ge p} \tldC_{\VPA}(\tldU) = \prod_{k \ge p, q \le 0} \tldC^{k,q}_{\VPA}(\tldU)
\]
for $p \in \Z$. We consider the spectral sequence $\dE_r^{p,q}(\tldfrU)$ associated with this column filtration. Then the zeroth term of the spectral sequence coincides with the tensor product of the Koszul complex and the exterior algebra
\[
\dE_0^{p,q}(\tldU) = K_{-q}(\{\partial^m \tldmu_{\infty}(E_{ij}) \,|\, \substack{i,j = 1, \dots, N \\ m = 1, 2, \dots}\}, \JetBundle{\frX}(\tldU)) \otimes \Lambda^p(\Psi_{ij (-n)} \,|\, \substack{i, j = 1, \dots, N \\ n = 1, 2, \dots})
\]
as discussed above. Thus, the first term is the tensor product of the Koszul homology and the exterior algebra
\[
\dE^{p,q}_1 = H_{-q}^{\Koszul}(\{\partial^m \tldmu_{\infty}(E_{ij}) \,|\, \substack{i,j = 1, \dots, N \\ m = 1, 2, \dots}\}, \JetBundle{\frX}(\tldU)) \otimes \Lambda^p(\Psi_{ij (-n)} \,|\, \substack{i, j = 1, \dots, N \\ n = 1, 2, \dots}).
\]
Then, by \autoref{lemma:regular-seq} above,
\begin{equation}\label{eq:10}
\dE^{p,q}_{1} =
\begin{cases}
\JetBundle{\frX}(\tldU) / \sum_{i,j,m} \JetBundle{\frX}(\tldU) \partial^m \tldmu_{\infty}(E_{ij}) \otimes \Lambda^{p}(\Psi_{ij (-n)} \,|\, \substack{i, j = 1, \dots, N \\ n = 1, 2, \dots}), & \!\!\! q = 0, \\
0, & \!\!\! q \ne 0.
\end{cases}
\end{equation}
The coboundary operator $d_1 = d^{+}_{\VPA}\colon \dE^{p,q}_{1} \longrightarrow \dE^{p+1, q}_{1}$ is given by \eqref{eq:d-VPA-plus}, and thus $(\dE^{p,q}_{1}, d_1 = d^{+}_{\VPA})$ coincides with the Chevalley complex \cite{CarEil56,Wei94} of the Lie algebra $\frg[t]$ with coefficients in the Koszul homology.

\begin{lemma}\label{lemma:conv-spec-double-cpx}
For any open subset $\tldU \subset \frX$, the spectral sequence $\dE_r^{p,q}(\tldU)$ converges to the total cohomology, i.e.\
$\dE_r^{p,q}(\tldU) \Longrightarrow H^{p+q}(\tldC_{\VPA}(\tldU), d_{\VPA})$.
\end{lemma}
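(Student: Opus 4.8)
The plan is to deduce convergence from the structure of the column filtration together with the collapse provided by \autoref{lemma:regular-seq}. The essential subtlety is that $\tldC_{\VPA}(\tldU)$ is built from infinite products---the completion forces $\tldC^n_{\VPA}(\tldU)=\prod_{p+q=n}\tldC^{p,q}_{\VPA}(\tldU)$ with $p\to+\infty$ and $q\to-\infty$---so the naive convergence statement for a bounded double complex does not apply, and one must rule out $\varprojlim^1$-type obstructions by hand.

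First I would verify that the column filtration is exhaustive, Hausdorff and complete. Fix a total degree $n$. Since $\tldC^{p,q}_{\VPA}=0$ unless $p\ge0$ and $q\le0$, we have $\tau_{\ge p}\tldC^n_{\VPA}(\tldU)=\prod_{k\ge\max(p,n,0)}\tldC^{k,n-k}_{\VPA}(\tldU)$. Hence $\tau_{\ge p}\tldC_{\VPA}(\tldU)=\tldC_{\VPA}(\tldU)$ for all $p\le0$, so the filtration is exhaustive; the requirement $k\ge p$ for every $p$ is vacuous, so $\bigcap_p\tau_{\ge p}\tldC^n_{\VPA}(\tldU)=0$ and the filtration is Hausdorff; and the quotients $\tldC^n_{\VPA}(\tldU)/\tau_{\ge p}\tldC^n_{\VPA}(\tldU)$ are the finite partial products $\prod_{\max(0,n)\le k<p}\tldC^{k,n-k}_{\VPA}(\tldU)$, whose inverse limit over $p$ reconstitutes the full product $\tldC^n_{\VPA}(\tldU)$. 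Thus the filtration is complete, and the spectral sequence is conditionally convergent. Here one also checks that $d^{+}_{\VPA}$ and $d^{-}_{\VPA}$ are continuous for this topology, which is immediate from \eqref{eq:d-VPA-plus} and \eqref{eq:d-VPA-minus}, so that the inverse-limit description is compatible with the differentials.

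Second, by \autoref{lemma:regular-seq} and the resulting computation \eqref{eq:10}, the first page is concentrated in the single row $q=0$: $\dE^{p,q}_1(\tldU)=0$ for $q\ne0$. Consequently $d_r=0$ for all $r\ge2$, so $\dE^{p,q}_2(\tldU)=\dE^{p,q}_{\infty}(\tldU)$ and, for each bidegree, the groups $\dE^{p,q}_r(\tldU)$ stabilise after finitely many pages. In particular the derived term $R\dE_{\infty}(\tldU)$ vanishes. Combining this with the conditional convergence from the first step, the complete convergence theorem for complete, Hausdorff, exhaustively filtered complexes yields strong convergence $\dE^{p,q}_r(\tldU)\Longrightarrow H^{p+q}(\tldC_{\VPA}(\tldU),d_{\VPA})$, as claimed.

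I expect the main obstacle to be convergence itself rather than any algebraic identity. Because of the infinite products coming from the $\hbar$-adic completion, a generic spectral sequence in this situation need not converge, and may abut to the wrong object through nonvanishing $\varprojlim^1$-terms. The two facts that save the argument are the completeness of the column filtration---which holds precisely because the total complex is the literal product of its columns---and the single-row collapse forced by regularity of the sequence $\{\partial^m\tldmu_{\infty}(E_{ij})\}$ in \autoref{lemma:regular-seq}, which makes the derived $E_{\infty}$-term vanish. Carefully confirming completeness and the compatibility of the differentials with the inverse-limit topology is the step deserving the most attention.
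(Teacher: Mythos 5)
Your proof is correct, but it takes a genuinely different route from the paper's. The paper's own argument is purely structural: for each $m \in \Z_{\ge 0}$ it exhibits the double subcomplex $(\tldC_{\VPA})_m(\tldU) = \partial^m R$, where $R = \calO_{\frX}(\tldU) \otimes \Lambda(T^*\C^N) \otimes \Lambda(T^*\frg)$ is embedded in $\jet{\infty}{R}$, observes that it is stable under $d^{\pm}_{\VPA}$ because these operators commute with the translation operator $\partial$, and notes that it is bounded in both directions; the spectral sequence of each bounded piece converges, and convergence of $\dE_r^{p,q}(\tldU)$ is inherited from this exhaustion. In particular, the paper never uses the first-page computation, so its argument works whether or not the Koszul homology collapses. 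You instead apply the complete convergence theorem (Theorem~5.5.10 of \cite{Weibel}) --- the tool the paper reserves for \autoref{lemma:conv-hE} --- directly to the column filtration. This needs exactly the two inputs you identify: completeness of the filtration, which holds because under the paper's conventions (cf.\ \eqref{eq:bc-double-grade} and the displayed definition of $\tau_{\ge p}$) each total-degree component is the literal product of its columns; and regularity, which you deduce from the concentration of $\dE_1$ in the row $q=0$ provided by \autoref{lemma:regular-seq} and \eqref{eq:10}, so that $d_r = 0$ for $r \ge 2$ and the derived $E_\infty$-term vanishes. There is no circularity in this, since identifying the first page requires no convergence statement. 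The trade-offs: your argument is a clean, standard application of Boardman--Weibel machinery, but its conclusion is contingent on the collapse (and on the product convention for the totalisation), hence less robust; the paper's exhaustion argument is more elementary and would survive even if \autoref{lemma:regular-seq} failed, at the cost of leaving implicit the passage from convergence on the bounded subcomplexes to convergence on the full complex.
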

\begin{proof}
Note that $\tldC^{p,q}_{\VPA} = 0$ unless $p \ge 0$ and $q \le 0$. To prove the asserted convergence, we construct subcomplexes that are bounded both from above and below. For $m \in \Z_{\ge 0}$, set $(\tldC_{\VPA})_m(\tldU) = \partial^m R$, where $R \coloneqq \calO_{\frX}(\tldU) \otimes \Lambda(T^* \C^N) \otimes \Lambda(T^* \frg)$ is regarded as a subalgebra of $\jet{\infty}{R}$ by the canonical embedding $j \colon R \hookrightarrow \jet{\infty}{R}$. By \eqref{eq:d-VPA-plus} and \eqref{eq:d-VPA-minus}, $d_{\VPA}$ preserves the subspace $(\tldC_{\VPA})_0(\tldU)$. Since $d_{\VPA}$ commutes with the translation operator $\partial$, $d_{\VPA}$ also preserves $(\tldC_{\VPA})_m(\tldU)$ for any $m \in \Z_{\ge 0}$. Therefore, $((\tldC_{\VPA})_m(\tldU), d^+_{\VPA}, d^-_{\VPA})$ is a double subcomplex of $(\tldC_{\VPA}(\tldU), d^+_{\VPA}, d^-_{\VPA})$. Consider the spectral sequence $(\dE_r^{p,q})_m(\tldU)$ associated with the double complex $((\tldC_{\VPA})_m(\tldU), d^+_{\VPA}, d^-_{\VPA})$. Since $(\tldC_{\VPA})_m(\tldU)$ is bounded, the spectral sequence $(\dE_r^{p,q})_m(\tldU)$ converges. Hence, so does the spectral sequence $\dE_r^{p,q}(\tldU)$.
\end{proof}

As a consequence, we obtain the following lemma.
\begin{lemma}\label{lemma:vanish-neg-VPA}
Let $\tldU \subset \frX$ be an open subset.
\begin{enumerate}
\item If $\tldU \cap \mu^{-1}(0) = \emptyset$, then $H^{n}(\tldC_{\VPA}^{\bullet}(\tldU), d_{\VPA}) = 0$ for all $n \in \Z$.
\item The cohomology $H^{n}(\tldC_{\VPA}^{\bullet}(\tldU), d_{\VPA})$ vanishes if $n < 0$.
\end{enumerate}
\end{lemma}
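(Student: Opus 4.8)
The plan is to read off both statements from the spectral sequence $\dE_r^{p,q}(\tldU)$ of the column filtration, whose first page was already identified in \eqref{eq:10}. First I would record the structural consequence of \autoref{lemma:regular-seq}: the regularity of the sequence $\{\partial^m\tldmu_{\infty}(E_{ij})\}$ forces the Koszul homology to be concentrated in top degree, so that $\dE_1^{p,q}(\tldU)$ vanishes for $q\neq0$ and equals $(\JetBundle{\frX}(\tldU)/I)\otimes\Lambda^p(\Psi_{ij(-n)})$ for $q=0$, where $I=\sum_{i,j,m}\JetBundle{\frX}(\tldU)\,\partial^m\tldmu_{\infty}(E_{ij})$. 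Since the $E_1$-page is concentrated in the single row $q=0$, every differential $d_r$ with $r\geq2$ shifts the $q$-coordinate and therefore vanishes; combined with the convergence established in \autoref{lemma:conv-spec-double-cpx}, this yields the collapse $H^n(\tldC_{\VPA}^{\bullet}(\tldU),d_{\VPA})\cong\dE_2^{n,0}(\tldU)$.

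Part (2) is then immediate. As $\Lambda^p(\Psi_{ij(-n)})=0$ for $p<0$, we have $\dE_1^{p,0}(\tldU)=0$ for $p<0$, hence $\dE_2^{n,0}(\tldU)=0$ and thus $H^n=0$ whenever $n<0$.

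For part (1) I would show that the coefficient module $\JetBundle{\frX}(\tldU)/I$ itself vanishes when $\tldU\cap\mu^{-1}(0)=\emptyset$, which annihilates the entire $E_1$-page and hence all cohomology. The key point is that $\mu^{-1}(0)$ is the zero locus of the functions $\mu^*(E_{ij})$, so the hypothesis means these functions have no common zero on $\tldU$; since the construction ultimately passes to the associated BRST cohomology sheaf, it is enough to verify the claim on an affine basis, where the Nullstellensatz gives $1=\sum_{ij}a_{ij}\mu^*(E_{ij})$ with $a_{ij}\in\calO_{\frX}(\tldU)$. Applying the jet inclusion $j\colon\calO_{\frX}(\tldU)\hookrightarrow\JetBundle{\frX}(\tldU)$ and using the identity $\tldmu_{\infty}(E_{ij})=j(\mu^*(E_{ij}))+\psi_{i(-1)}\phi_{j(-1)}$, I obtain $\sum_{ij}j(a_{ij})\tldmu_{\infty}(E_{ij})=1+n$, where $n=\sum_{ij}j(a_{ij})\psi_{i(-1)}\phi_{j(-1)}$. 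The correction $n$ is nilpotent: each of its monomials consumes two of the finitely many fermions $\psi_{1(-1)},\dots,\phi_{N(-1)}$, so $n^{N+1}=0$, whence $1+n$ is a unit with inverse $\sum_{k=0}^{N}(-n)^k$. Since $\sum_{ij}j(a_{ij})\tldmu_{\infty}(E_{ij})\in I$ and is a unit, we conclude $1\in I$, i.e.\ $\JetBundle{\frX}(\tldU)/I=0$, and the spectral sequence then gives $H^n=0$ for all $n$.

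The main obstacle is precisely this last step: passing from "no common zeros" to $1\in I$ at the level of the fermionically extended jet algebra. The bosonic unit-ideal statement is the Nullstellensatz, but the extra term $\psi_{i(-1)}\phi_{j(-1)}$ in the chiral moment map must be absorbed, and the observation that makes this work is that it is fermionic of bounded nilpotency order, so it perturbs the relation only by a nilpotent element and preserves the unit-ideal property. One should also take minor care that the expression $1=\sum_{ij}a_{ij}\mu^*(E_{ij})$ is genuinely available over the given $\tldU$; reducing to an affine basis, which suffices after sheafification, removes this difficulty.
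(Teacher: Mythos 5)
Your argument is essentially the paper's own proof: both read off part (2) from the fact that the first page \eqref{eq:10} of the column-filtration spectral sequence is concentrated in the row $q=0$ and the columns $p\ge 0$, and both deduce part (1) by combining \autoref{lemma:conv-spec-double-cpx} with the vanishing of the coefficient module $\JetBundle{\frX}(\tldU)\big/\sum_{i,j,m}\JetBundle{\frX}(\tldU)\,\partial^m\tldmu_{\infty}(E_{ij})$. The only difference is one of detail: the paper simply asserts this vanishing (it is the jet-level version of the remark in \autoref{sec:superHilb} that the quotient by the $\tldmu(E_{ij})$ is supported on $\mu^{-1}(0)$ because $\psi_i\phi_j$ is nilpotent), whereas you derive it from the Nullstellensatz together with the bounded nilpotency of the fermionic correction term — a correct elaboration, with the caveat you yourself flag that the Nullstellensatz step genuinely requires $\tldU$ to be affine, a restriction the paper's proof passes over silently.
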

\begin{proof}
Since $\JetBundle{\frX}(\tldU) / \sum_{i,j,m} \JetBundle{\frX}(\tldU) \partial^m \tldmu_{\infty}(E_{ij}) = 0$ if $\tldU \cap \mu^{-1}(0) = \emptyset$ in \eqref{eq:10}, it follows that $\dE^{p,q}_{1} = 0$ for any $p,q \in \Z$. Thus, $H^{p+q}(\tldC_{\VPA}^{\bullet}(\tldU), d_{\VPA}) = 0$ for any $p,q \in \Z$ by \autoref{lemma:conv-spec-double-cpx}, proving the first assertion.

By \eqref{eq:10}, $\dE^{p,q}_{1} = 0$ unless $p \in \Z_{\ge 0}$ and $q = 0$. Thus, by \autoref{lemma:conv-spec-double-cpx}, the cohomology $H^{p+q}(\tldC_{\VPA}^{\bullet}(\tldU), d_{\VPA})$ vanishes unless $p+q \ge 0$.
\end{proof}


\subsection{Cohomology of the Poisson BRST Reduction}
\label{sec:cohom-Poisson-BRST}

In the following, we compute the cohomology of the sheaf $(C_{\VPA}, d_{\VPA})$ of vertex Poisson algebra complexes from above.

\medskip

Recall that the first term of the spectral sequence $(\dE^{p,q}_{1}, d_1 = d^{+}_{\VPA})$ coincides with the Chevalley complex of the Lie algebra $\frg[t]$ with coefficients in the Koszul homology. Therefore, the second term $\dE_{2}^{p,q}$ is isomorphic to the Lie algebra cohomology, and thus
\begin{align}\label{eq:13}
\dE_2^{p,q}(\tldU) &\simeq H^{p}(H^{q}(\tldC_{\VPA}(\tldU), d^{-}_{\VPA}), d^{+}_{\VPA}) \\
&\simeq H^{p}(\frg, H^{\Koszul}_{-q}(\{\partial^{m} \tldmu_{\infty}(E_{ij}) \}_{i,j,m}, \tldcalO_{\frX}(\tldU))) \nonumber \\
&\simeq
\begin{cases}
H^{p}(\frg, \JetBundle{\frX}(\tldU) / \sum_{i,j,m} \JetBundle{\frX}(\tldU) \partial^m \tldmu_{\infty}(E_{ij})), & q = 0, \\
0, & q \ne 0. \nonumber
\end{cases}
\end{align}
Recall the affine open covering $\frX = \bigcup_{\lambda \vdash N} \tldU_{\lambda}$ in terms of partitions $\lambda$ of $N$ introduced in \autoref{sec:local-trivial}. By \autoref{prop:local-triv},
\begin{align}\label{eq:18}
\tldcalO_{\frX}(\tldU_{\lambda}) &= A_{\lambda} \otimes \Lambda(\{\psi : X^i Y^j \gamma \}_{\lambda}, \phi X^i Y^j \gamma \,|\, (i,j) \in \lambda) \\
&\quad \otimes \C[(B_{\lambda}^{\pm})_{ij} \,|\, i, j = 1, \dots, N] \otimes \C[\tldmu(E_{ij}) \,|\, i, j = 1, \dots, N], \nonumber
\end{align}
where $A_{\lambda}$ is the subalgebra of $\tldcalO_{\frX}(\tldU_{\lambda})$ generated by the $G$-invariant bosonic sections defined in \autoref{sec:local-trivial}. Note that the Lie group $G$ acts trivially on the subalgebra $A_{\lambda} \otimes \Lambda(\{\psi : X^i Y^j \gamma \}_{\lambda}, \phi X^i Y^j \gamma \,|\, (i,j) \in \lambda)$, while $\C[(B_{\lambda}^{\pm})_{ij} \,|\, i, j = 1, \dots, N] \otimes \C[\tldmu(E_{ij}) \,|\, i, j = 1, \dots, N]$ is as $G$-module isomorphic to the coordinate ring $\C[T^* G]$ of the cotangent bundle $T^* G \simeq G \times \frg^*$. The $\infty$-jet bundle $\JetBundle{\frX}(\tldU_{\lambda}) = \jet{\infty}{\tldcalO_{\frX}(\tldU_{\lambda})}$ is thus the tensor product $\JetBundle{\frX}(\tldU_{\lambda}) \simeq \JetBundle{M}(U_{\lambda}) \otimes \C[\jet{\infty}{T^* G}]$ of the $\jet{\infty}{G}$-invariant subalgebra $\JetBundle{M}(U_{\lambda}) \!=\! \jet{\infty}{(A_{\lambda} \otimes \Lambda(\{\psi : X^i Y^j\}_{\lambda}, (\phi X^i Y^j \gamma) \,|\, (i, j) \in \lambda))}$ and the subalgebra $\C[\jet{\infty}{T^* G}] \simeq \C[\jet{\infty}{G}] \otimes \C[\jet{\infty}{\frg^{*}}]$. Therefore, by \eqref{eq:13}, $\dE_2^{p,0} \simeq \JetBundle{M}(U_{\lambda}) \otimes H^p(\frg[t], \C[\jet{\infty}{G}])$.

The Lie algebra cohomology $H^p(\frg[t], \C[\jet{\infty}{G}])$ is naturally isomorphic to the de Rham cohomology $H^p_{\DR}(G, \C)$. Hence,
\begin{equation}\label{eq:17}
\dE^{p,q}_2(\tldU_{\lambda}) \simeq
\begin{cases}
\JetBundle{M}(U_{\lambda}) \otimes H^p_{\DR}(G, \C), & q = 0, \\
0, & q \ne 0.
\end{cases}
\end{equation}
Then, the spectral sequence $\dE^{p,q}_r$ collapses at the second term and it converges by \autoref{lemma:conv-spec-double-cpx}, and thus we obtain:
\begin{proposition}\label{prop:VPA-BRST-cohom}
For a partition $\lambda \vdash N$ and $n\in\Z$, the vertex Poisson algebra cohomology satisfies
\[
H^n(\tldC_{\VPA}(\tldU_{\lambda}), d_{\VPA}) \simeq \JetBundle{M}(U_{\lambda}) \otimes H^n_{\DR}(G, \C).
\]
The superalgebra
$\JetBundle{M}(U_{\lambda}) = \jet{\infty}{(A_{\lambda} \otimes \Lambda(\{\psi : X^i Y^j\}_{\lambda}, (\phi X^i Y^j \gamma) \,|\, (i, j) \in \lambda))}$ is naturally isomorphic to the commutative superalgebra
\[
\jet{\infty}{\calO_M(U_{\lambda})} \otimes \Lambda\bigl(\{\psi : X^i Y^j\}_{\lambda, (-n)}, (\phi X^i Y^j \gamma)_{(-n)} \,\bigm|\, \substack{ (i, j) \in \lambda \\ n = 1, 2, \dots }\bigr).
\]
\end{proposition}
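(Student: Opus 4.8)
The plan is to deduce the total cohomology directly from the spectral sequence $\dE_r^{p,q}(\tldU_\lambda)$ whose second page has just been computed in \eqref{eq:17}. That identification shows $\dE_2^{p,q}(\tldU_\lambda)$ is concentrated in the single row $q=0$, where it equals $\JetBundle{M}(U_\lambda)\otimes H^p_{\DR}(G,\C)$, and vanishes for $q\ne 0$. Since the $r$-th differential of a column-filtered spectral sequence shifts the bidegree by $(r,1-r)$, every $d_r$ with $r\ge 2$ has either its source or its target in a row $q\ne 0$ and therefore vanishes. Hence the spectral sequence degenerates at $E_2$.

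Next I would invoke \autoref{lemma:conv-spec-double-cpx}, by which $\dE_r^{p,q}(\tldU_\lambda)$ converges to $H^{p+q}(\tldC_{\VPA}(\tldU_\lambda),d_{\VPA})$. Because the abutment is filtered with associated graded $\dE_2^{p,q}(\tldU_\lambda)$ supported in the single row $q=0$, each total degree $n$ contributes exactly one nonzero graded piece, so there are no extension problems and $H^n(\tldC_{\VPA}(\tldU_\lambda),d_{\VPA})\cong \dE_2^{n,0}(\tldU_\lambda)\cong \JetBundle{M}(U_\lambda)\otimes H^n_{\DR}(G,\C)$, which is the first assertion.

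For the explicit form of $\JetBundle{M}(U_\lambda)$ I would use that the $\infty$-jet functor is monoidal: its universal property gives $\gHom_{\Alg}(\jet{\infty}{(R_1\otimes R_2)},A)\cong\gHom_{\Alg}(R_1,A[[t]])\times\gHom_{\Alg}(R_2,A[[t]])$ for every test algebra $A$, whence $\jet{\infty}{(R_1\otimes R_2)}\cong\jet{\infty}{R_1}\otimes\jet{\infty}{R_2}$. Applying this to the decomposition $A_\lambda\otimes\Lambda(\{\psi:X^iY^j\gamma\}_\lambda,\phi X^iY^j\gamma\mid(i,j)\in\lambda)$ of $\tldcalO_M(U_\lambda)$, together with the identity $A_\lambda=\calO_M(U_\lambda)$ from the remark after \autoref{prop:local-triv}, reduces everything to computing the jets of an exterior algebra; by the explicit formula recalled in \autoref{sec:jet-bundles}, the jet algebra of an exterior algebra on fermionic generators is the exterior algebra on their derivatives $\theta_{s(-n)}$ with $n\ge 1$, which produces exactly the stated factor.

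Since the entire computation of the $E_2$-page (the regular-sequence input of \autoref{lemma:regular-seq}, the local trivialisation of \autoref{prop:local-triv}, and the identification $H^p(\frg[t],\C[\jet{\infty}{G}])\cong H^p_{\DR}(G,\C)$) has already been carried out, I expect no serious obstacle here; the argument is essentially bookkeeping. The one point needing care is verifying the absence of hidden extensions, but this is immediate because the limit page lives in a single row, forcing the degree-$n$ abutment to be genuinely isomorphic to $\dE_2^{n,0}(\tldU_\lambda)$ rather than merely having it as associated graded.
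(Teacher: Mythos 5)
Your proposal is correct and follows essentially the same route as the paper: degeneration of the column-filtration spectral sequence at the $E_2$-page computed in \eqref{eq:17}, convergence via \autoref{lemma:conv-spec-double-cpx}, and identification of the abutment with the single surviving row. The only difference is that you spell out details the paper leaves implicit — the bidegree argument for the vanishing of $d_r$ for $r\ge 2$, the absence of extension problems because $E_\infty$ lives in one row, and, for the second assertion, the fact that $J_\infty$ (being a left adjoint) preserves tensor products combined with $A_\lambda=\calO_M(U_\lambda)$ and the explicit jets of an exterior algebra — all of which are correct.
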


Now, we determine the cohomology of the subcomplex $(C_{\VPA}, d_{\VPA})$.
By the above discussion, for a partition $\lambda \vdash N$,
\[
\dE^{p,q}_{1}(\tldU_{\lambda}) =
\JetBundle{M}(U_{\lambda}) \otimes \C[\jet{\infty}{G}] \otimes
\Lambda^{p}(\Psi_{ij (-n)} \,|\, \substack{i, j = 1, \dots, N \\ n = 1, 2, \dots})
\]
if $q=0$, and $\dE^{p,q}_1 = 0$ otherwise. It is a Lie algebra cohomology complex associated with the action of $\frg[t]$ on $\JetBundle{M}(U_{\lambda}) \otimes \C[\jet{\infty}{G}]$, as discussed above. We consider the Hochschild-Serre spectral sequence associated with the Lie subalgebra $\frg \subset \frg[t]$. We refer the reader to \cite{HS53} for details of the construction of the spectral sequence and its fundamental properties. Define a filtration $\HSF_{\bullet} \dE_1^{n,0}$ by
\begin{align*}
&\HSF_p \dE_1^{n,0}\\
&= \Bigl\{\sum_{\underline{i}, \underline{j}, \underline{m}} a_{\underline{i}, \underline{j}, \underline{m}} \Psi_{i_1 j_1 (-m_1)} \dots \Psi_{i_n j_n (-m_n)} \,\Bigm|\, a_{\underline{i}, \underline{j}, \underline{m}} = 0 \text{ if } \#\{k \,|\, m_k = 1\} > n - p \Bigr\}
\end{align*}
for $p\in\Z$. Let $\HSE_r^{p,q}$ be the spectral sequence associated with this filtration. The zeroth term $\HSE_0^{p,q} = \HSGr_p \dE_1^{p+q,0}$ is the Lie algebra cohomology complex associated with the action of the Lie subalgebra $\frg$. Indeed, since $G$ acts on $\jet{\infty} G$ freely, the complex $E_0^{p,q}$ factorises as
\begin{align*}
\HSE_0^{p,q} &= \bigl(\JetBundle{M}(U_{\lambda}) \otimes \C[\jet{\infty}{G}]^G \otimes \Lambda^{p}(\Psi_{ij (-n)} \,|\, \substack{i, j = 1, \dots, N \\ n = 2, \dots}) \bigr) \\
&\quad \otimes \bigl(\C[G] \otimes \Lambda^{q}(\Psi_{ij (-1)} \,|\, i, j = 1, \dots, N) \bigr),
\end{align*}
where the second factor is the Lie algebra cohomology complex of the Lie subalgebra $\frg$ with coefficients in $\C[G]$. Then, the first term $\HSE_1^{p,q}$ is obtained by de Rham cohomology $H^q_{\DR}(G, \C)$ as
\begin{equation}\label{eq:19}
\HSE_1^{p,q} \simeq \left(\JetBundle{M}(U_{\lambda}) \otimes \C[\jet{\infty}{G}]^G \otimes \Lambda^{p}(\Psi_{ij (-n)} \,|\, \substack{i, j = 1, \dots, N \\ n = 2, \dots}) \right) \otimes H_{\DR}^{q}(G, \C).
\end{equation}
On the other hand, it follows from \eqref{eq:18} that
\begin{align*}
C^n_{\VPA}(\tldU_{\lambda}) &= \bigoplus_{p+q=n} \JetBundle{M}(U_{\lambda}) \otimes \C[\jet{\infty}{G}]^G \otimes \C[\partial^m \tldmu_{\infty}(E_{ij}) \,|\, \substack{i,j = 1, \dots, N \\ m = 1, 2, \dots}] \\
&\quad \otimes \Lambda^{p}(\Psi_{ij (-m)} \,|\, \substack{i, j = 1, \dots, N \\ m = 2, 3 \dots}) \otimes \Lambda^{q}(\Phi_{ij (-m)} \,|\, \substack{i, j = 1, \dots, N \\ m = 1, 2, \dots}).
\end{align*}
Thus, the cohomology $H^{q}(C^{p, \bullet}_{\VPA}(\tldU_{\lambda}), d^{-}_{\VPA})$ with respect to the coboundary operator $d^{-}_{\VPA}$ coincides with the first factor of \eqref{eq:19} if $q = 0$ and vanishes otherwise. As a consequence,
\[
\HSE_1^{p,q} \simeq H^0(C^{p,\bullet}_{\VPA}(\tldU_{\lambda}), d^{-}_{\VPA}) \otimes H^{q}_{\DR}(G, \C).
\]
Considering the coboundary operator $d^{+}_1 \colon \HSE_1^{p,q} \longrightarrow \HSE_1^{p+1, q}$ induced from $d^{+}_{\VPA}$, we obtain
\[
\HSE_2^{p,q} \simeq H^p(H^0(C_{\VPA}(\tldU_{\lambda}), d^{-}_{\VPA}), d^{+}_{\VPA}) \otimes H_{\DR}^{q}(G, \C).
\]
By the same arguments as for \autoref{prop:VPA-BRST-cohom}, we obtain the isomorphism
\[
H^p(C_{\VPA}(\tldU_{\lambda}), d_{\VPA}) \simeq H^p(H^0(C_{\VPA}(\tldU_{\lambda}), d^{-}_{\VPA}).
\]
In particular, $H^0(C_{\VPA}(\tldU_{\lambda}), d_{\VPA}) \simeq \JetBundle{M}(U_{\lambda})$ by \eqref{eq:19}. The above spectral sequence collapses at the second term, and it converges. Therefore,
\[
H^n(\tldC_{\VPA}(\tldU_{\lambda}), d_{\VPA}) \simeq \bigoplus_{p+q=n} H^p(C_{\VPA}(\tldU_{\lambda}), d_{\VPA}) \otimes H^q_{\DR}(G, \C).
\]
By the isomorphism $H^0(C_{\VPA}(\tldU_{\lambda}), d_{\VPA}) \simeq \JetBundle{M}(U_{\lambda})$ and \autoref{prop:VPA-BRST-cohom}, the above isomorphism implies that $H^p(C_{\VPA}(\tldU_{\lambda}), d_{\VPA}) = 0$ unless $p=0$.
\begin{proposition}\label{prop:relative-VPA-BRST}
For a partition $\lambda \vdash N$ and $n\in\Z$, the vertex Poisson algebra cohomology satisfies
\[
H^n(C_{\VPA}(\tldU_{\lambda}), d_{\VPA}) \simeq
\begin{cases}
\JetBundle{M}(U_{\lambda}), & n=0, \\
0, & n \ne 0.
\end{cases}
\]
\end{proposition}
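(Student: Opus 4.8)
The plan is to obtain the proposition by comparing the relative cohomology $H^\bullet(C_{\VPA}(\tldU_{\lambda}), d_{\VPA})$ with the absolute cohomology computed in \autoref{prop:VPA-BRST-cohom}. The relative subcomplex is precisely the one on which the zero modes $\tldE_{ij (0)}$ and $\Phi_{ij (0)}$ act trivially, i.e.\ the relative Chevalley--Eilenberg complex for the pair $\frg \subset \frg[t]$, and since the fibre group $G = \GL_N$ is connected, the discrepancy between the relative and absolute complexes is exactly the de Rham factor $H^\bullet_{\DR}(G, \C)$ appearing in \autoref{prop:VPA-BRST-cohom}. The strategy is therefore to peel off this factor and match what remains against $\JetBundle{M}(U_{\lambda})$.

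First I would run the column spectral sequence $\dE_r^{p,q}(\tldU_{\lambda})$ attached to the double complex $(C_{\VPA}, d^+_{\VPA}, d^-_{\VPA})$. By \autoref{lemma:regular-seq} the sequence $\{\partial^m \tldmu_{\infty}(E_{ij})\}$ is regular, so the Koszul differential $d^-_{\VPA}$ has cohomology concentrated in degree $q = 0$; in particular the total complex has no cohomology in negative degree, which already yields $H^n(C_{\VPA}(\tldU_{\lambda}), d_{\VPA}) = 0$ for $n < 0$. The surviving first page is the Chevalley complex of $\frg[t]$ with coefficients in the reduced jet bundle, restricted to the relative subcomplex. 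Second, on this page I would run the Hochschild--Serre spectral sequence $\HSE_r^{p,q}$ for the inclusion $\frg \subset \frg[t]$. Since $G$ acts freely on $\jet{\infty}{G}$, the $\frg$-direction contributes exactly $H^q_{\DR}(G, \C)$, as in \eqref{eq:19}, while the complementary factor is $H^0(C^{p,\bullet}_{\VPA}(\tldU_{\lambda}), d^-_{\VPA})$; this furnishes both the identification $H^0(C_{\VPA}(\tldU_{\lambda}), d_{\VPA}) \simeq \JetBundle{M}(U_{\lambda})$ and the K\"unneth-type isomorphism
\[
H^n(\tldC_{\VPA}(\tldU_{\lambda}), d_{\VPA}) \simeq \bigoplus_{p+q=n} H^p(C_{\VPA}(\tldU_{\lambda}), d_{\VPA}) \otimes H^q_{\DR}(G, \C).
\]

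The proposition then follows by comparing this decomposition with \autoref{prop:VPA-BRST-cohom}, which gives $H^n(\tldC_{\VPA}(\tldU_{\lambda}), d_{\VPA}) \simeq \JetBundle{M}(U_{\lambda}) \otimes H^n_{\DR}(G, \C)$. Regarding both sides as graded modules over $H^\bullet_{\DR}(G, \C) \cong \Lambda(\xi_1, \xi_3, \dots, \xi_{2N-1})$, the left-hand side is free on the single graded piece $\JetBundle{M}(U_{\lambda})$ placed in degree $0$, whereas the right-hand side is free on $\bigoplus_p H^p(C_{\VPA}(\tldU_{\lambda}), d_{\VPA})$ with the $p$-th summand in degree $p$. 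Dividing out the augmentation ideal $H^{>0}_{\DR}(G, \C)$ recovers the generators on each side and forces $\bigoplus_p H^p(C_{\VPA}(\tldU_{\lambda}), d_{\VPA})$ to be concentrated in degree $0$, equal there to $\JetBundle{M}(U_{\lambda})$. Together with the negative-degree vanishing from the first step, this is exactly the asserted statement.

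The step I expect to be the main obstacle is ensuring that the K\"unneth-type isomorphism is one of $H^\bullet_{\DR}(G, \C)$-modules and not merely of graded vector spaces: only the module structure legitimises the final generator-counting argument that kills $H^p$ for $p > 0$. Convergence of both spectral sequences is routine by \autoref{lemma:conv-spec-double-cpx} and the boundedness of the relevant subcomplexes, but tracking the de Rham factor compatibly through the Koszul collapse and the Hochschild--Serre degeneration, so that freeness over $H^\bullet_{\DR}(G, \C)$ becomes manifest, is the delicate bookkeeping point.
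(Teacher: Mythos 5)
Your proposal follows essentially the same route as the paper's proof: the column (Koszul) spectral sequence disposes of the $d^{-}_{\VPA}$-direction via the regularity of $\{\partial^m\tldmu_{\infty}(E_{ij})\}$, the Hochschild--Serre spectral sequence for the pair $\frg\subset\frg[t]$ yields both $H^0(C_{\VPA}(\tldU_{\lambda}),d_{\VPA})\simeq\JetBundle{M}(U_{\lambda})$ and the K\"unneth-type isomorphism $H^n(\tldC_{\VPA}(\tldU_{\lambda}),d_{\VPA})\simeq\bigoplus_{p+q=n}H^p(C_{\VPA}(\tldU_{\lambda}),d_{\VPA})\otimes H^q_{\DR}(G,\C)$, and the vanishing for $n\neq0$ is obtained by comparison with \autoref{prop:VPA-BRST-cohom}. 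Your closing step (freeness over $H^{\bullet}_{\DR}(G,\C)$ and reduction modulo the augmentation ideal) is just a more explicit rendering of the comparison the paper states in one line, so this is a faithful match rather than a different method.
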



\subsection{Vanishing Theorem}
\label{sec:vanish-brst}

Finally, we use the results of the preceding sections in order to prove a vanishing (or no-ghost) theorem for the BRST cohomology sheaf $\calH_{\VA}^{\infty/2 + \bullet}(\frg, \tldcalD^\ch_{\frX, \hbar})$, which we then define as the sheaf $\tldcalD^\ch_{M,\hbar}$.

\medskip

Recall the $\hbar$-adic filtration $\hF_{\bullet} C_{\VA}$. Let $\hE_{r}^{p,q}$ be the spectral sequence associated with this filtration. The zeroth term $\hE_0^{p,q}$ is the associated graded complex $\hGr_p C_{\VA}^{p+q} \simeq \hbar^p C_{\VPA}^{p+q}$ with the coboundary operator $d_{\VPA}$.

\begin{lemma}\label{lemma:conv-hE}
For an open subset $\tldU \subset \frX$, the spectral sequence $\hE^{p,q}_{r}(\tldU)$ converges to $\hGr_{p} H^{p+q}(C_{\VA}(\tldU), d_{\VA})$.
\end{lemma}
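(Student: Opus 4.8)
The plan is to deduce convergence from the fact that the $\hbar$-adic filtration $\hF_{\bullet} C_{\VA}(\tldU)$ is exhaustive and complete, reducing the problem to the bounded (and hence automatically convergent) filtrations obtained modulo powers of $\hbar$. First I would record the formal properties of the filtration: it is decreasing, exhaustive since $\hF_0 C_{\VA}(\tldU) = C_{\VA}(\tldU)$, separated since $\bigcap_{p \ge 0} \hbar^p C_{\VA}(\tldU) = 0$ by flatness over $\C[[\hbar]]$, and complete because $C_{\VA}(\tldU)$ is $\hbar$-adically complete. This last point follows from the completeness of $\tldC_{\VA}(\tldU) = \tldcalD^\ch_{\frX, \hbar}(\tldU) \hatotimes \Cl_{\hbar}(T^* \frg)$ together with the fact that $C_{\VA}(\tldU)$ is the closed subspace cut out as the joint kernel of the continuous operators $\tldE_{ij (0)}$ and $\Phi_{ij (0)}$. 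Thus we are placed in the setting of the standard complete convergence theorem for filtered complexes, but I would give a self-contained argument by truncation.

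Second, for each $k \in \Z_{>0}$ I would pass to the quotient complex $C_{\VA}(\tldU) / \hbar^k C_{\VA}(\tldU)$, equipped with the induced filtration $\hbar^p C_{\VA}(\tldU) / \hbar^k C_{\VA}(\tldU)$ for $0 \le p < k$. This filtration is bounded, so the associated spectral sequence ${}^{(k)}\hE_r^{p,q}(\tldU)$ converges to $\hGr_p H^{p+q}(C_{\VA}(\tldU) / \hbar^k C_{\VA}(\tldU))$, exactly as in the bounded situation of \autoref{lemma:conv-spec-double-cpx}. The two families of spectral sequences are compatible, and for fixed $(p,q,r)$ the computation of $\hE_r^{p,q}(\tldU)$ only involves filtration degrees in a bounded range around $p$; hence ${}^{(k)}\hE_r^{p,q}(\tldU) = \hE_r^{p,q}(\tldU)$ once $k$ is large (depending on $p$, $q$, $r$). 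In particular, the pages of the full spectral sequence are the stable values of the truncated ones.

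Third comes the passage to the limit, which is where the genuine difficulty lies: because the filtration does not vanish for large $p$ (the complex is $\hbar$-torsion free), convergence is not automatic and one must control a $\varprojlim^1$ term. Here I would use flatness to obtain, for each $k$, the short exact sequence of complexes $0 \to C_{\VA}(\tldU) \xrightarrow{\hbar^k} C_{\VA}(\tldU) \to C_{\VA}(\tldU) / \hbar^k C_{\VA}(\tldU) \to 0$ and its long exact cohomology sequence. One checks the Mittag-Leffler condition for the resulting inverse system $\{ H^n(C_{\VA}(\tldU) / \hbar^k C_{\VA}(\tldU)) \}_k$, whence $\varprojlim^1 = 0$ and $H^n(C_{\VA}(\tldU), d_{\VA}) \cong \varprojlim_k H^n(C_{\VA}(\tldU) / \hbar^k C_{\VA}(\tldU))$. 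Flatness further guarantees that the filtration induced on $H^n(C_{\VA}(\tldU), d_{\VA})$ by the spectral sequence is separated and coincides with its genuine $\hbar$-adic filtration, so that $\hE_\infty^{p,q}(\tldU) \cong \hGr_p H^{p+q}(C_{\VA}(\tldU), d_{\VA})$.

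I expect the hard part to be the identification of the abutment in this third step, namely verifying that the filtration induced on the cohomology is precisely the $\hbar$-adic one and that it is separated. This is the point at which the flatness of $C_{\VA}(\tldU)$ over $\C[[\hbar]]$ and the continuity of $d_{\VA}$ are essential, since they prevent the appearance of $\hbar$-torsion in the cohomology that would otherwise obstruct strong (as opposed to merely conditional) convergence. The combinatorial bookkeeping of filtration degrees in the second step, by contrast, is routine.
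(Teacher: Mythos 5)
Your proposal is, in substance, an attempt to reprove in this special case the theorem that the paper simply cites: the paper's entire proof consists of the observation that the filtration $\hF_{\bullet}C_{\VA}(\tldU)$ is bounded from above and complete, followed by an appeal to the complete convergence theorem (Theorem~5.5.10 of \cite{Weibel}). Your first two steps are correct: $C_{\VA}(\tldU)$ is $\hbar$-adically complete, being a closed subspace of the complete module $\tldC_{\VA}(\tldU)$, and for fixed $(p,q,r)$ one indeed has ${}^{(k)}\hE_r^{p,q}(\tldU)=\hE_r^{p,q}(\tldU)$ once $k\ge p+r$, so each finite page is the stable value of the truncated ones.

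The genuine gap is in your third step, and it sits exactly where the content of the complete convergence theorem lies. First, the Mittag--Leffler property of the towers $\{H^n(C_{\VA}(\tldU)/\hbar^kC_{\VA}(\tldU))\}_k$ is asserted (``one checks'') but never verified, and it is \emph{not} a formal consequence of completeness and flatness: taking $C^0=C^1$ to be the $\hbar$-adic completion of $\bigoplus_{i\ge1}\C[[\hbar]]e_i$ with differential $d(e_i)=\hbar^i e_i$ gives a two-term complex of complete, $\hbar$-torsion-free modules for which the images of $\ker(d\bmod\hbar^{k'})\to\ker(d\bmod\hbar^{k})$ never stabilise, the $\hbar$-adic filtration on $H^1$ is not separated, and the $\hbar$-adic spectral sequence fails to converge in the sense required here. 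So Mittag--Leffler needs input specific to $C_{\VA}(\tldU)$; in the situations where the paper applies the lemma, that input is the degeneration of the $\hE_1$-page (it vanishes, or is concentrated on the diagonal $p+q=0$ by \autoref{prop:relative-VPA-BRST}), which is also what secures the regularity-type hypothesis of the cited theorem. Second, even granting Mittag--Leffler, your conclusion does not follow from the three facts you establish: knowing $H^n(C)\cong\varprojlim_k H^n(C/\hbar^k C)$, that the induced filtration on $H^n(C)$ is the $\hbar$-adic one, and that it is separated does not identify $\hE_\infty^{p,q}$ with $\hGr_pH^{p+q}$. An element of $\hE_\infty^{p,q}$ is a priori only an infinitely prolongable approximate cocycle---for every $r$ it has a representative $z_r\in\hbar^pC$ with $d_{\VA}z_r\in\hbar^{p+r}C$, with no single representative working for all $r$---and upgrading this to a genuine cocycle, i.e.\ proving surjectivity of the canonical injection $\hGr_pH^{p+q}\hookrightarrow\hE_\infty^{p,q}$, is precisely the successive-approximation argument that requires completeness \emph{together with} the Mittag--Leffler/regularity control. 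Note also that the two points you single out as the expected difficulty are in fact the easy ones: the identification of the induced filtration with the $\hbar$-adic one is immediate from $\hbar$-torsion-freeness, and separatedness is immediate from injectivity of $H^n(C)\to\varprojlim_kH^n(C/\hbar^kC)$. The actual difficulty is left unaddressed, and filling it in amounts to reproving Theorem~5.5.10 of \cite{Weibel}, which is what the paper invokes instead.
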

\begin{proof}
Because the filtration $\hF_{\bullet} C_{\VA}(\tldU)$ is bounded from above and complete, the spectral sequence $\hE^{p,q}_{r}(\tldU)$ converges by the complete convergence theorem (see Theorem~5.5.10 in \cite{Wei94}).
\end{proof}

First, consider the case where $\tldU \cap \mu^{-1}(0) = \emptyset$. By \autoref{lemma:vanish-neg-VPA}~(1) it follows that $\hE^{p,q}_{1}(\tldU) \simeq \hbar^p H^{p+q}(C_{\VPA}(\tldU), d_{\VPA}) = 0$. The spectral sequence $\hE^{p,q}_{r}$ collapses at the first term, and hence $\hE^{p,q}_{r} = 0$ for any $p,q \in \Z$ and $r \in \Z_{>0}$. Then \autoref{lemma:conv-hE} implies:
\begin{lemma}\label{lemma:support-BRST-cohom}
For an open subset $\tldU \subset \frX$ such that $\tldU \cap \mu^{-1}(0) = \emptyset$, the cohomology $H^n(C_{\VA}(\tldU), d_{\VA}) = 0$ for all $n\in\Z$.
\end{lemma}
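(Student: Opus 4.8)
The plan is to run the $\hbar$-adic filtration spectral sequence $\hE^{p,q}_r(\tldU)$ attached to $\hF_{\bullet} C_{\VA}(\tldU)$ and to reduce the vanishing of the BRST cohomology to the vanishing of its classical (vertex Poisson) counterpart. By construction the zeroth page is $\hE^{p,q}_0(\tldU) \simeq \hbar^p C^{p+q}_{\VPA}(\tldU)$ equipped with the Poisson coboundary $d_{\VPA}$, so that
\[
\hE^{p,q}_1(\tldU) \simeq \hbar^p H^{p+q}(C_{\VPA}(\tldU), d_{\VPA}).
\]
Everything therefore hinges on showing that the relative vertex Poisson cohomology $H^n(C_{\VPA}(\tldU), d_{\VPA})$ is zero for all $n$ as soon as $\tldU \cap \mu^{-1}(0) = \emptyset$.

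First I would prove this Poisson vanishing by exactly the mechanism used for \autoref{lemma:vanish-neg-VPA}~(1), now applied to the relative double complex $(C^{p,q}_{\VPA}(\tldU), d^{+}_{\VPA}, d^{-}_{\VPA})$. The $d^{-}_{\VPA}$-cohomology is computed by the Koszul complex of the sequence $\{\partial^m \tldmu_{\infty}(E_{ij})\}$, which is regular by \autoref{lemma:regular-seq}; hence the associated first page is governed by the Koszul homology of that sequence on $\JetBundle{\frX}(\tldU)$. When $\tldU \cap \mu^{-1}(0) = \emptyset$, the moment-map sections $\tldmu_{\infty}(E_{ij})$ generate the unit ideal (their bosonic parts $\mu^*(E_{ij})$ already have no common zero on $\tldU$, and the fermionic corrections are nilpotent), so the degree-zero Koszul homology $\JetBundle{\frX}(\tldU)/\sum_{i,j,m} \JetBundle{\frX}(\tldU)\,\partial^m\tldmu_{\infty}(E_{ij})$ is the zero ring. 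By regularity all higher Koszul homology vanishes as well, exactly as in \eqref{eq:10}, so the relative first page is identically zero and $H^n(C_{\VPA}(\tldU), d_{\VPA}) = 0$ for every $n$.

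Feeding this back gives $\hE^{p,q}_1(\tldU) = 0$ for all $p, q$, so the $\hbar$-adic spectral sequence degenerates at the first page and $\hE^{p,q}_r(\tldU) = 0$ for all $r \ge 1$, in particular $\hE^{p,q}_{\infty}(\tldU) = 0$. By \autoref{lemma:conv-hE} the spectral sequence converges to $\hGr_p H^{p+q}(C_{\VA}(\tldU), d_{\VA})$, so every graded piece $\hGr_p H^n(C_{\VA}(\tldU), d_{\VA})$ vanishes.

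The one step that is not purely formal — and the step I expect to be the crux — is concluding $H^n(C_{\VA}(\tldU), d_{\VA}) = 0$ from the vanishing of its associated graded. For an arbitrary filtered module this inference fails; it is valid here because the $\hbar$-adic filtration $\hF_{\bullet} C_{\VA}(\tldU)$ is bounded above and complete, i.e.\ $C_{\VA}(\tldU)$ is $\hbar$-adically separated and complete. This is precisely what the complete convergence theorem underlying \autoref{lemma:conv-hE} supplies: strong convergence together with separatedness forces the limit to vanish whenever all its graded pieces do. This yields $H^n(C_{\VA}(\tldU), d_{\VA}) = 0$ for all $n$, as claimed.
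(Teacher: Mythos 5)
Your proposal is correct and follows essentially the same route as the paper: the $\hbar$-adic filtration spectral sequence with first page $\hE^{p,q}_1(\tldU)\simeq\hbar^p H^{p+q}(C_{\VPA}(\tldU),d_{\VPA})$, vanishing of that page via the Koszul/unit-ideal mechanism of \autoref{lemma:vanish-neg-VPA}~(1), and the conclusion via the complete convergence theorem of \autoref{lemma:conv-hE}. If anything, you are slightly more explicit than the paper, which cites \autoref{lemma:vanish-neg-VPA}~(1) (stated for $\tldC_{\VPA}$) directly for the relative complex $C_{\VPA}$, whereas you spell out why the Poisson vanishing persists there and why vanishing of the associated graded forces vanishing of $H^n(C_{\VA}(\tldU),d_{\VA})$ itself.
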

As remarked above, this lemma completes the definition of the BRST cohomology sheaf $\calH^{\infty/2 + \bullet}_{\VA}(\frg, \tldcalD^\ch_{\frX, \hbar})$ (see \autoref{def:BRST-sheaf} and the preceding discussion).

\medskip

For a partition $\lambda \vdash N$, \autoref{prop:relative-VPA-BRST} yields the isomorphism
\begin{equation}\label{eq:hE1-local-isom}
\hE_1^{p,q}(\tldU_{\lambda}) \simeq
\begin{cases}
\hbar^p \JetBundle{M}(U_{\lambda}), & p + q = 0, \\
0, & p + q \ne 0.
\end{cases}
\end{equation}
This implies that the coboundary operator $d_1 \colon \hE^{p,q}_{1}(\tldU_{\lambda}) \longrightarrow \hE^{p+1,q}_{1}(\tldU_{\lambda})$ on the first term is identically zero. Thus, the spectral sequence $\hE^{p,q}_{r}$ collapses at the first term. By \autoref{lemma:conv-hE},
\[
H^{\infty/2+n}_{\VA}(\frg, \tldcalD^\ch_{\frX, \hbar}(\tldU_{\lambda})) = H^n(C_{\VA}(\tldU_{\lambda}), d_{\VA}) = 0
\]
for $n \ne 0$. Therefore, we obtain the following vanishing theorem for BRST cohomology sheaf.
\begin{theorem}\label{thm:vanish-BRST}
The BRST cohomology sheaf $\calH^{\infty/2+n}_{\VA}(\frg, \tldcalD^\ch_{\frX, \hbar})$ vanishes for $n \neq 0$.
\end{theorem}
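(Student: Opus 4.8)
The plan is to reduce the vanishing of the sheaf to a purely local statement on the affine open cover $M=\bigcup_{\lambda\vdash N}U_{\lambda}$ of \autoref{sec:local-trivial}, and then to run, chart by chart, the $\hbar$-adic spectral sequence that deforms the already-computed Poisson BRST cohomology (\autoref{prop:relative-VPA-BRST}) into the quantum one. Since $\calH^{\infty/2+n}_{\VA}(\frg,\tldcalD^\ch_{\frX,\hbar})$ is by definition the sheafification of the presheaf $U\mapsto H^{n}(C_{\VA}(\tldU),d_{\VA})$, and since sheafification preserves stalks, it suffices to prove that this presheaf vanishes for $n\neq0$ on the basis of those opens $\tldU\subseteq\frX$ contained in a single chart $\tldU_{\lambda}$. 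These opens form a basis of $M$ (every open is a union of its intersections with the $U_{\lambda}$), and it is precisely on them that the local trivialisation of \autoref{prop:local-triv}, and hence the whole computation of the Poisson BRST cohomology, survives restriction; so the chart-level version of \autoref{prop:relative-VPA-BRST} is available there.

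First I would fix such a $\tldU\subseteq\tldU_{\lambda}$ and consider the spectral sequence $\hE^{p,q}_{r}(\tldU)$ of the $\hbar$-adic filtration $\hF_{\bullet}C_{\VA}$, whose zeroth page is the associated graded complex $\hGr_{p}C_{\VA}^{p+q}\simeq\hbar^{p}C_{\VPA}^{p+q}$ with differential $d_{\VPA}$. By \autoref{lemma:conv-hE} this spectral sequence converges to $\hGr_{p}H^{p+q}(C_{\VA}(\tldU),d_{\VA})$, and its first page is $\hE^{p,q}_{1}(\tldU)\simeq\hbar^{p}H^{p+q}(C_{\VPA}(\tldU),d_{\VPA})$. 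By the chart-level extension of \autoref{prop:relative-VPA-BRST} this is isomorphic to $\hbar^{p}\JetBundle{M}(U)$ when $p+q=0$ and vanishes otherwise, exactly as recorded in \eqref{eq:hE1-local-isom}.

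The key observation is that $\hE_{1}$ is concentrated on the single antidiagonal $p+q=0$. Since the differential $d_{r}\colon\hE^{p,q}_{r}\to\hE^{p+r,q-r+1}_{r}$ raises the total degree $p+q$ by one, it always lands in a vanishing group; hence every $d_{r}$ is zero and the spectral sequence degenerates at $\hE_{1}=\hE_{\infty}$. Consequently $\hGr_{p}H^{n}(C_{\VA}(\tldU),d_{\VA})\simeq\hE^{p,n-p}_{\infty}$ vanishes for every $p$ whenever $n\neq0$. Because the $\hbar$-adic filtration on $H^{n}$ is exhaustive and complete, which is the content of the convergence asserted in \autoref{lemma:conv-hE}, the vanishing of the associated graded forces $H^{n}(C_{\VA}(\tldU),d_{\VA})=0$ for $n\neq0$. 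As $\tldU$ was an arbitrary open contained in a chart, the presheaf vanishes for $n\neq0$ on a basis of $M$, and therefore so does its sheafification.

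Given that the genuinely difficult input — the computation of the Poisson BRST cohomology — is already in hand, the remaining work for this theorem is the $\hbar$-adic deformation argument together with its globalisation, and I expect the main point requiring care to be the latter. It is \emph{not} enough to evaluate the presheaf on the cover members $U_{\lambda}$ alone, since vanishing of a presheaf on a fixed cover need not descend to its sheafification; one must instead establish the vanishing on a full basis, which is why I would first verify that \autoref{prop:relative-VPA-BRST} and its underlying trivialisation persist on every open contained in a chart, so that the stalks, computed over the cofinal family of such opens, vanish. By contrast, the degeneration of $\hE_r$ and the passage from the associated graded back to $H^n$ are formal once the concentration of $\hE_{1}$ on a single antidiagonal and the completeness of the filtration are noted.
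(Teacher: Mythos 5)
Your proposal is correct and takes essentially the same route as the paper's proof: the spectral sequence of the $\hbar$-adic filtration on $C_{\VA}$, whose first page is given by the Poisson BRST cohomology of \autoref{prop:relative-VPA-BRST}, is concentrated on the antidiagonal $p+q=0$, hence degenerates at the first term, and complete convergence (\autoref{lemma:conv-hE}) then yields $H^n(C_{\VA}(\tldU),d_{\VA})=0$ for $n\neq0$. Your one refinement --- establishing this on a basis of opens contained in charts rather than only on the $\tldU_{\lambda}$ themselves, justified because the trivialisation of \autoref{prop:local-triv} localises --- is precisely the step the paper leaves implicit in passing from chart-level vanishing to vanishing of the sheafified $\calH^{\infty/2+n}_{\VA}(\frg,\tldcalD^\ch_{\frX,\hbar})$.
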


We come to the central definition of this text:
\begin{definition}\label{def:chiral-sheaf-Hilb}
We set $\tldcalD^\ch_{M, \hbar} \coloneqq \calH^{\infty/2+0}_{\VA}(\frg, \tldcalD^\ch_{\frX, \hbar}) = \calH^{\infty/2+\bullet}_{\VA}(\frg, \tldcalD^\ch_{\frX, \hbar})$. This is a sheaf of $\hbar$-adic vertex superalgebras on the Hilbert scheme $M$.
\end{definition}

The isomorphism in \eqref{eq:hE1-local-isom} together with \autoref{lemma:conv-hE} induces an isomorphism $\tldcalD^\ch_{M, \hbar}(U_{\lambda}) \simeq \JetBundle{M}(U_{\lambda})[[\hbar]]$ for any $\lambda \vdash N$. However, this isomorphism depends on the local trivialisation \eqref{eq:18} and does not induce an isomorphism of sheaves. The isomorphism only implies:
\begin{proposition}\label{prop:quantization}
There is an isomorphism of sheaves of vertex Poisson superalgebras over $M$,
\[
\tldcalD^\ch_{M, \hbar} / \hbar \tldcalD^\ch_{M, \hbar} \simeq \JetBundle{M}.
\]
We say that the sheaf of $\hbar$-adic vertex superalgebras $\tldcalD^\ch_{M, \hbar}$ quantises the sheaf of vertex Poisson superalgebras $\JetBundle{M}$.
\end{proposition}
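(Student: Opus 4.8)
The plan is to construct a canonical comparison morphism out of the symbol (i.e.\ reduction modulo $\hbar$) and to check that it is an isomorphism on the affine charts $U_\lambda$, where the relevant cohomology has already been computed. By \autoref{def:chiral-sheaf-Hilb} and \autoref{def:BRST-sheaf}, $\tldcalD^\ch_{M,\hbar}$ is the sheaf associated with the presheaf $U\mapsto H^0(C_{\VA}(\tldU),d_{\VA})$. Since sheafification is exact, the cokernel of multiplication by $\hbar$ commutes with it, so $\tldcalD^\ch_{M,\hbar}/\hbar\tldcalD^\ch_{M,\hbar}$ is the sheaf associated with $U\mapsto H^0(C_{\VA}(\tldU))/\hbar H^0(C_{\VA}(\tldU))$. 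The differential $d_{\VA}$ preserves the $\hbar$-adic filtration $\hF_\bullet C_{\VA}$ and induces $d_{\VPA}$ on the associated graded; hence the symbol map $\sigma_0=(\,\cdot\bmod\hbar)$ sends a degree-$0$ $d_{\VA}$-cocycle to a $d_{\VPA}$-cocycle and a coboundary to a coboundary, and thus defines a natural transformation of presheaves $H^0(C_{\VA}(\tldU))/\hbar H^0(C_{\VA}(\tldU))\to H^0(C_{\VPA}(\tldU),d_{\VPA})$. After sheafification and using \autoref{prop:relative-VPA-BRST} to identify the target locally, this yields a morphism of sheaves $\tldcalD^\ch_{M,\hbar}/\hbar\tldcalD^\ch_{M,\hbar}\to\JetBundle{M}$.

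First I would check the claim locally. Over a chart $U_\lambda$ the trivialisation \eqref{eq:18}, via the local isomorphism $\tldcalD^\ch_{M,\hbar}(U_\lambda)\simeq\JetBundle{M}(U_\lambda)[[\hbar]]$ coming from \eqref{eq:hE1-local-isom}, shows that $\tldcalD^\ch_{M,\hbar}(U_\lambda)$ is $\hbar$-torsion-free, so that $\hGr_0 H^0(C_{\VA}(\tldU_\lambda))=H^0(C_{\VA}(\tldU_\lambda))/\hbar H^0(C_{\VA}(\tldU_\lambda))$. By \autoref{lemma:conv-hE} the spectral sequence $\hE_r^{p,q}(\tldU_\lambda)$ converges to $\hGr_p H^{p+q}(C_{\VA}(\tldU_\lambda),d_{\VA})$, and by \eqref{eq:hE1-local-isom} its first page is concentrated on the line $p+q=0$, forcing all higher differentials into and out of the corner to vanish. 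Consequently $\hGr_0 H^0\simeq\hE_\infty^{0,0}=\hE_1^{0,0}=H^0(C_{\VPA}(\tldU_\lambda),d_{\VPA})\simeq\JetBundle{M}(U_\lambda)$, and this isomorphism is exactly the one induced by $\sigma_0$. Hence the comparison morphism is an isomorphism over each $U_\lambda$, and therefore an isomorphism of sheaves.

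It remains to see that the isomorphism respects the vertex Poisson superalgebra structures and to address the gluing. The vertex Poisson bracket on $\tldcalD^\ch_{M,\hbar}/\hbar\tldcalD^\ch_{M,\hbar}$ is, by the recipe recalled in \autoref{sec:vertex-algebras}, the map $a\otimes b\mapsto \hbar^{-1}a_{(n)}b\bmod\hbar$; the symbol map intertwines these with the vertex Poisson $(n)$-products on $H^0(C_{\VPA})$, which \autoref{prop:relative-VPA-BRST} identifies with those of $\JetBundle{M}$, so the comparison map is an isomorphism of vertex Poisson superalgebras. The hard part, and the reason this requires an argument at all, is precisely the subtlety flagged after \autoref{def:chiral-sheaf-Hilb}: the local isomorphisms $\tldcalD^\ch_{M,\hbar}(U_\lambda)\simeq\JetBundle{M}(U_\lambda)[[\hbar]]$ depend on the trivialisation \eqref{eq:18} and do \emph{not} glue. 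What saves the proof is that, although the full $\hbar$-adic lift is trivialisation-dependent, its leading symbol — the reduction modulo $\hbar$ together with the induced differential $d_{\VPA}$ — is intrinsic; this is what makes the edge map of the $\hbar$-adic spectral sequence canonical and compatible with restriction, so that the local isomorphisms assemble into a single morphism of sheaves. Verifying this naturality (independence of the auxiliary open set $\tldU$, via the Poisson analogue of \autoref{lemma:support-BRST-cohom}, and compatibility with the restriction maps) is the step demanding the most care.
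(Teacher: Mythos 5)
Your proposal is correct and takes essentially the same route as the paper: the paper obtains \autoref{prop:quantization} from the collapse of the $\hbar$-adic spectral sequence, i.e.\ from \eqref{eq:hE1-local-isom} together with \autoref{lemma:conv-hE} and \autoref{prop:relative-VPA-BRST}, noting that while the resulting local isomorphisms $\tldcalD^\ch_{M,\hbar}(U_\lambda)\simeq\JetBundle{M}(U_\lambda)[[\hbar]]$ depend on the trivialisation \eqref{eq:18}, their reductions modulo $\hbar$ do not. Your presheaf-level symbol map, sheafification, and chart-by-chart verification spell out exactly this canonicity-of-the-leading-term argument that the paper leaves implicit.
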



\section{Vertex Superalgebra of Global Sections}
\label{sec:F-action}

In the previous section, we introduced the sheaf of $\hbar$-adic vertex superalgebras $\tldcalD^\ch_{M, \hbar}$ on the Hilbert scheme $M=\Hilb^N(\C^2)$. Now, we construct a vertex operator superalgebra $\sfV_{S_N}$ of central charge $c=-3N^2$ from the $\hbar$-adic vertex superalgebra $\tldcalD^\ch_{M, \hbar}(M)$ of global sections and study its conformal structure and associated variety.


\subsection{Equivariant Torus Action and Global Sections}
\label{sec:global-VA}

The resolution of singularities $M = \Hilb^N(\C^2) \longrightarrow M_0 = \C^{2N}/S_N$ is an example of a conical symplectic resolution in the sense of \cite{BLPW16a, BLPW16b}. This entails that $M$ is equipped with an action of the one-dimensional torus $\C^\times$ such that
\begin{enumerate}
\item the symplectic form $\omega$ satisfies $t^* \omega = t^n \omega$ for all $t \in \C^{\times}$ for some positive integer $n$ (here, $n=2$),
\item $\C^{\times}$ acts on the coordinate ring $\C[M]$ with only non-negative weights, and the trivial weight space $\C[M]^{\C^\times} = \C \cdot 1$ is one-dimensional.
\end{enumerate}
Geometrically, the torus action makes $M_0 = \C^{2N} / S_N$ into a cone and contracts $M_0$ to the cone point $o \in M_0$, the image of the origin of $\C^{2N}$. In \cite{BLPW16a, BLPW16b}, $\C^\times$-equivariant quantisations of conical symplectic resolutions are studied.

In the following, we consider the $\C^\times$-action on $M$ and show that it naturally lifts to an equivariant action on the BRST cohomology sheaf $\tldcalD^\ch_{M, \hbar}$. This allows us to define a vertex superalgebra $\sfV_{S_N}=[\tldcalD^\ch_{M, \hbar}(M)]^{\C^\times}$ from the global sections by taking the invariants under this torus action in the manner of \cite{BLPW16a, BLPW16b}.

\medskip

The conical structure of $M = \Hilb^N(\C^2)$ is given by the following action of the one-dimensional torus $\C^\times$: consider the action of $\C^\times$ on $\frX$ that induces an equivariant action on the structure sheaf $\calO_{\frX}$ such that the weights of the generators with respect to it are given by $\Swt(x_{ij}) = \Swt(y_{ij}) = \Swt(\gamma_i) = \Swt(\beta_i)= 1/2$ for $i,j=1,\dots,N$. Here, an element $x$ has (torus) weight $\Swt(x)=m$ if it is semi-invariant for the character $\C^\times\longrightarrow\C^\times$, $t\longmapsto t^m$, i.e.\ if $t^* x=t^m x$ for $t\in\C^\times$. Note that, with respect to this action, the Poisson bracket on $\calO_{\frX}$ is homogeneous of weight $-1$. Since the $\C^\times$-action commutes with the $G$-action, we obtain an induced $\C^\times$-action on $M$.

Moreover, there is the equivariant $\C^\times$-action on the sheaf $\tldcalD^\ch_{\frX, \hbar}$ over $\C$ with the torus weights of the generators given by
\begin{align*}
\Swt(x_{ij (-n)}) &= \Swt(y_{ij (-n)}) = \Swt(\gamma_{i (-n)}) = \Swt(\beta_{i (-n)}) = 1/2,\\
\Swt(\psi_{i (-n)}) &= \Swt(\phi_{i (-n)}) = 1/2,\qquad\Swt(\hbar) = 1
\end{align*}
for $i,j=1,\dots,N$ and $n \in \Z_{>0}$. Note that the operator product expansions of $\tldcalD^\ch_{\frX, \hbar}$ are homogeneous of weight $0$ with respect to the $\C^\times$-action. We extend this torus action to the BRST complexes $\tldC_{\VA}$ and $C_{\VA}$ by
\[
\Swt(\Psi_{ij (-n)}) = 0,\quad\Swt(\Phi_{ij (-n)}) = 1
\]
for $i,j=1,\dots,N$ and $n \in \Z_{>0}$. Then, the element $Q \in \tldC_{\VA}$ is homogeneous of weight $\Swt(Q) = 1$, and hence the coboundary operator $d_{\VA} = \hbar^{-1} Q_{(0)}$ is a homogeneous operator of weight $0$ on the complexes $\tldC_{\VA}$ and $C_{\VA}$. This implies that the BRST cohomology sheaf $\tldcalD^\ch_{M, \hbar} = \calH^{\infty/2+\bullet}_{\VA}(\frg, \tldcalD^\ch_{\frX, \hbar})$ is also equipped with the induced equivariant $\C^\times$-action over $M$. In particular, the space of global sections $\tldcalD^\ch_{M, \hbar}(M)$ is a $\C[[\hbar]]$-module with a $\C^\times$-action over $\C$.

Recall the affine open covering $\frX = \bigcup_{\lambda \vdash N} \tldU_{\lambda}$. For a partition $\lambda$, the open subset $\tldU_{\lambda}$ is closed under the $\C^\times$-action. Each column of the matrix $B_{\lambda} = (X^i Y^j \gamma)_{(i,j) \in \lambda}$ is homogeneous of weight $(i+j+1)/2$ and thus its determinant and minors are again homogeneous. This implies that the $(i,j)$-th row of its inverse $B_{\lambda}^{-1}$ is homogeneous of weight $-(i+j+1)/2$. Therefore, the complex $\tldC_{\VA}(\tldU_{\lambda})$ is generated by homogeneous sections and it can be decomposed into a direct product of weight spaces. The coboundary operator $d_{\VA}$ commutes with the $\C^\times$-action, and hence the BRST cohomology $H^{\bullet}(\tldC_{\VA}(\tldU_{\lambda}), d_{\VA})$ is also a direct product of weight spaces. This implies, in particular, the weight-space decomposition $\tldcalD^\ch_{M, \hbar}(U_{\lambda}) = \prod_{m} \tldcalD^\ch_{M, \hbar}(U_{\lambda})^{\C^\times\!, m}$.

Since the space of global sections $\tldcalD^\ch_{M, \hbar}(M)$ is the intersection of the $\tldcalD^\ch_{M, \hbar}(U_{\lambda})$ for all $\lambda \vdash N$, it can also be decomposed into the direct product of weight spaces
\[
\tldcalD^\ch_{M, \hbar}(M) = \prod_{m \ge 0} \tldcalD^\ch_{M, \hbar}(M)^{\C^\times\!, m}.
\]
We note that the weights $m \in \frac{1}{2} \Z_{\ge 0}$ of the global sections are non-negative and $\tldcalD^\ch_{M, \hbar}(M)^{\C^\times\!, 0} = \C \bfone$. Consider the subspace
\[
\tldcalD^\ch_{M, \hbar}(M)_\fin \coloneqq \bigoplus_{m \in \frac{1}{2} \Z_{\ge 0}} \tldcalD^\ch_{M, \hbar}(M)^{\C^\times\!, m}
\]
of the direct sum of weight spaces. This subspace is a $\C[\hbar]$-module since the weights of the global sections are non-negative and $\Swt(\hbar)=1$. Moreover, because the operator product expansions preserve the $\C^\times$-weight, they also preserve this subspace. Now we set
\begin{equation}\label{eq:sfW}
\sfV_{S_N} = \tldcalD^\ch_{M, \hbar}(M)_\fin \bigm/ (\hbar - 1),
\end{equation}
the quotient space by the ideal generated by $\hbar - 1$. It is a $\C$-vector space equipped with operator product expansions induced from the ones on $\tldcalD^\ch_{M, \hbar}(M)$. Since all the identities involving the vertex operators of $\tldcalD^\ch_{M, \hbar}(M)$ are also satisfied by the ones of $\sfV_{S_N}$, the $\C$-vector space $\sfV_{S_N}$ is a vertex superalgebra.
\begin{remark}
Alternatively, we can think of $\sfV_{S_N}$ as
\[
 [\tldcalD^\ch_{M, \hbar}(M)]^{\C^\times} \coloneqq \big(\tldcalD^\ch_{M, \hbar}(M)\otimes_{\C[[\hbar]]}\C((\sqrt{\hbar}))\big)^{\C^\times}=\bigoplus_{m \in \frac{1}{2} \Z_{\ge 0}} \sfV_{S_N}^{\C^\times\!, m}\hbar^{-m}
\]
(and then forget about $\hbar$), where taking the invariants under the natural extension of the $\C^\times$-action precludes the appearance of infinite sums (cf.\ \cite{KR08,AKM15}).
\end{remark}


\subsection{Associated Variety}
\label{sec:associated-variety}

In the following, we determine the associated variety of the vertex superalgebra $\sfV_{S_N}$ and, as a consequence, show that $\sfV_{S_N}$ is quasi-lisse.

\medskip

For a vertex superalgebra $V$ (or an $\hbar$-adic vertex superalgebra), consider the quotient vector space
\[
\Abar(V) \coloneqq V / C_2(V)\quad\text{with}\quad C_2(V) = \{ a_{(-2)} b \,|\, a, b \in V\}.
\]
The $(-1)$-product of $V$ induces a commutative and associative product on $\Abar(V)$ \cite{Z96}. Moreover, the $(0)$-product of $V$ defines a Poisson bracket on $\Abar(V)$ by $\{a, b\} = a_{(0)} b$ (or $\{a, b\} = \hbar^{-1} a_{(0)} b$, respectively) modulo $V_{(-2)} V$ for $a,b \in V$. Thus, $\Abar(V)$ is a Poisson superalgebra over $\C$ (or $\C[[\hbar]]$, respectively), and is called the $C_2$-Poisson algebra of $V$. The affine algebraic variety $\Spec(\Abar(V)_{\red})$ associated with $\Abar(V)_{\red}$ is called the associated variety of $V$ \cite{Arakawa12}, where $\Abar(V)_{\red} = \Abar(V) / N$ is the quotient algebra by the nilradical $N$ of $\Abar(V)$.

For any open subset $U$ of the Hilbert scheme $M$, the exact sequence
\[
0 \longrightarrow \hbar \tldcalD^\ch_{M, \hbar} \longrightarrow \tldcalD^\ch_{M, \hbar} \longrightarrow \JetBundle{M} \longrightarrow 0
\]
induces an injective homomorphism $\tldcalD^\ch_{M, \hbar}(U) / \hbar \tldcalD^\ch_{M, \hbar}(U) \hookrightarrow \JetBundle{M}(U)$. Then, the composition of the above homomorphism $\tldcalD^\ch_{M, \hbar}(U) \longrightarrow \JetBundle{M}(U)$ and the canonical projection $\JetBundle{M}(U) \longrightarrow \Abar(\JetBundle{M}(U))$ induces an injective homomorphism
\begin{equation}\label{eq:3}
\tldcalD^\ch_{M, \hbar}(U) \bigm/ \bigl(C_2(\tldcalD^\ch_{M, \hbar}(U)) + \hbar \tldcalD^\ch_{M, \hbar}(U)\bigr)
\hookrightarrow \Abar(\JetBundle{M}(U)).
\end{equation}

\begin{lemma}\label{lemma:1}
For $U\subset M$ open, the homomorphism \eqref{eq:3} induces an injective homomorphism of Poisson superalgebras $\Abar(\tldcalD^\ch_{M, \hbar}(U)) / \hbar \Abar(\tldcalD^\ch_{M, \hbar}(U)) \hookrightarrow \tldcalO_{M}(U)$.
\end{lemma}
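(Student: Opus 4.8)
The plan is to read off both sides of the claimed injection from the data already assembled around \eqref{eq:3}, so that the lemma becomes a reformulation plus a check of Poisson compatibility. On the source side I would first record the purely formal identity $\Abar(V)/\hbar\Abar(V)\cong V/(C_2(V)+\hbar V)$, valid for any $\hbar$-adic vertex superalgebra $V$: since $\hbar$ commutes with all $(n)$-products, $\hbar\Abar(V)$ is the image of $\hbar V+C_2(V)$ in $\Abar(V)=V/C_2(V)$. Applying this to $V=\tldcalD^\ch_{M,\hbar}(U)$ shows that $\Abar(\tldcalD^\ch_{M,\hbar}(U))/\hbar\Abar(\tldcalD^\ch_{M,\hbar}(U))$ is literally the source of \eqref{eq:3}. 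On the target side I would identify $\Abar(\JetBundle{M}(U))$ with $\tldcalO_M(U)$. Composing the algebra identity on the left with this identification on the right turns \eqref{eq:3} into the desired map, and its injectivity is inherited from that of \eqref{eq:3} because the target identification is an isomorphism.

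For the target identification I would invoke the standard computation of the $C_2$-algebra of an arc space. By the definition of the jet bundle sheaf, $\JetBundle{M}(U)=\jet{\infty}{\tldcalO_M(U)}$, and for a commutative differential superalgebra the $(-2)$-product is $a_{(-2)}b=(\partial a)\,b$; hence $C_2(\jet{\infty}{R})$ is the ideal generated by $\partial(\jet{\infty}{R})$, which is precisely the ideal generated by all jets of order $\ge 2$. The quotient is therefore $R=\tldcalO_M(U)$, with the original generators surviving in degree one. Moreover, since the vertex Poisson structure on $\jet{\infty}{R}$ is characterised by $f_{(0)}g=\{f,g\}$ for $f,g\in R$ (see Section~2.3 of \cite{Arakawa12}), the Poisson bracket induced on $\Abar(\jet{\infty}{R})$ coincides with the Poisson bracket of $\tldcalO_M(U)$. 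This identification is manifestly compatible with the $\Z/2\Z$-grading.

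It remains to check that \eqref{eq:3} is a morphism of Poisson superalgebras, not merely of commutative superalgebras. Here I would use that the reduction map modulo $\hbar$, which underlies \eqref{eq:3}, is a homomorphism of vertex Poisson superalgebras onto its image by \autoref{prop:quantization}, and that the functor $V\mapsto\Abar(V)$ sends vertex (Poisson) homomorphisms to Poisson homomorphisms. The only point requiring care is the normalisation $\hbar^{-1}$: the bracket on $\Abar(\tldcalD^\ch_{M,\hbar}(U))$ is $\{a,b\}=\hbar^{-1}a_{(0)}b\bmod C_2$, which is well defined precisely because $\tldcalD^\ch_{M,\hbar}(U)/\hbar$ is commutative, and reducing it modulo $\hbar$ reproduces the $(0)$-product of the vertex Poisson structure $Y_{+}(a,z)=\hbar^{-1}Y(a,z)\bmod\hbar$ on $\JetBundle{M}$. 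Thus the two $\hbar^{-1}$-normalised brackets match, and \eqref{eq:3} descends to a Poisson homomorphism.

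The main obstacle I anticipate is this last compatibility of Poisson structures: one must verify that the $\hbar$-adic Poisson bracket on $\Abar(\tldcalD^\ch_{M,\hbar}(U))$, after reduction modulo $\hbar$, agrees under the arc-space identification with the genuine Poisson bracket on $\tldcalO_M(U)$, keeping track of both the $\hbar^{-1}$ factors and the signs produced by the fermionic generators $\psi_i,\phi_i$. Everything else is either the formal quotient identity or the standard arc-space computation.
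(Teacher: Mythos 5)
Your proposal is correct and follows essentially the same route as the paper: the paper's proof likewise identifies $\Abar(\tldcalD^\ch_{M,\hbar}(U))/\hbar\Abar(\tldcalD^\ch_{M,\hbar}(U))$ with the source of \eqref{eq:3} via the isomorphism theorem (your formal quotient identity) and then composes with \eqref{eq:3} and the identification $\Abar(\JetBundle{M}(U))\simeq\tldcalO_M(U)$. The additional details you supply — the arc-space computation $C_2(\jet{\infty}{R})=(\partial\jet{\infty}{R})$ giving $\Abar(\jet{\infty}{R})\simeq R$, and the matching of the $\hbar^{-1}$-normalised brackets — are exactly the points the paper treats as standard and leaves implicit.
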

\begin{proof}
By the isomorphism theorem,
\begin{align*}
\Abar(\tldcalD^\ch_{M, \hbar}(U)) / \hbar \Abar(\tldcalD^\ch_{M, \hbar}(U))
&= \frac{\tldcalD^\ch_{M, \hbar}(U) / C_2(\tldcalD^\ch_{M, \hbar}(U))}{ \hbar \tldcalD^\ch_{M, \hbar}(U) / (C_2(\tldcalD^\ch_{M, \hbar}(U)) \cap \hbar \tldcalD^\ch_{M, \hbar}(U))} \\
&\simeq \tldcalD^\ch_{M, \hbar}(U) / (C_2(\tldcalD^\ch_{M, \hbar}(U)) + \hbar \tldcalD^\ch_{M, \hbar}(U)).
\end{align*}
The asserted homomorphism is then given by the composition of this isomorphism and the one in \eqref{eq:3} mapping to $\Abar(\JetBundle{M}(U)) \simeq \tldcalO_{M}(U)$.
\end{proof}

\autoref{lemma:1} implies the following proposition about the $C_2$-Poisson algebras of $\tldcalD^\ch_{M, \hbar}(M)$ and $\sfV_{S_N}$.
\begin{proposition}\label{prop:C2-subalg}
The $C_2$-Poisson algebra $\Abar(\tldcalD^\ch_{M, \hbar}(M))$ of $\tldcalD^\ch_{M, \hbar}(M)$ is a subalgebra of $\tldcalO_M(M)[[\hbar]]$, and $\Abar(\sfV_{S_N})$ is a subalgebra of $\tldcalO_{M}(M)$.
\end{proposition}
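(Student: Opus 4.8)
The plan is to assemble a single $\C[[\hbar]]$-linear map
\[
\Phi\colon \Abar(\tldcalD^\ch_{M,\hbar}(M)) \longrightarrow \tldcalO_M(M)[[\hbar]]
\]
out of local data and then to deduce injectivity from \autoref{lemma:1}. Write $V=\tldcalD^\ch_{M,\hbar}(M)$. For each $\lambda\vdash N$ the restriction $V\to\tldcalD^\ch_{M,\hbar}(U_\lambda)$ is a homomorphism of $\hbar$-adic vertex superalgebras, so by functoriality of the $C_2$-construction it induces a homomorphism of Poisson superalgebras $\Abar(V)\to\Abar(\tldcalD^\ch_{M,\hbar}(U_\lambda))$. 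Using the local trivialisation $\tldcalD^\ch_{M,\hbar}(U_\lambda)\simeq\JetBundle{M}(U_\lambda)[[\hbar]]$ (the discussion after \autoref{def:chiral-sheaf-Hilb}) together with $\Abar(\JetBundle{M}(U_\lambda))\simeq\tldcalO_M(U_\lambda)$, a short computation as for the $\beta\gamma$-system identifies the local $C_2$-quotient as $\Abar(\tldcalD^\ch_{M,\hbar}(U_\lambda))\simeq\tldcalO_M(U_\lambda)[[\hbar]]$: the $\hbar$-corrections to $C_2(V)$ are again of the form $\partial(\blkbar)$ up to higher order and hence do not enlarge the quotient. Because these local maps come from the single object $\Abar(V)$ and the natural restriction morphisms, they are compatible on the intersections $U_\lambda\cap U_{\lambda'}$, and since the presheaf $U\mapsto\tldcalO_M(U)[[\hbar]]$ satisfies the sheaf condition on the finite affine cover $M=\bigcup_\lambda U_\lambda$, they glue to the desired $\Phi$. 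By construction $\Phi$ is a homomorphism of Poisson superalgebras and $\Phi\bmod\hbar$ is the injection of \autoref{lemma:1}.

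For injectivity, reducing modulo $\hbar$ shows $\Ker\Phi\subseteq\hbar\Abar(V)$. Writing a kernel element as $\hbar\eta$ and using that the target $\tldcalO_M(M)[[\hbar]]$ is $\hbar$-torsion free gives $\Phi(\eta)=0$, so $\eta\in\Ker\Phi$; iterating yields $\Ker\Phi\subseteq\bigcap_{k\ge0}\hbar^k\Abar(V)$. Thus the whole first statement rests on the $\hbar$-adic separatedness $\bigcap_k\hbar^k\Abar(V)=0$, equivalently on the $\hbar$-adic closedness of $C_2(V)$ inside the complete, separated module $V$. This is the main obstacle: although $V$ is flat and $\hbar$-adically complete, a quotient by a non-closed submodule need not be separated, so one must genuinely check that $C_2(V)$ is closed. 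I would establish this locally from the identifications $\Abar(\tldcalD^\ch_{M,\hbar}(U_\lambda))\simeq\tldcalO_M(U_\lambda)[[\hbar]]$, which are manifestly separated and $\hbar$-torsion free, and transport it to $V$ via the sheaf property, using that $V$ is recovered from the cover $\{U_\lambda\}$ as the equalizer of the $\tldcalD^\ch_{M,\hbar}(U_\lambda)$.

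For the second assertion I would pass to the finite part and specialise $\hbar=1$. Since the $(n)$-products preserve the $\C^\times$-weight, the $C_2$-construction commutes with taking the finite part, giving $\Abar(\tldcalD^\ch_{M,\hbar}(M)_\fin)=\bigoplus_m\Abar(V)^{\C^\times,m}$, and it commutes with the central quotient by $\hbar-1$, so $\Abar(\sfV_{S_N})=\Abar(\tldcalD^\ch_{M,\hbar}(M)_\fin)/(\hbar-1)$. The embedding $\Phi$ is $\C^\times$-equivariant with $\Swt(\hbar)=1$; because the weights of $\tldcalO_M(M)$ lie in $\tfrac12\Z_{\ge0}$, a homogeneous element of weight $m$ of $\tldcalO_M(M)[[\hbar]]$ is a finite sum $\sum_k f_k\hbar^k$ with $f_k$ of weight $m-k\ge0$. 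Hence $\Phi$ restricts to a weight-graded, $\C[\hbar]$-linear injection $\Abar(\tldcalD^\ch_{M,\hbar}(M)_\fin)\hookrightarrow\tldcalO_M(M)[\hbar]$, and composing with the evaluation $\hbar\mapsto1$ produces a map $\Abar(\sfV_{S_N})\to\tldcalO_M(M)$.

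It remains to see that this last map is injective. Filtering $\Abar(\sfV_{S_N})$ by the $\C^\times$-weight inherited from $V_\fin$ and $\tldcalO_M(M)$ by its grading, the map is filtered, and on the associated graded it sends the leading weight-$d$ component $\xi_d$ of a class to the image of $\xi_d\bmod\hbar$ under the injection of \autoref{lemma:1}. If the leading term is divisible by $\hbar$, say $\xi_d=\hbar\eta$, then modulo $\hbar-1$ one may subtract $(\hbar-1)\eta$ to lower the top weight, so the class already lies in the next filtration step; therefore the associated graded map is injective by \autoref{lemma:1}. As the weight filtration is bounded below (all weights are non-negative) and exhaustive, injectivity on the associated graded forces injectivity of the map itself, which is exactly the claim that $\Abar(\sfV_{S_N})$ is a subalgebra of $\tldcalO_M(M)$.
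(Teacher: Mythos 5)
Your argument has two genuine gaps, and the first one is fatal to the construction on which everything else rests. The map $\Phi$ is never actually produced: the identification $\Abar(\tldcalD^\ch_{M,\hbar}(U_\lambda))\simeq\tldcalO_M(U_\lambda)[[\hbar]]$ that you feed into the gluing is induced by the trivialisation $\tldcalD^\ch_{M,\hbar}(U_\lambda)\simeq\JetBundle{M}(U_\lambda)[[\hbar]]$ of \eqref{eq:hE1-local-isom}, and the paper warns explicitly, just before \autoref{prop:quantization}, that this isomorphism depends on the local trivialisation \eqref{eq:18} and does \emph{not} induce an isomorphism of sheaves. Two such trivialisations differ by an automorphism which is the identity only modulo $\hbar$, and such an automorphism acts non-trivially, in higher orders of $\hbar$, on the quotient $\tldcalO_M(U_\lambda)[[\hbar]]$. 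So naturality holds for the restriction maps $\Abar(V)\to\Abar(\tldcalD^\ch_{M,\hbar}(U_\lambda))$ but not for your identifications of the targets with $\tldcalO_M(U_\lambda)[[\hbar]]$; the claimed agreement of the two composites on $U_\lambda\cap U_{\lambda'}$ — the entire justification for gluing — is unsupported and in general false. Only the reduction modulo $\hbar$ of these identifications is canonical, and that is exactly what \autoref{lemma:1} provides. This is also why the paper proceeds differently: it never glues local data, but uses the canonical maps of \eqref{eq:3}, which are defined for every open $U$ including $U=M$ itself, and deduces the proposition from \autoref{lemma:1} applied globally.

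The second gap is the one you flag and then defer: the separatedness $\bigcap_k\hbar^k\Abar(V)=0$, equivalently the $\hbar$-adic closedness of $C_2(V)$. Your plan to prove it over each $U_\lambda$ and ``transport it to $V$ via the sheaf property, using that $V$ is the equalizer of the $\tldcalD^\ch_{M,\hbar}(U_\lambda)$'' cannot work, because the $C_2$-construction is not sheaf-theoretic: one only has $C_2(V)\subseteq V\cap C_2(\tldcalD^\ch_{M,\hbar}(U_\lambda))$, possibly strictly, so the maps $\Abar(V)\to\Abar(\tldcalD^\ch_{M,\hbar}(U_\lambda))$ need not be injective, and a global section whose restriction to every $U_\lambda$ is a finite local sum $\sum_i a_{i\,(-2)}b_i$ need not admit such an expression with \emph{global} $a_i,b_i$. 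Hence closedness over the $U_\lambda$ transports no information to $M$. The ingredient that actually controls this is the equivariant $\C^\times$-action of \autoref{sec:global-VA}: all weights of global sections lie in $\frac12\Z_{\ge0}$, $\Swt(\hbar)=1$, and $C_2(V)$ is stable under taking weight components, so each weight space of $V$ meets only finitely many powers of $\hbar$; in particular, for the second statement every element of the finite part has only finitely many weight components, which is what makes the passage to $\sfV_{S_N}$ clean. Your concluding Rees-type filtration argument for the injectivity of $\Abar(\sfV_{S_N})\to\tldcalO_M(M)$ is the right idea, but as written it sits on top of the non-existent $\Phi$; it should instead be run directly from \autoref{lemma:1} with $U=M$ together with this weight decomposition, which is in substance how the paper argues.
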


\begin{lemma}
For any $m \in \Z_{>0}$, $\Tr(X^m)$ and $\Tr(Y^m)$ are cocycles in $C_{\VA}(\frX)$.
\end{lemma}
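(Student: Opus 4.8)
\emph{Plan.} I treat $\Tr(X^m)$ in detail; the argument for $\Tr(Y^m)$ is verbatim the same with $X$ replaced by $Y$. Here $\Tr(X^m)=\sum_{i_1,\dots,i_m}\NO x_{i_1i_2(-1)}x_{i_2i_3(-1)}\cdots x_{i_mi_1(-1)}\NO$, and the first thing I would record is that this is an unambiguously defined even section of $\tldcalD^\ch_{\frX,\hbar}(\frX)\subset\tldC_{\VA}^{0,0}(\frX)$. Indeed, since $x_{ij}(z)x_{kl}(w)\sim0$, the fields $x_{ij(-1)}$ have vanishing $(n)$-products for all $n\ge0$ and therefore generate, under the $(-1)$-product, a \emph{commutative and associative} polynomial subalgebra of $\tldcalD^\ch_{\frX,\hbar}(\frX)$ with no $\hbar$-corrections. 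In particular the placement of brackets in $\Tr(X^m)$ is immaterial, and inside this subalgebra one may compute exactly as in the commutative ring $\C[[\hbar]][x_{ij(-1)}]$.

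Next I would check the two conditions defining membership in the relative complex $C_{\VA}(\frX)$. Since $\Tr(X^m)$ involves no ghost generators, its operator product expansion with $\Phi_{ij}$ is trivial and $\Phi_{ij(0)}\Tr(X^m)=0$. For the second condition, recall $\tldE_{ij}=\tldmu_\ch(E_{ij})+J(E_{ij})$; as $J(E_{ij})$ lies in the ghost factor $\Cl^0_{\hbar}(T^*\frg)$, it has trivial OPE with $\Tr(X^m)$, so $\tldE_{ij(n)}\Tr(X^m)=\tldmu_\ch(E_{ij})_{(n)}\Tr(X^m)$ for all $n$. The operator $(1/\hbar)\tldmu_\ch(E_{ij})_{(0)}$ is a derivation of every $(n)$-product and, by a single Wick contraction of the unique $y$-field in each summand of $\tldmu_\ch(E_{ij})$ against the coordinate $x_{kl}$, sends $x_{kl}\mapsto x_{il}\delta_{jk}-\delta_{il}x_{kj}$; that is, it restricts on $\C[[\hbar]][x_{ij(-1)}]$ to the conjugation vector field chiralising $\{\mu^*(E_{ij}),-\}$. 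Since $\Tr(X^m)$ is the trace of the $m$-th power of $X$ in this commutative ring, it is conjugation-invariant, whence $(1/\hbar)\tldmu_\ch(E_{ij})_{(0)}\Tr(X^m)=m\,\Tr([E_{ji},X]X^{m-1})=0$. Thus $\tldE_{ij(0)}\Tr(X^m)=0$ and $\Tr(X^m)\in C_{\VA}(\frX)$.

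It then remains to verify the cocycle condition, for which I would split $d_{\VA}=d^+_{\VA}+d^-_{\VA}$ as in \eqref{eq:d-plus} and \eqref{eq:d-minus}. Since $\Tr(X^m)$ contains no $\Phi$, the operator $d^-_{\VA}$, built entirely from $\partial/\partial\Phi$, annihilates it. The operator $d^+_{\VA}$ reduces to $\sum_{i,j}\sum_{n\ge0}\Psi_{ij(-n-1)}\bigl(\hbar^{-1}\tldE_{ij(n)}\Tr(X^m)\bigr)$, the $\partial/\partial\Psi$-term vanishing for lack of $\Psi$'s. For $n=0$ the coefficient vanishes by membership in $C_{\VA}$ established above, and for $n\ge1$ it vanishes by a pole-order count: in the OPE $\tldmu_\ch(E_{ij})(z)\Tr(X^m)(w)$ the only non-trivial contractions pair the single $y$-field of each summand of $\tldmu_\ch(E_{ij})$ with one of the $x$-factors, so at most one contraction occurs and only a simple pole appears, i.e.\ $\tldE_{ij(n)}\Tr(X^m)=0$ for $n\ge1$. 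Hence $d^+_{\VA}\Tr(X^m)=0$, and therefore $d_{\VA}\Tr(X^m)=0$.

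The only delicate point is the passage from the chiral operator $\tldmu_\ch(E_{ij})_{(0)}$ to the classical conjugation derivation: a priori the non-commutativity and non-associativity of the normally ordered product could spoil the telescoping that makes $\Tr([E_{ji},X]X^{m-1})$ vanish. The resolution, and the crux of the write-up, is precisely the observation of the first paragraph that the $x$-variables span a genuinely commutative, associative, $\hbar$-correction-free subalgebra on which the derivation acts classically, reducing the identity to the commutative trace identity. I would take care to confirm that $(1/\hbar)\tldmu_\ch(E_{ij})_{(0)}$ preserves this subalgebra — which holds because $x_{kl}\mapsto x_{il}\delta_{jk}-\delta_{il}x_{kj}$ is again linear in the $x$'s — before invoking the classical computation.
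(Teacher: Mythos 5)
Your proposal is correct and takes essentially the same approach as the paper: the paper proves this lemma by the same Wick-formula analysis (carried out explicitly in the proof of \autoref{prop:gen-N4-cocyc}), namely that only the single $y$-field in each summand of $\tldmu_\ch(E_{ij})$ can contract with $\Tr(X^m)$, so the OPE $\tldmu_\ch(E_{ij})(z)\Tr(X^m)(w)$ has at most a simple pole, whose coefficient vanishes by cyclicity of the trace, giving both $\tldE_{ij(0)}\Tr(X^m)=0$ and $d_{\VA}\Tr(X^m)=0$. Your reduction of the simple-pole cancellation to the classical identity $m\,\Tr([E_{ji},X]X^{m-1})=0$ — via the derivation property of zero modes and the observation that the $x_{ij(-1)}$ span a commutative, associative, $\hbar$-correction-free subalgebra — is merely a more conceptual packaging of the telescoping computation the paper writes out term by term.
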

\begin{proof}
We shall see in the proof of \autoref{prop:gen-N4-cocyc} that the operator product expansion $\mu_\ch(E_{ij})(z) \Tr(X^m)(w) \sim \mu_\ch(E_{ij})(z) \Tr(Y^m)(w) \sim 0$ holds for all $i,j=1,\dots,N$. Thus, we obtain $d_{\VA}(\Tr(X^m)) = d_{\VA}(\Tr(Y^m)) = 0$ and moreover $\tldE_{ij(0)} \Tr(X^m) = \tldE_{ij(0)} \Tr(Y^m) = 0$ for any $i,j=1,\dots,N$.
\end{proof}

By the above lemma, there are the elements $\Tr(X^m),\Tr(Y^m) \in \sfV_{S_N}$ and their images in the Poisson algebra $\Abar(\sfV_{S_N})_{\red} \subset \calO_{M}(M)$. Under the isomorphism $\calO_{M}(M) \simeq \C[\C^{2N}]^{S_N}$, the elements $\Tr(X^m)$ and $\Tr(Y^m)$ lie in $\C[x_1, \dots, x_N]^{S_N}$ and $\C[y_1, \dots, y_N]^{S_N}$, respectively. In fact, the sets $\{\Tr(X^m) \,|\, m = 1, \dots, N \}$ and $\{\Tr(Y^m) \,|\, m = 1, \dots, N \}$ generate $\C[x_1, \dots, x_N]^{S_N}$ and $\C[y_1, \dots, y_N]^{S_N}$, respectively, by fundamental properties of symmetric polynomials. The following lemma is also a well-known fact for diagonal invariant algebras.
\begin{lemma}[\cite{Wallach93}, Theorem 2.1]
As a Poisson algebra, $\C[\C^{2N}]^{S_N}$ is generated by the subalgebras
$\C[x_1, \dots, x_N]^{S_N}$ and $\C[y_1, \dots, y_N]^{S_N}$.
\end{lemma}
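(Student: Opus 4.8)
The plan is to reduce the Poisson-algebra generation statement to a purely commutative one. Write $p_{r,s}=\sum_{i=1}^N x_i^r y_i^s$ for the \emph{polarized power sums}, so that the ordinary power sums $p_m(x)=p_{m,0}$ and $p_m(y)=p_{0,m}$ generate $\C[x_1,\dots,x_N]^{S_N}$ and $\C[y_1,\dots,y_N]^{S_N}$, respectively. Let $A\subset\C[\C^{2N}]^{S_N}$ be the Poisson subalgebra generated by these two subalgebras; by definition $A$ is closed under both the product and the bracket. The claim follows once I show \textbf{(i)} that every $p_{r,s}$ lies in $A$, and \textbf{(ii)} that the $p_{r,s}$ generate $\C[\C^{2N}]^{S_N}$ as a commutative algebra.

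For (i), recall that $x_i,y_i$ are Darboux coordinates, so $\{x_i,y_j\}=\delta_{ij}$ while the $x$'s (resp.\ the $y$'s) Poisson-commute among themselves; the exact normalization is immaterial. A one-line computation then gives, for $a,b\ge1$,
\[
\{p_b(x),p_a(y)\}=\sum_{i,j}\{x_i^b,y_j^a\}=ab\sum_{i=1}^N x_i^{b-1}y_i^{a-1}=ab\,p_{b-1,a-1}.
\]
Since we work over $\C$, the scalar $ab$ is invertible, so each $p_{r,s}$ with $r,s\ge0$ is, up to a nonzero constant, a single Poisson bracket of a generator of $\C[y_1,\dots,y_N]^{S_N}$ with one of $\C[x_1,\dots,x_N]^{S_N}$; together with $p_{r,0}\in\C[x_1,\dots,x_N]^{S_N}$ and $p_{0,s}\in\C[y_1,\dots,y_N]^{S_N}$ this shows $p_{r,s}\in A$ for all $(r,s)$.

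Step (ii) is the substantive input and the main obstacle: it is the classical fact that the ring of diagonal (``multisymmetric'') invariants $\C[\C^{2N}]^{S_N}$ is generated as a commutative algebra by the polarized power sums $p_{r,s}$. I would obtain this from Weyl's theorem on polarizations, namely that the invariants of $S_N$ acting on several copies of its permutation representation are generated by the polarizations of the single-copy basic invariants; polarizing the power sum $p_m=\sum_i z_i^m$ via $z_i\mapsto s x_i+t y_i$ and reading off the coefficient of $s^r t^{m-r}$ in $\sum_i(sx_i+ty_i)^m$ produces exactly $\binom{m}{r}p_{r,m-r}$. This polarization/restitution argument is where characteristic $0$ enters, since one must divide by multinomial coefficients. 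Combining (i) and (ii) yields $A=\C[\C^{2N}]^{S_N}$, which is the assertion; alternatively one may simply invoke \cite{Wallach93}, Theorem~2.1, for (ii).
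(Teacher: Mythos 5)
Your proof is correct, but it takes a genuinely different route from the paper, which offers no argument at all: the lemma is stated there purely as a citation of \cite{Wallach93}, Theorem~2.1. Your write-up supplies the actual mechanism. The bracket computation $\{p_b(x),p_a(y)\}=ab\,p_{b-1,a-1}$ is right (up to the immaterial sign convention), and it exhibits every polarized power sum $p_{r,s}$, $r,s\ge 0$, as $\tfrac{1}{(r+1)(s+1)}\{p_{r+1}(x),p_{s+1}(y)\}$, a single bracket of elements of the two given subalgebras; so step (i) is sound. Step (ii) --- that the $p_{r,s}$ generate $\C[\C^{2N}]^{S_N}$ as a commutative algebra --- is exactly the classical theorem on vector invariants of the symmetric group (Weyl's polarization theorem for multisymmetric functions, valid in characteristic $0$), and identifying it as the substantive input is correct; the two-step reduction (brackets produce the mixed power sums, polarization gives commutative generation) is essentially how this result is established in the literature. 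What your argument buys over the paper's bare citation is transparency: one sees precisely where characteristic $0$ enters (division by $ab$ and by the binomial coefficients in restitution), and one sees that single brackets, rather than iterated ones, already suffice. One caveat: your closing remark that one may ``invoke \cite{Wallach93}, Theorem~2.1, for (ii)'' does not work as stated, since the paper attributes precisely the Poisson-generation statement --- i.e.\ the lemma itself --- to that theorem; falling back on it for a sub-step would be citing the lemma to prove the lemma. Your primary route via Weyl is the one to keep.
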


With this result, we conclude that the $C_2$-Poisson algebra $\Abar(\sfV_{S_N})_{\red}$ includes $\C[\C^{2N}]^{S_N} = \calO_M(M)$. Then, we obtain the following theorem as a consequence of the above arguments. Recall that the Hilbert scheme $M$ is a resolution of singularities of $M_0 \simeq \C^{2N} / S_N$.
\begin{theorem}\label{thm:assoc-var}
The associated variety of the vertex superalgebra $\sfV_{S_N}$ coincides with the symplectic quotient variety $\C^{2N} / S_N$. In particular, $\sfV_{S_N}$ is quasi-lisse.
\end{theorem}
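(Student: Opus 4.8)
The associated variety is by definition the affine scheme $\Spec(\Abar(\sfV_{S_N})_{\red})$, so the plan is to identify the reduced $C_2$-Poisson algebra $\Abar(\sfV_{S_N})_{\red}$ with the coordinate ring $\C[\C^{2N}]^{S_N}$ of $\C^{2N}/S_N$ by establishing both inclusions inside $\calO_M(M)$. The two inclusions come from genuinely different inputs: the upper bound from the sheaf-theoretic quantisation picture of \autoref{prop:C2-subalg}, and the lower bound from the explicit trace cocycles together with Wallach's Poisson-generation theorem.

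For the inclusion $\Abar(\sfV_{S_N})_{\red} \subseteq \C[\C^{2N}]^{S_N}$, I would start from \autoref{prop:C2-subalg}, which embeds $\Abar(\sfV_{S_N})$ as a Poisson subalgebra of the supercommutative algebra $\tldcalO_M(M)$. Passing to reduced algebras, the odd (fermionic) generators of $\tldcalO_M(M)$ square to zero in characteristic zero and hence lie in the nilradical, so that $(\tldcalO_M(M))_{\red} = \calO_M(M) \cong \C[\C^{2N}]^{S_N}$, which is already reduced. Since an injection of algebras induces an injection of reductions (an element nilpotent in the larger ring is already nilpotent in the subring, so the kernel of the induced map is exactly the nilradical of the subring), one obtains a Poisson embedding $\Abar(\sfV_{S_N})_{\red} \hookrightarrow \C[\C^{2N}]^{S_N}$. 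Here one uses that over $\C$ the nilradical is automatically a Poisson ideal, so the bracket descends and, under this embedding, matches the symplectic bracket on $\C[\C^{2N}]^{S_N}$ transported by $\pi^*$.

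For the reverse inclusion, I would use the cocycles $\Tr(X^m)$ and $\Tr(Y^m)$ produced in the preceding lemma, whose images in $\Abar(\sfV_{S_N})_{\red} \subset \calO_M(M) \cong \C[\C^{2N}]^{S_N}$ are the power sums $\sum_i x_i^m$ and $\sum_i y_i^m$. For $m = 1,\dots,N$ these generate the subalgebras $\C[x_1,\dots,x_N]^{S_N}$ and $\C[y_1,\dots,y_N]^{S_N}$, which by Wallach's theorem (\cite{Wallach93}, Theorem~2.1) generate all of $\C[\C^{2N}]^{S_N}$ as a Poisson algebra. Because $\Abar(\sfV_{S_N})_{\red}$ is a Poisson subalgebra and its bracket matches the symplectic one by the previous step, the Poisson subalgebra generated by these power sums, namely the whole of $\C[\C^{2N}]^{S_N}$, is contained in $\Abar(\sfV_{S_N})_{\red}$. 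Combining the two inclusions gives $\Abar(\sfV_{S_N})_{\red} \cong \C[\C^{2N}]^{S_N}$, whence $X_{\sfV_{S_N}} = \Spec(\C[\C^{2N}]^{S_N}) = \C^{2N}/S_N$. Quasi-lisseness then follows formally, exactly as in the introduction: $\C^{2N}/S_N$ is a symplectic quotient singularity \cite{Bea00}, hence a normal variety with symplectic singularities, which by \cite{Kaledin:2006kq} has only finitely many symplectic leaves, so $\sfV_{S_N}$ is quasi-lisse in the sense of \cite{AraKaw18}.

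The main obstacle is not the set-theoretic assembly but the Poisson-structural bookkeeping. First, one must ensure that the $C_2$-Poisson bracket on $\Abar(\sfV_{S_N})$, computed from the $(0)$-product, survives reduction and agrees, under the embedding of \autoref{prop:C2-subalg}, with the symplectic bracket on $\C[\C^{2N}]^{S_N}$; only with this compatibility does Wallach's Poisson-generation statement transfer to force the lower bound. Second, one must verify that the trace classes genuinely descend to the reduced algebra as the asserted power sums, rather than being altered or annihilated by $C_2$-terms or by the reduction; this rests on the explicit identification of the images in $\calO_M(M) \cong \C[\C^{2N}]^{S_N}$ and, in turn, on the deferred computation showing $\Tr(X^m)$ and $\Tr(Y^m)$ are indeed cocycles in $C_{\VA}(\frX)$.
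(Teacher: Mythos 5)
Your proposal is correct and takes essentially the same route as the paper: the upper bound comes from \autoref{prop:C2-subalg} (passing to reduced algebras, where the fermionic part of $\tldcalO_M(M)$ dies), and the lower bound comes from the cocycles $\Tr(X^m)$, $\Tr(Y^m)$ viewed as power sums together with Wallach's Poisson-generation theorem. The only difference is that you spell out the nilradical and bracket-compatibility bookkeeping that the paper leaves implicit, which is a faithful filling-in rather than a different argument.
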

We also recall that $\C^{2N}/S_N\cong\calM_{S_N}\times T^*\C$ as symplectic varieties, where $\calM_{S_N}$ is the canonical symplectic singularity associated with the complex reflection group~$S_N$ \cite{Bea00} (cf.\ \autoref{thm:W-prop}).


\subsection{Conformal Structure}
\label{sec:conformal}

We now equip the sheaf $\tldcalD^\ch_{M,\hbar}$ with a natural conformal structure of central charge $c=-3N^2$ that is inherited via the BRST construction. This also makes $\sfV_{S_N}$ into a vertex operator superalgebra of CFT-type of that central charge.

\medskip

A comment on notation: for an open subset $\tldU \subset \frX$, a section $f$ in $\tldcalO_{\frX}(\tldU) = \calO_{\frX}(\tldU) \otimes \Lambda(\psi_i, \phi_i \,|\, i=1, \dots, N)$ determines a section $\iota(f) \in \tldcalD^\ch_{\frX, \hbar}(\tldU)$ using the natural map $\iota$ defined in \eqref{eq:lift-map}. For the remainder of the text, we shall regard vectors and matrices with entries in $\tldcalO_{\frX}(\tldU)$ as ones with entries belonging to the $\hbar$-adic vertex superalgebra $\tldcalD^\ch_{\frX, \hbar}(\tldU)$. Moreover, we identify sections in $\tldcalD^\ch_{\frX, \hbar}(\tldU)$ with ones in the $\hbar$-adic vertex subalgebra $\tldcalD^\ch_{\frX, \hbar}(\tldU) \otimes \bfone \subset \tldC_{\VA}(\tldU) = \tldcalD^\ch_{\frX, \hbar}(\tldU) \hatotimes \Cl_{\hbar}(T^* \frg)$ of the BRST cochain complex $\tldC_{\VA}(\tldU)$. For example, the matrix product $\phi X \gamma$ means $\sum_{i,j=1}^{N} \iota(\phi_i x_{ij} \gamma_{j}) = \sum_{i,j=1}^{N} x_{ij (-1)} \gamma_{j (-1)} \phi_{i (-1)} \otimes \bfone \in \tldC_{\VA}(\frX)$.

\medskip

First, we endow the sheaf $\tldC_{\VA} = \tldcalD^\ch_{\frX, \hbar} \hatotimes \Cl_{\hbar}(T^* \frg)$ of $\hbar$-adic vertex superalgebras with a conformal structure (cf.\ \cite{DM06,MN99}) of central charge $c=-3N^2$ for which the free-field generators have weights
\begin{align*}
\wt(x_{ij})&=\wt(y_{ij})=\wt(\gamma_i)=\wt(\beta_i)=\wt(\psi_i)=\wt(\phi_i)=1/2,\\
\wt(\Psi_{ij})&=0,\qquad\wt(\Phi_{ij})=1
\end{align*}
for $i,j=1,\dots,N$. That is, we consider the global section
\begin{equation}\label{eq:free-field-conformal}
T\coloneqq \frac{\hbar}{2}\bigl(\Tr(\partial X Y) - \Tr(X \partial Y) + \beta \partial \gamma - \partial \beta \gamma + \partial \phi \psi - \phi \partial \psi\bigr) + \hbar\Tr(\partial \Psi {}^{t} \Phi)
\end{equation}
in $\tldC_{\VA}(\frX)$ satisfying the operator product expansions
\begin{align*}
T(z) T(w) &\sim \frac{-3N^2 \hbar^4 / 2}{(z-w)^4} + \frac{2 \hbar^2}{(z-w)^2} T(w) + \frac{\hbar^2}{z-w} \partial T(w),\\
T(z) A(w) &\sim \frac{\wt(A)\hbar^2}{(z-w)^2} A(w) + \frac{\hbar^2}{z-w} \partial A(w)
\end{align*}
for $A=x_{ij},y_{ij},\gamma_i,\beta_i,\psi_i,\phi_i,\Psi_{ij},\Phi_{ij}$.

We observe that $\tldE_{ij (0)} T = \Phi_{ij (0)} T = 0$ for all $i, j = 1, \dots, N$ so that, by definition, $T\in C_{\VA}(\frX)$. This means that $T$ also defines a conformal structure on the subalgebra $C_{\VA}(\frX)$ of $\tldC_{\VA}(\frX)$.

\begin{remark}
We recall the homomorphisms $\tldmu_\ch\colon V^{-2N}(\frg)_{\hbar}\longrightarrow\tldcalD^\ch_{\frX, \hbar}(\frX)$ and $J\colon V^{2N}(\frg)_{\hbar}\longrightarrow\Cl_{\hbar}(T^* \frg)$ of $\hbar$-adic vertex superalgebras from \autoref{sec:brst-construction}. Both affine vertex algebras can be equipped with the standard Sugawara conformal vector, which we call $T_{\frg}$ and $\widetilde{T}_{\frg}$, respectively. Then a straightforward computation shows that the conformal structure on $\tldC_{\VA} = \tldcalD^\ch_{\frX, \hbar} \hatotimes \Cl_{\hbar}(T^* \frg)$ defined by $T$ is compatible with the conformal structures defined by $\tldmu_\ch(T_{\frg})$ and $J(\widetilde{T}_{\frg})$ on the images of $\tldmu_\ch$ and $J$, respectively, i.e.\ $\tldmu_\ch(T_{\frg})$ and $J(\widetilde{T}_{\frg})$ both have $T_{(1)}$-weights $2$ and satisfy $T_{(2)}\tldmu_\ch(T_{\frg})=T_{(2)}J(\widetilde{T}_{\frg})=0$ (cf.\ \cite{FZ92}).
\end{remark}

The following property is crucial in order to endow also the BRST cohomology sheaf $\tldcalD^\ch_{M, \hbar}$ with a conformal structure inherited from $T$.
\begin{proposition}
The global section $T$ satisfies $T\in\Ker d_{\VA}$, i.e.\ it is a cocycle in the cochain complex $(\tldC_{\VA}(\frX),d_{\VA})$, and in the subcomplex $(C_{\VA}(\frX),d_{\VA})$.
\end{proposition}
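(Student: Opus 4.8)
The plan is to reduce the vanishing $d_{\VA}T=(1/\hbar)Q_{(0)}T=0$ to the single assertion that $Q$ is a primary field of conformal weight $1$ with respect to $T$. Since $T$ is even and $Q$ is odd, the skew-symmetry of the $(n)$-products gives
\[
Q_{(0)}T=\sum_{j\ge 0}\frac{(-1)^{j+1}}{j!}\partial^j\bigl(T_{(j)}Q\bigr).
\]
If $Q$ is primary of weight $1$, i.e.\ $T_{(0)}Q=\hbar^2\partial Q$, $T_{(1)}Q=\hbar^2 Q$ and $T_{(j)}Q=0$ for $j\ge 2$, then only the $j=0$ and $j=1$ terms survive and they cancel: $Q_{(0)}T=-\hbar^2\partial Q+\partial(\hbar^2 Q)=0$. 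Because $T$ already lies in $C_{\VA}(\frX)$ — we have checked $\tldE_{ij(0)}T=\Phi_{ij(0)}T=0$ — and the subcomplex differential is the restriction of $d_{\VA}$, this one computation establishes $T\in\Ker d_{\VA}$ both in $\tldC_{\VA}(\frX)$ and in $C_{\VA}(\frX)$ at once.

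Two of the three primarity conditions are immediate. The listed OPE $T(z)A(w)$ shows $T_{(0)}=\hbar^2\partial$ on generators; as both $T_{(0)}$ and $\hbar^2\partial$ are derivations of all products agreeing on generators, $T_{(0)}=\hbar^2\partial$ globally, so $T_{(0)}Q=\hbar^2\partial Q$ holds automatically. For $T_{(1)}Q=\hbar^2 Q$ it suffices to count weights: $\tldmu_\ch(E_{ij})$ is a weight-$1$ bilinear in the weight-$1/2$ matter fields, $J(E_{ij})$ is weight $1$ in $\Psi_{ij}$ (weight $0$) and $\Phi_{ij}$ (weight $1$), and $\Psi_{ij}$ has weight $0$, so each summand $(\tldmu_\ch(E_{ij})+\tfrac12 J(E_{ij}))_{(-1)}\Psi_{ij}$ has weight $1$. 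It remains to prove the genuine primarity $T_{(j)}Q=0$ for $j\ge 2$. I would exploit the tensor decomposition $T=T_{\mathrm{mat}}+T_{\mathrm{gh}}$ into the matter stress tensor (the first bracket in \eqref{eq:free-field-conformal}) and the ghost stress tensor $T_{\mathrm{gh}}=\hbar\Tr(\partial\Psi\,{}^t\Phi)$, and split $Q$ correspondingly into $\sum_{ij}\NO\tldmu_\ch(E_{ij})\Psi_{ij}\NO$ and the cubic ghost term $\tfrac12\sum_{ij}\NO J(E_{ij})\Psi_{ij}\NO=-\tfrac12\sum c_{ijkl}^{pq}\NO\Psi_{ij}\Psi_{kl}\Phi_{pq}\NO$.

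The crux is the primarity of these composites, and here the weight assignment does the work. First one must show that each current $\tldmu_\ch(E_{ij})$ is a weight-$1$ primary for $T_{\mathrm{mat}}$: this is exactly where the symmetric choice $\wt(x_{ij})=\wt(y_{ij})=\wt(\gamma_i)=\wt(\beta_i)=\wt(\psi_i)=\wt(\phi_i)=1/2$ is essential, since the cubic-pole (current-anomaly) term of $T_{\mathrm{mat}}(z)$ against a $\beta\gamma$- or $bc$-type bilinear of two fields of common weight $h$ is proportional to $2h-1$ and hence vanishes precisely at $h=1/2$ — the same balance that forces $c=-3N^2$. Since $\tldmu_\ch(E_{ij})$ is then a matter primary and $\Psi_{ij}$ a ghost primary living in the complementary tensor factor, the product $\NO\tldmu_\ch(E_{ij})\Psi_{ij}\NO$ is primary of weight $1$ by distributing $T_{(j)}$ over the two factors. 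For the cubic ghost term, a direct computation in $\Cl_{\hbar}(T^*\frg)$ shows that the potential higher-pole anomalies cancel by the antisymmetry of the structure constants $c_{ijkl}^{pq}$ of $\frgl_N$ (equivalently, the Jacobi identity), the standard statement that $J$ assembles into a weight-$1$ primary. I expect this anomaly bookkeeping — verifying that the double-contraction terms of $T_{\mathrm{mat}}$ against $\tldmu_\ch(E_{ij})$ and of $T_{\mathrm{gh}}$ against the cubic ghost term both vanish — to be the main obstacle, whereas the reduction step and the weight count are routine.
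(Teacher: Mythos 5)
Your proof is correct, but it follows a genuinely different route from the paper's. The paper never verifies that $Q$ is primary; instead it defers the proposition to \autoref{prop:total-conf-vec}, where $T-(\TN+\TTr+\TSF)$ is exhibited as an explicit coboundary $\tfrac{1}{2}d_{\VA}\bigl(\Tr(XY\Phi)+\Tr(YX\Phi)-\beta\Phi\gamma+\cdots\bigr)$, while $\TN$, $\TTr$ and $\TSF$ are cocycles by \autoref{prop:gen-N4-cocyc} (Wick computations showing that the $\mu_\ch(E_{ij})_{(n)}$ annihilate the $\calN=4$, $\beta\gamma$ and symplectic-fermion generators); since $\Im d_{\VA}\subset\Ker d_{\VA}$, the cocycle property of $T$ follows. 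You instead run the classical BRST argument: skew-symmetry reduces $Q_{(0)}T$ to $-T_{(0)}Q+\partial(T_{(1)}Q)$ once $T_{(j)}Q=0$ for $j\ge2$, so everything hinges on $Q$ being a weight-one primary field, and the two anomaly cancellations you isolate are exactly the right ones: the matter currents $\tldmu_\ch(E_{ij})$ are primary because the cubic-pole coefficient of $T_{\mathrm{mat}}(z)$ against a bilinear in a conjugate pair of weights $(h,1-h)$ is proportional to $2h-1$, which vanishes at $h=1/2$, and the cubic ghost term's cubic pole is proportional to a trace of structure constants, $\sum_{i,j}c_{ijkl}^{ij}$, i.e.\ to $\operatorname{tr}(\operatorname{ad})$ on $\frgl_N$. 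One refinement: that trace vanishes because $\frgl_N$ is reductive, hence unimodular — not literally ``by antisymmetry of the structure constants''; for $\frgl_N$ the two become equivalent only after lowering indices with the nondegenerate trace form. Your route is more self-contained and conceptual: it needs none of the $\calN=4$ machinery, no explicit coboundary element, and it explains why the symmetric weight assignment $\wt=1/2$ on the matter fields is forced. The paper's detour is economical in context, because the congruence $T\equiv\TN+\TTr+\TSF\bmod\Im d_{\VA}$ is needed anyway to show that $\tldcalD^\ch_{M,\hbar}(M)$ is a conformal extension of $V_{\calN=4,\hbar}\hatotimes\calD^\ch(T^*\C^1)_{\hbar}\hatotimes\SF_{\hbar}$, so the proposition there comes for free. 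Both arguments handle membership in the subcomplex identically, via the observation $\tldE_{ij(0)}T=\Phi_{ij(0)}T=0$.
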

This will follow from \autoref{prop:total-conf-vec} below, where we show that $T$ coincides with a certain vector $\TN + \TTr + \TSF \in \Ker d_{\VA}$ modulo $\Im d_{\VA}$.

The proposition implies that $T$ induces a conformal structure of central charge $c=-3N^2$ on the cohomology associated with the complex $(\tldC_{\VA}(\frX),d_{\VA})$. The same is true for the (relative) cohomology $\tldcalD^\ch_{M,\hbar}(M)$ associated with the subcomplex $(C_{\VA}(\frX), d_{\VA})$.

\begin{remark}
A direct calculation reveals that any conformal vector of $C_{\VA}(\frX)$ lying in the kernel of $d_{\VA}$ must be of the form
\begin{align*}
T_{k_1, k_2} &= k_1 \mathrm{Tr}(\partial X Y) + (k_1-1) \Tr(X \partial Y)\\
&\quad+ (k_2-1) (\partial \beta \gamma - \partial \phi \psi) + k_2 (\beta \partial \gamma - \phi \partial \psi) + \Tr(\partial \Psi {}^t \Phi)
\end{align*}
for some $k_1,k_2\in\C$. Moreover, $T_{k_1, k_2} - T_{k_1, 1/2} \in \Im d_{\VA}$ for all $k_1$, $k_2$, and hence we obtain a one-parameter family $T_{k_1, 1/2}$, $k_1 \in \C$, of conformal vectors of $\tldcalD^\ch_{M, \hbar}(M)$. The conformal vector $T = T_{1/2, 1/2}$ is the unique choice that gives the same conformal structure as one discussed in \cite{BMR19} (see \autoref{sec:N4-SCA} below).
\end{remark}

By definition of $C_{\VA}$, the $\hbar$-adic vertex superalgebra $C_{\VA}(\frX)$ is generated by fields whose weights are in $\frac{1}{2}\Z_{>0}$. This implies that $C_{\VA}(\frX)$ is an $\hbar$-adic vertex operator superalgebra of CFT-type. This also holds for $\tldcalD^\ch_{M,\hbar}(M)$:
\begin{proposition}
The vector $T\bmod\Im d_{\VA}$ is a conformal vector of the $\hbar$-adic vertex operator superalgebra $\tldcalD^\ch_{M,\hbar}(M)$ of CFT-type of central charge $c=-3N^2$.
\end{proposition}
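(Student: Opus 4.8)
The plan is to show that all ingredients of a conformal structure descend from the BRST cochain complex to its cohomology. By the preceding proposition (proved as \autoref{prop:total-conf-vec}), the even section $T$ is a cocycle lying in the relative subcomplex $C_{\VA}(\frX)$, so its class $T\bmod\Im d_{\VA}$ is a well-defined global section of $\tldcalD^\ch_{M,\hbar}=\calH^{\infty/2+0}_{\VA}(\frg,\tldcalD^\ch_{\frX,\hbar})$, i.e.\ an element of $\tldcalD^\ch_{M,\hbar}(M)$. Recall that the $\hbar$-adic vertex superalgebra structure on the cohomology is induced from the one on $C_{\VA}(\frX)$ (the lemma making the BRST cohomology into an $\hbar$-adic vertex superalgebra): since $T$ is an even cocycle, each mode $T_{(n)}$ satisfies $d_{\VA}\circ T_{(n)}=T_{(n)}\circ d_{\VA}$, so it preserves $\Ker d_{\VA}$ and $\Im d_{\VA}$ and descends to a well-defined operator on $\tldcalD^\ch_{M,\hbar}(M)$.

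First I would verify the conformal-vector relations on cohomology. The two operator product expansions displayed for $T$ hold in $\tldC_{\VA}(\frX)$ by the Wick-type computation behind \eqref{eq:free-field-conformal}; because $T$ is a cocycle, they descend verbatim to $\tldcalD^\ch_{M,\hbar}(M)$. The expansion $T(z)T(w)\sim\cdots$ is precisely the $\hbar$-adic Virasoro relation of \autoref{lemma:Virasoro} with central charge $c=-3N^2$ (the leading pole $-3N^2\hbar^4/2=\hbar^4(c/2)$), and the expansion $T(z)A(w)\sim\cdots$ shows that $\hbar^{-2}T_{(0)}$ acts as the translation operator $\partial$ and $\hbar^{-2}T_{(1)}$ as multiplication by $\wt(A)$ on each free-field generator $A$. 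Since $T_{(0)}$ and $\partial$ are both even derivations for all $(n)$-products and agree on generators, they agree on all of $\tldC_{\VA}(\frX)$, giving $T_{(0)}=\hbar^2\partial$; likewise the weight relation $\wt(a_{(n)}b)=\wt(a)+\wt(b)-n-1$ propagates the generator weights to a conformal grading. By \autoref{prop:total-conf-vec} these identities may also be read off from the equality $T\equiv\TN+\TTr+\TSF\pmod{\Im d_{\VA}}$.

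The key structural point is the grading. Because $T$ is a cocycle, $[T_{(1)},d_{\VA}]=0$, so the weight operator $\hbar^{-2}T_{(1)}$ commutes with the differential and therefore acts on the cohomology $\tldcalD^\ch_{M,\hbar}(M)$. As the restriction of a semisimple operator to the invariant subspace $\Ker d_{\VA}$ and its further passage to a quotient are again semisimple, the induced action is semisimple with eigenvalues in $\frac12\Z$, so axioms (b) and (c) of a conformal $\hbar$-adic vertex superalgebra hold. Finally, CFT-type is inherited from the cochain level: the surviving strong generators of $C_{\VA}(\frX)$ have conformal weights in $\frac12\Z_{>0}$ and the vacuum has weight $0$, so $C_{\VA}(\frX)$ is an $\hbar$-adic vertex operator superalgebra of CFT-type; these positivity and finiteness properties pass to the cohomology, hence to the global sections $\tldcalD^\ch_{M,\hbar}(M)$, which is therefore of CFT-type as well.

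I expect the main obstacle to lie not in the descent itself but in controlling the conformal grading at the global and cohomological level. One must keep the conformal weight $\wt$ carefully apart from the torus weight $\Swt$ of \autoref{sec:global-VA} (they already disagree on higher modes $x_{ij(-n)}$), and one must ensure that the semisimplicity and lower boundedness of $\hbar^{-2}T_{(1)}$ genuinely survive, even though the microlocal local sections $\tldcalD^\ch_{M,\hbar}(U_\lambda)$ involve inverted, conformally inhomogeneous functions. The resolution is to argue at the level of global sections over $\frX$, where $C_{\VA}(\frX)$ is strongly generated by the positive-weight fields, so that the conformal grading is honest and bounded below there and then descends to cohomology; verifying that this strong-generation and positivity statement persists after taking the relative BRST subcomplex is the technical heart of the argument.
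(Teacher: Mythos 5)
Your proposal is correct and takes essentially the same route as the paper: the cocycle property of $T$ (established via $T\equiv\TN+\TTr+\TSF\bmod\Im d_{\VA}$ in \autoref{prop:total-conf-vec}) lets the Virasoro operator product expansion and the operators $T_{(0)}=\hbar^2\partial$ and $T_{(1)}$ descend from the complex to the cohomology, and CFT-type follows because the relative subcomplex $C_{\VA}(\frX)$ is generated by fields of weight in $\frac{1}{2}\Z_{>0}$ (the condition $\Phi_{ij(0)}c=0$ removing the weight-zero ghosts $\Psi_{ij(-1)}$). This is precisely the discussion by which the paper justifies the proposition, so nothing further is needed.
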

In particular, the global sections satisfy a $\frac{1}{2} \Z_{\ge 0}$-graded decomposition into weight spaces for $T_{(1)}$,
\[
\tldcalD^{\ch}_{M, \hbar}(M) = \prod_{n \in \frac{1}{2}\Z_{\ge 0}} \tldcalD^{\ch}_{M, \hbar}(M)_n, \quad
\tldcalD^{\ch}_{M, \hbar}(M)_n = \{ a \in \tldcalD^{\ch}_{M, \hbar}(M) \,|\, T_{(1)} a = n a \},
\]
with $\tldcalD^{\ch}_{M, \hbar}(M)_0 = \C[[\hbar]] \bfone$.

Finally, the reduction of $\tldcalD^\ch_{M,\hbar}(M)$ to $\sfV_{S_N}$ endows the latter with the structure of a vertex operator superalgebra:
\begin{corollary}\label{prop:CFT-type}
$\sfV_{S_N}=[\tldcalD^\ch_{M,\hbar}(M)]^{\C^\times}$ is a vertex operator superalgebra of CFT-type of central charge $c=-3N^2$.
\end{corollary}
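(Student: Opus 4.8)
The plan is to transport the conformal structure of $\tldcalD^\ch_{M,\hbar}(M)$ from the preceding proposition down to $\sfV_{S_N}$ and then to check the two finiteness conditions in the definition of a vertex operator superalgebra of CFT-type; recall that $\sfV_{S_N}$ is already known to be a vertex superalgebra. First I would note that the conformal vector $T$ of \eqref{eq:free-field-conformal} is $\C^\times$-homogeneous of torus weight $\Swt(T)=2$: each summand is $\hbar$ (of weight $1$) times an expression of weight $1$, using $\Swt(\Psi_{ij})=0$, $\Swt(\Phi_{ij})=1$ and $\Swt=1/2$ for the remaining generators. As $2\ge 0$, the cocycle class $T \bmod \Im d_{\VA}$ lies in $\tldcalD^\ch_{M,\hbar}(M)_\fin$ and hence descends to a well-defined even vector of $\sfV_{S_N}=\tldcalD^\ch_{M,\hbar}(M)_\fin/(\hbar-1)$, again denoted $T$.

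Next I would reduce the $\hbar$-adic operator product expansions of $T$ modulo $(\hbar-1)$, which substitutes $\hbar=1$ throughout: the fourth-order pole coefficient $-3N^2\hbar^4/2$ becomes $-3N^2/2$, and one is left with the genuine Virasoro relation of central charge $c=-3N^2$ together with $T_{(0)}=\partial$ and $\wt(a_{(n)})=\wt(a)-n-1$ on $\sfV_{S_N}$. The semisimplicity of $T_{(1)}$ with eigenvalues in $\frac12\Z_{\ge 0}$ is inherited from the weight decomposition $\tldcalD^\ch_{M,\hbar}(M)=\prod_n\tldcalD^\ch_{M,\hbar}(M)_n$ stated before the corollary, because $T_{(1)}$ commutes with the torus action and with the reduction.

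The main obstacle is the finiteness $\Dim_\C(\sfV_{S_N})_n<\infty$, along with the CFT-type conditions; the latter are immediate, since the weights in $\tldcalD^\ch_{M,\hbar}(M)$ are non-negative and $\tldcalD^\ch_{M,\hbar}(M)_0=\C[[\hbar]]\bfone$ reduces to $\C\bfone$. For the finiteness I would exploit the interplay of three gradings: the conformal weight $\wt$, the torus weight $\Swt$ and the $\hbar$-degree $j$. Passing to the associated graded $\gr_\hbar\tldcalD^\ch_{M,\hbar}(M)\simeq\JetBundle{M}(M)$, where sections are polynomials in the free-field modes, each generator at level $n_l\ge 1$ other than $\hbar$ satisfies $\wt-\Swt=n_l-1\ge 0$, whereas $\wt(\hbar)=0$ and $\Swt(\hbar)=1$; hence any bihomogeneous monomial obeys $j=(\Swt-\wt)+\sum_l(n_l-1)\ge \Swt-\wt$. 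Therefore, at fixed conformal weight $n$, every class of torus weight $m>n$ is divisible by $\hbar$, so multiplication by $\hbar$ is surjective from torus weight $m$ to $m+1$ once $m\ge n$. In the colimit description $(\sfV_{S_N})_n=\bigcup_m\hbar^{-m}\{v\in\tldcalD^\ch_{M,\hbar}(M)^{\C^\times\!,m}\mid \wt(v)=n\}$ this forces stabilisation at $m=\lceil n\rceil$, and each bigraded piece of fixed $(\wt,\Swt)=(n,m)$ with $m\le n$ is finite-dimensional, since $\Swt=m$ bounds the number of bosonic and $\Phi$-type factors while $\wt=n$ bounds the remaining fermionic ones and the admissible levels. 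This gives $\Dim_\C(\sfV_{S_N})_n<\infty$ and completes the identification of $\sfV_{S_N}$ as a vertex operator superalgebra of CFT-type of central charge $c=-3N^2$.
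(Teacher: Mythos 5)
Your overall route is the same as the paper's: transport the conformal vector $T$ of \eqref{eq:free-field-conformal} from the $\hbar$-adic vertex operator superalgebra $\tldcalD^\ch_{M,\hbar}(M)$ of the preceding proposition through the reduction $\sfV_{S_N}=\tldcalD^\ch_{M,\hbar}(M)_\fin/(\hbar-1)$. The soft steps are correct: $T$ is indeed torus-homogeneous with $\Swt(T)=2$, so its class survives into $\tldcalD^\ch_{M,\hbar}(M)_\fin$; setting $\hbar=1$ turns the $\hbar$-adic Virasoro relations into the usual ones with $c=-3N^2$; and non-negativity of the weights together with $\sfV_{S_N,0}=\C\bfone$ descends from $\tldcalD^\ch_{M,\hbar}(M)_0=\C[[\hbar]]\bfone$. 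Your stabilisation argument (every bihomogeneous class with $\Swt>\wt$ is divisible by $\hbar$, so at fixed conformal weight $n$ multiplication by $\hbar$ identifies the torus-weight spaces from $m\ge n$ onwards) is also sound, using flatness of the $\C[[\hbar]]$-module for injectivity of $\hbar$; this is more detail than the paper itself records, since the paper treats the corollary as immediate.

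The gap is in the final step, the finite-dimensionality of the bigraded pieces, which is the only genuinely non-trivial content here. You justify it by describing $\gr_\hbar\tldcalD^\ch_{M,\hbar}(M)\simeq\JetBundle{M}(M)$ as ``polynomials in the free-field modes'' and counting ``bosonic and $\Phi$-type factors''. But $\JetBundle{M}$ contains no ghost fields $\Phi$ at all, and its actual free-field description is only local: over $U_\lambda$ it is a polynomial superalgebra in the modes of the Hilbert-scheme coordinates of \autoref{sec:big-cell}, and these include generators of \emph{non-positive} bidegree. For instance $[Y:X^{N-1}]_{(N)}$ has $\wt=\Swt=(2-N)/2$, so for $N\ge3$ the bidegree-$(0,0)$ part of $\tldcalO_M(U_{(N)})\subset\JetBundle{M}(U_{(N)})$ already contains all powers of $[Y:X^{N-1}]_{(N)}^{N}\,[X^N:X^0]_{(N)}^{N-2}$ and is infinite-dimensional; hence a factor-counting argument run on (local sections of) $\JetBundle{M}$ fails, and finiteness of the \emph{global} bigraded pieces is a genuinely global statement your argument does not reach. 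The repair is to run both the divisibility and the counting on the BRST presentation the paper supplies (cf.\ \autoref{sec:chars}), namely $\tldcalD^\ch_{M,\hbar}(M)=H^0(C_{\VA}(\frX),d_{\VA})$ with $C_{\VA}(\frX)\subset\tldC_{\VA}(\frX)$: there every generator $a_{(-k)}$ (matter modes, $\Psi_{ij(-k)}$, $\Phi_{ij(-k)}$) satisfies $\wt,\Swt\ge0$ and $\wt-\Swt=k-1\ge0$, while $\hbar$ has bidegree $(0,1)$; the only generators of bidegree $(0,0)$ are the $N^2$ fermionic $\Psi_{ij(-1)}$, so each bigraded piece of $\tldC_{\VA}(\frX)$ is finite-dimensional, and since $d_{\VA}$ preserves both gradings the same holds for the cohomology and hence, after your stabilisation, for each weight space of $\sfV_{S_N}$. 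With that substitution your proof is complete.
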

Indeed, the following $\frac{1}{2} \Z_{\ge 0}$-graded decomposition into finite-dimensional weight spaces for $T_{(1)}$ holds:
\[
\sfV_{S_N} = \bigoplus_{n \in \frac{1}{2}\Z_{\ge 0}} \sfV_{S_N, n}, \qquad
\sfV_{S_N, n} = \{ a \in \sfV_{S_N} \,|\, T_{(1)} a = n a \},
\]
with $\sfV_{S_N, 0} = \C \bfone$.

We point out that the vertex operator superalgebra $\sfV_{S_N}$ does not have ``correct statistics'', i.e.\ it is not the case that states of integral $T_{(1)}$-weight are exactly those with even parity.


\subsection{Small \texorpdfstring{$\calN=4$}{N=4} Superconformal Algebra}
\label{sec:N4-SCA}

In the following, we assume that $N\ge2$ and show that the global sections $\tldcalD^\ch_{M, \hbar}(M)$ contain a quotient of the small $\calN=4$ superconformal algebra $\on{Vir}_{\calN=4,\hbar}^{c_{S_N}}$ of central charge $c_{S_N}=-3(N^2-1)$. We also show that the conformal structure of the superconformal algebra is essentially the one described in the previous section, inherited from the BRST reduction.

\medskip

We define the global sections
{\allowdisplaybreaks
\begin{align}\label{eq:gen-N4-SCA}
J^{+} &= \frac{1}{2}\Bigl(\Tr(X^2) - \frac{1}{N} \Tr(X)^2\Bigr), &
J^{-} &= \frac{1}{2}\Bigl(\Tr(Y^2) - \frac{1}{N} \Tr(Y)^2\Bigr), \\
J^{0} &= \Tr(X Y) - \frac{1}{N} \Tr(X) \Tr(Y), \nonumber\\
G^{+} &= \phi X \gamma - \frac{1}{N} \Tr(X) \phi \gamma, &
G^{-} &= \phi Y \gamma - \frac{1}{N} \Tr(Y) \phi \gamma, \nonumber\\
\tldG^{+} &= - \beta X \psi + \frac{1}{N} \Tr(X) \beta \psi, &
\tldG^{-} &= - \beta Y \psi + \frac{1}{N} \Tr(Y) \beta \psi, \nonumber \\
\TN &= \frac{1}{\hbar} G^{+}_{(0)}\tldG^{-} - \frac{\hbar}{2} \partial J^0 \nonumber
\end{align}
}%
in $\tldC_{\VA}(\frX)$.

\begin{remark}
By direct computation of the operator product expansions, one can show that $\TN$ takes the form
\begin{align*}
\TN &= \phi X Y \psi - \beta Y X \gamma + \beta \gamma \phi \psi - \frac{1}{N} \phi \gamma \beta \psi \\
&\quad + \frac{\hbar}{2}\Bigl(\Tr(\partial X Y) - \Tr(X \partial Y) - \frac{1}{N} \Tr(\partial X) \Tr(Y)
+ \frac{1}{N} \Tr(X) \Tr(\partial Y) \Bigr) \\
&\quad + \hbar \Bigl(N - \frac{1}{N}\Bigr) (\beta \partial \gamma - \partial \phi \psi).
\end{align*}
\end{remark}

Once again, we show that the above global sections are closed, i.e.\ in $\Ker d_{\VA}$.
\begin{proposition}\label{prop:gen-N4-cocyc}
The global sections \eqref{eq:gen-N4-SCA} are cocycles in $\tldC_{\VA}(\frX)$, and in the subcomplex $C_{\VA}(\frX)$.
\end{proposition}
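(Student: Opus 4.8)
The plan is to reduce the entire statement to the regularity of one family of operator product expansions, dispatch most generators by a contraction count, and handle the single anomalous case by a representation-theoretic argument.

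\textit{Step 1 (reduction).} All of the sections $J^{\pm}, J^0, G^{\pm}, \tldG^{\pm}$ in \eqref{eq:gen-N4-SCA} are ghost-free, i.e.\ they lie in $\tldcalD^\ch_{\frX, \hbar}\otimes\bfone$ and involve none of the ghosts $\Psi_{ij},\Phi_{ij}$. I would first observe that on such a section $a$ the differential collapses: the ghost-cubic part of $Q_{(0)}$ and the whole of $d^{-}_{\VA}$ (see \eqref{eq:d-minus}) annihilate $a$, since $\Phi_{ij(n)}a=\Psi_{ij(n)}a=0$ for $n\ge0$, leaving $d_{\VA}a=\hbar^{-1}\sum_{i,j}\sum_{n\ge0}\Psi_{ij(-n-1)}\,\tldmu_\ch(E_{ij})_{(n)}a$. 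Similarly $\Phi_{ij(0)}a=0$ trivially, while $\tldE_{ij(0)}a=\tldmu_\ch(E_{ij})_{(0)}a$ because $J(E_{ij})$ is ghost-quadratic. Hence both assertions (being a cocycle and lying in $C_{\VA}$) reduce to the single statement $\tldmu_\ch(E_{ij})(z)\,a(w)\sim 0$ for all $i,j$, i.e.\ $\tldmu_\ch(E_{ij})_{(n)}a=0$ for all $n\ge0$.

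\textit{Step 2 (the easy generators).} Recall from \autoref{lemma:sln-rep} and the discussion following it that $(1/\hbar)\tldmu_\ch(E_{ij})_{(0)}$ implements, as a derivation, the infinitesimal $\GL_N$-action ($X\mapsto gXg^{-1}$, $\gamma\mapsto g\gamma$, $\beta\mapsto\beta g^{-1}$, $\psi\mapsto g\psi$, $\phi\mapsto\phi g^{-1}$). Each generator in \eqref{eq:gen-N4-SCA} is manifestly invariant under this action, whence $\tldmu_\ch(E_{ij})_{(0)}a=0$. For the higher modes, since $\tldmu_\ch(E_{ij})$ has conformal weight $1$, weight-counting together with the CFT-type property (\autoref{prop:CFT-type}) forces $\tldmu_\ch(E_{ij})_{(n)}a=0$ once $n>\wt(a)$. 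The intermediate modes I control by a contraction count: in the free-field OPEs only the pairs $x$–$y$, $\gamma$–$\beta$, $\psi$–$\phi$ are singular, so a double contraction of $\tldmu_\ch(E_{ij})$ with $a$ requires $a$ to contain both members of one such conjugate pair. Inspecting \eqref{eq:gen-N4-SCA}, this never occurs for $J^{\pm}, G^{\pm}, \tldG^{\pm}$ (nor for $\Tr(X^m),\Tr(Y^m)$), so their OPEs with $\tldmu_\ch(E_{ij})$ have only simple poles and are regular by the vanishing of the $(0)$-product. This also establishes the lemma preceding this proposition.

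\textit{Step 3 (the obstacle: $J^0$, and $\TN$).} The only generator containing both members of a conjugate pair is $J^0=\Tr(XY)-\tfrac1N\Tr(X)\Tr(Y)$, and this is where the argument is delicate: a double contraction could produce a nonzero central term $\tldmu_\ch(E_{ij})_{(1)}J^0\in\C\bfone$ (a scalar by weight), and ruling this out is the crux. I would avoid the head-on computation and argue representation-theoretically. From the commutator formula and \autoref{lemma:chiral-comoment-hom} one has $[\tldmu_\ch(E_{kl})_{(0)},\tldmu_\ch(E_{ij})_{(1)}]=\hbar\,\tldmu_\ch([E_{kl},E_{ij}])_{(1)}$; applying this to $J^0$ and using $\tldmu_\ch(E_{kl})_{(0)}J^0=0$ (Step 2) together with the fact that zero modes annihilate scalars gives $\tldmu_\ch([E_{kl},E_{ij}])_{(1)}J^0=0$, hence $\tldmu_\ch(A)_{(1)}J^0=0$ for all $A\in[\frgl_N,\frgl_N]=\frsl_N$. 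The remaining central direction $A=\sum_a E_{aa}$ is handled directly: $\tldmu_\ch(\sum_a E_{aa})=\sum_i(\gamma_{i(-1)}\beta_i+\psi_{i(-1)}\phi_i)$ involves none of $x,y$, so its OPE with $J^0$ vanishes. Thus $\tldmu_\ch(E_{ij})_{(1)}J^0=0$, and as the $n\ge2$ modes vanish by weight, $J^0$ is a cocycle in $C_{\VA}$. Finally, $\TN=\tfrac1\hbar G^{+}_{(0)}\tldG^{-}-\tfrac\hbar2\partial J^0$ requires no further OPE: since $d_{\VA}$ and each $\tldE_{ij(0)}$ are (odd) derivations of all $(n)$-products commuting with $\partial$, and $G^{+},\tldG^{-},J^0$ have just been shown to lie in $\Ker d_{\VA}\cap C_{\VA}$, the same holds for $\TN$.
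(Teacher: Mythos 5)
Your proposal is correct, but its core is genuinely different from the paper's proof. The reduction in your Step~1 is exactly the paper's opening move: since the sections are ghost-free, both assertions follow once $\tldmu_\ch(E_{ij})_{(n)}A=0$ for all $n\ge0$. From there the paper proceeds purely computationally, writing out the Wick-formula OPEs $\mu_\ch(E_{ij})(z)\Tr(X^m)(w)$ and $\mu_\ch(E_{ij})(z)(\phi Y^m\gamma)(w)$ term by term and checking that the single-contraction terms cancel, with the remaining cases ($J^0$, $G^{+}$, $\tldG^{\pm}$) asserted to be ``similar calculations''. You replace this by a structural argument: the $(0)$-modes vanish because they implement the infinitesimal $\GL_N$-action and the sections are manifestly invariant (which is precisely what the paper's explicit cancellations verify by hand), and the modes $n\ge1$ vanish by counting contractions, since a quadratic, derivative-free current can produce at worst a double pole, and a double contraction requires the section to contain a conjugate pair $x$--$y$, $\gamma$--$\beta$ or $\psi$--$\phi$. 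This correctly isolates $J^0$ as the only generator where an anomalous central term $\tldmu_\ch(E_{ij})_{(1)}J^0\in\C[[\hbar]]\bfone$ could survive, and your commutator argument --- $[\tldmu_\ch(E_{kl})_{(0)},\tldmu_\ch(E_{ij})_{(1)}]=\hbar\,\tldmu_\ch([E_{kl},E_{ij}])_{(1)}$ via \autoref{lemma:chiral-comoment-hom}, combined with invariance of $J^0$, the fact that zero modes kill $\bfone$, and $[\frgl_N,\frgl_N]=\frsl_N$ --- together with the observation that $\tldmu_\ch(\sum_a E_{aa})$ contains no $x,y$ fields, kills it cleanly; this is exactly the double-contraction cancellation that the paper leaves to the reader. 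What each approach buys: the paper's is self-contained and requires nothing beyond the Wick formula, while yours explains \emph{why} no quantum correction can appear (equivariance rules it out in the $\frsl_N$ directions, and the central $\frgl_1$ direction decouples from $J^0$ entirely), and it also treats $\TN$ explicitly via the derivation property of $d_{\VA}$ and $\tldE_{ij(0)}$, which the paper leaves implicit. Two minor points: your appeal to \autoref{prop:CFT-type} is slightly misplaced (that corollary concerns $\sfV_{S_N}$ and comes later); the fact you actually need is just the non-negative weight grading of the free-field complex, which is immediate from the weight assignments --- and that weight-counting step is redundant anyway, since your contraction count already disposes of all modes $n\ge1$.
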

\begin{proof}
It suffices to show that $\mu_\ch(E_{ij})_{(n)}A = 0$ for $i,j=1,\dots,N$ and $n \in \Z_{\ge 0}$ and for any section $A$ defined in \eqref{eq:gen-N4-SCA}. Then, since these sections do not contain any ghost fields $\Psi_{ij}$ or $\Phi_{ij}$, this proves both assertions.

Recall that the Wick formula is realised by bidifferential operators in the variables $x_{ij (-n)},y_{ij (-n)},\dots$, as explained in \autoref{sec:h-adic-betagamma}. For $m \in \Z_{\ge 0}$, we obtain by direct calculation
{\allowdisplaybreaks
\begin{align*}
&\mu_\ch(E_{ij})(z) \Tr(X^m)(w) \\
&\sim \sum_{p=1}^{N} (x_{ip} y_{pj} - x_{pj} y_{ip})(z) \Tr(X^m)(w) \nonumber\\
&\sim \frac{\hbar}{z-w} \sum_{p=1}^{N} \Bigl(\frac{\partial}{\partial y_{pj}} (x_{ip} y_{pj})\Bigr)(z) \Bigl(\frac{\partial}{\partial x_{jp}} \Tr(X^m)\Bigr)(w) \nonumber\\
&\quad - \frac{\hbar}{z-w} \sum_{p=1}^{N} \Bigl(\frac{\partial}{\partial y_{ip}} (x_{pj} y_{ip})\Bigr)(z) \Bigl(\frac{\partial}{\partial x_{pi}} \Tr(X^m)\Bigr)(w) \nonumber\\
& \sim \frac{\hbar}{z-w} \sum_{p=1}^{N} \sum_{k=0}^{m-1} \left\{ \left(x_{ip} \Tr(X^{k} E_{jp} X^{m-k-1})\right)(w) - \left(x_{pj} \Tr(X^{k} E_{pi} X^{m-k-1})\right)(w) \right\} \nonumber\\
&= \frac{\hbar}{z-w} \sum_{p, q=1}^{N} \sum_{k=0}^{m-1} \left\{ \left(x_{ip} (X^{k})_{qj} (X^{m-k-1})_{pq} \right)(w) - \left(x_{pj} (X^{k})_{qp} (X^{m-k-1})_{iq} \right)(w) \right\} \nonumber\\
&= \frac{m \hbar}{z-w} \bigl\{ \left(X^m\right)_{ij}(w) - \left(X^m \right)_{ij}(w) \bigr\} = 0. \nonumber
\end{align*}
}%
This implies that $\mu_\ch(E_{ij})_{(n)} J^{+} = 0$ for any $i,j=1,\dots,N$ and $n \in \Z_{\ge 0}$. Similarly to the above calculation, we also obtain $\mu_\ch(E_{ij})_{(n)} \Tr(Y^m) = 0$, and thus $\mu_\ch(E_{ij})_{(n)} J^{-} = 0$ for any $i$, $j$ and $n$. Moreover, for $m \in \Z_{\ge 0}$,
{\allowdisplaybreaks
\begin{align*}
&\mu_\ch(E_{ij})(z) (\phi Y^m \gamma)(w) \\
& \sim \frac{\hbar}{z-w} \biggl\{\sum_{p=1}^{N} \Bigl(- y_{pj} \frac{\partial \phi Y^m \gamma}{\partial y_{pi}} + y_{ip} \frac{\partial \phi Y^m \gamma}{\partial y_{jp}} \Bigr)(w) \\
&\quad + \Bigl(\gamma_i \frac{\partial \phi Y^m \gamma}{\partial \gamma_{j}} \Bigr)(w) - \Bigl(\phi_j \frac{\partial \phi Y^m \gamma}{\partial \phi_{i}} \Bigr)(w) \biggr\} \\
& = \frac{\hbar}{z-w} \biggl\{ \sum_{p=1}^{N} \sum_{k=0}^{m-1} \left(- y_{pj} \phi Y^{k} E_{pi} Y^{m-k-1} \gamma + y_{ip} \phi Y^{k} E_{jp} Y^{m-k-1} \gamma \right)(w) \\
&\quad + \left(\gamma_i \phi Y^m \bfe_j \right)(w) - \left(\phi_j {}^t \bfe_i Y^m \gamma \right)(w) \biggr\} \\
& = \frac{\hbar}{z-w} \biggl\{ \sum_{k=0}^{m-1} \left(- (\phi Y^{k+1})_{j} (Y^{m-k-1} \gamma)_i + (\phi Y^{k})_{j} (Y^{m-k} \gamma)_{i} \right)(w) \\
&\quad + \left(\gamma_i (\phi Y^m)_j \right)(w) - \left(\phi_j (Y^m \gamma)_i \right)(w) \biggr\} = 0.
\end{align*}
}%
Therefore, $\mu_\ch(E_{ij})_{(n)} G^{-} = 0$ for any $i,j=1,\dots,N$ and $n \in \Z_{\ge 0}$. By similar calculations, one can check that $\mu_\ch(E_{ij})_{(n)} A = 0$ for $A = J^0, G^{+}, \tldG^{+}, \tldG^{-}$, as desired.
\end{proof}

By \autoref{prop:gen-N4-cocyc}, the global sections $J^{+}, J^{0}, J^{-}, \dots \in C_{\VA}(\frX)$ define elements in the cohomology $H^{0}(C_{\VA}(\frX), d_{\VA}) = \tldcalD^\ch_{M, \hbar}(M)$ denoted by the same symbols. The operator product expansions between these elements can also be obtained as a direct consequence of the Wick formula. First, note that $\Tr(X)$ and $\Tr(Y) \in \tldC_{\VA}(\frX)$ are also cocycles and define elements in the cohomology. It is easy to see that $\Tr(X)(z) A(w) \sim \Tr(Y)(z) A(w) \sim 0$ for any element $A = J^{+}, J^{0}, \dots$ defined in \eqref{eq:gen-N4-SCA}. Indeed, for example,
\begin{align}\label{eq:9}
&\Tr(Y)(z) \Tr(X^m)(w) \sim \frac{\hbar}{z-w} \sum_{i=1}^{N} \Tr(E_{ii})(w) \Bigl(\frac{\partial \Tr(X^m)}{\partial x_{ii}}\Bigr)(w) \\
&= \frac{\hbar}{z-w} \sum_{i=1}^{N} \sum_{k=0}^{m-1} \Tr(X^k E_{ii} X^{m-k-1})(w) = \frac{m \hbar}{z-w} \Tr(X^{m-1})(w) \nonumber
\end{align}
for $m \in \Z_{\ge 0}$, which implies $\Tr(Y)(z) J^{+}(w) \sim 0$. We shall later identify $\Tr(X)$ and $\Tr(Y)$ as generators of an $\hbar$-adic $\beta\gamma$-system in $\tldcalD^\ch_{M, \hbar}(M)$.

\medskip

The following six lemmata aim at studying the operator product expansions among the global sections \eqref{eq:gen-N4-SCA}. We shall see that they define a small $\calN=4$ superconformal algebra (see, e.g., \cite{Kac}).
\begin{lemma}\label{lemma:sl2-OPE}
The elements $J^{+}$, $J^{0}$ and $J^{-}$ generate an $\hbar$-adic affine vertex algebra associated with $\frsl_2$ of level $k=-(N^2-1)/2$, i.e.\ the operator product expansions
\begin{align*}
J^0(z) J^{\pm}(w) &\sim \frac{\pm 2 \hbar}{z-w} J^{\pm}(w), \quad
J^{0}(z) J^{0}(w) \sim \frac{-(N^2-1) \hbar^2}{(z-w)^2}, \\
J^{+}(z) J^{-}(w) &\sim \frac{(N^2-1)\hbar^2 /2}{(z-w)^2} + \frac{- \hbar}{z-w} J^{0}(w)
\end{align*}
hold, and other operator product expansions between the generators are trivial.
\end{lemma}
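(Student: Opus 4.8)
The plan is to compute all six operator product expansions directly via the Wick formula of \autoref{lemma:Wick-bidiff}. The key simplification is that $J^{+}$, $J^{0}$ and $J^{-}$ all lie in the image of $\iota$ and involve only the bosonic fields $x_{ij}$ and $y_{ij}$: no ghosts $\Psi_{ij},\Phi_{ij}$, no fermions $\psi_i,\phi_i$, and neither $\gamma_i$ nor $\beta_i$ occur. Since these sections are cocycles by \autoref{prop:gen-N4-cocyc}, it suffices to evaluate every OPE among them inside $\tldC_{\VA}(\frX)$, hence inside the $\hbar$-adic $\beta\gamma$-system, where the only nonzero contractions are $x_{ij}(z)y_{kl}(w)\sim-\delta_{il}\delta_{jk}\hbar/(z-w)$ and $y_{kl}(z)x_{ij}(w)\sim\delta_{il}\delta_{jk}\hbar/(z-w)$, while $x$-$x$ and $y$-$y$ contractions vanish. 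In particular, because $J^{+}$ is built solely from $X$ and $J^{-}$ solely from $Y$, one sees at once that $J^{+}(z)J^{+}(w)\sim0$ and $J^{-}(z)J^{-}(w)\sim0$, so only the four mixed OPEs require work.

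Next I would treat $J^{0}(z)J^{\pm}(w)$, which receive only single contractions (each monomial carries at most one $Y$-factor available to pair with an $X$-factor) and thus produce only simple poles. Using the trace identities already exploited in the proof of \autoref{prop:gen-N4-cocyc} and in \eqref{eq:9}, for instance $\Tr(XY)(z)\Tr(X^{2})(w)\sim 2\hbar\,\Tr(X^2)(w)/(z-w)$ and $\Tr(Y)(z)\Tr(X^2)(w)\sim 2\hbar\,\Tr(X)(w)/(z-w)$, one collects the four contributions hidden in the product $J^{0}\cdot J^{+}$. The crucial bookkeeping is that the $-\tfrac1N\Tr(X)^2$ and $-\tfrac1N\Tr(X)\Tr(Y)$ corrections conspire so that the $\Tr(X)^2$ contributions cancel down to exactly the piece needed to reconstitute $J^{+}$, yielding $J^{0}(z)J^{+}(w)\sim 2\hbar J^{+}(w)/(z-w)$. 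The computation for $J^{-}$ is identical after exchanging $X\leftrightarrow Y$, the sole difference being the opposite sign of the $x$-$y$ versus $y$-$x$ contraction, which produces the factor $-2$.

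Finally, the two remaining OPEs involve both single and double contractions. For $J^{0}(z)J^{0}(w)$ the double contractions of $\Tr(XY)$ with itself and of the correction terms contribute the $c$-number poles $-N^2$, $+1$, $+1$ and $-1$ (in units of $\hbar^2/(z-w)^2$), summing to $-(N^2-1)\hbar^2/(z-w)^2$, whereas the simple poles cancel pairwise, as they must since $J^0$ has weight $0$ for its own zero mode, leaving no current term. For $J^{+}(z)J^{-}(w)$, the double contractions assemble to $\tfrac12(N^2-1)\hbar^2/(z-w)^2$, and the single contractions produce $-\hbar\,\Tr(XY)(w)/(z-w)$ from the leading pieces together with $+\tfrac{\hbar}{N}\Tr(X)\Tr(Y)(w)/(z-w)$ from the trace corrections, which combine precisely into $-\hbar J^{0}(w)/(z-w)$. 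Reading off the double-pole coefficients against the $\hbar$-adic affine OPEs \eqref{eq:OPE-affine-VA} then identifies the level as $k=-(N^2-1)/2$.

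The main obstacle is purely combinatorial: tracking the numerous Wick contractions and, above all, verifying that the $\tfrac1N$-trace corrections decouple the currents $\Tr(X)$ and $\Tr(Y)$, so that the output is an honest $\frsl_2$-triple rather than a $\frgl_2$-current algebra, and that no spurious simple pole survives in $J^0(z)J^0(w)$. I would organise these cancellations by recording, for each of the four OPEs, the coefficients of $\Tr(X^2),\Tr(Y^2),\Tr(XY),\Tr(X)^2,\Tr(Y)^2,\Tr(X)\Tr(Y)$ and $\bfone$ separately. Alternatively, one could deduce the statement more conceptually from the commuting joint action of $\frgl_N$ and $\frgl_2$ on the $\beta\gamma$-system underlying $\tldcalD^\ch_{\frX,\hbar}$, with the $-\tfrac1N$-corrections implementing the passage from $\frgl_2$ to $\frsl_2$; but the direct Wick computation is self-contained and elementary.
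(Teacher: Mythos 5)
Your proposal is correct and follows essentially the same route as the paper's proof: a direct Wick-formula computation of the trace OPEs (e.g.\ $\Tr(XY)(z)\Tr(X^m)(w)\sim m\hbar\Tr(X^m)(w)/(z-w)$ and $\Tr(X^2)(z)\Tr(Y^2)(w)\sim 2N^2\hbar^2/(z-w)^2-4\hbar\Tr(XY)(w)/(z-w)$), followed by the bookkeeping of the $1/N$-trace corrections to assemble $J^\pm$, $J^0$ and the level $-(N^2-1)/2$. Your cancellation coefficients ($-N^2,+1,+1,-1$ for $J^0J^0$, and the combination $-\Tr(XY)+\tfrac1N\Tr(X)\Tr(Y)=-J^0$ in $J^+J^-$) all check out against the paper's computation.
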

More precisely, what we mean is that $J^{+}$, $J^{0}$ and $J^{-}$ form some quotient of the universal $\hbar$-adic affine vertex algebra associated with $\frsl_2$ of level $k=-(N^2-1)/2$.
\begin{proof}
It follows from the Wick formula that
{\allowdisplaybreaks
\begin{align*}
&\Tr(XY)(z) \Tr(X^m)(w) \sim \frac{\hbar}{z-w} \sum_{i,j=1}^{N} \Bigl(\frac{\partial \Tr(XY)}{\partial y_{ij}}\Bigr)(w) \Bigl(\frac{\partial \Tr(X^m)}{\partial x_{ji}}\Bigr)(w) \\
&\sim \frac{\hbar}{z-w} \sum_{i,j=1}^{N} \sum_{k=0}^{m-1} \Tr(X E_{ij})(w) \Tr(X^k E_{ji} X^{m-k-1})(w) \\
&\sim \frac{\hbar}{z-w} \sum_{i,j,p=1}^{N} \sum_{k=0}^{m-1} x_{ji}(w) \left( (X^k)_{pj} (X^{m-k-1})_{ip} \right)(w) = \frac{m \hbar}{z-w} \Tr(X^m)(w)
\end{align*}
}%
for $m \in \Z_{\ge 0}$. Thus, $J^{0}(z) J^{+}(w) \sim 2 J^{+}(w) \hbar/(z-w)$. Moreover,
{\allowdisplaybreaks
\begin{align*}
&\Tr(X^2)(z) \Tr(Y^2)(w) \\
&\sim \frac{\hbar^2/2}{(z-w)^2} \sum_{i,j,k,l=1}^{N} \Bigl(\frac{\partial^2 \Tr(X^2)}{\partial x_{ij} \partial x_{kl}} \Bigr)(w) \Bigl(\frac{\partial^2 \Tr(Y^2)}{\partial y_{ji} \partial y_{lk}} \Bigr)(w) \\
&\quad - \frac{\hbar}{z-w} \sum_{i,j=1}^{N} \Bigl(\frac{\partial \Tr(X^2)}{\partial x_{ij}} \Bigr)(w) \Bigl(\frac{\partial \Tr(Y^2)}{\partial y_{ji}} \Bigr)(w) \\
&= \frac{\hbar^2/2}{(z-w)^2} \sum_{i,j,k,l=1}^{N} \Tr(E_{ij}E_{kl} + E_{kl} E_{ij})(w) \Tr(E_{ji}E_{lk} + E_{lk} E_{ji})(w) \\
&\quad - \frac{\hbar}{z-w} \sum_{i,j=1}^{N} \Tr(E_{ij} X + X E_{ij})(w) \Tr(E_{ji} Y + Y E_{ji})(w) \\
&= \frac{2N^2 \hbar^2}{(z-w)^2} - \frac{4 \hbar}{z-w} \Tr(XY)(w),
\end{align*}
}%
and $\Tr(X^2)(z) \Tr(Y)^2(w) \sim 2 N \hbar^2/(z-w)^2 - 4 \hbar (\Tr(X) \Tr(Y))(w)/(z-w)$ by \eqref{eq:9}. It follows that $J^{+}(z) J^{-}(w) \sim ((N^2-1) \hbar^2 / 2) / (z-w)^2 - \hbar J^0(w) / (z-w)$. The other operator product expansions are obtained analogously.
\end{proof}

By \autoref{lemma:sl2-OPE}, there is an action of the $\hbar$-adic affine Lie algebra $\hatsl_2$ of level $k=-(N^2-1)/2$ on $\tldcalD^\ch_{M, \hbar}(M)$. A natural question is how the affine Lie algebra $\hatsl_2$ acts on the other elements $G^{+}$, $G^{-}$, $\tldG^{+}$ and $\tldG^{-}$. By direct verification, similar to the above lemmata, we obtain:
\begin{lemma}\label{lemma:sl2-doublets}
For $A = G,\tldG$, the elements $\{A^{+}, A^{-}\}$ generate some quotient of the Weyl module of the affine Lie algebra $\hatsl_2$ of level $k=-(N^2-1)/2$ associated with the two-dimensional irreducible representation of $\frsl_2$, i.e.\ the following operator product expansions hold:
\[
J^{0}(z) A^{\pm}(w) \sim \frac{\pm \hbar}{z-w} A^{\pm}(w), \;\; J^{\pm}(z) A^{\mp}(w) \sim \frac{\mp \hbar}{z-w} A^{\pm}(w), \;\; J^{\pm}(z) A^{\pm}(w) \sim 0.
\]
\end{lemma}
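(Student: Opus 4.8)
The plan is to establish all the asserted operator product expansions directly via the Wick formula, exactly as in the proofs of \autoref{prop:gen-N4-cocyc} and \autoref{lemma:sl2-OPE}, and then to deduce the Weyl-module statement as a formal consequence. The crucial simplifying observation is that $J^{+}$, $J^{0}$ and $J^{-}$ involve only the bosonic fields $x_{ij}$ and $y_{ij}$ and contain neither $\gamma_i$, $\beta_i$ nor any fermions. Hence, in computing $J^{a}(z) A^{\pm}(w)$, the only nonvanishing contractions pair entries of the matrices $X$, $Y$ appearing in $J^{a}$ with those appearing in $A^{\pm}$; the spectator factors $\phi\gamma$ (for $A=G$) and $\beta\psi$ (for $A=\tldG$) are inert, since $J^{a}$ has nothing to contract them against. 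Because each $A^{\pm}$ is linear in the relevant matrix while each $J^{a}$ is quadratic, at most one contraction can occur, so every such operator product expansion has at most a simple pole and the bidifferential operators of \autoref{lemma:Wick-bidiff} collapse to a single term.

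First I would compute this unique singular term. Using $x_{ij}(z) y_{kl}(w) \sim -\delta_{il}\delta_{jk}\hbar/(z-w)$ and $y_{ij}(z) x_{kl}(w) \sim +\delta_{il}\delta_{jk}\hbar/(z-w)$, the single contraction implements the $\frsl_2$-action on the doublet $(X,Y)$: the field $J^{+}$ replaces an occurrence of $Y$ by $X$, $J^{-}$ replaces $X$ by $Y$, and $J^{0}$ acts diagonally. For instance, contracting the $x$'s of $J^{+}=\frac12(\Tr(X^2)-\frac1N\Tr(X)^2)$ against the single $y$ of $G^{-}=\phi Y\gamma-\frac1N\Tr(Y)\phi\gamma$ produces $\frac{-\hbar}{z-w}\bigl(\phi X\gamma-\frac1N\Tr(X)\phi\gamma\bigr)=\frac{-\hbar}{z-w}G^{+}$, in agreement with $J^{\pm}(z)A^{\mp}(w)\sim\mp\hbar\,A^{\pm}(w)/(z-w)$. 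The vanishing relations $J^{\pm}(z)A^{\pm}(w)\sim0$ are immediate, since $J^{+}$ then shares only the field $X$ with $G^{+}$ (resp.\ $\tldG^{+}$), whose self-contraction vanishes, and symmetrically for the lower signs; the relation $J^{0}(z)A^{\pm}(w)\sim\pm\hbar\,A^{\pm}(w)/(z-w)$ follows the same pattern.

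The only bookkeeping requiring care is the collection of $\frac1N$-trace-correction terms: for each operator product expansion one must check that the contributions of the $\Tr(\cdot)^2$ and $\frac1N\Tr(\cdot)$ pieces combine to reproduce exactly the trace-subtracted field $A^{\pm}$ on the right, as the sample above illustrates. This is entirely mechanical and mirrors the trace bookkeeping already performed in \autoref{lemma:sl2-OPE}. I do not anticipate any genuine obstacle; the modest effort lies in checking the signs and trace terms uniformly across $G^{\pm}$ and $\tldG^{\pm}$, where the $\tldG$ case differs from the $G$ case only in that the inert spectator is $\beta\psi$ rather than $\phi\gamma$ and in an overall sign, which I have verified cancels correctly against the sign of the $x$--$y$ contraction.

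Finally, the representation-theoretic assertion follows formally from the established operator product expansions. The residues of the simple poles show that the zero modes $J^{a}_{(0)}$ act on $\Span_{\C}\{A^{+},A^{-}\}$ through the two-dimensional irreducible representation of $\frsl_2$, while the absence of higher-order poles shows that the positive modes $J^{a}_{(n)}$ with $n>0$ annihilate $A^{\pm}$. Together with the level $k=-(N^2-1)/2$ recorded in \autoref{lemma:sl2-OPE}, these are precisely the data needed to invoke the universal property of the Weyl module for $\hatsl_2$ at level $k$ induced from $\C^2$, yielding a homomorphism from that Weyl module onto the $\hatsl_2$-submodule generated by $A^{+}$ and $A^{-}$; in particular this submodule is the claimed quotient of the Weyl module.
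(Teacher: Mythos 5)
Your proposal is correct and takes essentially the same route as the paper: the paper's proof consists precisely of the statement that the OPEs follow ``by direct verification, similar to the above lemmata'', i.e.\ by the Wick-formula computations you carry out, and your key observations (only single $x$--$y$ contractions can occur since $J^{a}$ is quadratic in $X,Y$ while $A^{\pm}$ is linear, so only simple poles arise, with the $\tfrac1N$-trace corrections recombining into the trace-subtracted fields) match the paper's conventions and sample calculations in \autoref{prop:gen-N4-cocyc} and \autoref{lemma:sl2-OPE}. Your closing deduction of the Weyl-module statement from the zero-mode action and the vanishing of positive modes is exactly the intended (implicit) representation-theoretic conclusion.
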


\begin{lemma}\label{lemma:doublet-OPE-triv}
For $A = G,\tldG$, we have $A^{+}(z) A^{-}(w) \sim A^{\pm}(z) A^{\pm}(w) \sim 0$.
\end{lemma}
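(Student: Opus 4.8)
The plan is to compute the singular parts of all these operator product expansions directly by the Wick formula, in the same spirit as the proof of \autoref{prop:gen-N4-cocyc}. The essential input is that each of $G^{\pm}$ and $\tldG^{\pm}$ is a normally ordered polynomial in the free generators $x_{ij},y_{ij},\gamma_i,\beta_i,\psi_i,\phi_i$ and contains no ghost fields, and that among these generators the only non-vanishing contractions are the ones pairing $x$ with $y$, $\gamma$ with $\beta$, and $\psi$ with $\phi$ (all like-with-like and remaining mixed contractions being trivial).

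First I would treat the equal-superscript cases. The section $G^{+}$ is built only from $x_{ij}$, $\gamma_j$ and $\phi_i$; since none of the pairs occurring in $G^{+}(z)G^{+}(w)$ — namely $x$ with $x$, $\gamma$ with $\gamma$, $\phi$ with $\phi$, and the mixed pairs among them — admits a non-trivial contraction, the product is already regular and $G^{+}(z)G^{+}(w)\sim 0$. The identical argument, each time using only a single one of the three dual pairs, applies to $G^{-}$, $\tldG^{+}$ and $\tldG^{-}$, so that $A^{\pm}(z)A^{\pm}(w)\sim 0$ holds for trivial reasons.

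The substantive case is $G^{+}(z)G^{-}(w)$ (and, by the same pattern, $\tldG^{+}(z)\tldG^{-}(w)$). Here the only contractible pair is the matrix entry $x$ appearing in $G^{+}$ with the entry $y$ appearing in $G^{-}$; as each summand of $G^{\pm}$ contains exactly one such entry, the operator product expansion has at most a simple pole and no double contraction can occur. Expanding $G^{+}$ and $G^{-}$ into their leading and trace summands produces four single contractions, and in each the factor left after contraction is a fermion-boson bilinear of the form $\sum_{a,b}\phi_a\phi_b\gamma_a\gamma_b$ or $\bigl(\sum_a\phi_a\gamma_a\bigr)\bigl(\sum_b\phi_b\gamma_b\bigr)$. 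I would then observe that every such sum vanishes identically, because $\phi_a\phi_b$ is antisymmetric in $(a,b)$ while $\gamma_a\gamma_b$ is symmetric, so the double sum is equal to its own negative. Hence $G^{+}(z)G^{-}(w)\sim 0$, and the computation for $\tldG^{+}(z)\tldG^{-}(w)$ is verbatim after exchanging the roles of $(\gamma,\phi)$ and $(\psi,\beta)$, the relevant vanishing identity now being $\sum_{a,b}\beta_a\beta_b\psi_a\psi_b=0$.

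Given the Wick formula already exploited above, this computation is essentially routine and presents no conceptual obstacle. The only point demanding care is the bookkeeping of fermionic signs when the contracted bosonic entries are moved out of the normal ordering, in order to confirm that the surviving residue really is of symmetric-times-antisymmetric type and therefore vanishes. This lemma, together with the preceding ones, then allows $J^{+},J^{0},J^{-},G^{\pm},\tldG^{\pm}$ to be identified with the generators of the small $\calN=4$ superconformal algebra.
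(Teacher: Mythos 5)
Your proposal is correct and takes essentially the same approach as the paper: the equal-superscript cases are dismissed because no contractible pairs occur, and $G^{+}(z)G^{-}(w)\sim 0$ (likewise for $\tldG$) follows from the Wick formula, where the single $x$--$y$ contraction leaves a residue equal to $(\phi\gamma)^2=0$. Your symmetric-times-antisymmetric vanishing argument is the same observation the paper phrases as the square of the odd element $\phi\gamma$ being zero.
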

\begin{proof}
It is clear that $A^{\pm}(z) A^{\pm}(w) \sim 0$. It remains to show $G^{+}(z) G^{-}(w) \sim 0$ and $\tldG^{+}(z) \tldG^{-}(w) \sim 0$. To conclude the former, we observe that
\begin{align*}
(\phi X \gamma)(z) (\phi Y \gamma)(w) &\sim \frac{-\hbar}{z-w} \sum_{i,j=1}^{N} \Bigl(\frac{\partial \phi X \gamma}{\partial x_{ij}} \Bigr)(w) \Bigl(\frac{\partial \phi Y \gamma}{\partial y_{ji}} \Bigr)(w) \\
&= \frac{-\hbar}{z-w} \sum_{i,j=1}^{N} \left(\phi E_{ij} \gamma \right)(w) \left(\phi E_{ji} \gamma \right)(w) \\
&= \frac{-\hbar}{z-w} \sum_{i,j=1}^{N} (\phi_i \gamma_j \phi_j \gamma_i)(w) = \frac{-\hbar}{z-w} \sum_{i,j=1}^{N} (\phi \gamma)^2(w) = 0
\end{align*}
since $\phi \gamma$ is an odd element. Also, $\Tr(X)(z) (\phi Y \gamma)(w) \sim - (\phi \gamma)(w)/(z-w)$, and thus we obtain $G^{+}(z) G^{-}(w) \sim 0$. The operator product expansion $\tldG^{+}(z) \tldG^{-}(w) \sim 0$ can be obtained in a similar way.
\end{proof}

The remaining operator product expansions between the elements defined in \eqref{eq:gen-N4-SCA} are of the form $G^{a}(z) \tldG^{b}(w)$ for $a,b = +,-$. To compute them, we first need the following lemma.
\begin{lemma}\label{lemma:7}
For $m \in \Z_{\ge 0}$, the global sections $\beta X^m \gamma - \phi X^m \psi$ and $\beta Y^m \gamma - \phi Y^m \psi$ in $\tldC_{\VA}(\frX)$ are coboundaries, i.e.\ in $\Im d_{\VA}$.
\end{lemma}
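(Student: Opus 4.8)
The plan is to produce an explicit $d_{\VA}$-primitive by a zig-zag (tic-tac-toe) argument in the double complex $(\tldC_{\VA}(\frX),d^+_{\VA},d^-_{\VA})$. The key structural input is the bidegree decomposition of $d_{\VA}\Phi_{ij}=\tldE_{ij}=\tldmu_\ch(E_{ij})+J(E_{ij})$ read off from \eqref{eq:d-plus} and \eqref{eq:d-minus}, namely $d^-_{\VA}\Phi_{ij}=\tldmu_\ch(E_{ij})$ and $d^+_{\VA}\Phi_{ij}=J(E_{ij})$. First I would set
\[
c_m\coloneqq\sum_{i,j=1}^N\NO(X^m)_{ij}\,\Phi_{ji}\NO\in\tldC^{0,-1}_{\VA}(\frX)
\]
and compute the vertical differential. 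Since the coefficients $(X^m)_{ij}$ carry no ghosts we have $d^-_{\VA}(X^m)_{ij}=0$, so by the Leibniz rule $d^-_{\VA}c_m=\sum_{ij}\NO(X^m)_{ij}\,\tldmu_\ch(E_{ji})\NO$.

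The crucial simplification is that these are normal-ordered monomials built solely from the $(-1)$-modes of the free generators $x,y,\gamma,\beta,\psi,\phi$; because those generators have at most first-order-pole operator product expansions and $\bfone_{(-2)}=0$, their $(-1)$-modes (super)commute \emph{exactly} as operators, so the lift $\iota$ is multiplicative on such monomials and no $\hbar$-corrections arise. Consequently $d^-_{\VA}c_m=\iota\bigl(\sum_{ij}(X^m)_{ij}\tldmu(E_{ji})\bigr)=\iota(\Tr(X^mZ))$, and by the identity obtained in the proof of \autoref{lemma:8} (where $\Tr(X^m(XY-YX))=0$ by cyclicity) this equals $\beta X^m\gamma-\phi X^m\psi$ on the nose. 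Thus $d_{\VA}c_m=(\beta X^m\gamma-\phi X^m\psi)+d^+_{\VA}c_m$, and it remains to absorb the term $d^+_{\VA}c_m\in\tldC^{1,-1}_{\VA}$.

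For this I would run the tic-tac-toe. Granting that $\beta X^m\gamma-\phi X^m\psi$ is $d^+_{\VA}$-closed—that is, $\tldmu_\ch(E_{ij})_{(n)}(\beta X^m\gamma-\phi X^m\psi)=0$ for all $n\ge0$, a direct Wick computation of exactly the type carried out in \autoref{prop:gen-N4-cocyc}—the relation $d^-_{\VA}d^+_{\VA}=-d^+_{\VA}d^-_{\VA}$ shows $d^+_{\VA}c_m$ is $d^-_{\VA}$-closed. As it lies in bidegree $(1,-1)$ with $q<0$, the acyclicity of the vertical complex in negative $q$ yields $c_m^{(1)}\in\tldC^{1,-2}_{\VA}$ with $d^-_{\VA}c_m^{(1)}=d^+_{\VA}c_m$; this acyclicity is the chiral Koszul acyclicity, holding modulo $\hbar$ by \autoref{lemma:regular-seq} and \eqref{eq:10} and then $\hbar$-adically by completeness as in \autoref{lemma:conv-hE}. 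Iterating produces $c_m^{(k)}\in\tldC^{k,-k-1}_{\VA}$ with $d^-_{\VA}c_m^{(k)}=d^+_{\VA}c_m^{(k-1)}$, and the alternating sum $c=\sum_{k\ge0}(-1)^kc_m^{(k)}$ telescopes to $d_{\VA}c=\beta X^m\gamma-\phi X^m\psi$. The sum is finite because the $\C^\times$-action of \autoref{sec:global-VA} is preserved by $d^{\pm}_{\VA}$ and assigns weight $1$ to each $\Phi$, weight $0$ to each $\Psi$ and weight $1/2$ to each matter generator; every $c_m^{(k)}$ then has total weight $(m+2)/2$ while its $k+1$ factors $\Phi$ already contribute $k+1$, so the matter weight $(m+2)/2-(k+1)$ must stay non-negative and $c_m^{(k)}=0$ once $k>m/2-1$. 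The $Y$-statement is identical with $X$ replaced by $Y$.

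I expect the main obstacle to be the bookkeeping of the zig-zag at the $\hbar$-adic level: verifying that $d^-_{\VA}$ itself (not merely its symbol $d^-_{\VPA}$) is acyclic in negative $q$ and that the successive lifts $c_m^{(k)}$ can be chosen so that the alternating sum converges $\hbar$-adically, together with confirming that the conformal/torus weight genuinely truncates the chain. By contrast, the two inputs that might look delicate—the absence of quantum corrections in $d^-_{\VA}c_m$ and the $d^+_{\VA}$-closedness of $\beta X^m\gamma-\phi X^m\psi$—are comparatively routine, the former because of the exact commutation of the $(-1)$-modes and the latter because it is the same moment-map computation already used for the $\calN=4$ generators.
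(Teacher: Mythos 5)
You picked exactly the paper's primitive: $c_m=\Tr(X^m\Phi)$, and the paper's entire proof is the single assertion, checked with the Wick formula, that $d_{\VA}\Tr(X^m\Phi)=\beta X^m\gamma-\phi X^m\psi$ on the nose. However, your justification of the central identity $d^-_{\VA}c_m=\iota(\Tr(X^m Z))$ has a genuine gap. Exact supercommutativity of the generator $(-1)$-modes only makes naive monomials in those modes unambiguous; it says nothing about $d^-_{\VA}$, which by \eqref{eq:d-minus} applies to $\iota((X^m)_{ij})$ the $(-1)$-mode of the \emph{composite} field $\tldmu_\ch(E_{ji})$, and the mode of a normally ordered product is not the product of modes. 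For generators one has $(\NO x_{ip}y_{pj}\NO)_{(-1)}v=x_{ip(-1)}y_{pj(-1)}v+x_{ip(-2)}y_{pj(0)}v$, and $y_{pj(0)}$ acts on $\iota((X^m)_{ij})$ as $\hbar$ times a derivation, which is not zero. Concretely, the $\sum_p\NO x_{ip}y_{pj}\NO$-half of the comoment map contributes the $\hbar$-correction $\hbar\sum_{r+s=m-1}\Tr(X^r)\Tr(\partial X\,X^s)$, while the $-\sum_p\NO x_{pj}y_{ip}\NO$-half contributes its negative; these cancel only after summing over the trace indices and using cyclicity. That cancellation \emph{is} the Wick-formula computation constituting the paper's proof, so the step you dismiss as ``comparatively routine\dots{} because of the exact commutation of the $(-1)$-modes'' is precisely where all the work lies, and the reason you give for it is not valid.

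The rest of your proposal is logically coherent but vacuous, because in fact $d^+_{\VA}c_m=0$ identically, so there is nothing to absorb. Indeed, $c_m$ contains no $\Psi$'s, so only the first sum in \eqref{eq:d-plus} can act; every OPE of $\tldE_{ij}$ against $c_m$ has at most a simple pole (each term of $\tldmu_\ch(E_{ij})$ contains a single $y$, and $c_m$ contains no $y,\gamma,\beta,\psi,\phi$, while $\Psi(z)\Phi(w)$ has a bare simple pole), so $\tldE_{ij(n)}c_m=0$ for $n\ge1$; and $\tldE_{ij(0)}c_m=0$ because $(0)$-modes are derivations and $\Tr(X^m\Phi)$ is $\frg$-invariant. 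Hence $d_{\VA}c_m=d^-_{\VA}c_m$, and the zig-zag, the $\hbar$-adic Koszul acyclicity in negative $q$, and the weight-truncation argument can all be deleted. Once you repair the computation of $d^-_{\VA}c_m$ by an honest Wick calculation (and record the vanishing of $d^+_{\VA}c_m$, which is of the same difficulty), you have reproduced the paper's one-line proof.
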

\begin{proof}
Consider the element $\Tr(X^m \Phi)$, where $\Phi = (\Phi_{ij})_{i,j=1}^{N}$. Then, using the Wick formula, we see that $d_{\VA} \Tr(X^m \Phi) = (1/\hbar) Q_{(0)} \Tr(X^m \Phi) = \beta X^m \gamma - \phi X^m \psi$.
\end{proof}

\begin{lemma}\label{lemma:Virasoro}
The element $\TN$ defines a conformal vector of central charge $c_{S_N}=-3(N^2-1)$ of the subalgebra of $\tldcalD^\ch_{M, \hbar}(M)$ generated by the elements $J^{\pm}$, $J^{0}$, $G^{\pm}$, $\tldG^{\pm}$, $\TN$ in \eqref{eq:gen-N4-SCA}, i.e.\ the following operator product expansions hold:
\begin{align*}
\TN(z) \TN(w) &\sim \frac{-3 \hbar^4 (N^2-1) / 2}{(z-w)^4} + \frac{2 \hbar^2}{(z-w)^2} \TN(w) + \frac{\hbar^2}{z-w} \partial \TN(w), \\
\TN(z) A(w) &\sim \frac{\hbar^2}{(z-w)^2} A(w) + \frac{\hbar^2}{z-w} \partial A(w),\\
\TN(z) B(w) &\sim \frac{3 \hbar^2 / 2}{(z-w)^2} B(w) + \frac{\hbar^2}{z-w} \partial B(w)
\end{align*}
for $A = J^{\pm},J^{0}$ and $B = G^{\pm},\tldG^{\pm}$.
\end{lemma}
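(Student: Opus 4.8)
The plan is to compute all the required operator product expansions inside the BRST cochain complex $\tldC_{\VA}(\frX)$, where by \autoref{prop:gen-N4-cocyc} the sections $J^\pm,J^0,G^\pm,\tldG^\pm$ and $\TN$ are cocycles, so that the resulting OPEs descend to the cohomology $\tldcalD^\ch_{M,\hbar}(M)$; throughout one is free to discard coboundaries, in particular the combinations $\beta X^m\gamma-\phi X^m\psi$ and their $Y$-analogues produced by \autoref{lemma:7}. The central observation is that, writing $\TN=\tfrac1\hbar G^+_{(0)}\tldG^--\tfrac\hbar2\partial J^0$ as in \eqref{eq:gen-N4-SCA}, the $m=n=0$ case of Borcherds' identity \eqref{eq:Borcherds} expresses $(G^+_{(0)}\tldG^-)_{(l)}$ as the super-commutator $[G^+_{(0)},\tldG^-_{(l)}]$; since $G^+$ and $\tldG^-$ are odd, this gives for any section $C$ the derivation formula
\[
\TN_{(l)}C=\tfrac1\hbar\bigl(G^+_{(0)}(\tldG^-_{(l)}C)+\tldG^-_{(l)}(G^+_{(0)}C)\bigr)-\tfrac\hbar2(\partial J^0)_{(l)}C,
\]
reducing every OPE $\TN(z)C(w)$ to iterated OPEs among $G^+,\tldG^-,J^0$ and $C$.

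For the weight-one currents $A=J^0,J^\pm$ one takes $C=A$ and feeds in the $J$--$G$ and $J$--$\tldG$ expansions of \autoref{lemma:sl2-doublets} together with the $\frsl_2$ relations of \autoref{lemma:sl2-OPE}; a short bookkeeping shows that only a double and a single pole survive, with residues $\hbar^2A$ and $\hbar^2\partial A$, confirming that the $A$ are primary of weight $1$. The genuinely new input is the mixed expansion $G^a(z)\tldG^b(w)$, which is \emph{not} covered by \autoref{lemma:doublet-OPE-triv} (that lemma only kills the same-type pairs $G^+G^-$ and $\tldG^+\tldG^-$). This I would compute directly from the explicit forms in \eqref{eq:gen-N4-SCA} by the Wick formula of \autoref{lemma:Wick-bidiff}, using \autoref{lemma:7} to replace the $\beta X^m\gamma$- and $\beta Y^m\gamma$-type contractions by their $\phi X^m\psi$-counterparts modulo $\Im d_{\VA}$, so that the singular part closes on the span of $J^{ab}$, $\partial J^{ab}$ and $\TN$; comparing the order-one pole with the definition of $\TN$ fixes the normalisation, and the third-order pole yields the coefficient $c/3=-(N^2-1)$.

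With the mixed OPE in hand, the primary property $\TN(z)B(w)\sim\tfrac{3\hbar^2/2}{(z-w)^2}B(w)+\tfrac{\hbar^2}{z-w}\partial B(w)$ for $B=G^\pm,\tldG^\pm$ follows by applying the derivation formula with $C=B$: the same-type factor ($G^+_{(0)}G^\pm$ or $\tldG^-_{(l)}\tldG^\pm$) vanishes by \autoref{lemma:doublet-OPE-triv}, while the remaining mixed factor supplies exactly the weight-$3/2$ poles. Finally, the Virasoro relation $\TN(z)\TN(w)$ is obtained by one further application of the derivation formula with $C=\TN$: the terms $\tldG^-_{(l)}\TN$ and $G^+_{(0)}\TN$ are precisely the weight-$3/2$ OPEs just established (read off by skew-symmetry), and $(\partial J^0)_{(l)}\TN$ is controlled by the current OPE, together reproducing $\tfrac{2\hbar^2}{(z-w)^2}\TN+\tfrac{\hbar^2}{z-w}\partial\TN$ and the central term $\tfrac{-3\hbar^4(N^2-1)/2}{(z-w)^4}$. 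The main obstacle is the computation of $G^a(z)\tldG^b(w)$: this is where the Wick contractions, the traceless $1/N$-corrections, and the coboundary identifications of \autoref{lemma:7} must conspire to produce exactly $\TN$ in the order-one pole and the central charge $c_{S_N}=-3(N^2-1)$, and where the normalising constant $N-\tfrac1N$ in the explicit expression for $\TN$ originates.
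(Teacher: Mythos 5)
Your strategy coincides with the paper's own: both reduce the $\TN$ expansions, via the Borcherds commutator identity $(a_{(0)}b)_{(n)} = a_{(0)}b_{(n)} - (-1)^{p(a)p(b)}b_{(n)}a_{(0)}$, to the doublet expansions of \autoref{lemma:sl2-doublets} and the mixed expansions $G^a(z)\tldG^b(w)$ of \autoref{lemma:doublet-OPE}, and you correctly identify that the mixed expansion is the real computational content, to be established by the Wick formula together with the coboundary identifications of \autoref{lemma:7} --- this is exactly how the paper proves \autoref{lemma:doublet-OPE}.

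There is, however, a genuine gap in your bootstrap for $\TN(z)B(w)$: you work with the single expression $\TN = \frac{1}{\hbar}G^+_{(0)}\tldG^- - \frac{\hbar}{2}\partial J^0$. Since $G^+_{(0)}\tldG^- = \hbar\TN + \frac{\hbar^2}{2}\partial J^0$ and, by skew-symmetry, $\tldG^-_{(0)}G^+ = \hbar\TN - \frac{\hbar^2}{2}\partial J^0$, the ``remaining mixed factor'' in your derivation formula reintroduces the unknown: for $B = G^+$ it contains $G^+_{(0)}\TN$, and for $B = \tldG^-$ it contains $\tldG^-_{(l)}\TN$, i.e.\ the very expansions you are trying to establish. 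The case $B = G^+$ can be patched, because $(G^+_{(0)})^2 = \frac{1}{2}(G^+_{(0)}G^+)_{(0)} = 0$ gives $G^+_{(0)}\TN = -\frac{\hbar}{2}\partial(G^+_{(0)}J^0) = \frac{\hbar^2}{2}\partial G^+$ directly from the definition. But for $B = \tldG^-$ the analogous manipulation only yields $\tldG^-_{(m)}\TN = \TN_{(m)}\tldG^- + (\text{known terms})$, and feeding this into skew-symmetry determines $\TN_{(l)}\tldG^-$ only for even $l$ (the odd-$l$ equations are tautologies); in particular the weight-reading mode $\TN_{(1)}\tldG^-$ is left undetermined. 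This is precisely why the paper's proof begins by verifying, by direct calculation, the second identity $G^-_{(0)}\tldG^+ - \frac{\hbar^2}{2}\partial J^0 = -\hbar\TN$: with two expressions for $\TN$ at hand, each generator $B$ can be treated with the expression whose constituents pair with $B$ into expansions involving only $J^\pm$, $J^0$ and constants (using $\epsilon^{\pm\pm}=0$ in \autoref{lemma:doublet-OPE} and the vanishing same-type pairs of \autoref{lemma:doublet-OPE-triv}), so $\TN$ never reappears on the right-hand side and the bootstrap closes. To complete your argument you need either this second identity or a direct Wick computation of $\TN(z)G^+(w)$ and $\TN(z)\tldG^-(w)$.
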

\begin{proof}
By direct calculation, we verify $G^{-}_{(0)} \tldG^{+} - (\hbar^2/2) \partial J^0 = - \hbar \TN$. The operator product expansions for $\TN$ then follow from \autoref{lemma:sl2-doublets}, \autoref{lemma:doublet-OPE} below and Borcherds' identity $(a_{(0)} b)_{(n)} = a_{(0)} b_{(n)} - (-1)^{p(a) p(b)} b_{(n)} a_{(0)}$.
\end{proof}

\begin{lemma}\label{lemma:doublet-OPE}
For $a,b = +,-$, the following operator product expansions hold:
\[
G^{a}(z) \tldG^{b}(w) \sim \frac{-(N^2-1) \epsilon^{ab} \hbar^3}{(z-w)^3} + \frac{2 J^{ab}(w) \hbar^2}{(z-w)^2} + \frac{\epsilon^{ab} \TN(w) \hbar + \partial J^{ab}(w) \hbar^2}{z-w},
\]
where $\epsilon^{+-} = 1$, $\epsilon^{-+} = -1$, $\epsilon^{\pm \pm} = 0$ and $J^{+-} = J^{-+} = J^0/2$, $J^{\pm \pm} = J^{\pm}$.
\end{lemma}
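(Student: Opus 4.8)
The plan is to obtain all four operator product expansions by a direct application of the Wick formula, in the same spirit as the proofs of \autoref{prop:gen-N4-cocyc} and \autoref{lemma:sl2-OPE}, since each of $G^{a}$ and $\tldG^{b}$ is a normally ordered polynomial in the free generators $x_{ij},y_{ij},\gamma_i,\beta_i,\psi_i,\phi_i$. The only non-trivial contractions are the simple poles of $x_{ij}(z)y_{kl}(w)$, $\gamma_i(z)\beta_j(w)$ and $\psi_i(z)\phi_j(w)$ together with their transposes, so $G^{a}(z)\tldG^{b}(w)$ has at most a third-order pole: the three-fold contraction (in which no field survives) contributes only to the third-order pole, each two-fold contraction contributes to the second-order pole through its leading term and to the first-order pole through the Taylor expansion at $z$ of the surviving field about $w$, and the single contractions contribute to the first-order pole. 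I would organise the computation by this pole order, and use the formal substitution $X\leftrightarrow Y$ -- which exchanges $G^{+}\leftrightarrow G^{-}$, $\tldG^{+}\leftrightarrow\tldG^{-}$, $J^{+}\leftrightarrow J^{-}$, fixes $J^0$ and negates the propagator of $x$ with $y$ -- to pass between the $(+,\ast)$ and $(-,\ast)$ channels for the central and second-order terms.

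For the central term I would evaluate the three-fold contraction in the $(+,-)$ channel, where $\phi$ contracts with $\psi$, $\gamma$ with $\beta$ and (exactly once) $x$ with $y$. The leading contraction runs over all $N^2$ index choices, but the diagonal trace subtractions $-\tfrac1N\Tr(X)\phi\gamma$ and $\tfrac1N\Tr(Y)\beta\psi$ in $G^{+}$ and $\tldG^{-}$ cut this down to $N^2-1$, yielding the residue $-(N^2-1)\hbar^3$; since exactly one contraction of $x$ with $y$ enters, the substitution $X\leftrightarrow Y$ negates it, reproducing $\epsilon^{+-}=1\mapsto\epsilon^{-+}=-1$. In the $(+,+)$ channel no contraction of $x$ with $y$ is available, so there is no third-order pole, in accordance with $\epsilon^{++}=0$.

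For the second-order pole I would collect the two-fold contractions. Contracting $\phi$ with $\psi$ and $\gamma$ with $\beta$ leaves $x$ and $y$ surviving and produces $\Tr(XY)$ in the $(+,-)$ channel and $\Tr(X^2)$ in the $(+,+)$ channel; summing the corresponding contributions of the three trace corrections turns these into $J^0$ and $J^{+}$, i.e.\ the residue $2J^{ab}\hbar^2$. The remaining two-fold patterns, in which $x$ is contracted with $y$ and one of the pairs $\gamma$-$\beta$ or $\psi$-$\phi$ survives, produce spurious bilinears proportional to $\beta\gamma$ and to $\phi\psi$; crucially, these occur with opposite coefficients and therefore assemble into multiples of $\beta\gamma-\phi\psi$, which is a coboundary by \autoref{lemma:7} (with $m=0$). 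Hence they vanish in the cohomology $\tldcalD^\ch_{M,\hbar}(M)$, leaving exactly $2J^{ab}\hbar^2$.

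The first-order poles are then almost free. In the $(+,-)$ channel the residue $G^{+}_{(0)}\tldG^{-}=\hbar\TN+\tfrac{\hbar^2}{2}\partial J^0$ holds by the very definition of $\TN$ in \eqref{eq:gen-N4-SCA}, which is precisely $\epsilon^{+-}\hbar\TN+\hbar^2\partial J^{+-}$. For $(-,+)$ I would verify directly the identity $G^{-}_{(0)}\tldG^{+}=-\hbar\TN+\tfrac{\hbar^2}{2}\partial J^0$ -- the computation already invoked in the proof of \autoref{lemma:Virasoro} -- matching the explicit form of $\TN$ given in the preceding Remark after discarding coboundaries of the type $\beta X^m\gamma-\phi X^m\psi$ via \autoref{lemma:7}; for $(+,+)$ and $(-,-)$ the single contractions together with the sub-leading Taylor terms of the double contractions combine to $\hbar^2\partial J^{\pm}$. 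I expect the genuine difficulty to lie neither in the algebra nor in the identification of the principal terms, but in the bookkeeping: tracking the fermionic signs of the $\psi$-$\phi$ and $\gamma$-$\beta$ contractions, keeping straight which surviving field sits at $z$ rather than $w$ (so that the full derivative $\partial J^{ab}$ is correctly distributed between the double- and single-contraction contributions), and recognising the spurious $\beta\gamma$- and $\psi\phi$-type terms as coboundaries so that they drop out in the cohomology.
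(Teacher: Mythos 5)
Your proposal is correct and takes essentially the same route as the paper: a direct Wick-formula computation of each channel, organised by pole order, with the spurious terms of the form $\beta X^m\gamma-\phi X^m\psi$ discarded as coboundaries via \autoref{lemma:7}, and the $(+,-)$ residue fixed by the definition of $\TN$. The paper simply writes out the $(+,+)$ channel explicitly and declares the remaining channels analogous, so your pole-by-pole bookkeeping and the $X\leftrightarrow Y$ symmetry argument are organisational refinements of the same proof rather than a different method.
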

\begin{proof}
Again by the Wick formula, we compute
{\allowdisplaybreaks
\begin{align*}
&(\phi X \gamma)(z) (\beta X \psi)(w) \sim \frac{- \hbar^2}{(z-w)^2} \sum_{i,j=1}^{N} \Bigl(\frac{\partial^2 \phi X \gamma}{\partial \phi_i \partial \gamma_j}\Bigr)(w) \Bigl(\frac{\partial^2 \beta X \psi}{\partial \psi_i \partial \beta_j}\Bigr)(w) \\
&\quad + \frac{- \hbar^2}{z-w} \sum_{i,j=1}^{N} \Bigl(\partial_w \frac{\partial^2 \phi X \gamma}{\partial \phi_i \partial \gamma_j}\Bigr)(w) \Bigl(\frac{\partial^2 \beta X \psi}{\partial \psi_i \partial \beta_j}\Bigr)(w) \\
&\quad + \frac{\hbar}{z-w} \sum_{i=1}^{N} \Bigl(\frac{\partial \phi X \gamma}{\partial \phi_i}\Bigr)(w) \Bigl(\frac{\partial \beta X \psi}{\partial \psi_i}\Bigr)(w) + \frac{-\hbar}{z-w} \sum_{i=1}^{N} \Bigl(\frac{\partial \phi X \gamma}{\partial \gamma_i}\Bigr)(w) \Bigl(\frac{\partial \beta X \psi}{\partial \beta_i}\Bigr)(w) \\
&= \frac{- \hbar^2}{(z-w)^2} \sum_{i,j=1}^{N} (x_{ij} x_{ji})(w) + \frac{- \hbar^2}{z-w} \sum_{i,j=1}^{N} (\partial x_{ij} x_{ji})(w) \\
&\quad + \frac{\hbar}{z-w} \sum_{i=1}^{N} \left\{ (X \gamma)_i(w) (\beta X)_i(w) - (\phi X)_{i}(w) (X \psi)_i(w) \right\} \\
&= \frac{- \hbar^2}{(z-w)^2} \Tr(X^2)(w) + \frac{- \hbar^2}{z-w} \frac{1}{2} \partial_w \Tr(X^2)(w) + \frac{\hbar}{z-w}(\beta X^2 \gamma - \phi X^2 \psi)(w) \\
&\equiv \frac{- \hbar^2}{(z-w)^2} \Tr(X^2)(w) + \frac{- \hbar^2}{z-w} \frac{1}{2} \partial_w \Tr(X^2)(w),
\end{align*}
}%
where we use \autoref{lemma:7} for the equivalence modulo $\Im d_{\VA}$. Similarly,
\[
(\phi X \gamma)(z) (\Tr(X) \beta \psi)(w) \sim \frac{- \hbar^2}{(z-w)^2} \Tr(X)^2(w) + \frac{- \hbar^2}{z-w} \frac{1}{2} \partial_w \Tr(X)^2(w)
\]
modulo $\Im d_{\VA}$, and hence we obtain the desired operator product expansion $G^{+}(z) \tldG^{+}(w)$ of the lemma. The other operator product expansions can be verified in the same way.
\end{proof}

As a conclusion of Lemmata~\ref{lemma:sl2-OPE}--\ref{lemma:doublet-OPE}, we obtain:
\begin{proposition}\label{prop:small-N4-SCA}
For $N\ge2$, the elements \eqref{eq:gen-N4-SCA} define a homomorphism of $\hbar$-adic vertex superalgebras
\[
\on{Vir}_{\calN=4,\hbar}^{c_{S_N}}\longrightarrow \tldcalD^\ch_{M, \hbar}(M).
\]
\end{proposition}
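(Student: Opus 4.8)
The plan is to invoke the universal property of the $\hbar$-adic small $\calN=4$ superconformal algebra. Recall from \autoref{sec:h-adic-affine-VA} that $\on{Vir}_{\calN=4,\hbar}^{c_{S_N}}$ is, by definition, the $\hbar$-adic completion of the vertex subalgebra of $\on{Vir}_{\calN=4}^{c_{S_N}}[\hbar]$ generated by the rescaled elements $\hbar J^0$, $\hbar J^{\pm}$, $\hbar^2\TN$, $\hbar^{3/2}G^{\pm}$ and $\hbar^{3/2}\tldG^{\pm}$. Since the defining operator product expansions of $\on{Vir}_{\calN=4}^{c_{S_N}}$ are fixed once the central charge $c_{S_N}=-3(N^2-1)$ is specified, a homomorphism of $\hbar$-adic vertex superalgebras out of $\on{Vir}_{\calN=4,\hbar}^{c_{S_N}}$ is determined by the images of these generators, provided those images satisfy precisely the rescaled operator product expansions. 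Thus it suffices to exhibit elements of $\tldcalD^\ch_{M,\hbar}(M)$ realising the rescaled generators and to check the relations.

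First I would take the global sections $J^0$, $J^{\pm}$, $G^{\pm}$, $\tldG^{\pm}$ and $\TN$ of \eqref{eq:gen-N4-SCA} as the images of $\hbar J^0$, $\hbar J^{\pm}$, $\hbar^{3/2}G^{\pm}$, $\hbar^{3/2}\tldG^{\pm}$ and $\hbar^2\TN$, respectively. By \autoref{prop:gen-N4-cocyc} these sections are cocycles lying in $C_{\VA}(\frX)$ and hence define genuine elements of the cohomology $\tldcalD^\ch_{M,\hbar}(M)=H^{0}(C_{\VA}(\frX),d_{\VA})$. The level of $\on{Vir}_{\calN=4}^{c_{S_N}}$ is $k=c_{S_N}/6=-(N^2-1)/2$, which matches the $\frsl_2$-level appearing in \autoref{lemma:sl2-OPE}; this is the first consistency check.

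The core of the argument is then a bookkeeping comparison: rescaling each defining operator product expansion of \autoref{sec:h-adic-affine-VA} by the appropriate power of $\hbar$ (namely $\hbar^2$ for a pair of currents, $\hbar^3$ for a $G$--$\tldG$ pair and $\hbar^4$ for $\TN$--$\TN$) reproduces exactly the relations established in Lemmata~\ref{lemma:sl2-OPE}--\ref{lemma:doublet-OPE}. For instance, $\hbar^4$ times $\TN(z)\TN(w)\sim(c_{S_N}/2)/(z-w)^4+\dots$ gives the fourth-order pole $-3\hbar^4(N^2-1)/2$ of \autoref{lemma:Virasoro}, and $\hbar^3$ times $G^a(z)\tldG^b(w)\sim\dots$ gives the poles of \autoref{lemma:doublet-OPE}, with the central term $\hbar^3(c_{S_N}/3)\epsilon^{ab}=-(N^2-1)\epsilon^{ab}\hbar^3$, the second-order term $2\hbar^2 J^{ab}$ and the first-order term $\epsilon^{ab}\hbar\TN+\hbar^2\partial J^{ab}$ matching term by term. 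The remaining relations (the $\frsl_2$-doublet structure of $\{G^{\pm}\}$ and $\{\tldG^{\pm}\}$, and the vanishing $G^{+}(z)G^{-}(w)\sim\tldG^{+}(z)\tldG^{-}(w)\sim 0$) are precisely Lemmata~\ref{lemma:sl2-doublets} and~\ref{lemma:doublet-OPE-triv}. Since all relations match, the universal property yields the asserted homomorphism.

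The one subtlety I would highlight is that, because $\tldcalD^\ch_{M,\hbar}(M)$ is a cohomology space, the operator product expansions are only meaningful modulo $\Im d_{\VA}$, so the relations need only hold up to coboundaries. This is exactly how \autoref{lemma:Virasoro} and \autoref{lemma:doublet-OPE} are proved, discarding the coboundary terms $\beta X^m\gamma-\phi X^m\psi$ via \autoref{lemma:7}. Apart from this point, the proof involves no new computation beyond assembling the six preceding lemmata; the genuine analytic work --- the Wick-formula calculations of all singular operator product expansions --- has already been carried out there. The main conceptual point, which I would make explicit, is simply that the universal property of the small $\calN=4$ superconformal algebra remains valid in the $\hbar$-adic category, so that matching the rescaled operator product expansions of the generators is both necessary and sufficient for the existence of the homomorphism.
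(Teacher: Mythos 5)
Your proposal is correct and takes essentially the same approach as the paper: there the proposition is stated precisely as the conclusion of Lemmata~\ref{lemma:sl2-OPE}--\ref{lemma:doublet-OPE}, which is exactly the assembly you carry out, including the observation that the $G$--$\tldG$ relations hold modulo $\Im d_{\VA}$ via \autoref{lemma:7}. Your explicit appeal to the universal property of $\on{Vir}_{\calN=4,\hbar}^{c_{S_N}}$ and the $\hbar$-power bookkeeping merely spell out what the paper leaves implicit.
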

We denote the image of the above homomorphism by $V_{\calN=4,\hbar}$. It is some quotient of the universal $\hbar$-adic small $\calN=4$ superconformal algebra $\on{Vir}_{\calN=4,\hbar}^{c_{S_N}}$ of central charge $c_{S_N}=-3(N^2-1)$.

The analogous statement holds for the vertex operator superalgebra $\sfV_{S_N}$ with a vertex algebra homomorphism $\on{Vir}_{\calN=4}^{c_{S_N}}\longrightarrow \sfV_{S_N}$, whose image we denote $V_{\calN=4}$.

\medskip

Finally, we study further cocycles in $C_{\VA}(\frX)$, corresponding to non-zero elements in $\tldcalD^\ch_{M, \hbar}(M)$, and show that together with $V_{\calN=4,\hbar}$ they generate a conformal subalgebra of the $\hbar$-adic vertex operator superalgebra $\tldcalD^\ch_{M, \hbar}(M)$.
\begin{lemma}
The elements $\Tr(X)/\sqrt{N}$ and $\Tr(Y)/\sqrt{N} \in \tldcalD^\ch_{M, \hbar}(M)$ form an $\hbar$-adic $\beta\gamma$-system $\calD^\ch(T^*\C^1)_{\hbar}$.
\end{lemma}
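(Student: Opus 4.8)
The plan is to compute the three pairwise operator product expansions of $\Tr(X)$ and $\Tr(Y)$ inside $\tldcalD^\ch_{M,\hbar}(M)$ and to match them with the defining relations of the rank-one $\hbar$-adic $\beta\gamma$-system from \autoref{sec:h-adic-betagamma}. Recall that $\Tr(X)$ and $\Tr(Y)$ are already known to be cocycles in $C_{\VA}(\frX)$, so they descend to well-defined elements of the cohomology $\tldcalD^\ch_{M,\hbar}(M)=H^0(C_{\VA}(\frX),d_{\VA})$; since the $(n)$-products of cocycles are again cocycles, their operator product expansions may be computed at the cochain level in $\tldC_{\VA}(\frX)$ by the Wick formula. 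First, because the coordinate fields $x_{ij}$ (respectively $y_{ij}$) mutually commute in the $\hbar$-adic $\beta\gamma$-system, all Wick contractions vanish and one obtains $\Tr(X)(z)\Tr(X)(w)\sim\Tr(Y)(z)\Tr(Y)(w)\sim 0$. For the mixed product I would specialise the computation \eqref{eq:9} to $m=1$, giving $\Tr(Y)(z)\Tr(X)(w)\sim N\hbar/(z-w)$ since $\Tr(X^0)=N\bfone$; as both fields are linear in the generators there are no second- or higher-order poles, so this singular part is exact. By the skew-symmetry of the simple-pole term, whose residue is a constant multiple of the vacuum, this also yields $\Tr(X)(z)\Tr(Y)(w)\sim -N\hbar/(z-w)$.

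Rescaling, I would set $x\coloneqq\Tr(X)/\sqrt{N}$ and $y\coloneqq\Tr(Y)/\sqrt{N}$, so that $x(z)y(w)\sim -\hbar/(z-w)$ and $x(z)x(w)\sim y(z)y(w)\sim 0$, which are precisely the defining relations of $\calD^\ch(T^*\C^1)_{\hbar}$. These relations furnish a homomorphism of $\hbar$-adic vertex algebras $\calD^\ch(T^*\C^1)_{\hbar}\longrightarrow\tldcalD^\ch_{M,\hbar}(M)$ whose image is the subalgebra generated by $x$ and $y$. To upgrade this to an isomorphism onto its image, that is, to verify that the generated subalgebra is a genuine copy of the $\beta\gamma$-system rather than a proper quotient of it, it remains to establish injectivity.

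For injectivity I would pass to the classical limit modulo $\hbar$. Under the inclusion $\tldcalD^\ch_{M,\hbar}(M)/\hbar\hookrightarrow\JetBundle{M}(M)$ the symbols of $\Tr(X)$ and $\Tr(Y)$ are the power sums $\sum_i x_i$ and $\sum_i y_i$ in $\tldcalO_M(M)=\C[\C^{2N}]^{S_N}$. Under the splitting $\C^{2N}/S_N\cong\calM_{S_N}\times T^*\C$ these are exactly the two coordinates of the free cotangent factor $T^*\C$, hence algebraically independent, and the same holds for all their $\partial$-jets in the jet bundle. Therefore the induced map $\calO_{J_{\infty}T^*\C^1}\longrightarrow\JetBundle{M}(M)$ of classical limits is injective; by the $\C[[\hbar]]$-flatness of $\tldcalD^\ch_{M,\hbar}(M)$ together with $\hbar$-adic completeness, this injectivity lifts to the $\hbar$-adic map, so the subalgebra generated by $\Tr(X)/\sqrt{N}$ and $\Tr(Y)/\sqrt{N}$ is isomorphic to $\calD^\ch(T^*\C^1)_{\hbar}$. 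The operator product computations are routine given \eqref{eq:9}; the genuine content, and the step I expect to be the main obstacle, is this freeness statement, since it is what prevents the generated subalgebra from collapsing to a proper quotient and it relies crucially on the algebraic independence supplied by the center-of-mass $T^*\C$ factor.
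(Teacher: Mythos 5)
Your proposal is correct, and its computational core is precisely the paper's entire proof: the paper establishes the lemma by listing the three operator product expansions $\Tr(X)(z)\Tr(Y)(w)\sim -N\hbar/(z-w)$ and $\Tr(X)(z)\Tr(X)(w)\sim\Tr(Y)(z)\Tr(Y)(w)\sim 0$, which you obtain in the same way (Wick formula, i.e.\ \eqref{eq:9} specialised to $m=1$, together with skew-symmetry and the vanishing of all contractions among the $x_{ij}$, respectively the $y_{ij}$). The one genuine difference is that you go further and verify that the resulting homomorphism from $\calD^\ch(T^*\C^1)_{\hbar}$ is injective, via the algebraic independence of the symbols $\sum_i x_i$, $\sum_i y_i$ and their jets in $\JetBundle{M}(M)$, lifted to the $\hbar$-adic level by flatness and completeness; the paper treats ``form an $\hbar$-adic $\beta\gamma$-system'' as settled by the operator product expansions alone. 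Your extra step is sound and is the natural substitute, in the $\hbar$-adic setting, for the simplicity of the ordinary Weyl vertex algebra (which in the non-$\hbar$-adic case makes any non-zero map from it automatically injective); so your argument supplies a justification that the paper leaves implicit, at the cost of invoking \autoref{prop:quantization} and the splitting $\C^{2N}/S_N\cong\calM_{S_N}\times T^*\C$, neither of which is needed for the paper's terse version.
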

\begin{proof}
This follows directly from
\begin{align*}
\Tr(X)(z)\Tr(Y)(w) &\sim - N \hbar/ (z-w),\\
\Tr(X)(z)\Tr(X)(w) &\sim \Tr(Y)(z)\Tr(Y)(w) \sim 0.\qedhere
\end{align*}
\end{proof}
\begin{lemma}
The elements $\Lambda_1 \coloneqq \phi \gamma / \sqrt{N}$ and $\Lambda_2 \coloneqq \beta \psi / \sqrt{N} \in \tldcalD^\ch_{M, \hbar}(M)$ generate an $\hbar$-adic symplectic fermion vertex superalgebra $\SF_{\hbar}$.
\end{lemma}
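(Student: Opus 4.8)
The plan is to read off the three operator product expansions among $\Lambda_1=\phi\gamma/\sqrt N$ and $\Lambda_2=\beta\psi/\sqrt N$ directly from the Wick formula (\autoref{lemma:Wick-bidiff}) and compare them with the defining relations of $\SF_\hbar$. Both generators are odd (each is a sum of a boson times a fermion) and, being $G$-invariant and free of the ghost fields $\Psi_{ij},\Phi_{ij}$, they are cocycles in $C_{\VA}(\frX)$, so they represent classes in $\tldcalD^\ch_{M,\hbar}(M)=H^0(C_{\VA},d_{\VA})$ (this is already implicit in the computations of \autoref{lemma:doublet-OPE-triv} and \autoref{lemma:doublet-OPE}, where $\phi\gamma$ and $\beta\psi$ occur as operator product coefficients of cocycles).

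For the diagonal products there is nothing to contract. The only nonzero elementary contractions are the $bc$-contraction $\phi_i$--$\psi_j$, contributing $+\hbar\delta_{ij}$, and the $\beta\gamma$-contraction $\gamma_i$--$\beta_j$, contributing $-\hbar\delta_{ij}$ (the relative sign being fixed as in the proof of \autoref{lemma:doublet-OPE}), whereas $\gamma$--$\gamma$, $\beta$--$\beta$, $\phi$--$\phi$, $\psi$--$\psi$ and all mixed boson--fermion pairs vanish. Since $\Lambda_1$ involves only $\phi,\gamma$ and $\Lambda_2$ only $\beta,\psi$, no pair of fields in $\Lambda_1(z)\Lambda_1(w)$ (resp.\ $\Lambda_2(z)\Lambda_2(w)$) can be contracted, whence $\Lambda_1(z)\Lambda_1(w)\sim\Lambda_2(z)\Lambda_2(w)\sim0$ already in $\tldC_{\VA}(\frX)$.

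The substance is the cross term. Applying the Wick formula to $(\phi\gamma)(z)(\beta\psi)(w)=\sum_{i,j}(\phi_i\gamma_i)(z)(\beta_j\psi_j)(w)$, the double contraction (contract $\phi_i$ with $\psi_j$ and $\gamma_i$ with $\beta_j$) produces $\sum_i(+\hbar)(-\hbar)/(z-w)^2=-N\hbar^2/(z-w)^2$, while the two single contractions leave $+\frac{\hbar}{z-w}\sum_i\NO\gamma_i\beta_i\NO$ and $-\frac{\hbar}{z-w}\sum_i\NO\phi_i\psi_i\NO$, i.e.\ altogether the simple pole $\frac{\hbar}{z-w}(\beta\gamma-\phi\psi)(w)$. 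Dividing by $N$ gives
\[
\Lambda_1(z)\Lambda_2(w)\sim\frac{-\hbar^2}{(z-w)^2}+\frac{\hbar}{N(z-w)}(\beta\gamma-\phi\psi)(w).
\]
Here I would invoke \autoref{lemma:7} with $m=0$: the field $\beta\gamma-\phi\psi$ is a coboundary, so the simple pole vanishes in the cohomology $\tldcalD^\ch_{M,\hbar}(M)$, leaving $\Lambda_1(z)\Lambda_2(w)\sim-\hbar^2/(z-w)^2$. This is exactly the symplectic-fermion relation $u(z)v(w)\sim\hbar^2(u,v)/(z-w)^2$ for the form with $(\Lambda_1,\Lambda_2)=-1$ and $(\Lambda_i,\Lambda_i)=0$.

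Having matched all defining operator product expansions, the assignment $\Lambda_i\mapsto\Lambda_i$ extends to a homomorphism $\SF_\hbar\to\tldcalD^\ch_{M,\hbar}(M)$ of $\hbar$-adic vertex superalgebras whose image is the subalgebra generated by $\Lambda_1,\Lambda_2$; its injectivity (so that the image is genuinely $\SF_\hbar$ and not a proper quotient) I would deduce by passing to the classical limit, where the symbols of $\Lambda_{1(-n)},\Lambda_{2(-n)}$ are the algebraically independent jets of $\phi\gamma$ and $\beta\psi$ in $\JetBundle{M}$. I expect the only genuine obstacle to be the sign bookkeeping in the contractions together with the realization that the product must be read in BRST cohomology rather than in $\tldC_{\VA}$: the relative sign between the $bc$- and $\beta\gamma$-contractions is precisely what leaves the double pole with coefficient $-\hbar^2$, and \autoref{lemma:7} is exactly what kills the otherwise-fatal simple pole that would prevent $\Lambda_1,\Lambda_2$ from forming a symplectic-fermion system.
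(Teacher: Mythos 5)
Your proof is correct and takes essentially the same route as the paper: the paper's own proof is just the two-line statement of the operator product expansions $\Lambda_1(z)\Lambda_2(w)\sim-\hbar^2/(z-w)^2$ and $\Lambda_1(z)\Lambda_1(w)\sim\Lambda_2(z)\Lambda_2(w)\sim0$, which are exactly what you derive. Your Wick-formula computation — in particular invoking \autoref{lemma:7} with $m=0$ to kill the simple-pole term $\frac{\hbar}{N(z-w)}(\beta\gamma-\phi\psi)$ in cohomology, mirroring the paper's explicit treatment of the analogous term $\beta X^2\gamma-\phi X^2\psi$ in the proof of \autoref{lemma:doublet-OPE} — fills in precisely the calculation the paper leaves implicit, and your classical-limit argument that the generated subalgebra is not a proper quotient is a sound supplement that the paper omits.
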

\begin{proof}
The assertion follows from the operator product expansions
\begin{align*}
\Lambda_1(z)\Lambda_2(w) &\sim -\hbar^2/ (z-w)^2,\\
\Lambda_1(z)\Lambda_1(w) &\sim \Lambda_2(z)\Lambda_2(w) \sim 0.\qedhere
\end{align*}
\end{proof}

Moreover, one can easily verify that the operator product expansions among these three vertex subalgebras are trivial. Thus,
\[
V_{\calN=4,\hbar} \hatotimes \calD^\ch(T^*\C^1)_{\hbar} \hatotimes \SF_{\hbar}\subset \tldcalD^\ch_{M, \hbar}(M)
\]
as $\hbar$-adic vertex superalgebras.

Now, consider the conformal vectors
\begin{align*}
\TTr &= \frac{\hbar}{2N}\bigl(\Tr(\partial X) \Tr(Y) - \Tr(X) \Tr(\partial Y) \bigr) \in \calD^\ch(T^*\C^1)_{\hbar},\\
\TSF &= \Lambda_1 \Lambda_2 = \frac{1}{N}( \phi \gamma \beta \psi + \hbar \beta \partial \gamma - \hbar \partial \phi \psi) \in \SF_{\hbar}
\end{align*}
of the $\hbar$-adic vertex subalgebras $\calD^{\ch}(T^*\C^1)_{\hbar}$ and $\SF_{\hbar}$. They have central charge $c=-1$ and $-2$, respectively.

Overall, the tensor product $V_{\calN=4,\hbar} \hatotimes \calD^\ch(T^*\C^1)_{\hbar} \hatotimes \SF_{\hbar}$ inside $\tldcalD^\ch_{M, \hbar}(M)$ has a conformal structure of central charge $c=-3(N^2-1) + (-1) + (-2) = -3 N^2$ defined by $\TN + \TTr + \TSF$. We show that it coincides with the natural conformal structure of $\tldcalD^\ch_{M, \hbar}(M)$ defined in \autoref{sec:conformal}:
\begin{proposition}\label{prop:total-conf-vec}
For $N\ge2$, the $\hbar$-adic vertex operator superalgebra $\tldcalD^\ch_{M, \hbar}(M)$ of central charge $c=-3N^2$ is a conformal extension of $V_{\calN=4,\hbar} \hatotimes \calD^\ch(T^*\C^1)_{\hbar} \hatotimes \SF_{\hbar}$.
\end{proposition}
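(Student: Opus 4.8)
The statement that $\tldcalD^\ch_{M,\hbar}(M)$ is a conformal extension of $V_{\calN=4,\hbar}\hatotimes\calD^\ch(T^*\C^1)_\hbar\hatotimes\SF_\hbar$ means precisely that the conformal vector of the ambient algebra agrees with that of the subalgebra. Since the subalgebra carries the conformal vector $\TN+\TTr+\TSF$ (of central charge $-3(N^2-1)+(-1)+(-2)=-3N^2$) and $\tldcalD^\ch_{M,\hbar}(M)$ carries the conformal vector $T$ induced from \eqref{eq:free-field-conformal}, the whole proposition reduces to the single cochain-level identity
\[
T\equiv \TN+\TTr+\TSF \pmod{\Im d_{\VA}}.
\]
The plan is to prove this identity directly and to extract as a by-product that $T\in\Ker d_{\VA}$ (the proposition in \autoref{sec:conformal}): indeed $\TN$, $\TTr$ and $\TSF$ are cocycles — $\TN$ by \autoref{prop:gen-N4-cocyc}, and $\TTr$, $\TSF$ because they are built by $(-1)$-products and $\partial$ from the cocycles $\Tr(X),\Tr(Y)$ and $\phi\gamma,\beta\psi$ — so once $T$ differs from their sum by a coboundary it is automatically closed and represents the same class.

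First I would make the right-hand side explicit. Substituting the closed form of $\TN$ from the remark following \eqref{eq:gen-N4-SCA} together with the definitions of $\TTr$ and $\TSF$, the terms proportional to $1/N$ cancel in pairs: the $\tfrac1N\Tr(\partial X)\Tr(Y)$-type terms of $\TN$ cancel against $\TTr$, and the $\tfrac1N\phi\gamma\beta\psi$ term of $\TN$ cancels against $\TSF$. What survives is
\begin{align*}
\TN+\TTr+\TSF &= \phi XY\psi-\beta YX\gamma+\beta\gamma\phi\psi\\
&\quad+\tfrac{\hbar}{2}\bigl(\Tr(\partial XY)-\Tr(X\partial Y)\bigr)+\hbar N\,(\beta\partial\gamma-\partial\phi\psi).
\end{align*}
Comparing with \eqref{eq:free-field-conformal}, the bosonic bulk terms $\tfrac{\hbar}{2}(\Tr(\partial XY)-\Tr(X\partial Y))$ cancel against those of $T$, so that the difference $D:=T-(\TN+\TTr+\TSF)$ consists only of the ghost Sugawara term $\hbar\Tr(\partial\Psi\,{}^t\Phi)$, the matter quartics $-\phi XY\psi+\beta YX\gamma-\beta\gamma\phi\psi$, and the $\hbar$-linear $\beta\gamma$- and $\phi\psi$-corrections coming from \eqref{eq:free-field-conformal} and from the $\hbar N(\beta\partial\gamma-\partial\phi\psi)$-term.

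The core step is then to exhibit $D$ as a coboundary. For this I would generalise the computation behind \autoref{lemma:7}: for a word $P=P(X,Y)$ the section $\Tr(P\Phi)$ has ghost number $-1$, and $d_{\VA}\Tr(P\Phi)=(d^{+}_{\VA}+d^{-}_{\VA})\Tr(P\Phi)$ splits into a purely matter part (produced by $d^{-}_{\VA}$ contracting $\Phi$ against $\tldmu_\ch$, yielding $\beta P\gamma-\phi P\psi$ together with moment-map contributions $\Tr(P\tldE)$ that are themselves coboundaries since $\tldE_{ij}=\hbar\,d_{\VA}\Phi_{ij}$) and a ghost-number-zero part containing one $\Psi$ and one $\Phi$ (produced by $d^{+}_{\VA}$). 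Choosing $\Xi$ to be an appropriate combination of $\Tr(XY\Phi)$-type sections and a ghost bilinear, the $d^{-}_{\VA}$-contributions should reproduce the matter quartics of $D$ modulo the constraint terms, while the $d^{+}_{\VA}$-contributions reproduce the ghost Sugawara term $\hbar\Tr(\partial\Psi\,{}^t\Phi)$; the leftover $\hbar$-linear terms are then matched using the remark in \autoref{sec:conformal}, where $T_{k_1,k_2}-T_{k_1,1/2}\in\Im d_{\VA}$ and $T=T_{1/2,1/2}$.

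The main obstacle is precisely this last Wick-and-coboundary bookkeeping. One must compute $d_{\VA}$ on the ghost–matter primitives via the Wick formula of \autoref{sec:h-adic-betagamma}, keep track of the cubic ghost term $-\tfrac12 c^{pq}_{ijkl}\Psi\Psi\Phi$ in $Q$ and of all fermionic reordering signs, and verify that the higher matter terms (the $\Tr(P[X,Y])$-type expressions) organise into coboundaries via $\tldE_{ij}=\hbar\,d_{\VA}\Phi_{ij}$, so that exactly the ghost Sugawara term of $T$ survives. Once $D=d_{\VA}\Xi$ is established, the displayed identity follows, $T$ is closed, and the two conformal vectors coincide in $\tldcalD^\ch_{M,\hbar}(M)$, which is the assertion of the proposition.
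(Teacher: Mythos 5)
Your reduction is exactly the paper's: the proposition is equivalent to the single identity $T\equiv \TN+\TTr+\TSF \pmod{\Im d_{\VA}}$, and your simplification of $\TN+\TTr+\TSF$ (the $1/N$-cancellations against $\TTr$ and $\TSF$) is correct, yielding the same difference that the paper records, namely
\begin{align*}
D\coloneqq T-(\TN+\TTr+\TSF)&=\beta YX\gamma-\phi XY\psi-\beta\gamma\phi\psi-\hbar\bigl(N-\tfrac12\bigr)\beta\partial\gamma-\tfrac{\hbar}{2}\partial\beta\gamma\\
&\quad-\tfrac{\hbar}{2}\phi\partial\psi+\hbar\bigl(N+\tfrac12\bigr)\partial\phi\psi+\hbar\Tr(\partial\Psi\,{}^t\Phi).
\end{align*}
Your observation that closedness of $T$ then comes for free (since $\TN$, $\TTr$, $\TSF$ are built from cocycles) also matches the paper's logic.

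The genuine gap is that you stop precisely at the step that \emph{is} the paper's proof: you never exhibit a primitive $\Xi$ with $d_{\VA}\Xi=D$, only assert that an ``appropriate combination'' of $\Tr(XY\Phi)$-type sections and a ghost bilinear ``should'' work, and you flag the Wick bookkeeping as unresolved. The paper's proof consists of the explicit element
\[
\Xi=\tfrac12\bigl(\Tr(XY\Phi)+\Tr(YX\Phi)-\beta\Phi\gamma+\phi\Phi\psi+(\beta\gamma)\Tr(\Phi)-(\phi\psi)\Tr(\Phi)-N\hbar\Tr(\partial\Phi)\bigr)
\]
and the verification $D=d_{\VA}\Xi$; without producing such an element your argument is a plan, not a proof. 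Moreover, three details of your sketch would fail as stated. First, $D$ has ghost number $0$ and $d_{\VA}$ raises ghost number by one, so $\Xi$ must be homogeneous of ghost number $-1$; a pure ghost bilinear $\Psi\Phi$ has ghost number $0$ and cannot occur in $\Xi$ (the term $\hbar\Tr(\partial\Psi\,{}^t\Phi)$ in $D$ is instead produced by $d^{+}_{\VA}$ acting on the $\Phi$-linear terms of $\Xi$). Second, the remark in \autoref{sec:conformal} only supplies coboundaries proportional to $\partial(\beta\gamma-\phi\psi)$ (this is \autoref{lemma:7} at $m=0$ composed with $\partial$), a one-dimensional space, whereas the $\hbar$-linear part of $D$ involves $\beta\partial\gamma$ and $\partial\phi\psi$ with independent coefficients; in the paper these are accounted for by the terms $(\beta\gamma)\Tr(\Phi)-(\phi\psi)\Tr(\Phi)-N\hbar\Tr(\partial\Phi)$ of $\Xi$. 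Third, your claim that the moment-map contributions $\Tr(P\tldE)$ are coboundaries ``since $\tldE_{ij}=\hbar\,d_{\VA}\Phi_{ij}$'' is a non sequitur: $P_{(-1)}(d_{\VA}\Phi_{ij})$ differs from $\pm d_{\VA}(P_{(-1)}\Phi_{ij})$ by $(d_{\VA}P)_{(-1)}\Phi_{ij}$, and individual matrix entries of $P(X,Y)$ are not $d_{\VA}$-closed (only suitable traces are), so these terms must be tracked through the same explicit computation you are deferring.
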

\begin{proof}
We need to show that for $T$ from \eqref{eq:free-field-conformal},
\begin{equation}\label{eq:4}
T \equiv \TN + \TTr + \TSF
\end{equation}
modulo $\Im d_{\VA}$. Indeed, one can easily verify the identity
\begin{align*}
&T - (\TN + \TTr + \TSF) \\
&= \beta Y X \gamma - \phi X Y \psi - \beta \gamma \phi \psi -
\hbar\Bigl(N - \frac{1}{2}\Bigr)\beta \partial \gamma - \frac{\hbar}{2} \partial \beta \gamma \\
&\quad - \frac{\hbar}{2} \phi \partial \psi + \hbar\Bigl(N + \frac{1}{2}\Bigr) \partial \phi \psi + \hbar\Tr(\partial \Psi {}^t \Phi) \\
&= \frac{1}{2} d_{\VA} \bigl(\Tr(X Y \Phi) + \Tr(Y X \Phi) - \beta \Phi \gamma + \phi \Phi \psi + (\beta\gamma) \Tr(\Phi) \\
& \phantom{=\frac{1}{2} d_{\VA} \bigl(} - (\phi \psi) \Tr(\Phi) - N \hbar \Tr(\partial \Phi)
\bigr) \in \Im d_{\VA}.\qedhere
\end{align*}
\end{proof}
In the next section, we shall actually see that $\tldcalD^\ch_{M, \hbar}(M)$ is isomorphic to a tensor product of $\calD^\ch(T^*\C^1)_{\hbar} \hatotimes \SF_{\hbar}$ and some conformal extension of $V_{\calN=4,\hbar}$. That is, we can split off the (unimportant) $\beta\gamma$-system and the symplectic fermion completely.

As an immediate consequence of the proposition we obtain:
\begin{corollary}
For $N\ge2$, the vertex operator superalgebra $\sfV_{S_N}$ of central charge $c=-3N^2$ is a conformal extension of $V_{\calN=4} \otimes \calD^\ch(T^*\C^1) \otimes \SF$.
\end{corollary}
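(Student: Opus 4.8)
The plan is to deduce the statement for the vertex operator superalgebra $\sfV_{S_N}$ directly from its $\hbar$-adic counterpart \autoref{prop:total-conf-vec} by applying the reduction $\tldcalD^\ch_{M,\hbar}(M)\rightsquigarrow\sfV_{S_N}=[\tldcalD^\ch_{M,\hbar}(M)]^{\C^\times}$ of \autoref{sec:global-VA}. Recall that this reduction passes to the finite part $\tldcalD^\ch_{M,\hbar}(M)_\fin$ (the direct sum of the $\C^\times$-weight spaces) and then quotients by $(\hbar-1)$, inducing on the result a vertex operator superalgebra structure by \autoref{prop:CFT-type}. Since the reduction preserves all $(n)$-product identities, it carries $\hbar$-adic vertex subalgebras to vertex subalgebras and conformal vectors to conformal vectors, so it suffices to transport the three ingredients of \autoref{prop:total-conf-vec}.

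First I would verify that the generators survive the reduction. Each of $J^{\pm},J^0,G^{\pm},\tldG^{\pm},\TN$ in \eqref{eq:gen-N4-SCA}, as well as $\Tr(X)/\sqrt N$, $\Tr(Y)/\sqrt N$ and $\Lambda_1,\Lambda_2$, is a fixed finite expression in the free-field generators and is therefore $\C^\times$-homogeneous of positive torus weight ($\Swt=1$ for $J^{\pm},J^0$; $\Swt=3/2$ for $G^{\pm},\tldG^{\pm}$; $\Swt=2$ for $\TN$; $\Swt=1/2$ for $\Tr(X),\Tr(Y)$; and $\Swt=1$ for $\Lambda_1,\Lambda_2$). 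Hence all of them lie in $\tldcalD^\ch_{M,\hbar}(M)_\fin$ and descend to well-defined elements of $\sfV_{S_N}$. Setting $\hbar=1$, the $\hbar$-adic operator product expansions of Lemmata~\ref{lemma:sl2-OPE}--\ref{lemma:doublet-OPE} become exactly the defining relations of $\on{Vir}_{\calN=4}^{c_{S_N}}$ of central charge $c_{S_N}=-3(N^2-1)$, while the $\beta\gamma$- and symplectic-fermion lemmata yield the rank-one system $\calD^\ch(T^*\C^1)$ and $\SF$, respectively. As the mutual operator product expansions among the three families vanish already $\hbar$-adically, their images generate $V_{\calN=4}\otimes\calD^\ch(T^*\C^1)\otimes\SF$ inside $\sfV_{S_N}$, with $V_{\calN=4}$ the image of $\on{Vir}_{\calN=4}^{c_{S_N}}\to\sfV_{S_N}$ fixed after \autoref{prop:small-N4-SCA}.

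It then remains to match the conformal vectors, where the only genuine content resides. By \autoref{prop:CFT-type} the conformal vector of $\sfV_{S_N}$ is the image of $T$ from \eqref{eq:free-field-conformal}, whereas the conformal vector of the subalgebra is $\TN+\TTr+\TSF$. The identity $T\equiv\TN+\TTr+\TSF\pmod{\Im d_{\VA}}$ established in \autoref{prop:total-conf-vec} holds already in $\tldcalD^\ch_{M,\hbar}(M)$; since both passing to BRST cohomology and reducing respect $\Im d_{\VA}$, it descends to an equality of the two conformal vectors in $\sfV_{S_N}$. The central charges add to $-3(N^2-1)+(-1)+(-2)=-3N^2$, so the inclusion is conformal and exhibits $\sfV_{S_N}$ as a conformal extension. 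The main subtlety — and essentially the only one, given \autoref{prop:total-conf-vec} — is the bookkeeping of torus weights against powers of $\hbar$ in the reduction, which is exactly what the weight assignments above control and which guarantees that the limiting central charges come out as $-3(N^2-1)$, $-1$ and $-2$.
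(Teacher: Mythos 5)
Your proposal is correct and takes essentially the same route as the paper: there the corollary is stated as an immediate consequence of \autoref{prop:total-conf-vec}, obtained by passing through the reduction $\sfV_{S_N}=[\tldcalD^\ch_{M,\hbar}(M)]^{\C^\times}$ of \autoref{sec:global-VA}, which is exactly your argument. You merely spell out the details the paper leaves implicit (the $\C^\times$-homogeneity of the generators, the descent of the identity $T\equiv\TN+\TTr+\TSF \bmod \Im d_{\VA}$, and the matching of central charges), all of which check out.
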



\section{Free-Field Realisation}
\label{sec:Wakimoto}

In this section, we consider the local sections of the sheaf $\tldcalD^\ch_{M,\hbar}$ over the affine open subset $U_{(N)}$ defined in \autoref{sec:big-cell}, and thus obtain a free-field realisation of the global sections, and of $\sfV_{S_N}$, given by the sheaf restriction morphism.

This also shows a factorisation of $\sfV_{S_N}$ that allows us to split off the vertex operator superalgebra $\W_{S_N}$ of central charge $c_{S_N}=-3(N^2-1)$ from $\sfV_{S_N}$. The latter is the vertex operator superalgebra for the reflection group $S_N$ conjectured by Bonetti, Meneghelli and Rastelli \cite{BMR19}. We show that $\W_{S_N}$ is a conformal extension of the small $\calN=4$ superconformal algebra of central charge $c_{S_N}$. Moreover, $\W_{S_N}$ has the associated variety $\calM_{S_N}$, is quasi-lisse and has a free-field realisation in terms of a $\beta\gamma bc$-system of rank $N-1$ that coincides with the one proposed in \cite{BMR19} (generalising \cite{Adamovic16} for $N=2$).

\medskip

We recall from \autoref{sec:big-cell} the affine open subset $\tldU_{(N)} \subset \frX$ and the sections $[X^N : X^i]_{(N)}$, $[Y : X^i]_{(N)}$ and $\{\psi : X^i \gamma\}_{(N)} \in \tldcalO_{\frX}(\tldU_{(N)})$ for $i=0,\dots,N-1$. In this section, we omit the subscript ${(N)}$ from these sections and simply write $[X^N : X^i]$, $[Y : X^i]$ and $\{\psi : X^i \gamma\}$, respectively. Via the embedding $\tldcalO_M \hookrightarrow \tldcalD^\ch_{\frX, \hbar} \subset \tldC_{\VA}$, we regard
these sections as elements of $\tldC_{\VA}(\tldU_{(N)})$.

\begin{lemma}\label{lemma:deriv-matB}
The matrix $B_{(N)} = (\gamma, X \gamma, \dots, X^{N-1} \gamma)$ satisfies the following identities for $p,q=1,\dots,N$:
\begin{enumerate}[leftmargin=*]
\item $(\partial / \partial x_{pq}) B_{(N)} = \sum_{k=0}^{N-2} (X^k \gamma)_q (0, \dots, 0, \bfe_p, X \bfe_p, \dots, X^{N-k-1} \bfe_p)$,
\item $(\partial / \partial x_{pq}) B^{-1}_{(N)} = - \sum_{k=0}^{N-2} (X^k \gamma)_q B^{-1}_{(N)} (0, \dots, 0, \bfe_p, X \bfe_p, \dots, X^{N-k-1} \bfe_p) B^{-1}_{(N)}$.
\end{enumerate}
\end{lemma}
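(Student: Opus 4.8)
The plan is to treat both identities as statements in the commutative algebra $\calO_{\frX}(\tldU_{(N)})$, on which $\partial/\partial x_{pq}$ acts as an ordinary derivation; thus the usual rules of matrix calculus apply and no vertex-algebra subtleties enter (the entries of $B_{(N)}$ lie in $\calO_{\frX}(\tldU_{(N)})$, where $B_{(N)}$ is invertible). The second identity will then be immediate from the first: differentiating $B_{(N)} B_{(N)}^{-1} = \Id$ yields the standard formula
\[
\frac{\partial}{\partial x_{pq}} B_{(N)}^{-1} = - B_{(N)}^{-1}\Bigl(\frac{\partial}{\partial x_{pq}} B_{(N)}\Bigr) B_{(N)}^{-1},
\]
into which I substitute the first identity and pull the scalar factors $(X^k\gamma)_q$ out of the matrix product. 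So essentially all the content lies in the first identity.

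To prove the first identity I would work column by column. The $m$-th column of $B_{(N)}$ is $X^m\gamma$ for $m=0,\dots,N-1$, and since $\partial X/\partial x_{pq} = E_{pq}$, the Leibniz rule gives $\frac{\partial}{\partial x_{pq}} X^m = \sum_{l=0}^{m-1} X^l E_{pq} X^{m-1-l}$. Applying this to $X^m\gamma$ and using the elementary identity $E_{pq} w = w_q\,\bfe_p$ for any column vector $w$ (where $w_q$ denotes the $q$-th entry), I obtain
\begin{align*}
\frac{\partial}{\partial x_{pq}}(X^m\gamma)
&= \sum_{l=0}^{m-1} X^l E_{pq}\,(X^{m-1-l}\gamma)
 = \sum_{l=0}^{m-1} (X^{m-1-l}\gamma)_q\, X^l\bfe_p\\
&= \sum_{k=0}^{m-1} (X^k\gamma)_q\, X^{m-1-k}\bfe_p,
\end{align*}
where the last step reindexes by $k = m-1-l$.

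The remaining step is to reassemble these column expressions into the matrix form of the right-hand side. Collecting the contributions from all columns $m=0,\dots,N-1$ according to the scalar $(X^k\gamma)_q$, the coefficient of $(X^k\gamma)_q$ is exactly the matrix that carries the appropriate power $X^{\bullet}\bfe_p$ in each column and zeros in the leading positions, namely the matrix $(0,\dots,0,\bfe_p,X\bfe_p,\dots)$ displayed in the statement; summing $k$ from $0$ to $N-2$ then exhausts all nonzero contributions. I expect the one genuinely delicate point to be precisely this bookkeeping: one must track how many leading zero columns each matrix carries and which power of $X$ occupies each of the remaining columns, so that the reindexed double sum matches the stated arrangement exactly. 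This is a finite, purely combinatorial check (most transparently verified on small cases such as $N=2,3$), and once it is carried out the first identity follows, and with it the second.
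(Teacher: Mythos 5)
Your proof is correct and is essentially the paper's own argument: the paper likewise applies the Leibniz rule to obtain $(\partial/\partial x_{pq})X^m\gamma=\sum_{k=0}^{m-1}(X^k\gamma)_q\,X^{m-k-1}\bfe_p$, which gives (1) column by column, and deduces (2) by differentiating $B_{(N)}B_{(N)}^{-1}=\Id$. The bookkeeping you defer does work out, and carrying it through shows that for fixed $k$ the coefficient matrix has $k+1$ leading zero columns followed by $\bfe_p,X\bfe_p,\dots,X^{N-k-2}\bfe_p$, so the exponent $N-k-1$ printed in the lemma is an off-by-one slip in the statement (your column formula, and the way the lemma is actually used later, e.g.\ in the proof of \autoref{lemma:prim-gen-cocycle} where the first $k+1$ columns are said to vanish, confirm this).
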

\begin{proof}
By the Leibniz rule, we obtain $(\partial / \partial x_{pq}) X^m \gamma = \sum_{k=0}^{m-1} X^{m-k-1} E_{pq} X^{k} \gamma = \sum_{k=0}^{m-1} (X^k \gamma)_q X^{m-k-1} \bfe_p$, which implies (1). The identity (2) follows from (1) and $B_{(N)} (\partial B_{(N)}^{-1} / \partial x_{pq}) + (\partial B_{(N)} / \partial x_{pq}) B^{-1}_{(N)} = 0$.
\end{proof}

\begin{lemma}\label{lemma:prim-gen-cocycle}
The elements $[X^N : X^i]$, $[Y : X^i]$ and $\{\psi : X^i \gamma\} \in \tldC_{\VA}(\tldU_{(N)})$ for $i=0,\dots,N-1$ are cocycles with respect to the coboundary operator $d_{\VA}$.
\end{lemma}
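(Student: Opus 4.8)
The plan is to reduce the cocycle condition to the regularity of a single operator product expansion, and then to dispatch that regularity by a Wick computation, treating the three families of sections separately.

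First I would exploit that each of $[X^N:X^i]$, $[Y:X^i]$, $\{\psi:X^i\gamma\}$ lies in the ghost-free part $\tldcalD^\ch_{\frX,\hbar}\hatotimes\bfone\subset\tldC_{\VA}$, i.e.\ involves none of the ghost fields $\Psi_{ij},\Phi_{ij}$. Using the explicit formulae \eqref{eq:d-plus} and \eqref{eq:d-minus}: every summand of $d^-_{\VA}$ carries a $\partial/\partial\Phi_{ij(-n-1)}$ and so annihilates a ghost-free section, while in $d^+_{\VA}$ the second summand carries a $\partial/\partial\Psi_{pq(-n-1)}$ and also vanishes; in the first summand $\tldE_{ij(n)}=\tldmu_\ch(E_{ij})_{(n)}+J(E_{ij})_{(n)}$ reduces to $\tldmu_\ch(E_{ij})_{(n)}$ on a ghost-free vector, since $J(E_{ij})_{(n)}\bfone=0$ for $n\ge0$. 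Hence for such a section $a$,
\[
d_{\VA}a=\sum_{i,j}\sum_{n\ge0}\Psi_{ij(-n-1)}\hatotimes\bigl(\hbar^{-1}\tldmu_\ch(E_{ij})_{(n)}a\bigr),
\]
and as the $\Psi_{ij(-n-1)}$ are linearly independent over the matter algebra, $a$ is a cocycle if and only if $\tldmu_\ch(E_{ij})_{(n)}a=0$ for all $i,j$ and all $n\ge0$, i.e.\ the OPE $\tldmu_\ch(E_{ij})(z)\,a(w)$ is regular.

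Next I would note that $\tldmu_\ch(E_{ij})$ is quadratic in the free fields, so by the Wick formula its OPE with a localised section has poles of order at most two; only the simple pole $\tldmu_\ch(E_{ij})_{(0)}a$ and the double pole $\tldmu_\ch(E_{ij})_{(1)}a$ must be controlled. The simple pole is the single-contraction term, equal to $\hbar$ times the classical Poisson bracket $\{\tldmu(E_{ij}),a\}$, i.e.\ the infinitesimal $G$-action. All three families are manifestly $G$-invariant: under $g\in G$ one has $B_{(N)}\mapsto gB_{(N)}$ (each column $X^k\gamma\mapsto gX^k\gamma$) while $X^N\gamma,Y\gamma,\psi\mapsto g(\,\cdot\,)$, so $B_{(N)}^{-1}X^N\gamma$, $B_{(N)}^{-1}Y\gamma$, $B_{(N)}^{-1}\psi$ are fixed; the simple pole therefore vanishes.

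It remains to treat the double pole, and this is where I expect the real work to lie. A double contraction of $\tldmu_\ch(E_{ij})$ with $a$ requires $a$ to depend on both $x$ and $y$, through the $XY$-summand $\sum_p(x_{ip}y_{pj}-x_{pj}y_{ip})$ (the $\gamma\beta$- and $\psi\phi$-summands cannot contract twice, as $a$ carries no $\beta$ and no $\phi$). For $[X^N:X^i]$ (a function of $x,\gamma$ only) and $\{\psi:X^i\gamma\}$ (a function of $x,\gamma,\psi$ only) no such contraction exists, so the double pole is identically zero and these are cocycles with no further argument. For $[Y:X^i]=(B_{(N)}^{-1}Y\gamma)_{i+1}$, however, the double contraction produces a genuine second-order expression of the shape $\sum_p(\partial_{y_{pi}}\partial_{x_{jp}}-\partial_{y_{jp}}\partial_{x_{pi}})(B_{(N)}^{-1}Y\gamma)_s$, and the crux is to show it vanishes. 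The plan is to evaluate it using $\partial_{y_{ab}}(B_{(N)}^{-1}Y\gamma)_s=(B_{(N)}^{-1})_{sa}\gamma_b$ together with the derivative formula for $B_{(N)}^{-1}$ from \autoref{lemma:deriv-matB}, and then to collapse the resulting matrix products via the identities $B_{(N)}^{-1}X^k\gamma=\bfe_{k+1}$ and $XB_{(N)}=(X\gamma,\dots,X^N\gamma)$, so that the contributions of the two antisymmetric halves $x_{ip}y_{pj}$ and $x_{pj}y_{ip}$ of the comoment map cancel. Keeping track of how the contraction indices thread through the inverse matrix $B_{(N)}^{-1}$ is the main obstacle; should the naive $\iota$-lift not be closed on the nose, the obstruction is precisely this double pole, and one would absorb it by replacing $B_{(N)}^{-1}$ with its $(-1)$-product inverse (\autoref{prop:inverse-wrt-NO}) to produce a closed representative. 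The other two families, as noted, follow immediately once the reduction and the simple-pole argument are in place.
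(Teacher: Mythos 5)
Your proposal is correct and follows essentially the same route as the paper's proof: the reduction of $d_{\VA}$-closedness of ghost-free sections to $\tldmu_\ch(E_{ij})_{(n)}A=0$ for $n\ge0$, the observation that the simple pole is the infinitesimal $G$-action and vanishes by $G$-invariance, the fact that $[X^N:X^i]$ and $\{\psi:X^i\gamma\}$ admit no double contractions, and the evaluation of the double contraction for $[Y:X^i]$ via \autoref{lemma:deriv-matB} together with $B_{(N)}^{-1}X^k\gamma=\bfe_{k+1}$ are exactly the paper's steps. Two minor points: in the actual computation each sum $\sum_{p}\partial^2(B_{(N)}^{-1}Y\gamma)/\partial x_{jp}\,\partial y_{pi}$ vanishes on its own, because $(0,\dots,0,\bfe_j,X\bfe_j,\dots,X^{N-k-1}\bfe_j)\,\bfe_{k+1}=0$, so no cancellation between the two antisymmetric halves of the comoment map is needed; and your fallback of replacing $B_{(N)}^{-1}$ by its $(-1)$-product inverse (\autoref{prop:inverse-wrt-NO}) would only prove closedness of a different representative, not of the elements named in the lemma, so it could not be used as stated — fortunately the double pole does vanish on the nose.
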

\begin{proof}
Again, it suffices to show that $\mu_\ch(E_{ij})_{(n)} A = 0$ for $A = [X^N : X^k], {[Y : X^k]},\allowbreak \{\psi : X^k \gamma\}$ and $k=0,\dots,N-1$. We consider the operator product expansion of $\mu_\ch(A_{ij})(z) (B_{(N)}^{-1} Y \gamma)(w)$ for the vector $B_{(N)}^{-1} Y \gamma = {}^t([Y : X^{k-1}])_{k=1, \dots, N}$. By the Wick formula,
\begin{align*}
&\mu_\ch(E_{ij})(z) (B_{(N)}^{-1} Y \gamma)(w) \\
&\sim \frac{- \hbar^2}{(z-w)^2} \sum_{i,j,p=1}^{N} \biggl(\frac{\partial^2 B_{(N)}^{-1} Y \gamma}{\partial x_{jp} \partial y_{pi}} - \frac{\partial^2 B_{(N)}^{-1} Y \gamma}{\partial x_{pi} \partial y_{jp}}\biggr)(w) + \frac{\hbar}{z-w} ( \dots ).
\end{align*}
The second term $\hbar ( \dots ) / (z-w)$ comes from the single contraction of the operator product expansion. Note that $[Y : X^k]$ is $G$-invariant for any $k=0,\dots,N-1$, which implies that the second term vanishes. We then note that
\begin{align*}
\sum_{p=1}^{N} \frac{\partial^2 B_{(N)}^{-1} Y \gamma}{\partial x_{jp} \partial y_{pi}} &= - \sum_{p=1}^{N} \sum_{k=0}^{N-2} (X^k \gamma)_p B^{-1}_{(N)} (0, \dots, 0, \bfe_j, \dots, X^{N-k-1} \bfe_j) B^{-1}_{(N)} \gamma_i \bfe_p \\
&= - \sum_{k=0}^{N-2} \gamma_i B^{-1}_{(N)} (0, \dots, 0, \bfe_j, \dots, X^{N-k-1} \bfe_j) B^{-1}_{(N)} X^k \gamma \\
&= - \sum_{k=0}^{N-2} \gamma_i B^{-1} (0, \dots, 0, \bfe_j, \dots, X^{N-k-1} \bfe_j) \bfe_{k+1}.
\end{align*}
The first $k+1$ columns of the matrix $(0, \dots, 0, \bfe_j, \dots, X^{N-k-1} \bfe_j)$ vanish so that $(0, \dots, 0, \bfe_j, \dots, X^{N-k-1} \bfe_j) \bfe_{k+1} = 0$. Hence, $\mu_\ch(E_{ij})(z) (B^{-1}_{(N)} Y \gamma)(w) \sim 0$. On the other hand, the elements $[X^N : X^i]$ and $\{\psi : X^i \gamma\}$ are $G$-invariant and the operator product expansions of $\mu_\ch(E_{ij})(z) [X^N : X^i](w)$ and $\mu_\ch(E_{ij})(z) \{\psi : X^i \gamma\}(w)$ do not cause multiple contractions. Thus, these elements are cocycles.
\end{proof}

Since the elements $[X^N : X^i]$, $[Y : X^i]$, $\{\psi : X^i \gamma\}$ and
$\phi X^i \gamma$ for $i=0,\dots,N-1$ are $G$-invariant, they lie in the subcomplex $C_{\VA}(\tldU_{(N)})$. By the above lemma, they define elements in $H^0(C_{\VA}(\tldU_{(N)}), d_{\VA}) = \tldcalD^\ch_{M, \hbar}(U_{(N)})$. In fact, the $\hbar$-adic vertex superalgebra $\tldcalD^\ch_{M, \hbar}(U_{(N)})$ of sections over $U_{(N)}$ is strongly generated by these sections, i.e.
\begin{align}\label{eq:1}
\tldcalD^\ch_{M, \hbar}(U_{(N)}) &= \C[[\hbar]][[X^N : X^i]_{(-n)}, [Y : X^i]_{(-n)} \,|\, \substack{ i = 0, \dots, N-1 \\ n = 1, 2, \dots}] \\
&\quad \hatotimes \Lambda_{\C[[\hbar]]}(\{\psi : X^i \gamma\}_{(-n)}, (\phi X^i \gamma)_{(-n)} \,|\, \substack{ i = 0, \dots, N-1 \\ n = 1, 2, \dots}) \nonumber
\end{align}
as $\C[[\hbar]]$-module. We shall show that the $\hbar$-adic vertex superalgebra $\tldcalD^\ch_{M, \hbar}(U_{(N)})$ is isomorphic to an $\hbar$-adic $\beta\gamma bc$-system of rank $N-1$. First, we determine the operator product expansions between the above generators.

\begin{lemma}\label{lemma:OPE-Tr-local}
For $i=0,\dots,N-1$, the operator product expansions
{\allowdisplaybreaks
\begin{align}\label{eq:8}
\Tr(X)(z) [Y : X^i](w) &\sim
\begin{cases}
\displaystyle \frac{\hbar}{z-w}, & i = 0, \\
0, & i \ne 0,
\end{cases} \\
\label{eq:11}
\Tr(Y)(z) [Y : X^i](w) &\sim
\begin{cases}
0, & i = N-1, \\
\displaystyle - \frac{(i+1) \hbar}{z-w} [Y : X^{i+1}], & i \ne N-1,
\end{cases} \\
\nonumber
\Tr(Y)(z) [X^N : X^i](w) &\sim
\begin{cases}
\displaystyle \frac{N \hbar}{z-w}, & i = N-1, \\
\displaystyle - \frac{(i+1) \hbar}{z-w} [X^N : X^{i+1}](w), & i \ne N-1,
\end{cases} \\
\nonumber
\Tr(Y)(z) \{\psi : X^i \gamma\}(w) &\sim
\begin{cases}
0, & i = N-1, \\
\displaystyle - \frac{(i+1) \hbar}{z-w} \{ \psi : X^{i+1} \gamma \}(w), & i \ne N-1,
\end{cases} \\
\nonumber
\Tr(Y)(z) (\phi X^i \gamma)(w) &\sim \frac{i \hbar}{z-w} (\phi X^{i-1} \gamma)(w)
\end{align}
}%
hold, and other operator product expansions between $\Tr(X)$, $\Tr(Y)$ and the elements in \eqref{eq:1} are trivial.
\end{lemma}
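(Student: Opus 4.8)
The plan is to compute every operator product expansion directly from the Wick formula, exactly as in the proofs of \autoref{prop:gen-N4-cocyc} and \autoref{lemma:prim-gen-cocycle}, exploiting one crucial simplification: both $\Tr(X)=\sum_a x_{aa}$ and $\Tr(Y)=\sum_a y_{aa}$ are \emph{linear} in the free-field generators. Consequently, when either is contracted against any section $F$, only a single Wick contraction is possible; there are no multiple contractions and hence no higher powers of $\hbar$, and since the fields $\partial\Tr(X)$, $\partial\Tr(Y)$ remaining after the contraction are constant, there are also no higher-order poles. Thus each OPE collapses to its simple-pole term, which is $\hbar/(z-w)$ times the classical Poisson bracket of the two sections. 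Equivalently, $\Tr(Y)_{(0)}$ and $\Tr(X)_{(0)}$ act on $\tldcalD^\ch_{M,\hbar}(U_{(N)})$ (up to the factor $\hbar$ and a sign) as the infinitesimal Hamiltonian flows $X\mapsto X+tI$ and $Y\mapsto Y+tI$, both of which fix $\gamma$, $\beta$, $\psi$, $\phi$.

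With this reduction in hand, I would evaluate each bracket by differentiating the defining relations of the local coordinates from \autoref{sec:big-cell}. The sections $[Y:X^i]$ and $\{\psi:X^i\gamma\}$ are by definition the coefficients of the $X$-independent vectors $Y\gamma$ and $\psi$ in the flag basis $\{\gamma,X\gamma,\dots,X^{N-1}\gamma\}$; applying $\sum_a\partial/\partial x_{aa}=\tfrac{d}{dt}\big|_{t=0}$ under $X\mapsto X+tI$ leaves these vectors fixed but differentiates the basis via $\tfrac{d}{dt}(X+tI)^j\gamma|_{t=0}=jX^{j-1}\gamma$. Matching coefficients of $X^i\gamma$ then yields the index-raising rule $[Y:X^i]\mapsto-(i+1)[Y:X^{i+1}]$ (respectively $\{\psi:X^i\gamma\}\mapsto-(i+1)\{\psi:X^{i+1}\gamma\}$), with the top coefficient $i=N-1$ mapping to zero because the basis derivative produces no $X^{N-1}\gamma$ term. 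The explicit matrix derivative $\sum_a\partial B_{(N)}^{-1}/\partial x_{aa}$ needed here is supplied by \autoref{lemma:deriv-matB}, combined with the identities $B_{(N)}^{-1}X^k\gamma=\bfe_{k+1}$. For $\phi X^i\gamma$ only the factor $X^i\gamma$ flows, giving at once $\Tr(Y)(z)(\phi X^i\gamma)(w)\sim i\hbar(\phi X^{i-1}\gamma)(w)/(z-w)$.

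The one genuinely different case is $[X^N:X^i]$, and this is where I expect the main bookkeeping to lie. Here the expanded vector $X^N\gamma$ is itself $X$-dependent, so under the flow it contributes the extra term $\tfrac{d}{dt}(X+tI)^N\gamma|_{t=0}=NX^{N-1}\gamma$. Writing $X^N\gamma=\sum_i[X^N:X^i]X^i\gamma$ and differentiating both sides, the matching of the coefficient of $X^{N-1}\gamma$ now receives this inhomogeneous contribution, producing the constant term $\Tr(Y)(z)[X^N:X^{N-1}](w)\sim N\hbar/(z-w)$, whereas for $i\ne N-1$ the same cancellation as before gives $-(i+1)[X^N:X^{i+1}]$. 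Finally, the vanishing (trivial) OPEs are immediate from the flow description: $\Tr(X)$ brackets to zero with every generator not containing $Y$, and with $[Y:X^i]$ for $i\ne 0$ since $B_{(N)}^{-1}\gamma=\bfe_1$ selects only $i=0$; the self-OPEs of $\Tr(X)$ and $\Tr(Y)$ and their mutual OPE are the $\beta\gamma$ relations already recorded. The subtleties to keep under control throughout are the consistent tracking of signs and, more importantly, the verification that passing to the $\hbar$-adic vertex superalgebra—where $B_{(N)}^{-1}$ is the normally ordered inverse of \autoref{prop:inverse-wrt-NO}—introduces no corrections beyond the single contraction, which is precisely what the linearity of $\Tr(X)$ and $\Tr(Y)$ guarantees.
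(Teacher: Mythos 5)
Your proposal is correct and is essentially the paper's own proof: there too, the linearity of $\Tr(X)$ and $\Tr(Y)$ reduces every OPE to a single Wick contraction, the OPEs of $\Tr(X)$ with $[X^N:X^i]$, $\{\psi:X^i\gamma\}$, $\phi X^i\gamma$ vanish because those sections contain no $y$-variables, and the remaining cases are the simple pole $\hbar/(z-w)$ times the derivation $\sum_p \partial/\partial x_{pp}$ (your flow $X\mapsto X+tI$), evaluated with \autoref{lemma:deriv-matB}(2) and $B_{(N)}^{-1}X^k\gamma=\bfe_{k+1}$ exactly as you indicate, whether one differentiates $B_{(N)}^{-1}$ directly or the defining relations. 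The one sign you leave open is the one place where care genuinely pays off: carrying out your flow computation for \eqref{eq:8} (or simply taking $N=1$, where $[Y:X^0]=y_{11}$) gives $\Tr(X)(z)[Y:X^0](w)\sim -\hbar/(z-w)$, which agrees with the computation in the paper's own proof but not with the sign printed in the statement of \eqref{eq:8}, so tracking that sign exposes a typo in the displayed formula rather than confirming it.
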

\begin{proof}
Since the elements $[X^N : X^i]$, $\{\psi : X^i \gamma\}$ and $\phi X^i \gamma$ do not contain the variables $y_{pq}$, $p,q=1,\dots,N$, they have trivial operator product expansions with $\Tr(X)$. We note that
\begin{align*}
&\Tr(X)(z) (B^{-1}_{(N)} Y \gamma)(w) \sim \frac{- \hbar}{z-w} \sum_{p=1}^{N}\biggl(\frac{\partial B^{-1}_{(N)} Y \gamma}{\partial y_{pp}} \biggr)(w) \\
&= \frac{- \hbar}{z-w} \sum_{p=1}^{N}\left(B^{-1}_{(N)} E_{pp} \gamma \right)(w)= \frac{- \hbar}{z-w}\left(B^{-1}_{(N)} \gamma\right)(w) = \frac{- \hbar}{z-w} \bfe_1,
\end{align*}
which implies \eqref{eq:8}. To see \eqref{eq:11}, we use \autoref{lemma:deriv-matB}~(2) and
\begin{align*}
&\Tr(Y)(z) (B^{-1}_{(N)} Y \gamma)(w) \sim \frac{\hbar}{z-w} \sum_{p=1}^{N} \biggl(\frac{\partial B^{-1}_{(N)}}{\partial x_{pp}} Y \gamma \biggr)(w) \\
&\sim \frac{- \hbar}{z-w} B^{-1}_{(N)} (0, \gamma, 2 X \gamma, \dots, (N-1) X^{N-2} \gamma) B^{-1}_{(N)} Y \gamma \\
&= \frac{- \hbar}{z-w} (0, \bfe_1, 2 \bfe_2, \dots, (N-1) \bfe_{N-1})\, {}^t \!\left([Y : X^{i-1}]\right)_{i=1, \dots, N} \\
&= \frac{- \hbar}{z-w} {}^t\!\left(j [Y : X^j]\right)_{j=1, \dots, N}.
\end{align*}
The other operator product expansions can be obtained in a similar way.
\end{proof}

\begin{lemma}
For $i,j=0,\dots,N-1$, there are the operator product expansions
{\allowdisplaybreaks
\begin{align*}
[Y : X^i](z) [X^N : X^j](w) &\sim
\begin{cases}
\displaystyle \frac{\hbar}{z-w}, & i=N-j-1, \\
\displaystyle \frac{-\hbar}{z-w} [X^N : X^{i+j+1}](w), & i=0, \dots, N-j-2, \\
0, & i = N-j, \dots, N-1,
\end{cases} \\
[Y : X^i](z) [Y : X^j](w) &\sim 0, \\
[Y : X^i](z) \{\psi : X^j \gamma\}(w) &\sim
\begin{cases}
\displaystyle \frac{-\hbar}{z-w} \{\psi : X^{i+j+1} \gamma\}(w), & i=0, \dots, N-j-2, \\
0, & i=N-j-1, \dots, N-1,
\end{cases} \\
[Y : X^i](z) (\phi X^{j} \gamma)(w) &\sim
\begin{cases}
\displaystyle \frac{\hbar}{z-w} (\phi X^{j-i-1} \gamma)(w), & i=0, \dots, j-1, \\
0, & i=j, \dots, N-1.
\end{cases}
\end{align*}
}%
\end{lemma}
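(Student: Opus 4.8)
The plan is to compute all four families of operator product expansions directly from the Wick formula, exactly as in the proofs of \autoref{lemma:prim-gen-cocycle} and \autoref{lemma:OPE-Tr-local}. Since each generator is a rational function of the free fields $x_{pq},y_{pq},\gamma_p,\psi_p,\phi_p$, its vertex operator is obtained through the bidifferential operators of \autoref{lemma:Wick-bidiff}, and among the generators appearing in this lemma the only propagator that can contribute is the $y$--$x$ contraction. Indeed, $[Y:X^i]$ is the sole generator containing a $y$-variable, and it contains exactly one, entering linearly through the factor $Y\gamma$ in $B_{(N)}^{-1}Y\gamma$, while $[X^N:X^j]$, $\{\psi:X^j\gamma\}$ and $\phi X^j\gamma$ contain none; the fermions $\psi$ and $\phi$ never both occur in a pair treated here, so no fermionic contraction arises.

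First I would record the two inputs that drive every computation: the defining identity $B_{(N)}^{-1}X^m\gamma=\bfe_{m+1}$ for $0\le m\le N-1$, together with $B_{(N)}^{-1}X^N\gamma={}^t([X^N:X^i])_i$, which collapses all the sums; and the derivative formulas of \autoref{lemma:deriv-matB} for $B_{(N)}$ and $B_{(N)}^{-1}$. The $y$-derivative of $[Y:X^i]=(B_{(N)}^{-1}Y\gamma)_{i+1}$ is immediate, namely $\gamma_q(B_{(N)}^{-1})_{i+1,p}$ upon differentiating by $y_{pq}$, whereas the $x$-derivative of the second factor splits, via the Leibniz rule, into a contribution from the explicit power ($X^N$, $X^j$, or none) and a contribution from the prefactor $B_{(N)}^{-1}$ governed by \autoref{lemma:deriv-matB}(2). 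Substituting these into the single-contraction term of the Wick formula, summing over the contraction indices, and repeatedly applying $B_{(N)}^{-1}X^m\gamma=\bfe_{m+1}$ reduces the residue to the shifted generator $[X^N:X^{i+j+1}]$, $\{\psi:X^{i+j+1}\gamma\}$ or $\phi X^{j-i-1}\gamma$ appearing in the statement. The three-way case distinction then records whether the shifted index is admissible (giving that generator), equals $N$ in the first family (where $X^N\gamma$ re-enters the $[X^N:\cdot]$ coordinates, producing the scalar $\hbar$-term), or leaves the admissible range (giving zero); the upper bound $k\le N-2$ on the summation range in \autoref{lemma:deriv-matB} is what forces the boundary zeros.

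The vanishing $[Y:X^i](z)[Y:X^j](w)\sim 0$ requires the most care, since here \emph{both} factors carry a $y$-variable, so there are two single contractions as well as a genuine double contraction. The two single-contraction terms, one contracting the $y$ of $[Y:X^i]$ against an $x$ in the $B_{(N)}^{-1}$ of $[Y:X^j]$ and the other with the roles reversed, produce the same expression up to the opposite signs of the propagators $y_{pq}(z)x_{rs}(w)$ and $x_{rs}(z)y_{pq}(w)$, and hence cancel. The double-contraction term, of order $(z-w)^{-2}$, involves a product of two matrices of the shape $(0,\dots,0,\bfe_q,X\bfe_q,\dots,X^{N-k-1}\bfe_q)$ from \autoref{lemma:deriv-matB}; using that the first $k+1$ columns of such a matrix vanish — precisely the identity exploited in the proof of \autoref{lemma:prim-gen-cocycle} — this term dies upon contraction with the standard basis vectors supplied by $B_{(N)}^{-1}X^m\gamma=\bfe_{m+1}$.

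The main obstacle is organisational rather than conceptual: keeping track of the $B_{(N)}^{-1}$-derivative contributions and carrying out the index bookkeeping cleanly across the several cases, so that the shifts by $i+j+1$ (respectively $j-i-1$) and the boundary zeros emerge without sign errors. I would streamline this by computing each operator product expansion at the level of the vectors $B_{(N)}^{-1}Y\gamma$, $B_{(N)}^{-1}X^N\gamma$, $B_{(N)}^{-1}\psi$ and $\phi X^j\gamma$, and only reading off components at the end, which isolates the single nontrivial matrix manipulation and lets the identity $B_{(N)}^{-1}X^m\gamma=\bfe_{m+1}$ do all the collapsing.
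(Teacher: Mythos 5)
Your proposal is correct and takes essentially the same route as the paper: both compute the single $y$--$x$ Wick contraction at the level of the vectors $B_{(N)}^{-1}Y\gamma$, $B_{(N)}^{-1}X^N\gamma$, $B_{(N)}^{-1}\psi$, $\phi X^j\gamma$, using \autoref{lemma:deriv-matB} together with the collapsing identity $B_{(N)}^{-1}X^m\gamma = \bfe_{m+1}$, and only read off components at the end. You are in fact more thorough than the paper on the family $[Y:X^i](z)[Y:X^j](w)$, which the paper dispatches with ``in a similar fashion'': your observation that the two single-contraction terms cancel against each other via the opposite propagator signs, and that the double contraction dies because the two shift matrices from \autoref{lemma:deriv-matB} have incompatible column supports, is precisely the argument needed there and is correct.
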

\begin{proof}
By \autoref{lemma:deriv-matB}, for $p=1,\dots,N$,
\begin{align*}
\sum_{q=1}^{N} \gamma_{q} \frac{\partial}{\partial x_{qp}} B^{-1}_{(N)} &= \sum_{k=0}^{N-2} (X^k \gamma)_{p} B^{-1}_{(N)} (0, \dots, 0, \gamma, X \gamma, \dots, X^{N-k-1} \gamma) B^{-1}_{(N)}, \\
\sum_{q=1}^{N} \gamma_{q} \frac{\partial}{\partial x_{qp}} X^N \gamma
&= \sum_{k=0}^{N-1} (X^k \gamma)_p X^{N-k-1} \gamma.
\end{align*}
This implies that
{\allowdisplaybreaks
\begin{align*}
&(Y \gamma)_{p}(z) (B^{-1}_{(N)} X^N \gamma)(w) \sim \frac{\hbar}{z-w} \sum_{q=1}^{N} \Bigl(\gamma_q \frac{\partial}{\partial x_{qp}} B^{-1}_{(N)} X^N \gamma\Bigr)(w) \\
&= \frac{\hbar}{z-w} \biggl\{\sum_{k=0}^{N-1} \bigl((X^k \gamma)_p B^{-1}_{(N)} X^{N-k-1} \gamma\bigr)(w) \\
&\quad - \sum_{k=0}^{N-k-2} \bigl((X^k \gamma)_{p} B^{-1}_{(N)} (0, \dots, 0, \gamma, X \gamma, \dots, X^{N-k-1} \gamma) B^{-1}_{(N)} X^{N} \gamma\bigr)(w)\biggr\} \\
&= \frac{\hbar}{z-w} {\Big.}^t\!\Bigl( (X^{N-j} \gamma)_{p}(w) - \sum_{k=0}^{N-2} (X^k \gamma)_p [X^N : X^{k+j}] \Bigr)_{j=1, \dots, N}
\end{align*}
}%
since $B^{-1}_{(N)} X^{j-1} \gamma = \bfe_{j}$ for $j=1,\dots,N$. Multiplying by $\sum_{p=1}^{N} (B^{-1}_{(N)})_{ip}$ on both sides, we obtain the first identity of the lemma. The other operator product expansions can be obtained in a similar fashion.
\end{proof}

\begin{lemma}\label{lemma:10}
For $i,j=0,\dots,N-1$, there is the operator product expansion
$(\phi X^i \gamma)(z) \{\psi : X^j \gamma\}(w) \sim \delta_{ij} \hbar / (z-w)$.
\end{lemma}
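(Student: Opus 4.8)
The plan is to exploit that both fields are \emph{linear} in the fermions and that neither contains any $y$ or $\beta$ variable, so that the Wick formula produces only a single fermionic contraction with no bosonic (and in particular no multiple) contractions.

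First I would write out the two sections explicitly. By definition $\phi X^i \gamma = \sum_{p=1}^{N} \phi_p (X^i \gamma)_p$ is linear in the $\phi_p$, while $\{\psi : X^j \gamma\}$ is the $(j+1)$-th component of the vector $B_{(N)}^{-1} \psi$, so that $\{\psi : X^j \gamma\} = \sum_{k=1}^{N} (B_{(N)}^{-1})_{j+1,k} \psi_k$ is linear in the $\psi_k$. The bosonic coefficients $(X^i \gamma)_p$ and $(B_{(N)}^{-1})_{j+1,k}$ lie in $\calO_{\frX}(\tldU_{(N)})$ and involve only the variables $x_{pq}$ and $\gamma_p$ (through $B_{(N)}$ and its inverse), but neither $y_{pq}$ nor $\beta_p$.

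Next I would apply the Wick formula to $(\phi X^i \gamma)(z) \{\psi : X^j \gamma\}(w)$. The only nontrivial basic contractions available in $\tldcalD^\ch_{\frX, \hbar}$ are the $x$-$y$, $\gamma$-$\beta$ and $\phi$-$\psi$ ones; since neither field contains $y$ or $\beta$, the bosonic coefficients contract with nothing and there can be no double contractions. Hence the sole contribution comes from the single contraction of $\phi_p$ against $\psi_k$, governed by $\phi_p(z) \psi_k(w) \sim \delta_{pk} \hbar / (z-w)$ (which has the same sign as the given $\psi_i(z)\phi_j(w)$ by skew-symmetry, since both generators are odd). Collecting the surviving spectator factors at $w$ then yields
\[
(\phi X^i \gamma)(z) \{\psi : X^j \gamma\}(w) \sim \frac{\hbar}{z-w} \sum_{p=1}^{N} (X^i \gamma)_p (B_{(N)}^{-1})_{j+1,p}(w) = \frac{\hbar}{z-w} \bigl(B_{(N)}^{-1} X^i \gamma\bigr)_{j+1}(w).
\]

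Finally I would identify the right-hand side using the structure of $B_{(N)}$. Since $B_{(N)} = (\gamma, X \gamma, \dots, X^{N-1} \gamma)$ has $X^i \gamma$ as its $(i+1)$-th column, we get $B_{(N)}^{-1} X^i \gamma = \bfe_{i+1}$ and therefore $(B_{(N)}^{-1} X^i \gamma)_{j+1} = \delta_{i+1,j+1} = \delta_{ij}$, which is exactly the claimed operator product expansion. I do not expect a genuine obstacle here: the computation is routine, and the only point requiring a moment's care is the bookkeeping of signs in the fermionic contraction, but since the bosonic coefficients are even and commute with the fermions, no sign arises. The conceptual crux is simply the observation that the absence of $y$ and $\beta$ forces the Wick expansion to terminate at a single contraction.
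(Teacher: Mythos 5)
Your proposal is correct and follows essentially the same route as the paper: both apply the Wick formula, observe that the absence of $y$ and $\beta$ variables leaves only the single $\phi$--$\psi$ contraction, and conclude via $B_{(N)}^{-1}X^i\gamma=\bfe_{i+1}$, so that $(B_{(N)}^{-1}X^i\gamma)_{j+1}=\delta_{ij}$. The only cosmetic difference is that the paper writes the contraction as a bidifferential operator $(X^i\gamma)_p\,\partial/\partial\psi_p$ acting on the second factor, which is the same computation as your contraction bookkeeping, including the sign discussion.
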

\begin{proof}
By the Wick formula,
\begin{align*}
(\phi X^i \gamma)(z) \{\psi : X^j \gamma\}(w) &\sim \frac{\hbar}{z-w} \sum_{p,q=1}^{N} \left((X^i \gamma)_p \frac{\partial}{\partial \psi_p} (B^{-1}_{(N)})_{j+1,q} \psi_q \right)(w) \\
&= \frac{\hbar}{z-w} \sum_{p=1}^{N} \left((B^{-1}_{(N)})_{j+1,p} (X^i \gamma)_p \right)(w) = \frac{\hbar}{z-w} \delta_{ij}.\qedhere
\end{align*}
\end{proof}

Obviously, the operator product expansions between $[X^N : X^i]$ and $\{\psi : X^j \gamma\}$, $\phi X^j \gamma$ are trivial for any $i,j=0,\dots,N-1$. Note that all operator product expansions in Lemmata \ref{lemma:OPE-Tr-local}--\ref{lemma:10} have simple poles, and they have a certain triangular property, e.g., $[Y : X^{i}](z) [X^N : X^j](w) \sim 0$ and $[Y : X^{i}](z) \{\psi : X^j \gamma\}(w) \sim 0$ unless $i < N-j-1$. This enables us to diagonalise the operator product expansions, as stated in the following proposition. For $m=1,\dots,N-1$, we define
{\allowdisplaybreaks
\begin{align*}
\Wb_m &= \sum_{k=1}^{m} \left(\frac{-1}{N}\right)^{m-k} \binom{m}{k} \Tr(X)^{m-k} \Bigl( \phi X^k \gamma - \frac{1}{N} \Tr(X^k) \phi \gamma \Bigr), \\
\Wc_m &= \sum_{k=m}^{N-1} \left(\frac{1}{N}\right)^{m-k} \binom{k}{m} \Tr(X)^{k-m} \{\psi : X^k \gamma\}, \\
\Wbeta_{m} &= \frac{1}{m+1} \biggl(\, \sum_{k=2}^{m+1} \left(\frac{-1}{N}\right)^{m-k+1} \binom{m+1}{k} \Tr(X)^{m-k+1} \Tr(X^k) \\
&\quad + m \left(\frac{-1}{N}\right)^{m} \Tr(X)^{m+1} \biggr), \\
\Wgamma_{m} &= \sum_{k=m}^{N-1} \left(\frac{1}{N}\right)^{k-m} \binom{k}{m} \Tr(X)^{k-m} [Y : X^k]
- \sum_{j=1}^{N-m-2} \Wb_j \Wc_{m+j+1}\\
&\quad - \sum_{j=1}^{N-m-3} \frac{j+1}{N} \Wbeta_j \phi\gamma \Wc_{m+j+2} - \frac{N-m-1}{N} \phi\gamma \Wc_{m+1}.
\end{align*}
}%

\begin{proposition}\label{prop:betagambc}
The $\Wbeta_i$, $\Wgamma_i$ are bosonic and the $\Wb_i$, $\Wc_i$ fermionic elements in $\tldcalD^\ch_{M, \hbar}(U_{(N)})$ for $i=1,\dots,N-1$. They satisfy the operator product expansions
\[
\Wbeta_i(z) \Wgamma_j(w) \sim - \delta_{ij} \hbar / (z-w)\quad\text{and}\quad\Wb_i(z) \Wc_j(w) \sim \delta_{ij} \hbar / (z-w),
\]
and all other operator product expansions between them vanish, i.e.\ they form an $\hbar$-adic $\beta\gamma bc$-system of rank $N-1$. Moreover, they have trivial operator product expansions with $\Tr(X)$, $\Tr(Y)$ and with $\phi \gamma$, $\beta \psi$.
\end{proposition}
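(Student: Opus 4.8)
The plan is to reduce every assertion to the first-order operator product expansions among the strong generators $[X^N:X^i]$, $[Y:X^i]$, $\{\psi:X^i\gamma\}$ and $\phi X^i\gamma$ (for $i=0,\dots,N-1$) established in Lemmata~\ref{lemma:OPE-Tr-local}--\ref{lemma:10}, together with the Wick formula (\autoref{lemma:Wick-bidiff}). Since all of those operator product expansions have only simple poles, so will every one we compute, and the $(n)$-products with $n\ge 1$ play no role in the resulting singular parts; the only subtlety is that $\Wb_m,\Wc_m,\Wbeta_m,\Wgamma_m$ are normally ordered polynomials in the free fields, so the non-commutative Wick theorem produces both single- and double-contraction terms. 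The parity statement is immediate: each $\Wbeta_m,\Wgamma_m$ is an even polynomial in the bosonic fields $\Tr(X)$, $\Tr(X^k)$, $[Y:X^k]$ dressed by an even number of fermions, while each $\Wb_m,\Wc_m$ is linear in the fermions $\phi X^k\gamma$, $\{\psi:X^k\gamma\}$.

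The key structural input is the triangular grading by the power of $X$: the operator $\Tr(Y)_{(0)}$ acts as a shift raising this power and $\Tr(X)_{(0)}$ detects only the bottom piece (cf.\ \autoref{lemma:OPE-Tr-local}), and the binomial coefficients and powers of $\Tr(X)$ in the definitions are chosen precisely so that the relevant sums telescope. I would proceed as follows. First, verify the decoupling from $\Tr(X)$, $\Tr(Y)$, $\phi\gamma$ and $\beta\psi$: using \autoref{lemma:OPE-Tr-local} and the Leibniz rule for the derivations $\Tr(X)_{(0)}$, $\Tr(Y)_{(0)}$, the vanishing of $\Tr(X)(z)\Wc_m(w)$ and $\Tr(Y)(z)\Wc_m(w)$ reduces, after collecting the index-shifted terms, to a Pascal-type binomial identity, and similarly for $\Wb_m$ and $\Wbeta_m$. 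Second, establish the fermionic relations $\Wb_i(z)\Wc_j(w)\sim\delta_{ij}\hbar/(z-w)$ and $\Wb_i(z)\Wb_j(w)\sim\Wc_i(z)\Wc_j(w)\sim 0$: here \autoref{lemma:10} already gives the diagonal pairing $(\phi X^i\gamma)(z)\{\psi:X^j\gamma\}(w)\sim\delta_{ij}\hbar/(z-w)$, and one checks that the $\Tr(X)$-dressing is triangular and leaves the diagonal residue unchanged, again by a binomial identity. Third, treat the purely bosonic pairing $\Wbeta_i(z)\Wgamma_j(w)$.

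The main obstacle is the field $\Wgamma_m$, whose definition carries the fermionic correction terms $-\sum_j \Wb_j\Wc_{m+j+1}$, $-\sum_j \frac{j+1}{N}\Wbeta_j\,\phi\gamma\,\Wc_{m+j+2}$ and $-\frac{N-m-1}{N}\phi\gamma\,\Wc_{m+1}$. These are forced because the bosonic leading term $\sum_{k\ge m}(1/N)^{k-m}\binom{k}{m}\Tr(X)^{k-m}[Y:X^k]$ has, by the operator product expansions of $[Y:X^i]$ with $\{\psi:X^j\gamma\}$ and with $\phi X^j\gamma$ recorded in the lemma preceding \autoref{lemma:10}, nontrivial simple-pole couplings to the fermionic sector; the correction terms are designed to cancel these so that $\Wgamma_m$ commutes with $\Wb_j$, $\Wc_j$ and $\phi\gamma$, while contributing exactly $-\delta_{ij}\hbar/(z-w)$ to $\Wbeta_i(z)\Wgamma_j(w)$ and nothing to $\Wbeta_i(z)\Wbeta_j(w)$ or $\Wgamma_i(z)\Wgamma_j(w)$. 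I expect this verification to require tracking the double-contraction terms of the Wick expansion (the cubic pieces $\Wbeta_j\,\phi\gamma\,\Wc$ contract against $[Y:X^k]$ in two ways) followed by a nested binomial resummation; organizing it by downward induction on $m$, so that the correction at level $m$ only involves $\Wbeta_j,\Wc_{m'}$ with $m'>m$ already shown to be in standard form, should make the cancellations manageable. All remaining operator product expansions among $\Wbeta,\Wgamma,\Wb,\Wc$ and with $\Tr(X),\Tr(Y),\phi\gamma,\beta\psi$ then follow by the same bookkeeping, completing \autoref{prop:betagambc}.
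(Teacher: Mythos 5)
Your strategy is, in outline, the one the paper intends: the paper in fact prints no proof of \autoref{prop:betagambc}, asserting only that the triangular, simple-pole structure of the operator product expansions in Lemmata~\ref{lemma:OPE-Tr-local}--\ref{lemma:10} ``enables us to diagonalise'' them, and your plan (Leibniz rule for the zero modes, Pascal-type binomial identities, the Wick formula with double contractions, downward induction on $m$ for the correction terms in $\Wgamma_m$) is a sensible organisation of exactly that verification. (One incidental point: your binomial identity for $\Tr(Y)_{(0)}\Wc_m=0$, namely $(k-m)\binom{k}{m}=k\binom{k-1}{m}$, only closes if the exponent in the printed definition of $\Wc_m$ is read as $(1/N)^{k-m}$; as printed, $(1/N)^{m-k}$ is a typo.)

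However, two points are genuinely problematic. First, your opening claim that every operator product expansion you compute will have only simple poles, so that the $(n)$-products with $n\ge1$ play no role, is false, and you later tacitly retract it: normally ordered polynomials in fields with simple-pole pairings acquire second-order poles from double contractions --- the paper's own $(\phi\gamma)(z)(\beta\psi)(w)\sim -N\hbar^2/(z-w)^2$ is such a case --- and the cancellation of these double poles is part of what must be proved; for instance, the subtraction $-\frac{1}{N}\Tr(X^k)\phi\gamma$ inside $\Wb_m$ exists precisely so that the $\hbar^2\Tr(X^k)/(z-w)^2$ double contraction against $\beta\psi$ cancels. Second, and more seriously, the final claim of the proposition (trivial operator product expansions with $\beta\psi$) is not reachable by your proposed reduction at all: $\beta\psi$ is not among the strong generators in \eqref{eq:1}, and none of the cited lemmata records its pairings. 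Computing with free-field representatives, the single contractions of $\phi X^k\gamma$ against $\beta\psi$ produce multiples of $\beta X^k\gamma-\phi X^k\psi$ and of $\Tr(X^k)_{(-1)}(\beta\gamma-\phi\psi)$, which are \emph{nonzero} in the complex $C_{\VA}(\tldU_{(N)})$ and vanish only modulo $\Im d_{\VA}$, by \autoref{lemma:7} --- the same mechanism the paper uses in the proof of \autoref{lemma:doublet-OPE}. Since your proposal never invokes \autoref{lemma:7} nor the freedom to discard coboundaries when computing in the cohomology $\tldcalD^\ch_{M,\hbar}(U_{(N)})$, the ``same bookkeeping'' cannot close this case; working modulo $\Im d_{\VA}$ is the missing idea.
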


By the above proposition, there is the following isomorphism of $\hbar$-adic vertex superalgebras
\begin{equation}\label{eq:factor-h-Wakimoto}
\tldcalD^\ch_{M, \hbar}(U_{(N)}) \simeq
\calD^\ch(T^* \C^{N-1})_{\hbar} \hatotimes \Cl_{\hbar}(T^* \C^{N-1})
\hatotimes \calD^\ch(T^* \C^1)_{\hbar} \hatotimes \SF_{\hbar},
\end{equation}
where $\calD^\ch(T^* \C^{N-1})_{\hbar}$ is the $\hbar$-adic $\beta\gamma$-system of rank $N-1$ generated by $\Wbeta_{i}$ and $\Wgamma_i$ for $i=1,\dots,N-1$, $\Cl_{\hbar}(T^* \C^{N-1})$ is the $\hbar$-adic $bc$-system of rank $N-1$ generated by $\Wb_{i}$ and $\Wc_i$ for $i=1,\dots,N-1$, $\calD^\ch(T^* \C^1)_{\hbar}$ is another $\hbar$-adic $\beta\gamma$-system generated by $\Tr(X)/\sqrt{N}$ and $\Tr(Y)/\sqrt{N}$ and $\SF_{\hbar}$ is the $\hbar$-adic symplectic fermion vertex superalgebra generated by $\phi \gamma/\sqrt{N}$ and $\beta \psi/\sqrt{N}$. That is, the sheaf restriction morphism (see the proof of \autoref{thm:Wakimoto} for the injectivity)
\[
\tldcalD^\ch_{M, \hbar}(M) \hookrightarrow \tldcalD^\ch_{M, \hbar}(U_{(N)})
\]
defines a free-field realisation.

Recall the conformal structure on $\tldcalD^\ch_{M, \hbar}(M)$ of central charge $c=-3N^2$. We extend it to the free-field vertex superalgebra $\tldcalD^\ch_{M, \hbar}(U_{(N)})$ via the above restriction morphism. Then, the generators have weights
\[
\wt(\Wbeta_m)=\frac{m+1}{2},\;\wt(\Wgamma_m)=\frac{-m+1}{2},\;
\wt(\Wb_m)=\frac{m+2}{2},\;\wt(\Wc_m)=-\frac{m}{2}
\]
for $m=1,\dots,N-1$. This shows that the $\beta\gamma$-system generated by $\Wbeta_m$, $\Wgamma_m$, which generally affords a one-parameter family of conformal structures, has a conformal structure of central charge $c=-1+3m^2$, while similarly the $bc$-system generated by $\Wb_m$, $\Wc_m$ has a conformal structure of central charge $c=-2-6m-3m^2$ for $m=1,\dots,N-1$. Moreover, we already saw that the $\beta\gamma$-system generated by $\Tr(X)/\sqrt{N}$, $\Tr(Y)/\sqrt{N}$ and the symplectic fermion vertex superalgebra have central charge $c=-1$ and $c=-2$, respectively, extending the above formulae for the central charges to $m=0$.

We also obtain an analogous free-field realisation of the vertex operator superalgebra $\sfV_{S_N}=[\tldcalD^\ch_{M,\hbar}(M)]^{\C^\times}$:
\begin{theorem}\label{thm:Wakimoto}
The restriction morphism $\tldcalD^\ch_{M, \hbar}(M) \hookrightarrow \tldcalD^\ch_{M, \hbar}(U_{(N)})$ induces a free-field realisation of $\sfV_{S_N}$, i.e.\ an embedding
\[
\sfV_{S_N}\hookrightarrow\calD^\ch(T^* \C^{N-1}) \otimes \Cl(T^* \C^{N-1}) \otimes \calD^\ch(T^* \C^1) \otimes \SF
\]
of vertex operator superalgebras.
\end{theorem}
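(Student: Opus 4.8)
The plan is to prove the statement in two stages. First I would establish that the sheaf restriction morphism $\tldcalD^\ch_{M, \hbar}(M) \to \tldcalD^\ch_{M, \hbar}(U_{(N)})$ is injective as a homomorphism of $\hbar$-adic vertex operator superalgebras, and then descend this injection to $\sfV_{S_N}$ by passing to the finite part, taking $\C^\times$-invariants and reducing modulo $\hbar - 1$, identifying the target via the factorisation \eqref{eq:factor-h-Wakimoto}. Since restriction morphisms of $\tldcalD^\ch_{M, \hbar}$ are by construction homomorphisms of $\hbar$-adic vertex superalgebras and the conformal vector $T$ of \eqref{eq:free-field-conformal} restricts to itself, the map is automatically conformal and $\C^\times$-equivariant; thus the only substantial point is injectivity.

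For injectivity I would argue at the level of the associated graded and then lift. Reducing modulo $\hbar$, the exact sequence $0 \to \hbar \tldcalD^\ch_{M, \hbar} \to \tldcalD^\ch_{M, \hbar} \to \JetBundle{M} \to 0$ of sheaves yields, exactly as in \eqref{eq:3} and using \autoref{prop:quantization}, injections $\tldcalD^\ch_{M, \hbar}(M)/\hbar \hookrightarrow \JetBundle{M}(M)$ and $\tldcalD^\ch_{M, \hbar}(U_{(N)})/\hbar \hookrightarrow \JetBundle{M}(U_{(N)})$ compatible with restriction. It therefore suffices to show that $\JetBundle{M}(M) \to \JetBundle{M}(U_{(N)})$ is injective. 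Since $M = \Hilb^N(\C^2)$ is smooth and irreducible, all its jet schemes and its arc space $J_\infty M$ are irreducible and reduced, so $\calO_{J_\infty M}$ is the structure sheaf of an integral scheme; as $\pi_\infty^{-1}(U_{(N)})$ is a dense open subset of $J_\infty M$, restriction of global sections is injective. The bosonic part of $\JetBundle{M}$ is $(\pi_\infty)_*\calO_{J_\infty M}$ and the full sheaf is obtained by tensoring with the exterior algebra on the fermionic jet variables, a free module, so the restriction $\JetBundle{M}(M) \to \JetBundle{M}(U_{(N)})$ is injective. Finally, because $\tldcalD^\ch_{M, \hbar}(M)$ and $\tldcalD^\ch_{M, \hbar}(U_{(N)})$ are $\hbar$-torsion-free and $\hbar$-adically separated, injectivity modulo $\hbar$ lifts: if $0 \neq a \in \Ker$, write $a = \hbar^k b$ with $b \notin \hbar \tldcalD^\ch_{M, \hbar}(M)$; then $b \in \Ker$ and $b \bmod \hbar \neq 0$, contradicting injectivity mod $\hbar$.

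Granting injectivity of the $\hbar$-adic restriction, I would conclude as follows. The morphism is $\C^\times$-equivariant and $\C[[\hbar]]$-linear, hence preserves the weight-space decomposition; tensoring over $\C[[\hbar]]$ with $\C((\sqrt{\hbar}))$ is flat and taking $\C^\times$-invariants is left exact, so the induced map $\sfV_{S_N} = [\tldcalD^\ch_{M, \hbar}(M)]^{\C^\times} \hookrightarrow [\tldcalD^\ch_{M, \hbar}(U_{(N)})]^{\C^\times}$ is again injective and is a homomorphism of vertex operator superalgebras. It then remains to identify the target: all the free-field generators $\Wbeta_i, \Wgamma_i, \Wb_i, \Wc_i, \Tr(X), \Tr(Y), \phi\gamma, \beta\psi$ of \autoref{prop:betagambc} are homogeneous for the $\C^\times$-action, so the isomorphism \eqref{eq:factor-h-Wakimoto} is $\C^\times$-equivariant and compatible with $\hatotimes$; taking invariants and reducing modulo $\hbar - 1$ sends each tensor factor $\calD^\ch(T^*\C^{\bullet})_{\hbar}$, $\Cl_{\hbar}(T^*\C^{\bullet})$ and $\SF_{\hbar}$ to its ordinary counterpart, giving $[\tldcalD^\ch_{M, \hbar}(U_{(N)})]^{\C^\times} \simeq \calD^\ch(T^*\C^{N-1}) \otimes \Cl(T^*\C^{N-1}) \otimes \calD^\ch(T^*\C^1) \otimes \SF$, as desired.

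The main obstacle is the injectivity at the graded level, namely that $\JetBundle{M}(M) \hookrightarrow \JetBundle{M}(U_{(N)})$. This rests on the arc space $J_\infty M$ being an integral scheme, so that global jet sections are torsion-free and inject into their restriction over the dense open $U_{(N)}$; the decisive input is the smoothness of the Hilbert scheme $M$, which guarantees the irreducibility and reducedness of all its jet schemes. The subsequent $\hbar$-adic lifting and the descent to $\sfV_{S_N}$ are then formal.
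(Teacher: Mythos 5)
Your overall skeleton (injectivity of the restriction at the $\hbar$-adic level, then descent to $\sfV_{S_N}$) matches the paper's, and your injectivity argument is a valid variant: where the paper filters $\JetBundle{M}$ by fermionic degree and applies a zero-locus/density argument to the locally free $\calO_M$-modules $\Gr_r\JetBundle{M}$, you invoke integrality of the arc space $J_\infty M$ (via smoothness of $\Hilb^N(\C^2)$) to get torsion-freeness of bosonic jet sections. One inaccuracy there: your claim that $\JetBundle{M}$ is the bosonic part tensored with a \emph{free} exterior algebra is false globally. The fermionic generators $\{\psi:X^iY^j\gamma\}_{\lambda}$, $\phi X^iY^j\gamma$ are chart-dependent; globally the odd part is the jet algebra of the exterior algebra of the (non-trivial) tautological bundle and its dual, so $\JetBundle{M}$ is only \emph{locally} free over the bosonic jets. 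This is repairable with one line (local freeness over an integral base suffices, arguing on a trivialising cover), so by itself it is a flaw rather than a fatal gap. The $\hbar$-adic lifting step is fine.

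The genuine gap is your final identification $[\tldcalD^\ch_{M,\hbar}(U_{(N)})]^{\C^\times}\simeq\calD^\ch(T^*\C^{N-1})\otimes\Cl(T^*\C^{N-1})\otimes\calD^\ch(T^*\C^1)\otimes\SF$, which is false. Unlike the global sections, whose $\C^\times$-weights are non-negative, the local sections over $U_{(N)}$ have weights unbounded below: $\Swt(\Wc_m)=-m/2$ and $\Swt(\Wgamma_m)=(1-m)/2$, so products of distinct modes such as $\Wc_{1(-1)}\Wc_{1(-2)}\cdots\Wc_{1(-k)}\bfone$ have arbitrarily negative weight (already for $N=2$). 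Consequently a $\C^\times$-homogeneous element of the $\hbar$-adically complete algebra $\tldcalD^\ch_{M,\hbar}(U_{(N)})$ can be a genuinely infinite series $\sum_k\hbar^k c_k$ with $\Swt(c_k)=m-k$ nonzero for infinitely many $k$, and the invariants $[\tldcalD^\ch_{M,\hbar}(U_{(N)})]^{\C^\times}$ form a \emph{strict completion} of the polynomial tensor product. Your flatness/left-exactness argument therefore only embeds $\sfV_{S_N}$ into that completion, not into the vertex operator superalgebra named in the theorem. The missing step — exactly how the paper closes its proof — is to show that the image of $\tldcalD^\ch_{M,\hbar}(M)_\fin$ under restriction lies in the $\C[\hbar]$-submodule of \emph{finite} superpolynomials in the modes $\Wbeta_{i(-n)},\Wgamma_{i(-n)},\Wb_{i(-n)},\Wc_{i(-n)},\Tr(X)_{(-n)},\Tr(Y)_{(-n)},(\phi\gamma)_{(-n)},(\beta\psi)_{(-n)}$ (which uses that global finite sections are built from weight-bounded data, not merely that they are invariant); only after this does reduction modulo $(\hbar-1)$ land in $\calD^\ch(T^*\C^{N-1})\otimes\Cl(T^*\C^{N-1})\otimes\calD^\ch(T^*\C^1)\otimes\SF$ as required.
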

The conformal structure on the right-hand side is the one with central charge $c=\sum_{m=0}^{N-1}((-1+3m^2)+(-2-6m-3m^2))=-3N^2$ described in the preceding paragraph.
\begin{proof}
First, we show that the restriction $\Res^{M}_{U_{(N)}}\colon \tldcalD^\ch_{M, \hbar}(M) \longrightarrow \tldcalD^\ch_{M, \hbar}(U_{(N)})$ is injective. Let $f \in \tldcalD^\ch_{M, \hbar}(M)$ be a global section such that $\Res^{M}_{U_{(N)}}(f) = 0$. We assume for the sake of contradiction that $f \not\in \hbar\tldcalD^\ch_{M, \hbar}(M)$. By the isomorphism $\tldcalD^\ch_{M, \hbar} / \hbar \tldcalD^\ch_{M, \hbar} \simeq \JetBundle{M}$ of \autoref{prop:quantization}, its $0$-th symbol $f_0 \coloneqq \sigma_0(f)$ is a non-zero global section of $\JetBundle{M}$.

Consider the grading \mbox{$\deg x_{ij (-n)} = \deg y_{ij (-n)} = \deg \beta_{i (-n)} = \deg \gamma_{i (-n)} = 0$} and $\deg \psi_{i (-n)} = \deg \phi_{i (-n)} = 1$ for all $i$, $j$ and $n$. This grading induces a filtration $\{F_r \JetBundle{M} \}_{r=0}^{\infty}$ of $\JetBundle{M}$ defined by $F_r \JetBundle{M} = \{ f \in \JetBundle{M} \;|\; \deg f \le r \}$. Let $\Gr_r \JetBundle{M}$ be the quotient sheaf $F_{r} \JetBundle{M} / F_{r+1} \JetBundle{M}$ for $r\in\Z_{\ge0}$. Then $\Gr_r \JetBundle{M}$ is a locally free $\calO_{M}$-module of infinite rank. The short exact sequence $0 \rightarrow F_{r+1} \JetBundle{M} \rightarrow F_{r} \JetBundle{M} \rightarrow \Gr_r \JetBundle{M} \rightarrow 0$
induces the commutative diagram
\[
\begin{tikzcd}
0 \arrow[r] & F_{r+1} \JetBundle{M}(M) \arrow[r] \arrow[d, "\Res^{M}_{U_{(N)}}" swap] & F_{r} \JetBundle{M}(M) \arrow[r, "\tilde{\sigma}_0"] \arrow[d, "\Res^{M}_{U_{(N)}}"] & \Gr_r \JetBundle{M}(M) \arrow[d, "\Res^{M}_{U_{(N)}}"]\\
0 \arrow[r] & F_{r+1} \JetBundle{M}(U_{(N)}) \arrow[r] & F_{r} \JetBundle{M}(U_{(N)}) \arrow[r, "\tilde{\sigma}_0"] & \Gr_r \JetBundle{M}(U_{(N)})
\end{tikzcd}
\]
where each row is exact. Let $g \in \Gr_r \JetBundle{M}$ be a global section and assume $\Res^M_{U_{(N)}}(g) = 0$. Since $\Gr_r \JetBundle{M}$ is a locally free $\calO_{M}$-module, the zero locus $\{p \in M \;|\; g(p) = 0 \}$ is a Zariski closed subset of $M$. By assumption, $g(p) = 0$ for any point $p$ of the Zariski open subset $U_{(N)}$, and hence $g(p) = 0$ for any point $p$ of the closure $\overline{U}_{(N)} = M$. Then, $g = 0$ and thus the restriction $\Res^M_{U_{(N)}}\colon\Gr_r \JetBundle{M}(M) \longrightarrow \Gr_r \JetBundle{M}(U_{(N)})$ is injective. Since $f_0 \in \JetBundle{M}(M)$ is a non-zero global section, there exists a unique $r\in\Z_{\ge0}$ such that $f_0 \in F_r \JetBundle{M}(M) \setminus F_{r+1} \JetBundle{M}(M)$. By assumption, $\Res^{M}_{U_{(N)}}(\tilde{\sigma}_0(f_0)) = \tilde{\sigma}_0(\Res^{M}_{U_{(N)}}(f_0)) = 0$,
and thus we obtain $\tilde{\sigma}_0(f_0) = 0$ by the above injectivity. This implies that $f_0 \in F_{r+1} \JetBundle{M}(M)$, contradicting the definition of $r$. By contradiction, we conclude that $\Res^{M}_{U_{(N)}}(f) = 0$ implies $f = 0$.

The image of the $\C[\hbar]$-submodule $\tldcalD^\ch_{M, \hbar}(M)_\fin$ with respect to the restriction homomorphism is clearly included in the $\C[\hbar]$-submodule of all superpolynomials in $\Wbeta_{i (-n)}$, $\Wgamma_{i (-n)}$, $\Wb_{i (-n)}$, $\Wc_{i (-n)}$, $\Tr(X)_{(-n)}$, $\Tr(Y)_{(-n)}$, $(\phi \gamma)_{(-n)}$, $(\beta \psi)_{(-n)}$ for $i=1,\dots,N-1$ and $n \in \Z_{>0}$. Taking the quotient by the ideal generated by $(\hbar - 1)$, these elements strongly generate the vertex superalgebra $\calD^\ch(T^* \C^{N-1}) \otimes \Cl(T^* \C^{N-1}) \otimes \calD^\ch(T^* \C^1) \otimes \SF$, and hence there is a homomorphism of vertex superalgebras from $\sfV_{S_N}$ to it.
\end{proof}

\begin{remark}
One can verify that the free-field realisation of the small $\calN=4$ superconformal algebra of level $k=-(N^2-1)/2$ obtained in \autoref{thm:Wakimoto} coincides for $N=2$ with the one given by Adamović in \cite{Adamovic16} and for $N=3$ with the one obtained by Bonetti, Meneghelli and Rastelli in \cite{BMR19}, Section~4.2.2, with parameter $\Lambda = 1/3$.
\end{remark}

\begin{remark}
The free-field realisation of \autoref{thm:Wakimoto} is an analogue of the Wakimoto realisation for affine vertex algebras studied in \cite{Wakimoto86, FF90, Frenkel05}.
\end{remark}

Finally, as an application of this free-field realisation, we discuss a certain factorisation of the $\hbar$-adic vertex superalgebra $\tldcalD^{\ch}_{M, \hbar}(M)$. Recall the $\hbar$-adic vertex operator subalgebra $\calD^{\ch}(T^*\C^1)_{\hbar} \hatotimes \SF_{\hbar}$ of $\tldcalD^{\ch}_{M, \hbar}(M)$ generated by $\Tr(X)$, $\Tr(Y)$, $\Lambda_1 = \phi \gamma/\sqrt{N}$ and $\Lambda_2 = \beta\psi/\sqrt{N}$. Let
\[
\tldcalD^{\ch}_{M, \hbar}(M)^{\perp}\coloneqq\Com_{\tldcalD^{\ch}_{M, \hbar}(M)}(\calD^{\ch}(T^* \C^1)_{\hbar} \hatotimes \SF_{\hbar})
\]
be the commutant vertex operator superalgebra of $\calD^{\ch}(T^* \C^1)_{\hbar} \hatotimes \SF_{\hbar}$ in $\tldcalD^{\ch}_{M, \hbar}(M)$. We show that the double commutant is again $\calD^{\ch}(T^* \C^1)_{\hbar} \hatotimes \SF_{\hbar}$, and not some extension of it. In other words:
\begin{proposition}\label{prop:h-factorization}
The following factorisation of the $\hbar$-adic vertex operator superalgebra $\tldcalD^{\ch}_{M, \hbar}(M)$ of global sections holds:
\[
\tldcalD^{\ch}_{M, \hbar}(M) \cong \tldcalD^{\ch}_{M, \hbar}(M)^{\perp} \hatotimes \calD^{\ch}(T^* \C^1)_{\hbar} \hatotimes \SF_{\hbar}.
\]
\end{proposition}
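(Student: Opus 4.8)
The plan is to deduce the factorisation from the free-field realisation of \autoref{thm:Wakimoto}. Write $V:=\tldcalD^{\ch}_{M,\hbar}(M)$, $B:=\calD^\ch(T^*\C^1)_{\hbar}\hatotimes\SF_{\hbar}$ and $A:=\calD^\ch(T^*\C^{N-1})_{\hbar}\hatotimes\Cl_{\hbar}(T^*\C^{N-1})$. The restriction morphism to $U_{(N)}$ together with \eqref{eq:factor-h-Wakimoto} gives an embedding $V\hookrightarrow A\hatotimes B$ of $\hbar$-adic vertex superalgebras under which the subalgebra of $V$ generated by $\Tr(X),\Tr(Y),\phi\gamma,\beta\psi$ is exactly the tensor factor $\bfone\hatotimes B$, while by \autoref{prop:betagambc} the generators $\Wbeta_i,\Wgamma_i,\Wb_i,\Wc_i$ of $A$ have trivial operator product expansions with all of $B$, i.e.\ $A\hatotimes\bfone$ centralises $\bfone\hatotimes B$.

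First I would compute the commutant of $B$ in the ambient free-field algebra. Since $\calD^\ch(T^*\C^1)_{\hbar}$ and $\SF_{\hbar}$ are generated by fields with nondegenerate mutual operator product expansions, each has trivial self-commutant, $\Com_{\calD^\ch(T^*\C^1)_{\hbar}}(\calD^\ch(T^*\C^1)_{\hbar})=\Com_{\SF_{\hbar}}(\SF_{\hbar})=\C[[\hbar]]\bfone$; hence $\Com_B(B)=\C[[\hbar]]\bfone$ and therefore $\Com_{A\hatotimes B}(\bfone\hatotimes B)=A\hatotimes\bfone$. Intersecting with $V$ identifies the commutant $\tldcalD^{\ch}_{M,\hbar}(M)^{\perp}=\Com_V(B)$ with $V\cap(A\hatotimes\bfone)$. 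Because $\Com_V(B)$ and $B$ mutually commute and meet only in $\C[[\hbar]]\bfone$, the vertex subalgebra they generate is $\Com_V(B)\hatotimes B$, yielding the inclusion $\Com_V(B)\hatotimes B\hookrightarrow V$.

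The substance is the reverse inclusion. I would use the lower-bounded conformal grading of $V$ from \autoref{prop:total-conf-vec} together with the fact that all generators of $B$ have positive weight, so that at each fixed weight only finitely many modes of $B$ occur and the $B$-degree is bounded. The modes of $\Tr(X),\Tr(Y),\phi\gamma,\beta\psi$ generate a Heisenberg--Clifford Lie superalgebra $\mathfrak{a}$ whose creation/annihilation splitting is dictated by the conformal weight (one checks there are no weight-preserving modes, the symplectic-fermion zero modes acting trivially on $B$), and since $B\subset V$ all modes of $\mathfrak{a}$ preserve $V$. I would then run a downward induction on the $B$-degree: for $v\in V$ of top $B$-degree $D$, applying the annihilation monomial $\mathfrak{a}$-dual to a degree-$D$ creation monomial $\beta_{J}$ kills all states of $B$-degree $<D$ and, by nondegeneracy of the pairing on the finite-dimensional degree-$D$ space, extracts the corresponding $A$-coefficient $u_{J}$ into $V\cap(A\hatotimes\bfone)=\Com_V(B)$; subtracting $\sum_{J}u_{J}\beta_{J}\in\Com_V(B)\hatotimes B$ lowers the degree and closes the induction. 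This exhibits $V=\Com_V(B)\hatotimes B$ as an $\mathfrak{a}$-module, and the mutual commuting established above upgrades it to the asserted isomorphism of $\hbar$-adic vertex superalgebras; in particular the double commutant $\Com_V(\Com_V(B))$ is $B$ and not a proper extension.

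The main obstacle I anticipate is carrying out the extraction cleanly in the $\hbar$-adic, completed-tensor setting. The brackets of $\mathfrak{a}$ carry factors of $\hbar$ and $\hbar^2$, so the pairing between creation and annihilation modes is nondegenerate only after inverting $\hbar$, and solving for the coefficients $u_{J}$ threatens to introduce negative powers of $\hbar$. I would handle this by rescaling the generators of $B$ by the appropriate powers of $\hbar$ (as done systematically in \autoref{sec:h-adic-affine-VA}) so that the relevant pairings become $\hbar$-units and all extracted coefficients remain in the $\C[[\hbar]]$-lattice, and then invoke $\hbar$-adic completeness and flatness (cf.\ \autoref{prop:quantization}) via the $\hbar$-adic Nakayama lemma to promote the resulting equality modulo $\hbar$ to the equality $V=\Com_V(B)\hatotimes B$ on the nose.
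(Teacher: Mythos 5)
Your strategy coincides with the paper's: embed $V\coloneqq\tldcalD^\ch_{M,\hbar}(M)$ into $A\hatotimes B$, with $A=\calD^\ch(T^*\C^{N-1})_\hbar\hatotimes\Cl_\hbar(T^*\C^{N-1})$ and $B=\calD^\ch(T^*\C^1)_\hbar\hatotimes\SF_\hbar$, via restriction to $U_{(N)}$ and \eqref{eq:factor-h-Wakimoto}; use that the generators $\Tr(X),\Tr(Y),\phi\gamma,\beta\psi$ of $B$ are global sections, so their modes preserve $V$ and act only on the $B$-factor; and strip the $B$-part off an element $\sum_i a_i\otimes b_i\in V$ to conclude $a_i\otimes\bfone\in V\cap(A\hatotimes\bfone)=\Com_V(B)$. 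The paper compresses your dual-basis extraction into ``induction on the conformal weight for $\TTr+\TSF$ plus simplicity of $B$'' (which is the same mechanism), and leaves your preliminary identification $\Com_{A\hatotimes B}(\bfone\hatotimes B)=A\hatotimes\bfone$ implicit; those parts of your write-up are fine.

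The step that fails as written is your rescaling fix for the $\hbar$-divisibility problem. Inside the $\C[[\hbar]]$-lattice you can only rescale generators by \emph{non-negative} powers of $\hbar$ --- the convention of \autoref{sec:h-adic-affine-VA} that you cite is $\widehat{A}=\hbar A_{(-1)}\bfone$, which makes the pairings \emph{more} divisible by $\hbar$, not less --- and $\hbar^{-1}\Tr(X)$ is simply not an element of the algebra. So no admissible rescaling turns the creation--annihilation pairings into $\hbar$-units; your extraction genuinely yields $\hbar^{k_J}u_J\otimes\bfone\in V$ with $k_J>0$, and neither Nakayama nor completeness allows division by $\hbar$ inside an arbitrary $\C[[\hbar]]$-submodule. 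What rescues the argument is that $V$ is $\hbar$-saturated in $\tldcalD^\ch_{M,\hbar}(U_{(N)})$: if $\hbar w=\Res^{M}_{U_{(N)}}(g)$ for a global section $g$, then $g\bmod\hbar$ restricts to zero in $\JetBundle{M}(U_{(N)})$, hence vanishes by the injectivity of the restriction on $\JetBundle{M}$ established in the proof of \autoref{thm:Wakimoto}; flatness of the sheaf (\autoref{prop:quantization}) then gives $g=\hbar g'$ with $g'$ global, whence $w=\Res^{M}_{U_{(N)}}(g')\in V$, and iterating shows $\hbar^k w\in V$ implies $w\in V$. This is exactly the division you need (the paper instead sidesteps the issue by performing the extraction modulo $\hbar^m$ for every $m$ and invoking $\hbar$-adic completeness). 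With this substitution in place of the rescaling, your proof goes through.
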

\begin{proof}
Recall the isomorphism \eqref{eq:factor-h-Wakimoto}. Consider any element of the form $\sum_{i} a_i \otimes b_i$ of $\tldcalD^{\ch}_{M, \hbar}(M) \subset \tldcalD^{\ch}_{M, \hbar}(U_{(N)})$, where $a_i \in \calD^{\ch}(T^* \C^{N-1})_{\hbar} \hatotimes \Cl_{\hbar}(T^* \C^{N-1})$ and $b_i \in \calD^{\ch}(T^* \C^1)_{\hbar} \hatotimes \SF_{\hbar}$. By induction on the conformal weight with respect to the conformal vector $\TTr + \TSF \in \calD^{\ch}(T^* \C^1)_{\hbar} \hatotimes \SF_{\hbar}$, the simplicity of $\calD^{\ch}(T^* \C^1)_{\hbar} \hatotimes \SF_{\hbar}$ implies that $a_i \otimes \bfone$ lies in $\tldcalD^{\ch}_{M, \hbar}(M) / (\hbar^m)$ for any $i$ and any positive integer $m$. Since $\tldcalD^{\ch}_{M, \hbar}(M)^{\perp} = \tldcalD^{\ch}_{M, \hbar}(M) \cap (\calD^{\ch}(T^* \C^{N-1}))_{\hbar} \hatotimes \Cl_{\hbar}(T^* \C^{N-1}))$, this implies $a_i \in \tldcalD^{\ch}_{M, \hbar}(M)^{\perp}$ for any $i$.
\end{proof}

Analogously, let
\[
\W_{S_N}\coloneqq\Com_{\sfV_{S_N}}(\calD^{\ch}(T^*\C^1) \otimes \SF)
\]
be the commutant of $\calD^{\ch}(T^*\C^1) \otimes \SF$ inside $\sfV_{S_N}$. Then \autoref{prop:h-factorization} implies the following factorisation of the vertex operator superalgebra
\[
\sfV_{S_N} = \W_{S_N} \otimes \calD^\ch(T^* \C^1) \otimes \SF,
\]
which allows us to split off the uninteresting tensor factor $\calD^{\ch}(T^*\C^1) \otimes \SF$. As a consequence of \autoref{thm:assoc-var}, \autoref{prop:small-N4-SCA}, \autoref{prop:CFT-type} and \autoref{thm:Wakimoto}, we conclude the following theorem, which summarises the properties of $\W_{S_N}$.
\begin{theorem}\label{thm:W-prop}
For $N\ge2$, the vertex subalgebra $\W_{S_N}\subset \sfV_{S_N}$ is a vertex operator superalgebra of CFT-type of central charge $c_{S_N}=-3(N^2-1)$ satisfying:
\begin{enumerate}
\item $\W_{S_N}$ is a conformal extension of some quotient of the universal small $\calN=4$ superconformal vertex superalgebra $\on{Vir}_{\calN=4}^{c_{S_N}}$.
\item The associated variety of $\W_{S_N}$ is $\calM_{S_N}=(V_{S_N} \oplus V_{S_N}^*) / S_N$, where $V_{S_N}=\C^{N-1}$ is the reflection representation of the symmetric group $S_N$. In particular, $\W_{S_N}$ is quasi-lisse.
\item There is a free-field realisation $\W_{S_N}\hookrightarrow\calD^\ch(T^* \C^{N-1}) \otimes \Cl(T^* \C^{N-1})$ in terms of $\rk(S_N)=N-1$ many copies of the $\beta\gamma b c$-system with central charges $c=-3(2p_i-1)$, where $p_i=2,\dots,N$ are the degrees of the fundamental invariants of $S_N$.
\end{enumerate}
\end{theorem}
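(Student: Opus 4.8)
My plan is to deduce the theorem from the tensor factorisation
\[
\sfV_{S_N} = \W_{S_N} \otimes \calD^\ch(T^* \C^1) \otimes \SF
\]
established just before the statement via \autoref{prop:h-factorization}, together with the structural results of Sections~\ref{sec:F-action}--\ref{sec:Wakimoto}. The CFT-type property and the central charge are the first to settle: by \autoref{prop:CFT-type} the vertex operator superalgebra $\sfV_{S_N}$ is of CFT-type of central charge $c=-3N^2$ with non-negative $\frac12\Z$-grading, and the complementary factor $\calD^\ch(T^*\C^1)\otimes\SF$ is of CFT-type of central charge $-1+(-2)=-3$. Since the grading on the tensor product is the sum of the gradings and all weights are non-negative, the weight-zero space factors as a tensor product, forcing $(\W_{S_N})_0=\C\bfone$ and the absence of negative weights; subtracting central charges then gives $c_{S_N}=-3N^2-(-3)=-3(N^2-1)$.

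For part~(1) I would first note that the generators $J^{\pm},J^0,G^{\pm},\tldG^{\pm},\TN$ of the image $V_{\calN=4}$ from \autoref{prop:small-N4-SCA} have trivial operator product expansions with $\Tr(X)$, $\Tr(Y)$, $\phi\gamma$ and $\beta\psi$, so that $V_{\calN=4}$ lies inside the commutant $\W_{S_N}=\Com_{\sfV_{S_N}}(\calD^\ch(T^*\C^1)\otimes\SF)$. The conformal vector of $\W_{S_N}$ is exactly $\TN$, since by \autoref{prop:total-conf-vec} the total conformal vector decomposes as $T\equiv\TN+\TTr+\TSF$ modulo $\Im d_{\VA}$, with $\TTr+\TSF$ the conformal vector of the split-off factor. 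Hence $\W_{S_N}$ is a conformal extension of $V_{\calN=4}$, a quotient of $\on{Vir}_{\calN=4}^{c_{S_N}}$.

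Part~(2) uses the behaviour of the $C_2$-Poisson algebra under tensor products, $R_{V\otimes W}\cong R_V\otimes R_W$, which yields $X_{\sfV_{S_N}}\cong X_{\W_{S_N}}\times X_{\calD^\ch(T^*\C^1)}\times X_{\SF}$. As the rank-one $\beta\gamma$-system has associated variety $T^*\C\cong\C^2$ and the symplectic fermion is lisse, this reads $X_{\sfV_{S_N}}\cong X_{\W_{S_N}}\times\C^2$; comparing with $X_{\sfV_{S_N}}\cong\C^{2N}/S_N\cong\calM_{S_N}\times T^*\C$ from \autoref{thm:assoc-var} and cancelling the common $T^*\C$ factor gives $X_{\W_{S_N}}\cong\calM_{S_N}$. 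Quasi-lisseness is immediate, as $\calM_{S_N}$ is a symplectic singularity with finitely many leaves. Part~(3) follows by restricting the embedding of \autoref{thm:Wakimoto} to $\W_{S_N}$: the $\beta\gamma bc$-generators $\Wbeta_i,\Wgamma_i,\Wb_i,\Wc_i$ commute with $\Tr(X),\Tr(Y),\phi\gamma,\beta\psi$ by \autoref{prop:betagambc}, so $\W_{S_N}$ maps into $\calD^\ch(T^*\C^{N-1})\otimes\Cl(T^*\C^{N-1})$; the weights computed after \autoref{thm:Wakimoto} show the $m$-th pair has central charge $(-1+3m^2)+(-2-6m-3m^2)=-3(2(m+1)-1)$, so with $p_i=m+1\in\{2,\dots,N\}$, the degrees of the fundamental invariants of $S_N$, the central charges are $-3(2p_i-1)$.

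The only genuinely delicate point is the cancellation in part~(2): one must verify that the factor $\C^2$ in $X_{\sfV_{S_N}}\cong X_{\W_{S_N}}\times\C^2$ is identified with the same $T^*\C$ appearing in $\C^{2N}/S_N\cong\calM_{S_N}\times T^*\C$, so that the remaining factor is $\calM_{S_N}$ itself and not merely an abstract complement. This is guaranteed because both product decompositions are induced by the single explicit splitting of generators, with $\Tr(X),\Tr(Y)$ spanning the $\C^2$; beyond this bookkeeping the theorem carries no further obstacle, being a consequence of the structural results assembled above.
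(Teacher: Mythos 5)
Your proposal is correct and follows essentially the same route as the paper, which states \autoref{thm:W-prop} as a direct consequence of the factorisation $\sfV_{S_N}=\W_{S_N}\otimes\calD^\ch(T^*\C^1)\otimes\SF$ from \autoref{prop:h-factorization} together with \autoref{thm:assoc-var}, \autoref{prop:small-N4-SCA}, \autoref{prop:CFT-type} and \autoref{thm:Wakimoto}. Your filled-in details — the central-charge subtraction, identifying $\TN$ as the conformal vector of the commutant via \autoref{prop:total-conf-vec}, the $C_2$-algebra tensor decomposition with the explicit identification of $\Tr(X),\Tr(Y)$ as the coordinates of the $T^*\C$ factor, and the central-charge bookkeeping for the $\beta\gamma bc$-pairs — are all sound and match the paper's intended argument.
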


It follows that $\W_{S_N}$ is the vertex operator superalgebra for the reflection group $S_N$ conjectured by Bonetti, Meneghelli and Rastelli \cite{BMR19}, i.e.\ the vertex superalgebra corresponding to the four-dimensional $\calN=4$ supersymmetric Yang-Mills theory $\on{SYM}_{\frsl_N}$ with gauge group $\SL_N$ via the 4D/2D duality \cite{BeeLemLie15}.

We remark that in the case of $N=1$, $\W_{S_N}$ reduces to the trivial vertex operator algebra $\C\bfone$.

\begin{remark}
The small $\calN=4$ superconformal vertex superalgebra $\on{Vir}_{\calN=4}^{c_{S_N}}$ has the group of outer automorphisms isomorphic to $\on{SL}_2(\C)$, which acts on the odd generators of conformal weight $3/2$ by $G^+ \mapsto a G^+ - b \tldG^+$, $\tldG^+ \mapsto - c G^+ + d \tldG^+$, $G^- \mapsto a G^- - b \tldG^-$ and $\tldG^- \mapsto - c G^- + d \tldG^-$ for $a,b,c,d \in \C$ with $ad - bc = 1$ and trivially on the other generators $J^{\pm}$, $J^0$ and $T_{\calN=4}$ (see \cite{CLR22}, cf.\ \cite{MI96}).

It can be hypothesised that this action lifts to an action on the vertex operator superalgebras $\sfW_{S_N}$ and $\sfV_{S_N}$ given by $\phi P \gamma \mapsto a (\phi P \gamma) + b (\beta P \psi)$, $\beta P \psi \mapsto c (\phi P \gamma) + d (\beta P \psi)$ and $\Tr P \mapsto \Tr P$ for $a,b,c,d \in \C$ with $ad - bc = 1$ and $P = P(X, Y) \in \C\langle X, Y\rangle$.
\end{remark}


\section{Characters and Schur Indices}
\label{sec:chars}

In the following, we determine the supercharacters of the vertex operator superalgebras $\sfV_{S_N}$ and $\W_{S_N}$, which coincide with the Schur indices of the four-dimensional $\calN=4$ supersymmetric Yang-Mills theories with gauge groups $\GL_N$ and $\SL_N$, respectively. Using the Euler-Poincaré principle and the exact integration formulae in \cite{PP22} we show that these characters are quasimodular forms (see, e.g., \cite{KZ95,Zag08}). Moreover, as was already noticed in \cite{KLS21,HO23}, it follows from the results in \cite{AR13} that they can be written as certain (generalised) multiple $q$-zeta values discovered over a century ago \cite{Mac21}. This also implies formulae for the expansion of these characters in terms of Eisenstein series \cite{Hua22}. Here, we present a different formulation communicated to us by Henrik Bachmann.

We comment that further results on quasimodularity and the appearance of multiple $q$-zeta values for vertex operator algebras in the context of the 4D/2D duality are obtained, e.g., in \cite{Mil22}.


\subsection{Euler-Poincaré Principle}

We recall the BRST cohomology sheaf on $M$, whose only non-vanishing degree component is $\tldcalD^\ch_{M, \hbar}=\calH^{\infty/2+0}_{\VA}(\frg, \tldcalD^\ch_{\frX, \hbar})$ of ghost degree~$0$. The vertex superalgebra $\sfV_{S_N}=[\tldcalD^\ch_{M, \hbar}(M)]^{\C^\times}$ is obtained from the global sections of this sheaf by taking the invariants under the torus action. The conformal vector $T$ from \autoref{sec:conformal} makes $\sfV_{S_N}$ a vertex operator superalgebra of central charge $c=-3N^2$ that is $\frac{1}{2}\Z_{\ge0}$-graded by weights, of CFT-type and whose (super)character is
\[
\on{(s)ch}_{\sfV_{S_N}}(q)=\Tr_{\sfV_{S_N}}(\pm1)^Pq^{T_{(1)}-c/24}=q^{N^2/8}(1+\calO(q^{1/2})),
\]
where $P$ is the usual parity operator with eigenvalues in $\Z/2\Z$. In the following, we determine this supercharacter and study its modular properties.

The global sections $\tldcalD^\ch_{M, \hbar}(M)$ are given by the cohomology of $d_{\VA} = (1/\hbar) Q_{(0)}$ on the $\hbar$-adic vertex superalgebra $C_{\VA}(\frX)\subset\tldC_{\VA}(\frX)$. The latter is the $\hbar$-adic free-field vertex superalgebra $\tldC_{\VA}(\frX) = \calD^\ch(T^* V)_{\hbar} \otimes \Cl_{\hbar}(T^* \C^N) \otimes \Cl_{\hbar}(T^* \frg)$. For the torus invariants this entails that
\[
\sfV_{S_N}=H^\bullet(C, Q_{(0)})=H^0(C, Q_{(0)})
\]
is the cohomology of $Q_{(0)}$ on the vertex superalgebra
\[
C \coloneqq \{ c \in \tldC \,|\, \tldE_{ij (0)} c = \Phi_{ij (0)} c = 0 \text{ for all } i, j = 1, \dots, N \} \subset \tldC
\]
given as the intersection of the kernels of $\tldE_{ij (0)}$ and $\Phi_{ij (0)}$ for $i,j=1,\dots,N$ on the free-field vertex operator superalgebra
\[
\tldC \coloneqq \calD^\ch(T^* V) \otimes \Cl(T^* \C^N) \otimes \Cl(T^* \frg),
\]
where $\calD^\ch(T^* V) = \C[x_{ij (-n)}, y_{ij (-n)}, \gamma_{i (-n)}, \beta_{i (-n)} \,|\, \substack{i,j=1, \dots, N \\ n = 1, 2, \dots}]$ is the $\beta\gamma$-system associated with the symplectic vector space $T^* V$, $V = \gEnd(\C^N) \oplus \C^N$, with generators $x_{ij}$, $y_{ij}$, $\gamma_i$ and $\beta_i$, $\Cl(T^* \C^N) = \Lambda_{\C}(\psi_{i (-n)}, \phi_{i (-n)} \,|\, \substack{i=1, \dots, N \\ n = 1, 2, \dots})$ is the $bc$-system with generators $\psi_i$ and $\phi_i$ and $\Cl(T^* \frg) = \Lambda_\C(\Psi_{ij (-n)}, \Phi_{ij (-n)} \,|\, \substack{i,j=1, \dots, N \\ n = 1, 2, \dots})$ is the (ghost) $bc$-system generated by $\Psi_{ij}$ and $\Phi_{ij}$. The $\Z$-grading by ghost degree on $\tldC$ is defined via $\deg(\Psi_{ij (-n)}) = 1$ and $\deg(\Phi_{ij (-n)}) = -1$.

For convenience we also introduce the intermediate kernel
\[
C \subset \bar{C} \coloneqq \{ c \in \tldC \,|\, \Phi_{ij (0)} c = 0 \text{ for all } i, j = 1, \dots, N \} \subset \tldC.
\]
Taking the kernels of the $\Phi_{ij (0)}$ in $\tldC$ simply amounts to removing all underived $\Psi$-fields from the tensor factor $\Cl(T^* \frg)$, i.e.
\[
\bar{C}=\calD^\ch(T^* V) \otimes \Cl(T^* \C^N)\otimes\Lambda_\C(\Psi_{ij (-n-1)}, \Phi_{ij (-n)} \,|\, \substack{i,j=1, \dots, N \\ n = 1, 2, \dots}).
\]

As an application of the Euler-Poincaré principle, the (super)character of $\sfV_{S_N}$ can be computed as
\begin{align*}
\on{(s)ch}_{\sfV_{S_N}}(q)&=\Tr_{\sfV_{S_N}}(\pm1)^Pq^{T_{(1)}-c/24}=\Tr_{H^0(C, Q_{(0)})}(\pm1)^{P_\text{m}}q^{T_{(1)}-c/24}\\
&=\sum_{i\in\Z}(-1)^i\Tr_{H^i(C, Q_{(0)})}(\pm1)^{P_\text{m}}q^{T_{(1)}-c/24}\\
&=\sum_{i\in\Z}(-1)^i\Tr_{C^i}(\pm1)^{P_\text{m}}q^{T_{(1)}-c/24}\\
&=\Tr_{C}(\pm1)^{P_\text{m}}(-1)^{P_\text{gh}}q^{T_{(1)}-c/24},
\end{align*}
where $P=P_\text{m}+P_\text{gh}$ is the parity operator on $\tldC$ and $P_\text{m}$, $P_\text{gh}$ the parity operators only acting on matter and ghost fields, respectively (so, $P_\text{gh}$ gives the ghost degree modulo 2). For the fourth equality we used the Euler-Poincaré principle and that $Q_{(0)}$ commutes with $T_{(1)}$ and $P_\text{m}$. For the supercharacter of $\sfV_{S_N}$ we then obtain the simple formula
\[
\on{sch}_{\sfV_{S_N}}(q)=\on{sch}_{C}(q).
\]
Hence, we need to compute the supercharacter of the kernel $C$ in the free-field vertex superalgebra $\bar{C}$.

To understand this intersection of the kernels of the $\tldE_{ij (0)} = (Q_{(0)} \Phi_{ij})_{(0)}$, recall from \autoref{lemma:sln-rep} that they form a representation of $\frg = \frgl_N$ on $\tldC$, which restricts to $\bar{C}$. We shall describe this representation in the following.

For any finite-dimensional representation $V$ of $\frg$ we modify the usual symmetric algebra $S(V)=\bigoplus_{k=0}^\infty S^k(V)$ by introducing a formal variable $x$,
\[
S_x(V)\coloneqq S(Vx)=\bigoplus_{k=0}^\infty S^k(V)x^k\subset S(V)[[x]]
\]
and analogously for the exterior algebra $\Lambda(V)$.

Let $\bar{C}=\bigoplus_{p,r,n}\bar{C}^{p,r}_n$ be the decomposition of $\bar{C}$ into simultaneous eigenspaces for $F_\text{m}$, $F_\text{gh}$ and $T_{(1)}$ with eigenvalues $p,r\in\Z$ and $n\in\frac{1}{2}\Z_{\ge0}$ respectively, where $F_\text{gh}$ denotes the ghost degree (or ghost fermion number operator) defined above by $F_\text{gh}\Psi_{ij}=\Psi_{ij}$ and $F_\text{gh}\Phi_{ij}=-\Phi_{ij}$ and $F_\text{m}$ is the analogous fermion number operator for the matter fields defined by $F_\text{m}\psi_i=\psi_i$ and $F_\text{m}\phi_i=-\phi_i$. Note that $(-1)^{F_\text{m}}=(-1)^{P_\text{m}}$ and $(-1)^{F_\text{gh}}=(-1)^{P_\text{gh}}$. Then we define
\[
\bar{C}_{y,z,q}=\bigoplus_{\substack{p,r\in\Z\\n\in\frac{1}{2}\Z_{\ge0}}}\bar{C}^{p,r}_ny^pz^rq^n\subset\bar{C}[[y^{\pm1},z^{\pm1},q^{1/2}]].
\]
It is not difficult to see:
\begin{lemma}\label{lem:tensorglN}
As a module for $\frgl_N$,
\begin{align*}
\bar{C}_{y,z,q}&\cong\bigotimes_{n=0}^\infty\Bigl(S_{q^{n+1/2}}(\frgl_N)\otimes S_{q^{n+1/2}}(\frgl_N)\otimes S_{q^{n+1/2}}(\C^N)\otimes S_{q^{n+1/2}}((\C^N)^*)\\
&\quad\otimes\Lambda_{yq^{n+1/2}}(\C^N)\otimes\Lambda_{y^{-1}q^{n+1/2}}((\C^N)^*)\otimes\Lambda_{zq^{n+1}}(\frgl_N)\otimes\Lambda_{z^{-1}q^{n+1}}(\frgl_N)\Bigr),
\end{align*}
where $\frgl_N$ denotes the (self-dual) adjoint representation and $\C^N$ and $(\C^N)^*$ the vector and dual vector representation, respectively.
\end{lemma}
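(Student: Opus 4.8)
The plan is to treat this as a bookkeeping statement in the free-field realisation, once the $\frgl_N$-action has been understood mode by mode. First I would recall, as established in the discussion immediately preceding the lemma, that $\bar{C}$ is obtained from $\tldC=\calD^\ch(T^*V)\otimes\Cl(T^*\C^N)\otimes\Cl(T^*\frg)$ by deleting the underived ghost fields $\Psi_{ij(-1)}$. Forgetting the vertex structure, $\bar{C}$ is therefore the free supercommutative algebra generated by the modes
\[
\{x_{ij(-m)},y_{ij(-m)},\gamma_{i(-m)},\beta_{i(-m)},\psi_{i(-m)},\phi_{i(-m)}\mid m\ge1\}\cup\{\Psi_{ij(-m)}\mid m\ge2\}\cup\{\Phi_{ij(-m)}\mid m\ge1\},
\]
with $x,y,\gamma,\beta$ bosonic and $\psi,\phi,\Psi,\Phi$ fermionic. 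Thus, as a vector space, $\bar{C}$ is the tensor product over all these generators of the symmetric algebra on each bosonic one and the exterior algebra on each fermionic one.

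Next I would pin down the $\frgl_N$-action. By \autoref{lemma:sln-rep}, $\frgl_N$ acts on $\tldC$, and hence on $\bar{C}$, via $E_{ij}\mapsto(1/\hbar)\tldE_{ij(0)}$. The crucial point is that the zero mode $\tldE_{ij(0)}$ acts as an even derivation of all $(n)$-products (Borcherds' identity with $m=0$) and commutes with the translation operator $\partial$ (since $(\partial\tldE_{ij})_{(0)}=0$). Consequently it acts as a derivation of the free supercommutative algebra above, and, commuting with $\partial$, it acts on every mode $a_{(-m)}\bfone=\tfrac{1}{(m-1)!}\partial^{m-1}a$ by exactly the same finite-dimensional $\frgl_N$-representation that the underived field $a$ carries. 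Reading these representations off from the explicit formulas for $\tldmu_\ch$ and $J$, one finds: the adjoint representation $\frgl_N$ for $x_{ij},y_{ij},\Psi_{ij},\Phi_{ij}$; the vector representation $\C^N$ for $\gamma_i,\psi_i$; and the dual vector representation $(\C^N)^*$ for $\beta_i,\phi_i$. Because the action is by derivations on a free algebra, the $\frgl_N$-module structure of $\bar{C}$ is then the tensor product, over all generating modes, of the symmetric (resp.\ exterior) power of the corresponding representation.

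It remains to organise these factors according to the three commuting gradings and collect, for each fixed mode index $m=n+1$, the generators sharing that index. The conformal weight of $a_{(-m)}\bfone$ is $\wt(a)+m-1$, the matter fermion number $F_\text{m}$ (tracked by $y$) is $\pm1$ on $\psi,\phi$, and the ghost fermion number $F_\text{gh}$ (tracked by $z$) is $\pm1$ on $\Psi,\Phi$. Using $\wt(x_{ij})=\dots=\wt(\phi_i)=1/2$, every bosonic or matter-fermionic mode of index $m=n+1$ has weight $n+1/2$, which yields the factors $S_{q^{n+1/2}}(\frgl_N)\otimes S_{q^{n+1/2}}(\frgl_N)\otimes S_{q^{n+1/2}}(\C^N)\otimes S_{q^{n+1/2}}((\C^N)^*)$ together with $\Lambda_{yq^{n+1/2}}(\C^N)\otimes\Lambda_{y^{-1}q^{n+1/2}}((\C^N)^*)$. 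I expect the one genuinely delicate point — though it is still routine — to be the ghost sector: since $\wt(\Psi_{ij})=0$ while $\wt(\Phi_{ij})=1$, and since $\Psi_{ij(-1)}$ has been removed, the surviving ghost modes $\Psi_{ij(-(n+2))}$ and $\Phi_{ij(-(n+1))}$ both have weight $n+1$, which is precisely what aligns them into the factors $\Lambda_{zq^{n+1}}(\frgl_N)\otimes\Lambda_{z^{-1}q^{n+1}}(\frgl_N)$; here I would invoke the self-duality $\frgl_N\cong\frgl_N^*$ to present both in the adjoint representation. Taking the tensor product over $n\ge0$ then gives the asserted decomposition.
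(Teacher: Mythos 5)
Your proposal is correct and takes essentially the same route as the paper, which records the lemma as a routine verification (offering only the hint that the $x_{ij(n)}\bfone$ span adjoint representations in $T_{(1)}$-weight $n+1/2$); your mode-by-mode bookkeeping --- the derivation property of the zero modes $\tldE_{ij(0)}$, their commuting with $\partial$, the representation carried by each free-field generator, and the weight alignment of the ghost modes after removing the underived $\Psi_{ij(-1)}$ --- is exactly the intended argument. One cosmetic slip: the derivation property of a zero mode on the $(n)$-products is the commutator formula, i.e.\ Borcherds' identity \eqref{eq:Borcherds} specialised to $m=n=0$, not merely $m=0$.
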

The lemma also encodes the eigenvalues of the semisimple operator $y^{F_\text{m}}z^{F_\text{gh}}q^{T_{(1)}}$ on $\bar{C}$. For example, for any $n$, the $x_{ij (n)}\bfone$ for $i,j=1,\dots,N$ span an adjoint representation of $\frgl_N$ supported in $T_{(1)}$-weight $n+1/2$, amounting to the first tensor product in the above expression. Then, for the (super)character we shall take the trace over the specialisation $(\pm1)^{F_\text{m}}(-1)^{F_\text{gh}}q^{T_{(1)}}=(\pm1)^{P_\text{m}}(-1)^{P_\text{gh}}q^{T_{(1)}}$ of this operator.


\subsection{Quasimodularity}

By definition, the vertex superalgebra $C$ is nothing but the sum of all copies of the trivial representation of $\frgl_N$ in the tensor-product representation $\bar{C}$ from \autoref{lem:tensorglN}. The trivial representation may be selected by integrating over the Haar measure of the corresponding compact Lie group $\on{U}(N)$ with suitable normalisation.

Such integrals are well-studied in the physics literature in the context of computing the Schur index of four-dimensional $\calN=4$ supersymmetric Yang-Mills theories. We refer to \cite{BDF15,PP22,PWZ22,BSR22} in the case of the gauge groups $\GL_N$ and $\SL_N$. Indeed, the above integral coincides precisely with that given for $\GL_N$ in these references (in particular, see \cite{BDF15}, but take note of a slightly different convention here that leads to a factor of $q^{N^2/8}$ compared to \cite{BDF15}). This is of course to be expected, as the prediction from \cite{BeeLemLie15} is that the Schur index of the four-dimensional theory is recovered by the supercharacter of the corresponding vertex operator superalgebra.

Let $\eta(q)=q^{1/24}\prod_{n=0}^\infty(1-q^n)$ be the well-known Dedekind eta function, a holomorphic modular form of weight $1/2$ for some character. As an immediate corollary we obtain \cite{BDF15}:
\begin{proposition}\label{prop:qseries}
The supercharacters of $\sfV_{S_N}$ and $\W_{S_N}$ are
\begin{align*}
\on{sch}_{\sfV_{S_N}}(q)&=\frac{\eta(q)}{\eta(q^{1/2})^2}{\sum_{n=0}^\infty}(-1)^n\biggl(\binom{N+n}{N}+\binom{N+n-1}{N}\biggr)q^{(N+2n)^2/8},\\
\on{sch}_{\W_{S_N}}(q)&=\frac{1}{\eta(q)^3}{\sum_{n=0}^\infty}(-1)^n\biggl(\binom{N+n}{N}+\binom{N+n-1}{N}\biggr)q^{(N+2n)^2/8},
\end{align*}
and they coincide with the Schur indices of the four-dimensional $\calN=4$ supersymmetric Yang-Mills theories with gauge groups $\GL_N$ and $\SL_N$, respectively.
\end{proposition}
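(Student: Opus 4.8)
The plan is to evaluate the supercharacter $\on{sch}_{\sfV_{S_N}}(q)=\on{sch}_{C}(q)$ of the kernel $C=\bar C^{\frg}$ by realising it as a Haar integral over $\on{U}(N)$, to match this with the Schur-index matrix integral evaluated in the cited physics literature, and then to deduce the character of $\W_{S_N}$ from the factorisation of \autoref{sec:Wakimoto}. First I would make precise that $C$ is the space of $\frg=\frgl_N$-invariants in $\bar C$: the operators $\tldE_{ij(0)}$ span the $\frgl_N$-action of \autoref{lemma:sln-rep} restricted to $\bar C$, and a vector is annihilated by an entire reductive Lie algebra exactly when it spans a trivial subrepresentation, so $C=\bar C^{\frgl_N}$. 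Since, on each finite-dimensional graded piece, averaging the equivariant character over the Haar measure of $\on{U}(N)$ extracts the character of the invariant subspace, this yields
\[
\on{sch}_{\sfV_{S_N}}(q)=\on{sch}_{C}(q)=\int_{\on{U}(N)}\on{sch}_{\bar C}(g;q)\,dg,\qquad \on{sch}_{\bar C}(g;q)=\Tr_{\bar C}\bigl(g\,(-1)^{P_\text{m}}(-1)^{P_\text{gh}}q^{T_{(1)}-c/24}\bigr),
\]
with $c=-3N^2$.

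Writing $t_1,\dots,t_N$ for the eigenvalues of $g$ on $\C^N$ and specialising the formal variables $y,z$ of the preceding lemma to $y=z=-1$, that lemma turns $\on{sch}_{\bar C}(g;q)$ into an explicit infinite product (a plethystic exponential in the $t_i/t_j$ and $t_i^{\pm1}$). Crucially, the matter contributions of the vector and dual-vector representations $S_{q^{n+1/2}}(\C^N)$, $S_{q^{n+1/2}}((\C^N)^*)$ coming from $\gamma,\beta$ cancel against the fermionic $\Lambda_{-q^{n+1/2}}(\C^N)$, $\Lambda_{-q^{n+1/2}}((\C^N)^*)$ coming from $\psi,\phi$, leaving — up to the overall normalisation $q^{-c/24}=q^{N^2/8}$ — the standard $\calN=4$ vector-multiplet integrand
\[
\prod_{n=0}^{\infty}\prod_{i,j=1}^{N}\frac{(1-q^{n+1}\,t_i/t_j)^2}{(1-q^{n+1/2}\,t_i/t_j)^2}.
\]
The Weyl integration formula then rewrites $\int_{\on{U}(N)}$ as the familiar integral over the maximal torus with Vandermonde measure.

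The second step is to recognise this as precisely the integral computing the Schur index of four-dimensional $\calN=4$ super Yang-Mills with gauge group $\GL_N$, as set up in \cite{BDF15,PP22}. Here I would invoke the exact integration formulae of \cite{PP22} together with the closed-form evaluation of \cite{BDF15} to obtain
\[
\on{sch}_{\sfV_{S_N}}(q)=\frac{\eta(q)}{\eta(q^{1/2})^2}\sum_{n=0}^{\infty}(-1)^n\biggl(\binom{N+n}{N}+\binom{N+n-1}{N}\biggr)q^{(N+2n)^2/8}.
\]
Two bookkeeping points remain to be checked: the overall factor $q^{N^2/8}=q^{-c/24}$ accounts for the convention difference with \cite{BDF15} already flagged in the text, and the eta-quotient prefactor $\eta(q)/\eta(q^{1/2})^2$ arises from the diagonal $i=j$ (Cartan) terms of the integrand combined with the $q^{-c/24}$ shift.

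Finally, the character of $\W_{S_N}$ follows from the factorisation $\sfV_{S_N}\cong\W_{S_N}\otimes\calD^\ch(T^*\C^1)\otimes\SF$ of vertex operator superalgebras from \autoref{sec:Wakimoto}, under which supercharacters multiply. A direct generator count gives $\on{sch}_{\calD^\ch(T^*\C^1)}(q)=\eta(q)^2/\eta(q^{1/2})^2$ (two bosons of weight $1/2$, central charge $-1$) and $\on{sch}_{\SF}(q)=\eta(q)^2$ (two fermions of weight $1$, central charge $-2$), so dividing $\on{sch}_{\sfV_{S_N}}$ by $\eta(q)^4/\eta(q^{1/2})^2$ replaces the prefactor $\eta(q)/\eta(q^{1/2})^2$ by $1/\eta(q)^3$ and produces the asserted formula for $\on{sch}_{\W_{S_N}}$. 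The coincidence with the Schur indices of the $\GL_N$- and $\SL_N$-gauge theories is then built in: the $\GL_N$ index is the integral just evaluated, while the $\SL_N$ index is recovered exactly after removing the tensor factor $\calD^\ch(T^*\C^1)\otimes\SF$ corresponding to the $T^*\C$ (equivalently, $\frgl_N$-versus-$\frsl_N$) discrepancy. I expect the genuine obstacle to be the closed-form evaluation of the matrix integral; this analytic heart is imported wholesale from \cite{PP22,BDF15}, so within the present argument the real work is (i) matching our combinatorial integrand, coming from the preceding lemma, to the Schur-index integrand of those references — including the normalisation responsible for $q^{N^2/8}$ — and (ii) the eta-quotient accounting for the two split-off tensor factors.
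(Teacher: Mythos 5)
Your proposal is correct and follows essentially the same route as the paper: reduce via the Euler--Poincar\'e principle to the supercharacter of the invariants $C=\bar C^{\frgl_N}$, extract these invariants by a Haar integral over $\on{U}(N)$, identify the resulting integral with the $\GL_N$ Schur-index integral evaluated in closed form in \cite{BDF15,PP22}, and then divide by $\on{sch}_{\calD^\ch(T^*\C^1)\otimes\SF}(q)=\eta(q)^4/\eta(q^{1/2})^2$ to obtain $\on{sch}_{\W_{S_N}}$ from the factorisation of \autoref{sec:Wakimoto}. The only difference is one of detail: you spell out the cancellation of the $\beta\gamma$/$bc$ matter contributions that leaves the vector-multiplet integrand, a verification the paper leaves implicit when asserting that its invariant-extracting integral ``coincides precisely'' with the one in the cited references.
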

The two characters (or Schur indices) differ precisely by
\[
\frac{\eta(q)^2}{\eta(q^{1/2})^2}\eta(q)^2=\on{sch}_{\calD^\ch(T^* \C^1) \otimes \SF}(q),
\]
i.e.\ the above factorisation $\sfV_{S_N} = \W_{S_N} \otimes \calD^\ch(T^* \C^1) \otimes \SF$ corresponds to considering $\frgl_N$ rather than $\frsl_N$ in the quiver construction (or $\GL_N$ rather than $\SL_N$ on the level of the Schur indices).

Using the exact integration formulae in \cite{PP22}, we can study the modular properties of these characters. As usual, we consider $q=\e^{2\pi\ii\tau}$, where $\tau$ is from the complex upper half-plane $\mathbb{H}=\{z\in\C\,|\,\Im(z)>0\}$ equipped with an action of $\SL_2(\Z)=\langle S,T\rangle$. Let $\Gamma^0(2)=\langle T^2,STS\rangle$ denote the congruence subgroup of matrices whose upper right entry vanishes modulo~2.
\begin{proposition}[\cite{PP22}]\label{prop:quasimodular}
The supercharacter $\on{sch}_{\W_{S_N}}(q)$ converges to a sum of quasimodular forms of weights $N-1,N-3,\dots\in\Z_{\ge0}$ for the full modular group $\SL_2(\Z)$ if $N$ is odd and for $\Gamma^0(2)$ with some character if $N$ is even.
\end{proposition}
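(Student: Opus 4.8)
The plan is to start from the closed $q$-series for $\on{sch}_{\W_{S_N}}$ established in the previous proposition and to recognise its numerator as a two-sided theta series, so that division by $\eta^3$ produces derivatives of a base theta function, which are manifestly quasimodular. Write $f_N(q)=\sum_{n\ge0}(-1)^n a_N(n)\,q^{(N+2n)^2/8}$ with $a_N(n)=\binom{N+n}{N}+\binom{N+n-1}{N}$, so that $\on{sch}_{\W_{S_N}}=f_N/\eta^3$. Setting $k=N+2n$, the exponent becomes $k^2/8$ and $a_N$ becomes a polynomial $R_N(k)=a_N((k-N)/2)$ of degree $N$ in $k$. First I would record the reflection identity $a_N(-n-N)=(-1)^N a_N(n)$, an elementary consequence of $\binom{x}{N}=\tfrac{1}{N!}x(x-1)\cdots(x-N+1)$, together with the vanishing $a_N(n)=0$ for $n\in\{-(N-1),\dots,-1\}$, i.e.\ for $0<|k|<N$. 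These two facts show that the summand is invariant under $k\mapsto-k$ and that the ``middle'' range contributes nothing, whence
\begin{equation*}
2 f_N(q)=\sum_{\substack{k\in\Z\\ k\equiv N\,(2)}}\chi(k)\,R_N(k)\,q^{k^2/8},\qquad \chi(k)=(-1)^{(k-N)/2},
\end{equation*}
where $\chi$ is a character modulo $4$ and $R_N$ has parity $(-1)^N$, so that it contains only monomials $k^d$ with $d\equiv N\bmod 2$ and $0\le d\le N$.

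Next I would expand $R_N(k)=\sum_{d} c_d k^{d}$ and treat each monomial theta $\Theta_d=\sum_{k\equiv N(2)}\chi(k)k^{d}q^{k^2/8}$ separately, using that $D:=q\,\tfrac{d}{dq}$ multiplies the $k$-th term by $k^2/8$. For $N$ odd the summation runs over odd $k$, exactly the lattice of Jacobi's identity $\eta^3=\tfrac12\sum_{k\ \mathrm{odd}}(-1)^{(k-1)/2}k\,q^{k^2/8}$; since the two mod-$4$ characters agree up to the constant $(-1)^{(1-N)/2}$, one obtains $\Theta_d=\mathrm{const}\cdot D^{(d-1)/2}\eta^3$. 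For $N$ even the summation runs over even $k=2\ell$, and the relevant base object is the weight-$\tfrac12$ theta $\Phi=\sum_{\ell\in\Z}(-1)^{\ell}q^{\ell^2/2}$ at level $2$, giving $\Theta_d=\mathrm{const}\cdot D^{d/2}\Phi$; here only even $d$ survive, the odd-$d$ sums vanishing by $\ell\mapsto-\ell$.

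I would then conclude by dividing by $\eta^3$. For $N$ odd, $\on{sch}_{\W_{S_N}}=\tfrac12\sum_d c_d'\,D^{(d-1)/2}\eta^3/\eta^3$, and an induction on $m$ shows that $D^{m}\eta^3/\eta^3$ is a quasimodular form of weight $2m$ for $\SL_2(\Z)$: the base case is $D\log\eta^3=3\,D\log\eta=E_2/8$, and the step uses $D(\eta^3 P)=\eta^3(\tfrac{E_2}{8}P+DP)$ together with the closure of $\C[E_2,E_4,E_6]$ under $D$. Since $d$ runs over $N,N-2,\dots,1$, the resulting weights $d-1$ are $N-1,N-3,\dots,0$, all in $\Z_{\ge0}$. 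For $N$ even the same computation with $\Phi$ in place of $\eta^3$ yields quasimodular forms of weight $d-1$ for $\Gamma^0(2)$, the character arising from combining the $\eta$-multiplier with the transformation of $\Phi$; here $d$ runs over $N,N-2,\dots,2$, the constant term $d=0$ being absent because $R_N(0)=a_N(-N/2)=\binom{N/2}{N}+\binom{N/2-1}{N}=0$, so the weights are $N-1,N-3,\dots,1$. The series converges on the upper half-plane by the Gaussian decay of the theta summands, matching the ``converges to'' in the statement.

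The main obstacle will be the even case: pinning down the precise level and character requires tracking the nontrivial multiplier system of $\eta^3$ and the transformation law of $\Phi$ simultaneously under $\Gamma^0(2)=\langle T^2,STS\rangle$, and verifying holomorphy at all cusps (holomorphy on $\mathbb{H}$ being automatic since $\eta$ has no zeros there) so that each quotient is a genuine quasimodular form rather than a meromorphic one. This is exactly the information packaged by the exact integration formulae of \cite{PP22}, whose closed-form evaluation of the $\GL_N$/$\SL_N$ Schur index I would invoke to certify the transformation behaviour; the theta-derivative computation above is the structural explanation for the stated weights and level, and it can be cross-checked against the value $\on{sch}_{\W_{S_3}}(q)=(1-E_2)/24$, which has weight-$2$ and weight-$0$ components as predicted.
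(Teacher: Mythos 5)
Your proposal is correct, and it takes a genuinely different route from the paper. The paper's own proof is essentially a citation: it writes the supercharacter as $\rk(\frsl_N)=N-1$ iterated contour integrals, applies the exact integration formulae of \cite{PP22} (each integration producing generalised Eisenstein series), and concludes by induction over $N$, so that all weight/level/character bookkeeping is inherited from \cite{PP22}. You instead start from the closed $q$-series of the preceding proposition and prove quasimodularity by hand: the reflection identity $a_N(-n-N)=(-1)^N a_N(n)$ and the vanishing of $a_N$ in the middle range (both of which check out) turn the numerator into a two-sided theta sum, which becomes a combination of $D^{(d-1)/2}\eta^3$ for odd $N$ (via Jacobi) and of $D^{d/2}\Phi$ for even $N$, and division by $\eta^3$ then produces the weights $N-1,N-3,\dots$ directly from the degree and parity of $R_N$. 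For odd $N$ your argument is complete and self-contained: $D^m\eta^3/\eta^3\in\C[E_2,E_4,E_6]$ by your induction, no appeal to \cite{PP22} is needed, and the method instantly reproduces the paper's example, since $R_3(k)=(k^3-k)/24$ gives $f_3=(\eta^3-8D\eta^3)/24$ and hence $\on{sch}_{\W_{S_3}}=(1-E_2)/24$. For even $N$ you defer the level, character and cusp verification to \cite{PP22}, which is exactly what the paper does for the whole statement, so your proof is no less complete than the paper's; but note that this deferral hides a real subtlety: $\Phi/\eta^3$ is meromorphic at the infinite cusp (its expansion begins at $q^{-1/8}$), and it is precisely the top depth component of $D^{m}\Phi/\eta^3$, so the quotient is holomorphic at that cusp only because $D^{m}\Phi/\Phi=O(q^{1/2})$, and the notion of ``quasimodular form with character'' in play must tolerate depth components that are meromorphic at cusps --- the paper's own $N=2$ case, which is the single term $-D\Phi/\eta^3$, already exhibits this. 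In summary, the paper's route is short and uniform in $N$; yours is structural and explicit, fully elementary in the odd case, and explains combinatorially where the weight pattern $N-1,N-3,\dots$ comes from.
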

\begin{proof}
The supercharacter (or the corresponding Schur index) can be written as $\rk(\frsl_N)=N-1$ many iterated contour integrations. After each integration, one obtains an expression in terms of certain generalised Eisenstein series. Applying the precise integration formulae in \cite{PP22} to the Schur index for $\SL_N$, the statement follows by induction over $N$.
\end{proof}
We shall sketch an alternative proof of this statement in \autoref{sec:qMZV} based on certain multiple $q$-zeta values.

In particular, for odd $N$, the Fourier expansion of $\on{sch}_{\W_{S_N}}(q)=q^{(N^2-1)/8}+\dots$ contains only integral $q$-powers. For even $N$, on the other hand, the $q$-powers are in $\frac{3}{8}+\frac{1}{2}\Z$ or $\frac{7}{8}+\frac{1}{2}\Z$.

By multiplying by $\eta(q)^4/\eta(q^{1/2})^2$ we obtain the corresponding statement for the supercharacter $\on{sch}_{\sfV_{S_N}}(q)=q^{N^2/8}+\dots$ of $\sfV_{S_N}$, which is then a quasimodular form of weights $N$, $N-2$, $\dots$ for $\Gamma^0(2)$ with some character.

\begin{remark}
We comment on the appearance of quasimodular forms of mixed weight as supercharacters of the vertex superalgebras $\W_{S_N}$ (see, e.g., the discussion in the introduction of \cite{BSR22}).

Let $f\colon\mathbb{H}\longrightarrow\C$ be a (holomorphic) quasimodular form of weight~$k$ and depth~$p$, say, for the full modular group $\SL_2(\Z)$, but the following argument works similarly for $\Gamma^0(2)$. Recall that necessarily $p\leq k/2$. Then, by definition, applying the weight-$k$ Petersson slash operator $|_k M$ to $f$ for some $M\in\SL_2(\Z)$ will result in a function $f|_k M$ that is a polynomial of degree $p$ in $c/(c\tau+d)$ with quasimodular coefficients. If, however, we apply the weight-$w$ Petersson slash operator $|_w$ for any weight $w\leq k-p$, then $f|_w M$ is a polynomial of degree $k-w$ in $\tau$ or equivalently $\log(q)$ with quasimodular coefficients.

This argument applies in particular to $w=0$, and hence we can interpret any quasimodular form of mixed weight, say with some highest weight $k$, as sitting inside of a vector-valued function $F\colon\mathbb{H}\longrightarrow\C^d$ (which is not unique, of course) whose components are polynomials in $\tau$ of degree $k$ with quasimodular coefficients that transforms with weight~0 under $\SL_2(\Z)$.

This point of view is sensible when considering the supercharacter of $\W_{S_N}$. Indeed, $\W_{S_N}$ is quasi-lisse and hence satisfies a modular linear differential equation of weight~0 \cite{AraKaw18,Li23}. This is compatible with $\W_{S_N}$ being a component of a vector-valued quasimodular form on $\mathbb{H}$ with $\log(q)$-terms.
\end{remark}

We also point out that the characters $\on{sch}_{\W_{S_N}}(q)$ can be generalised by introducing additional variables so that they probably transform as quasi-Jacobi forms \cite{KM15}. The modular properties of these characters should reveal information about the representation theory of $\W_{S_N}$ and its free-field realisation in \autoref{thm:Wakimoto}. This is studied for the case $N=2$ in \cite{PP22,PWZ22}.


\subsection{Multiple \texorpdfstring{$q$}{q}-Zeta Values}
\label{sec:qMZV}

It is in principle straightforward to write down bases for the relevant (finite-dimensional) spaces of quasimodular forms in \autoref{prop:quasimodular} in terms of the (normalised) Eisenstein series
\[
E_{2k}(q)=1-\frac{4k}{B_{2k}}\sum_{n=0}^\infty\sigma_{2k-1}(n)q^n
\]
for $k\in\Z_{>0}$, with Bernoulli numbers $B_{2k}$. These are holomorphic quasimodular forms (in fact, modular for $k\geq2$) of weight $2k$ for the modular group $\SL_2(\Z)$. For even $N$, one also needs the scaled versions $E_{2k}(q^{1/2})$.

Then one can expand the characters in \autoref{prop:qseries} in these bases for small~$N$. For instance, for $N=2$, $3$ the characters are given by
\begin{align*}
\on{sch}_{\W_{S_2}}(q)&=\frac{\eta(q^{1/2})^2}{\eta(q)^4}(E_2(q)-E_2(q^{1/2}))/24\\
&=q^{3/8}(1+3q-4q^{3/2}+9q^2-12q^{5/2}+22q^3+\dots),\\
\on{sch}_{\W_{S_3}}(q)&=\frac{1}{24}(1-E_2(q))\\
&=q(1 + 3 q + 4 q^2 + 7 q^3 + 6 q^4 + 12 q^5 + 8 q^6+\dots).
\end{align*}
See \cite{PP22,BSR22,HO23} for expressions up to $N\approx 10$.

In the following, we explain how a general formula for arbitrary $N$ can be proved in the context of multiple $q$-zeta values. More precisely, we give a generating function for the coefficients of the expansion of the characters in terms of Eisenstein series. This was done in \cite{Hua22}, confirming a conjecture in \cite{PP22}. Here, we present a version communicated to us by Henrik Bachmann.

\medskip

In \cite{Mac21}, MacMahon introduced the following $q$-analogues of generalised divisor sums (all summation indices being integers):
\begin{align*}
A_k(q)&=\sum_{0<m_1<\dots<m_k}\frac{q^{m_1+\dots+m_k}}{(1-q^{m_1})^2\dots(1-q^{m_k})^2},\\
C_k(q)&=\sum_{0<m_1<\dots<m_k}\frac{q^{2m_1+\dots+2m_k-k}}{(1-q^{2m_1-1})^2\dots(1-q^{2m_k-1})^2}
\end{align*}
for $k\in\Z_{>0}$. $A_k(q)$ is a well-known example of a multiple $q$-zeta value (see, e.g., \cite{Bra05,Zha07}). In general (in one of several versions), these multiple $q$-zeta values take the form
\[
\zeta_q(a_1,\dots,a_k)=\sum_{0<m_1<\dots<m_k}\frac{q^{(a_1-1)m_1+\dots+(a_k-1)m_k}}{(1-q^{m_1})^{a_1}\dots(1-q^{m_k})^{a_k}}
\]
for $a_i\in\Z_{>0}$ with $a_k\geq2$. By design, they tend to usual multiple zeta values in the limit $q\to1$, after multiplying with $(1-q)^{a_1+...+a_k}$. It is apparent that one obtains $A_k(q)=\zeta_q(2,\dots,2)$ as very symmetric special case for $a_1=\dots=a_k=2$. $C_k(q)$ is not a multiple $q$-zeta value, but in some sense an odd version thereof \cite{Hof19}.

The $q$-expansions of $A_k(q)$ and $C_k(q)$ are computed in \cite{AR13}. By comparing these $q$-expansions with those in \autoref{prop:qseries}, it follows immediately that
\[
\on{sch}_{\W_{S_N}}(q)=\begin{cases}
A_{(N-1)/2}(q),& N\text{ odd},\\
\frac{\eta(q^{1/2})^2}{\eta(q)^4}C_{N/2}(q^{1/2}),& N\text{ even},
\end{cases}
\]
as was already noticed in \cite{KLS21,HO23}.

Moreover, the authors of \cite{AR13} prove the recurrence relations
\begin{align*}
A_k(q)&=\frac{1}{(2k+1)2k}\Big(\big(6A_1(q)+k(k-1)\big)A_{k-1}(q)-2q\frac{d}{dq}A_{k-1}(q)\Big),\\
C_k(q)&=\frac{1}{2k(2k-1)}\Big(\big(2C_1(q)+(k-1)^2\big)C_{k-1}(q)-q\frac{d}{dq}C_{k-1}(q)\Big),
\end{align*}
which together with $A_1(q)=(1-E_2(q))/24$ and $C_1(q)=(E_2(q^2)-E_2(q))/24$ can be used to prove the quasimodularity statements in \autoref{prop:quasimodular} in a somewhat independent way.

Moreover, these recursion relations can be used to prove certain formulae for the expansions of $A_k(q)$ and $C_k(q)$ in terms of Eisenstein series. Such expansions for $\on{sch}_{\W_{S_N}}(q)$ were conjectured in \cite{PP22} and proved in \cite{Hua22}. Here, we state a different formulation due to Henrik Bachmann. Indeed, using the formulae for the Fourier expansions of the multiple Eisenstein series $G_{2,\dots,2}$ in \cite{Bac20} one can obtain the following expressions for the generating series of $A_k(q)$ and $C_k(q)$, which can then be used to obtain explicit (non-recursive) expressions for $A_k(q)$ and $C_k(q)$ in terms of Eisenstein series:
\begin{align*}
1+\sum_{k=1}^\infty A_k(q)x^{2k}&=\frac{2}{x}\arcsin\Bigl(\frac{x}{2}\Bigr)\exp\biggl(\sum_{j=1}^\infty\frac{(-1)^{j-1}}{j}\tilde{G}_{2j}(q)\Bigl(2\arcsin\Bigl(\frac{x}{2}\Bigr)\Bigr)^{2j}\biggr),\\
1+\sum_{k=1}^\infty C_k(q)x^{2k}&=\exp\biggl(\sum_{j=1}^\infty\frac{(-1)^{j-1}}{j}\bigl(\tilde{G}_{2j}(q)-\tilde{G}_{2j}(q^2)\bigr)\Bigl(2\arcsin\Bigl(\frac{x}{2}\Bigr)\Bigr)^{2j}\biggr).
\end{align*}
Here $\tilde{G}_{2k}(q)=-\frac{B_{2k}}{2(2k)!}E_{2k}(q)$ for $k\in\Z_{>0}$ denotes the Eisenstein series in a different normalisation.


\section{\texorpdfstring{$N=2$}{N=2} Case: Twisted Chiral de Rham Algebra on \texorpdfstring{$\bbP^1$}{ℙ¹}}
\label{sec:n=2-case}

\newcommand{\Ix}{0}
\newcommand{\Iy}{\infty}
\newcommand{\cX}{\bar{x}}
\newcommand{\cY}{\bar{y}}
\newcommand{\cDx}{\bar{\xi}}
\newcommand{\cDy}{\bar{\eta}}
\newcommand{\qX}{x}
\newcommand{\qY}{y}
\newcommand{\qDx}{\xi}
\newcommand{\qDy}{\eta}
\newcommand{\qBx}{b^{\Ix}}
\newcommand{\qCx}{c^{\Ix}}
\newcommand{\qBy}{b^{\Iy}}
\newcommand{\qCy}{c^{\Iy}}

In this section, we study the easiest non-trivial case of $N=2$, i.e.\ the sheaf of $\hbar$-adic vertex superalgebras $\tldcalD^\ch_{M, \hbar}$ on the Hilbert scheme $M = \Hilb^2(\C^2)$. We see that this sheaf essentially coincides with the twisted chiral de Rham algebra on the projective line $\bbP^1$ with parameter $\alpha = 1/2$, introduced in \cite{GMS05}. Moreover, the vertex operator superalgebra $\W_{S_2}$ of global sections is equal to the simple quotient of $\on{Vir}_{\calN=4}^{-9}$ and the free-field realisation in terms of the $\beta\gamma bc$-system coincides with the one given in \cite{Adamovic16}.

\medskip

First, we discuss the local coordinates of the Hilbert scheme $M = \Hilb^2(\C^2)$. In this section, we write $U_{\Ix} = U_{(2)}$ and $U_{\Iy} = U_{(1^2)}$ so that there is an affine open covering $M = U_{\Ix} \cup U_{\Iy}$. Recall the local coordinates discussed in \autoref{sec:big-cell}. There are the functions $[X^2 : 1]_{\Ix}$, $[X^2 : X]_{\Ix}$, $[Y : 1]_{\Ix}$, $[Y : X]_{\Ix}$ defined over $U_{\Ix}$, and $[Y^2 : 1]_{\Iy}$, $[Y^2 : Y]_{\Iy}$, $[X : 1]_{\Iy}$, $[X : Y]_{\Iy}$ defined over $U_{\Iy}$. The traces $\Tr(X)$, $\Tr(Y)$ are functions defined over $M$ and are written in the local coordinates as
\begin{align*}
\Tr(X) &= [X^2 : X]_{\Ix} = 2 [X : 1]_{\Iy} + [X : Y]_{\Iy} [Y^2 : Y]_{\Iy}, \\
\Tr(Y) &= 2 [Y : 1]_{\Ix} + [Y : X]_{\Ix} [X^2 : X]_{\Ix} = [Y^2 : Y]_{\Iy}.
\end{align*}
We define
\begin{align*}
\cX &= [Y : X]_{\Ix}, &\cDx &= - \Tr(X^2) + (1/2) \Tr(X)^2 = - 2 [X^2 : 1]_{\Ix} - (1/2) \Tr(X)^2, \\
\cY &= [X : Y]_{\Iy}, &\cDy &= \Tr(Y^2) - (1/2) \Tr(Y)^2 = 2 [Y^2 : 1]_{\Iy} + (1/2) \Tr(Y)^2.
\end{align*}
Then, $(\cX, \cDx, \Tr(X), \Tr(Y))$ and $(\cY, \cDy, \Tr(X), \Tr(Y))$ give local coordinates for $M$ over $U_{\Ix}$ and $U_{\Iy}$, respectively. The transition map between these local coordinates can be seen to be $\cY = {1}/{\cX}$, $\cDy = - \cX^2 \cDx$. This implies the well-known fact that the Hilbert scheme $M$ is isomorphic to the direct product
\[
M = \Hilb^2(\C^2) \simeq T^* \bbP^1 \times T^* \C^1.
\]
It is not difficult to see that this isomorphism is an isomorphism of holomorphic symplectic manifolds, not just one of algebraic varieties.

\medskip

Now we discuss the structure of the sheaf $\tldcalD^\ch_{M, \hbar}$ with respect to the above local coordinates. As discussed in \autoref{sec:Wakimoto}, the following local trivialisations hold:
\begin{align*}
\tldcalD^\ch_{M, \hbar}(U_{\Ix}) &= \C[[\hbar]][\qX_{(-n)}, \qDx_{(-n)} \,|\, n = 1, 2, \dots] \hatotimes \Lambda_{\C[[\hbar]]}(\qBx_{(-n)}, \qCx_{(-n)} \,|\, n = 1, 2, \dots) \\
&\quad \hatotimes \calD^\ch(T^* \C^1)_{\hbar} \hatotimes \SF_{\hbar}, \\
\tldcalD^\ch_{M, \hbar}(U_{\Iy}) &= \C[[\hbar]][\qY_{(-n)}, \qDy_{(-n)} \,|\, n = 1, 2, \dots] \hatotimes \Lambda_{\C[[\hbar]]}(\qBy_{(-n)}, \qCy_{(-n)} \,|\, n = 1, 2, \dots) \\
&\quad \hatotimes \calD^\ch(T^* \C^1)_{\hbar} \hatotimes \SF_{\hbar}
\end{align*}
with
\begin{alignat*}{2}
\qX &= [Y : X]_{\Ix},\qquad & \qDx &= - \Tr(X^2) + (1/2) \Tr(X)^2,\\
\qY &= [X : Y]_{\Iy},\qquad & \qDy &= \Tr(Y^2) - (1/2) \Tr(Y)^2
\end{alignat*}
and
\begin{alignat*}{2}
\qBx &= \phi X \gamma - (1/2) \Tr(X) \phi \gamma,\qquad & \qCx &= \{\psi : X \gamma\}_{\Ix},\\
\qBy &= \phi Y \gamma - (1/2) \Tr(Y) \phi \gamma,\qquad & \qCy &= \{\psi : Y \gamma\}_{\Iy}
\end{alignat*}
and where $\calD^\ch(T^* \C^1)_{\hbar}$ and $\SF_{\hbar}$ are generated by $\Tr(X)/\sqrt{2}$, $\Tr(Y)/\sqrt{2}$ and $\phi \gamma/\sqrt{2}$, $\beta \psi/\sqrt{2}$, respectively. It is not difficult to verify that
\begin{gather*}
\qBy = \qX_{(-1)} \qBx, \qquad \qCy = (1/\qX)_{(-1)} \qCx,
\end{gather*}
and thus $\qBy_{(-1)} \qCy = \qBx_{(-1)} \qCx$. The global section $J^0 = \Tr(XY) - (1/2) \Tr(X) \Tr(Y)$ can be written in terms of the above elements as
\begin{equation}\label{eq:2}
J^0 = - 2 \qX_{(-1)} \qDx + \qBx_{(-1)} \qCx = 2 \qY_{(-1)} \qDy - \qBy_{(-1)} \qCy.
\end{equation}
Combining this identity with $\qY = 1/\qX$ and $\qBy_{(-1)} \qCy = \qBx_{(-1)} \qCx$, we obtain that $\qDy = - \qX_{(-1)}^2 \qDy + \qX_{(-1)} \qBx_{(-1)} \qCx - (3/2) \hbar \qX_{(-2)} \bfone$. The above description of the transition homomorphism implies that the sections $(\qX, \qDx, \qBx, \qCx)$ defined over $U_{\Ix}$ and the sections $(\qY, \qDy, \qBy, \qCy)$ over $U_{\Iy}$ form the $\hbar$-adic version of the twisted chiral de Rham algebra $\Omega^\ch_{\bbP^1, \alpha}$ on $\bbP^1$ for $\alpha=1/2$ introduced by Gorbounov, Malikov and Schechtman in \cite{GMS05}. On the other hand, there are the bosonic global section $\Tr(X)$ and $\Tr(Y)$ and the fermionic ones $\phi \gamma$ and $\beta \psi$ in $\tldcalD^\ch_{M, \hbar}(M)$.

\begin{proposition}\label{prop:chiral-de-Rham}
There is an isomorphism of sheaves of $\hbar$-adic vertex superalgebras over
$M = \Hilb^2(\C^2) \simeq T^* \bbP^1 \times T^* \C^1$,
\[
\tldcalD^\ch_{M, \hbar} \simeq \Omega_{T^* \bbP^1, 1/2,\hbar}^\ch \hatotimes \calD^\ch(T^*\C^1)_{\hbar} \hatotimes
\SF_{\hbar},
\]
where $\Omega^\ch_{T^* \bbP^1, 1/2,\hbar}$ is a certain $\hbar$-adic analogue of the twisted chiral de Rham algebra $\Omega^\ch_{\bbP^1, 1/2}$ defined as a sheaf of $\hbar$-adic vertex superalgebras on $T^* \bbP^1$, $\calD^\ch(T^* \C^1)_{\hbar}$ is the $\hbar$-adic $\beta\gamma$-system generated by $\Tr(X)/\sqrt{2}$, $\Tr(Y)/\sqrt{2}$ and $\SF_{\hbar}$ is the $\hbar$-adic symplectic fermion vertex superalgebra generated by $\phi \gamma/\sqrt{2}$, $\beta \psi/\sqrt{2}$.
\end{proposition}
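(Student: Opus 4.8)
The plan is to build the global isomorphism from the explicit local trivialisations of $\tldcalD^\ch_{M, \hbar}$ over the two-chart affine cover $M = U_{\Ix} \cup U_{\Iy}$ recorded above, matching transition functions on the single overlap $U_{\Ix} \cap U_{\Iy}$. The first step is to split off the factor $\calD^\ch(T^* \C^1)_{\hbar} \hatotimes \SF_{\hbar}$, which appears in both local descriptions and is generated by the \emph{global} sections $\Tr(X)$, $\Tr(Y)$, $\phi \gamma$, $\beta \psi \in \tldcalD^\ch_{M, \hbar}(M)$. By \autoref{prop:betagambc}, specialised to $N=2$, these four sections have trivial operator product expansions with the twisted-de-Rham generators $(\qX, \qDx, \qBx, \qCx)$ over $U_{\Ix}$ and $(\qY, \qDy, \qBy, \qCy)$ over $U_{\Iy}$, and hence they span a constant subsheaf isomorphic to $\calD^\ch(T^* \C^1)_{\hbar} \hatotimes \SF_{\hbar}$. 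Taking its sheaf commutant and running the local-simplicity argument from the proof of \autoref{prop:h-factorization} chartwise (the commutant of a simple factor in a free-field tensor product is the complementary factor) yields a sheaf-level factorisation $\tldcalD^\ch_{M, \hbar} \cong \Omega \hatotimes \calD^\ch(T^* \C^1)_{\hbar} \hatotimes \SF_{\hbar}$, where $\Omega(U_{\Ix})$ and $\Omega(U_{\Iy})$ are the $\hbar$-adic $\beta\gamma bc$-systems on the respective twisted-de-Rham coordinates.

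It then remains to identify $\Omega$ with the $\hbar$-adic twisted chiral de Rham sheaf. I would first \emph{define} $\Omega^\ch_{T^* \bbP^1, 1/2, \hbar}$ intrinsically by gluing two copies of the $\hbar$-adic $\beta\gamma bc$-system along $U_{\Ix} \cap U_{\Iy}$, using the $\alpha = 1/2$ twist transition of \cite{GMS05} with the Planck constant replaced by the formal parameter $\hbar$ throughout the microlocalisation of \autoref{sec:h-adic-betagamma}. Since the cover consists of only two charts with connected intersection, no triple-overlap cocycle condition arises; one only needs that the prescribed transition is a genuine $\hbar$-adic vertex superalgebra isomorphism on the overlap and squares with its inverse to the identity, which is exactly the well-definedness established in \cite{GMS05}.

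The core of the argument is the comparison of transition functions. On $U_{\Ix} \cap U_{\Iy}$ the complementary sheaf $\Omega$ inherits from $\tldcalD^\ch_{M, \hbar}$ the change of coordinates already computed above, namely $\qY = 1/\qX$, $\qBy = \qX_{(-1)} \qBx$ and $\qCy = (1/\qX)_{(-1)} \qCx$ (so that $\qBy_{(-1)} \qCy = \qBx_{(-1)} \qCx$), together with
\[
\qDy = - \qX_{(-1)}^2 \qDx + \qX_{(-1)} \qBx_{(-1)} \qCx - \tfrac{3}{2} \hbar\, \qX_{(-2)} \bfone.
\]
I would then verify, by a direct free-field calculation in the style of \cite{GMS05}, that these coincide with the defining transition of $\Omega^\ch_{\bbP^1, \alpha}$ for $\alpha = 1/2$: the classical part $\qDy = - \qX_{(-1)}^2 \qDx + \qX_{(-1)} \qBx_{(-1)} \qCx$ reproduces the symplectic change of the cotangent coordinate on $T^* \bbP^1$ twisted by the determinant line, while the anomalous quantum correction $- \tfrac{3}{2} \hbar\, \qX_{(-2)}$ is the term linear in the twist parameter that is selected precisely by the half-integral value $\alpha = 1/2$. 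Because both sides are freely generated over each chart, it suffices to match these finitely many generator transition formulae.

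The hard part will be this final matching step. One must confirm that the $\hbar$-deformed transition intertwines \emph{all} $(n)$-products, not merely the underlying graded (symbol) pieces, and reconcile the normalisation conventions of \cite{GMS05} — where the twist enters through the logarithmic derivative of the bundle transition function — with the anomaly coefficient $-3/2$ appearing here, so as to pin down $\alpha = 1/2$ unambiguously. The requisite checks reduce to a finite, if delicate, application of the Wick formula on the generators; the subtlety lies entirely in correctly tracking the single $\hbar$-linear anomaly term and certifying that it matches the $\alpha = 1/2$ prescription rather than any other value.
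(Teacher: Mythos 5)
Your proposal is correct and follows essentially the same route as the paper: both use the two-chart trivialisations of $\tldcalD^\ch_{M,\hbar}$ over $U_{\Ix}\cup U_{\Iy}$, split off the factor $\calD^\ch(T^*\C^1)_{\hbar}\hatotimes\SF_{\hbar}$ generated by the global sections $\Tr(X)$, $\Tr(Y)$, $\phi\gamma$, $\beta\psi$, and then identify the remaining $\beta\gamma bc$-generators via the transition formulae $\qY=1/\qX$, $\qBy=\qX_{(-1)}\qBx$, $\qCy=(1/\qX)_{(-1)}\qCx$ and $\qDy=-\qX_{(-1)}^2\qDx+\qX_{(-1)}\qBx_{(-1)}\qCx-\tfrac{3}{2}\hbar\,\qX_{(-2)}\bfone$ (derived in the paper from expressing $J^0$ in both charts) with the $\alpha=1/2$ gluing of \cite{GMS05}. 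Your explicit commutant step for the sheaf-level splitting and your flagging of the anomaly-coefficient check merely make precise what the paper leaves as a direct verification.
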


Finally, we describe the global sections of $\tldcalD^\ch_{M, \hbar}$ in the case of $N=2$. The elements $J^+ = \qDx$, $J^0$ defined by \eqref{eq:2}, $J^{-} = \qDy$, $G^+ = \qBx$, $G^{-} = \qBy,\dots$ form the quotient $V_{\calN=4}$ of the universal small $\calN = 4$ superconformal algebra. The description of these global sections in the sections $(\qX, \qDx, \qBx, \qCx)$ defined over $U_{\Ix}$ gives a free-field realisation of the superconformal algebra that coincides with the free-field realisation of the simple quotient introduced by Adamović in \cite{Adamovic16}. The main consequence of this is, that for $N=2$, $V_{\calN=4}$ is the simple quotient of $\on{Vir}_{\calN=4}^{-9}$.

Moreover, comparing the global sections in \cite{GMS05} with the free-field realisation in \cite{Adamovic16} shows that $\W_{S_2}$ is actually equal to $V_{\calN=4}$, rather than a conformal extension of it, as would happen for $N>2$.
\begin{proposition}
The vertex operator superalgebra $\sfV_{S_2}$ of global sections coincides with the tensor product
\[
\sfV_{S_2}\cong V_{\calN=4} \otimes \calD^\ch(T^* \C^1) \otimes \SF,
\]
where $V_{\calN=4}=\W_{S_2}$ is the simple quotient of the small $\calN=4$ superconformal algebra $\on{Vir}_{\calN=4}^{-9}$ of level $k=-3/2$. In particular, $\W_{S_2}$ and $\sfV_{S_2}$ are simple.
\end{proposition}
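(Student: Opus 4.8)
The plan is to deduce everything from the factorisation already established in \autoref{sec:Wakimoto} together with the local description in \autoref{prop:chiral-de-Rham} and Adamović's free-field realisation \cite{Adamovic16}. By \autoref{prop:h-factorization} in its vertex operator superalgebra form, we already have
\[
\sfV_{S_2} = \W_{S_2} \otimes \calD^\ch(T^* \C^1) \otimes \SF, \qquad \W_{S_2} = \Com_{\sfV_{S_2}}(\calD^\ch(T^* \C^1) \otimes \SF),
\]
so it remains only to identify the commutant $\W_{S_2}$ and to establish the three simplicity assertions.

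First I would show $V_{\calN=4} = \W_{S_2}$. The inclusion $V_{\calN=4} \subseteq \W_{S_2}$ is immediate: by \autoref{prop:gen-N4-cocyc} the generators $J^\pm, J^0, G^\pm, \tldG^\pm, \TN$ are cocycles, and the lemmata of \autoref{sec:N4-SCA} show that they have trivial operator product expansions with $\Tr(X)$, $\Tr(Y)$, $\phi\gamma$, $\beta\psi$, hence lie in the commutant. For the reverse inclusion I would pass to the big cell $U_{\Ix}=U_{(2)}$ and use the free-field realisation of \autoref{thm:Wakimoto} for $N=2$, under which $\W_{S_2}$ embeds into a single rank-one $\beta\gamma bc$-system, namely the one generated (up to normalisation) by the twisted chiral de Rham sections $\qX$, $\qDx$, $\qBx$, $\qCx$ over $U_{\Ix}$, i.e.\ the generators $\Wgamma_1$, $\Wbeta_1$, $\Wb_1$, $\Wc_1$ of \autoref{prop:betagambc}. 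Expressing $J^\pm$, $J^0$, $G^\pm$, $\tldG^\pm$, $\TN$ in these coordinates via \eqref{eq:gen-N4-SCA} and \eqref{eq:2}, one recovers precisely the free-field formulas for the small $\calN=4$ superconformal algebra at level $k=-3/2$ given by Adamović in \cite{Adamovic16}.

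The point of invoking \cite{Adamovic16} is that the image of that free-field realisation is exactly the simple quotient of $\on{Vir}_{\calN=4}^{-9}$. Consequently $V_{\calN=4}$, being the image of the homomorphism $\on{Vir}_{\calN=4}^{-9}\to\sfV_{S_2}$ of \autoref{prop:small-N4-SCA}, is this simple quotient; in particular it is simple of central charge $-9$ and level $k=-3/2$. To close the gap $V_{\calN=4}=\W_{S_2}$, rather than allowing a proper conformal extension, I would compare the two descriptions of the same vertex subalgebra of the $\beta\gamma bc$-system: on one side $\W_{S_2}$ is, by \autoref{prop:chiral-de-Rham}, the vertex algebra of global sections of the twisted chiral de Rham algebra $\Omega^\ch_{\bbP^1,1/2}$ computed by Gorbounov, Malikov and Schechtman \cite{GMS05}; on the other side $V_{\calN=4}$ is Adamović's image. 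Since both are described by the same generating fields $\qX$, $\qDx$, $\qBx$, $\qCx$ over $U_{\Ix}$, they coincide, giving $\W_{S_2}=V_{\calN=4}$. Alternatively, one may compare characters: the supercharacter $\on{sch}_{\W_{S_2}}$ computed in \autoref{sec:chars} matches that of the simple $\calN=4$ module, which together with $V_{\calN=4}\subseteq\W_{S_2}$ forces equality.

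Finally, the simplicity statements follow: $\W_{S_2}=V_{\calN=4}$ is simple by the previous step, and since $\calD^\ch(T^*\C^1)$ and $\SF$ are simple vertex (super)algebras of CFT-type, the tensor product $\sfV_{S_2}=\W_{S_2}\otimes\calD^\ch(T^*\C^1)\otimes\SF$ is simple as well by the standard simplicity of tensor products of simple CFT-type vertex operator superalgebras. I expect the main obstacle to be the exact identification $\W_{S_2}=V_{\calN=4}$, that is, verifying that the conformal extension is trivial in the case $N=2$; this is where the detailed matching of the global-sections computation of \cite{GMS05} with Adamović's realisation \cite{Adamovic16}, or equivalently the character coincidence, is essential, whereas all the remaining steps are bookkeeping within results already proved.
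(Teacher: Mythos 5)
Your proposal is correct and follows essentially the same route as the paper: the factorisation $\sfV_{S_2}=\W_{S_2}\otimes\calD^\ch(T^*\C^1)\otimes\SF$ from \autoref{prop:h-factorization}, the identification of the $N=2$ sheaf with the twisted chiral de Rham algebra in \autoref{prop:chiral-de-Rham}, the matching of the restriction to $U_{(2)}$ with Adamovi\'c's free-field realisation to identify $V_{\calN=4}$ as the simple quotient of $\on{Vir}_{\calN=4}^{-9}$, and the comparison of the global sections computed in \cite{GMS05} with Adamovi\'c's image to conclude $\W_{S_2}=V_{\calN=4}$ exactly. The character-comparison argument you mention as a fallback is a reasonable (if not strictly necessary) supplement, but the core of your argument is the paper's own.
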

We are led to believe that the vertex operator algebras of global sections $\sfV_{S_N}$, or equivalently $\W_{S_N}$, and the quotient $V_{\calN=4}$ of the small $\calN=4$ superconformal algebra are simple for all $N\ge2$.


\bibliographystyle{alpha_noseriescomma}
\bibliography{refs_new2}

\end{document}